\documentclass[11pt]{amsart}

\usepackage{evearticle}

\makeatletter
\renewcommand*\env@matrix[1][\arraystretch]{
	\edef\arraystretch{#1}
	\hskip -\arraycolsep
	\let\@ifnextchar\new@ifnextchar
	\array{*\c@MaxMatrixCols c}}
\makeatother

\newcounter{jmstep}
\newcounter{jmstepanchor}
\renewcommand{\thejmstep}{\arabic{jmstep}}

\crefname{jmstep}{Step}{Steps}
\Crefname{jmstep}{Step}{Steps}
\newcommand{\step}[2]{
	\refstepcounter{jmstep}
	\stepcounter{jmstepanchor}
	\label{#1}
	\noindent\emph{Step \thejmstep. #2.}
}
\newcommand{\stepn}[2]{
	\refstepcounter{jmstep}
	\stepcounter{jmstepanchor}
	\label{#1}
	\noindent\emph{Step \thejmstep. #2.}
}
\AtBeginEnvironment{proof}{
	\setcounter{jmstep}{0}
}
\newcounter{jmsubstep}[jmstep]
\renewcommand{\thejmsubstep}{\thejmstep\alph{jmsubstep}}
\newcounter{jmsubstepanchor}

\crefname{jmsubstep}{Step}{Steps}
\Crefname{jmsubstep}{Step}{Steps}

\newcommand{\substepn}[2]{
	\refstepcounter{jmsubstep}
	\stepcounter{jmsubstepanchor}
	\label{#1}
	\noindent\emph{Step \thejmsubstep. #2.}
}

\newcommand{\one}{\mathbf{1}}

\begin{document}

\title[The conformally invariant metric on CLE$_4$ II: existence of geodesics]{The conformally invariant metric on CLE$_4$ II:\\ existence of geodesics}
\author[E.~Kammerer, K.~Kavvadias, J.~Miller, and Y.~Tian]{Emmanuel Kammerer, Konstantinos Kavvadias, Jason Miller, and Yi Tian}

\date{\today}

\maketitle
\begin{abstract}
We continue our study of the conformal loop ensemble (CLE) with parameter $\kappa=4$, the critical threshold at or below which the loops are simple and disjoint, touching neither each other nor the domain boundary. This paper is the second in a series of three establishing that the loops of a CLE$_4$ uniquely determine a conformally invariant, local, and geodesic metric such that the metric ball growth from the domain boundary coincides with the uniform exploration of Werner and Wu. In this second paper, we prove the existence of geodesics, showing that any geodesic between two loops is supported on the CLE$_4$ loops (off a set of Hausdorff dimension zero) and does not intersect the domain boundary. Along the way, we establish sharp quantitative estimates for the CLE$_4$ metric geometry, including exponential tail bounds for rectangle distances and multi-scale four-arm SLE$_4$ non-intersection bounds for metric balls.
\end{abstract}

\tableofcontents

\setlength{\parindent}{0pt}
\setlength{\parskip}{0.5\baselineskip plus 1pt minus 1pt}

\section{Introduction}
\subsection{Overview}

\begin{figure}[ht]
	\includegraphics[width=0.49\textwidth]{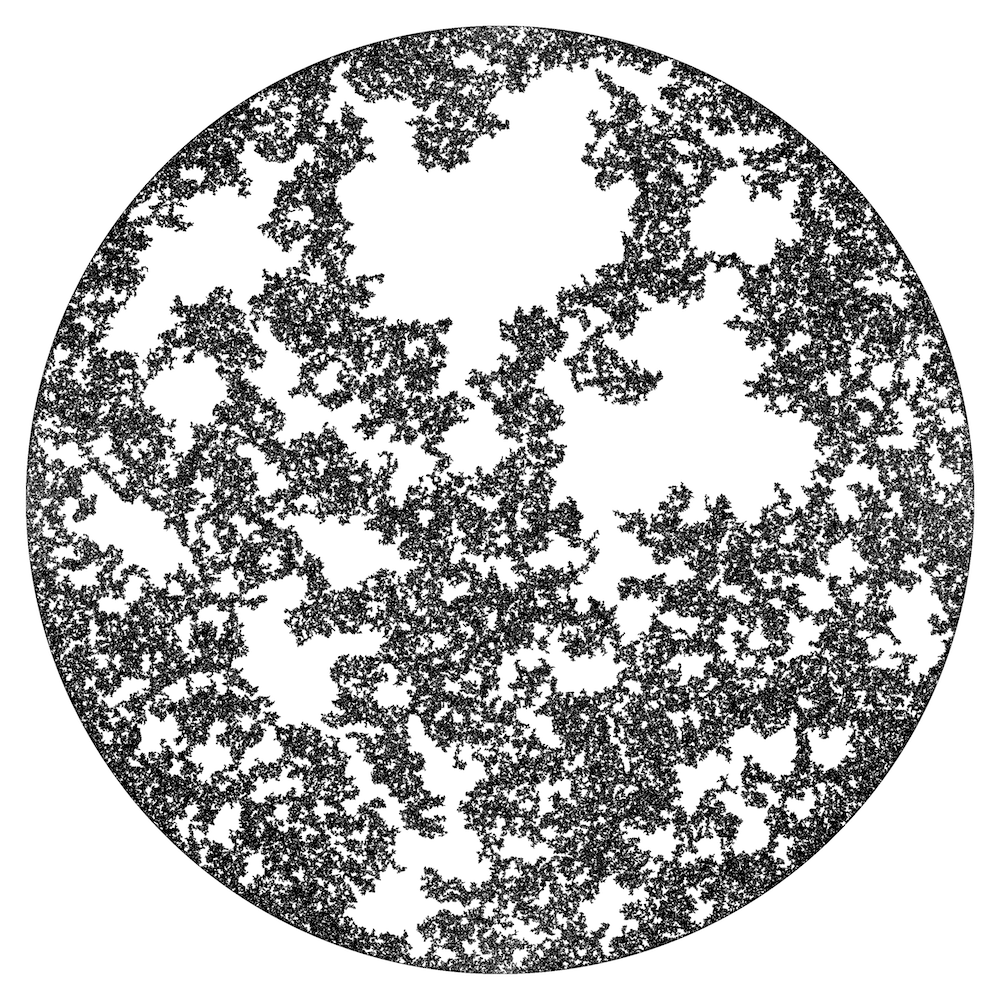}
	\includegraphics[width=0.49\textwidth]{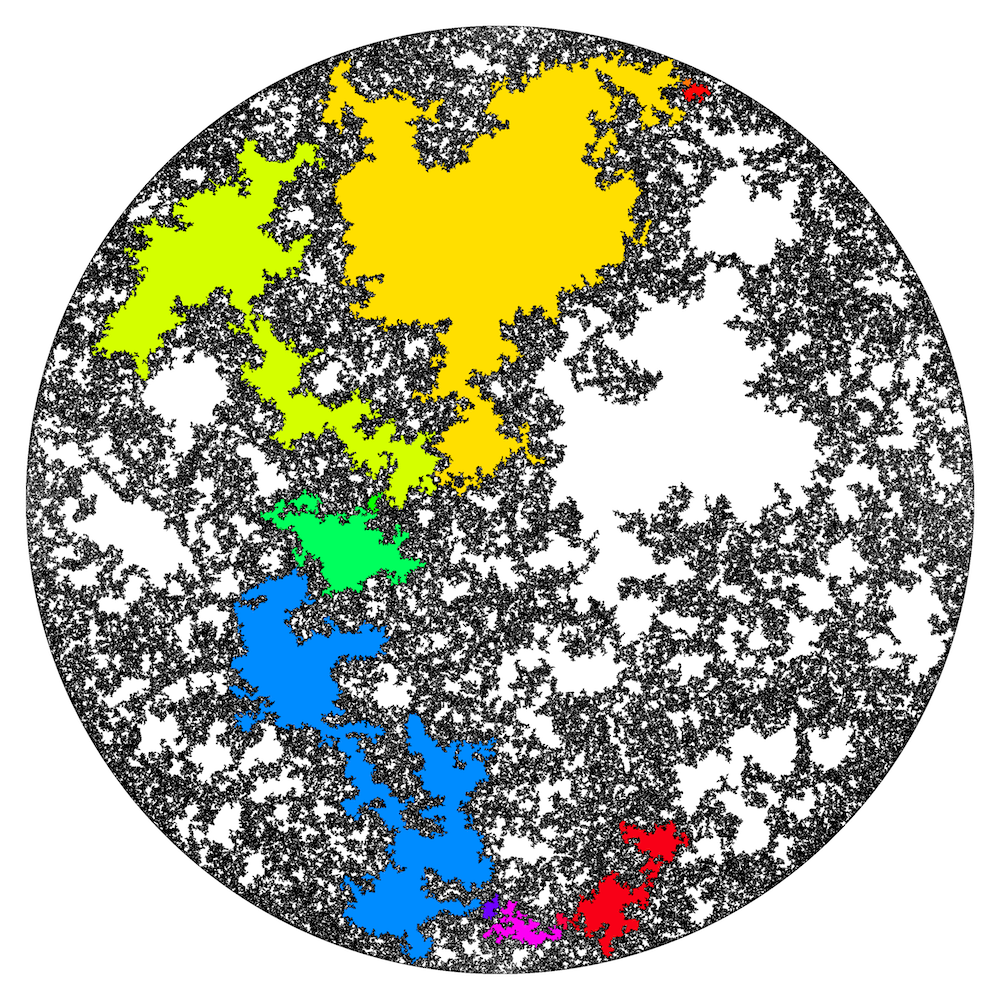}
	\caption{\label{fig:simulations}\textbf{Left:} A (discrete approximation of a) CLE$_4$ in the unit disk $\BD$. \textbf{Right:} The geodesic between the two red loops; loops along the geodesic are colored according to the length parameterization.}
\end{figure}

The \emph{conformal loop ensembles} (CLE$_\kappa$ for $\kappa \in (8/3, 8)$) are collections of random non-crossing loops within simply connected domains in $\BC$ \cite{TreeCLE,CLE}. They are conformally invariant, meaning their laws are preserved under conformal automorphisms of the domain. CLEs are the natural loop variants of the Schramm--Loewner evolution (SLE) \cite{s2000sle}. SLEs and CLEs are conjectured (and in several cases rigorously shown) to describe the scaling limits of interfaces in critical statistical mechanics models on planar lattices \cite{s2001percolation,lsw2004lerw,ss2009dgff,s2010ising} as well as on random planar maps \cite{s2016hc,lsw2017schnyder,gkmw2018active,kmsw2019bipolar,gm2021saw,gm2021percolation}.

In the same way as for SLE, the value $\kappa=4$ is critical for CLE. For $\kappa \in (8/3,4]$, the loops are simple and disjoint, touching neither each other nor the domain boundary. On the other hand, for $\kappa \in (4,8)$ the loops are self-touching and can touch each other and the domain boundary \cite{rs2005basic}. In this paper, we focus on the critical case $\kappa=4$.  For $\kappa \in (4,8)$ it is therefore possible to define a metric on the set of loops of a $\CLE_\kappa$ by considering the graph whose vertices are the loops and two distinct loops are adjacent if they intersect.  This metric is conformally invariant since $\CLE_\kappa$ is conformally invariant.  This is not naively possible for $\kappa = 4$ since the loops of a $\CLE_4$ are disjoint. Nevertheless, in our first paper \cite{kkmt2026cle4_part1}, we constructed a conformally invariant metric on the loops of a CLE$_4$ by taking subsequential scaling limits as $\kappa\downarrow 4$ of the renormalized graph metric on the CLE$_\kappa$.

The program of constructing the CLE$_4$ metric was initiated by Werner and Wu in \cite{CoInCLEExpl}, where the distance from any loop in CLE$_4$ to the domain boundary was defined via a Poissonian construction starting from the domain boundary. Equivalently, Werner and Wu in \cite{CoInCLEExpl} constructed metric balls from the domain boundary, where a loop of CLE$_4$ is discovered when it is contained in the metric ball. They referred to this exploration as the \emph{uniform exploration} (cf.~also \cite{LevelLineGFFI,TVSGFF}). This terminology arises because, as one grows the uniform exploration from the domain boundary, the CLE$_4$ loops are discovered in a Poissonian manner, with each new loop being rooted at a point chosen according to harmonic measure on the boundary of the unexplored region (seen from the target point of the exploration) just before the loop is discovered. It was not proved in \cite{CoInCLEExpl} that the uniform exploration corresponds to the metric ball growth from the boundary for a metric defined on the entire CLE$_4$, nor was it shown that such a metric would be determined by the CLE$_4$.

We proved in our first paper \cite{kkmt2026cle4_part1} that the growth process of metric balls from the domain boundary for our subsequential limiting metric indeed has the law of the uniform exploration. The aim of this second paper is to establish the existence of geodesics (see \Cref{fig:simulations}) and analyze their properties; these properties will play a crucial role in establishing the uniqueness of the metric in the third paper of the series \cite{kkmt2026cle4_part3}.

We note that a CLE$_4$ metric (which coincides with ours by the uniqueness proved in \cite{kkmt2026cle4_part3}) was previously constructed by Sheffield, Watson, and Wu in unpublished work, following an approach completely distinct from ours. In contrast to their indirect definition of the metric, we showed in the first paper that the metric arises naturally as a subsequential scaling limit as $\kappa \downarrow 4$ of the renormalized graph metric on the CLE$_\kappa$. Furthermore, they did not investigate the existence of geodesics, which is not automatically guaranteed by the subsequential limit as $\kappa \downarrow 4$: although a geodesic exists between any two loops for the graph metric on CLE$_\kappa$ for $\kappa>4$, one must rule out the possibility that the loops along this geodesic disappear in the limit as $\kappa \downarrow 4$; this is a subtle question due to the topology with respect to which the limit is taken. The results of this paper actually hold for any metric on CLE$_4$ satisfying a certain set of natural axioms. This paper can therefore be read independently of the first one, and the results proved here can be applied to other approximation schemes for the CLE$_4$ metric.

\subsection{Geodesics in random metric spaces}
\label{subsec:geodesic_background}
We provide a brief background on the study of geodesics in random metric spaces.

In many settings, the existence of geodesics is a direct consequence of the convergence (possibly along a subsequence) of geodesic metric spaces in the Gromov--Hausdorff or Gromov--Hausdorff--Prokhorov topology. This is the case, for example, for the Brownian continuum random tree \cite{Ald91}, the Brownian sphere (for which tightness was established in \cite{LG07}), and the scaling limits of random planar maps with large faces \cite{LGM11}. Similarly, for the Liouville quantum gravity (LQG) metric, the existence of geodesics in any subsequential limit of Liouville first passage percolation is a direct byproduct of tightness \cite{WeakLQGMet}. However, in our setting, since the subsequential limit is constructed with respect to a different topology, the existence of geodesics is not automatic and has to be proved directly.

More generally, the study of geodesics has become a central topic in the field of random geometry. In the Brownian sphere, detailed structural properties of geodesics and of geodesic trees directed toward typical points were established by Le Gall \cite{GeoBMap}. Beyond existence and uniqueness, a recurring phenomenon across planar random geometry is the \emph{confluence} (or coalescence) of geodesics: geodesics starting from different locations merge before reaching their destination. Geodesic confluence has been established for LQG metrics \cite{ConfLQG}, and the geometric nature of LQG geodesics was shown to be distinct from that of SLE curves \cite{GeoLQGnSLE}. Confluence and geodesic network structures also play a central role in the Kardar--Parisi--Zhang (KPZ) universality class, notably in the directed landscape \cite{DiLand, 27NetDiLand}. Conformally covariant metrics and their geodesics have also been studied on CLE carpets \cite{GeoCLECarp} and non-simple CLE gaskets \cite{ExUniCoCoGeoMetNonsimCLEGas}.

Furthermore, properties of geodesics were a crucial tool in establishing the uniqueness of the Brownian sphere \cite{LG13, Mie13} and of stable carpets and gaskets \cite{StabCarGas}. They were also useful in the proof of uniqueness of the LQG metric \cite{ExUniLQG}. In a similar way, the existence and structural properties of geodesics established in this paper will play a key role in proving the uniqueness of the CLE$_4$ metric in our next paper \cite{kkmt2026cle4_part3}.

\subsection{Setup}\label{subsec:setup}

We begin by introducing some notation for CLE$_4$ and associated objects. Let $U \subsetneq \BC$ be a simply connected domain. Let $\Gamma_U$ be a non-nested CLE$_4$ in $U$. We shall write $\Upsilon_U$ for the carpet of $\Gamma_U$, which is the set of points of $\overline{U}$ that are not surrounded by a loop of $\Gamma_U$ (we choose to include $\partial U$ in $\Upsilon_U$ so that the paths that we will define later can start from $\partial U$). 

For each open subset $V \subset U$, we shall write 
\begin{equation}\label{eq:def-V-star}
    V^\star \defeq V \setminus \overline{\bigcup_{\SCL \in \Gamma_U : \SCL \not\subset V} \mathop{\mathrm{int}}(\SCL)},
\end{equation}
where $\mathrm{int}(\SCL)$ is the open set encircled by the loop $\SCL$. Intuitively, $V^\star$ is obtained from $V$ by removing the regions encircled by the loops that are not contained in $V$. For $V \subset \overline{U}$, we write $\Gamma_U\vert_{V} \defeq \{\SCL \in \Gamma_U : \SCL \subset V\}$.

For every deterministic point $z \in U$, a.s.\ there is a unique loop of $\Gamma_U$ surrounding $z$; we denote it by $\SCL(z)$.

Our main object of interest is a metric on the set of loops of CLE$_4$. Since the underlying set is not a subset of $\BC$, the associated balls are not subsets of $\BC$. Still, we can define the metric balls in terms of subsets of $\BC$ as follows. Let $D$ be a metric on $\Gamma_U$. For all $\SCL \in \Gamma_U$ and $t \ge 0$, we denote by
\begin{equation*}
    \SCB_t(\SCL; D) \quad \text{(resp.\ } \SCB_t^-(\SCL; D)\text{)},
\end{equation*}
or simply $\SCB_t(\SCL)$ (resp.\ $\SCB_t^-(\SCL)$), when there is no danger of confusion, the closure of the union of the domains surrounded by $\SCL^\prime$ for $\SCL^\prime \in \Gamma_U$ with $D(\SCL, \SCL^\prime) \le t$ (resp.\ $D(\SCL, \SCL^\prime) < t$). Note that $\SCB_0^-(\SCL; D) = \emptyset$. Note that we have $\SCB_t(\SCL; D) = \SCB_t^-(\SCL; D)$ unless there exists $\SCL^\prime \in \Gamma_U$ with $D(\SCL, \SCL^\prime) = t$. 

Moreover, we can make sense of the distance to a subset of $\overline{U}$ in the following way. Let $A, B \subset \overline U$. Then, we write
\begin{itemize}
    \item $D(\SCL, A) \defeq \inf\{t \ge 0 : \SCB_t(\SCL; D) \cap A \neq \emptyset\}$ for all $\SCL \in \Gamma_U$; 
    \item for all $t \ge 0$, $\SCB_t(A; D)$ (resp.\ $\SCB_t^-(A; D)$) for the closure of the union of $A$ and the domains surrounded by $\SCL$ for $\SCL \in \Gamma_U$ with $D(\SCL, A) \le t$ (resp.\ $D(\SCL, A) < t$), except $\SCB_0^-(A; D)\defeq \emptyset$; 
    \item $D(A, B) \defeq \inf\{t \ge 0 : \SCB_t(A; D) \cap B \neq \emptyset\}$.
\end{itemize}
Here and below, $\inf\emptyset \defeq \infty$. Note that $D(A,B) = D(B,A)$ (since one can see that $D(A, B)$ is the limit as $\varepsilon \downarrow 0$ of the infimum of $D(\SCL, \SCL')$ over pairs of loops $\SCL, \SCL' \in \Gamma_U$ such that $\mathrm{dist}(A, \mathrm{int}(\SCL))\le \varepsilon$ and $\mathrm{dist}(B, \mathrm{int}(\SCL'))\le \varepsilon$). We also write $D(A, y)\defeq D(A, \{y\})$ for all $y \in \overline{U}$.

Let $P \colon [0, 1] \to \Upsilon_U$ be a continuous path. We say that $P$ is \emph{admissible} if $\bigl(\bigcup_{\SCL \in \Gamma_U} \SCL\bigr) \cap P([0,1])$ is dense in $P([0,1])$. Given $A, B \subset \overline{U}$, we say that $P$ \emph{connects} $A$ and $B$ if $P(0) \in A$ and $P(1) \in B$. We shall refer to
\begin{equation*}
    \len(P; D) \defeq \sup_{\SCL_0, \SCL_1, \dots, \SCL_n} \sum_{j = 1}^n D(\SCL_{j - 1}, \SCL_j)
\end{equation*}
as the \emph{$D$-length} of $P$, where $\SCL_0, \SCL_1, \dots, \SCL_n$ range over all sequences of (not necessarily distinct) loops of $\Gamma_U$ such that $n \ge 1$ and $P(t_j) \in \SCL_j$ for some $0 \le t_0 \le t_1 \le \cdots \le t_n \le 1$. Let $A, B \subset X \subset \overline U$. Then we shall write 
\begin{equation*}
    D(A, B; X) \defeq \inf_{P} \len(P; D),
\end{equation*}
where $P$ ranges over all admissible paths with $P([0,1]) \subset X$, $P(0) \in A$ and $P(1) \in B$ (with the convention that $\inf\emptyset = \infty$); note that admissible paths take values in $\Upsilon_U$, so the constraint is $P([0,1]) \subset X \cap \Upsilon_U$. The function $D(\bullet,\bullet;X)$ is called the internal distance on $X$.

We shall write $U_{\BQ} \defeq U \cap \BQ^2$. By abuse of notation, we shall also denote by $D$ the mapping
\begin{equation*}
    D \colon U_{\BQ} \times U_{\BQ} \to \BR \colon (x, y) \mapsto D(\SCL(x), \SCL(y)). 
\end{equation*}
This is only a pseudometric, since it vanishes whenever $\SCL(x) = \SCL(y)$. We shall equip $\BR^{U_{\BQ} \times U_{\BQ}}$ with the product topology. Note that the space $\BR^{U_{\BQ} \times U_{\BQ}}$ is a Polish space.

\subsection{Main results}

In the first paper of this series \cite{kkmt2026cle4_part1}, we proved that subsequential scaling limits of the renormalized graph distances on CLE$_\kappa$ give rise to a metric on CLE$_4$ which satisfies the following properties.

\begin{definition}\label{def:weak_axioms}
	We define a \emph{weak geodesic CLE$_4$ metric coupling} to be a family of couplings
	\begin{equation*}
		D= \left\{(\Gamma_U, D_{\Gamma_U}^U)\right\}_U
	\end{equation*}
	where $U \subsetneq \BC$ ranges over all simply connected domains, and for each $U$, $\Gamma_U$ is a non-nested CLE$_4$ in $U$, such that the following conditions are satisfied for every such $U$ and all deterministic choices of the auxiliary data ($V$, $I$, $\phi$, arcs) below:
	\begin{enumerate}[label=(\Roman*), ref=\Roman*]
		\item\label{it:weak_axiom_geodesic} {\bfseries (Weak geodesic metric)} $D_{\Gamma_U}^U$ is a.s.\ a metric on $\Gamma_U$ satisfying the following conditions:
		\begin{enumerate}[label=(\roman*), ref=\roman*]
			\item\label{it:weak_axiom_geodesic_0} Let $\SCL \in \Gamma_U$ and $t \ge 0$. Then the metric ball $\SCB_t(\SCL; D_{\Gamma_U}^U)$ is connected.
			\item\label{it:weak_axiom_geodesic_1} Let $\SCL \in \Gamma_U$ and $t \ge 0$. Let $\phi\colon U \to \BD$ be a conformal mapping. Then
			\begin{equation*}
				\lim_{t^\prime \downarrow t} \phi (\SCB_{t^\prime}(\SCL; D_{\Gamma_U}^U) )= \phi (\SCB_t(\SCL; D_{\Gamma_U}^U)) \quad \text{and} \quad \lim_{t^\prime \uparrow t} \phi (\SCB_{t^\prime}(\SCL; D_{\Gamma_U}^U)) = \phi (\SCB_t^-(\SCL; D_{\Gamma_U}^U))
			\end{equation*}
			with respect to the Hausdorff metric, the second limit being required only for $t > 0$.
			\item\label{it:weak_axiom_geodesic_2} Let $\SCL_1, \SCL_2 \in \Gamma_U$ and $t \ge 0$. Suppose that $\SCB_t(\SCL_1; D_{\Gamma_U}^U) \cap \SCL_2 = \emptyset$. Then
			\begin{equation*}
				D_{\Gamma_U}^U(\SCL_1, \SCL_2) = t + D_{\Gamma_U}^U(\SCB_t(\SCL_1; D_{\Gamma_U}^U), \SCL_2). 
			\end{equation*}
			\item\label{it:weak_axiom_geodesic_4} Let $\SCL_1, \SCL_2 \in \Gamma_U$. Then the closed set $K_{\SCL_1, \SCL_2}$ defined as the closure of $\{ z \in \overline U : D_{\Gamma_U}^U(\SCL_1, z)+ D_{\Gamma_U}^U(\SCL_2,z)= D_{\Gamma_U}^U(\SCL_1, \SCL_2) \}$ has a connected component containing $\SCL_1 \cup \SCL_2$.

		\end{enumerate}
		Items~\eqref{it:weak_axiom_geodesic_0}--\eqref{it:weak_axiom_geodesic_4} are also required with a deterministic connected arc (resp.\ two disjoint such arcs) of $\partial U$ in place of $\SCL$ or $\SCL_1$ (resp.\ $\SCL_1$ and $\SCL_2$).
		\item\label{it:weak_axiom_locality} {\bfseries (Locality)}
		Let $V \subset U$ be a deterministic simply connected subdomain. Let $I \subset \partial U$ be a deterministic connected arc. Let $\{V_j\}_j$ be the connected components of $V^\star$. Then there exists a coupling $(\Gamma_U, D_{\Gamma_U}^U, \{D_{\Gamma_U|_{V_j}}^{V_j}\}_j)$ such that the following hold: 
			\begin{itemize}
				\item Conditionally on the $\sigma$-algebra generated by
				\begin{multline}\label{eq:weak_axiom_locality}
					\bigl\{\SCL \in \Gamma_U : \SCL \not\subset V\bigr\}, \quad \bigl\{\SCB_t(\SCL; D_{\Gamma_U}^U) : \SCL \in \Gamma_U, \ \SCL \not\subset V, \ t \in [0, D_{\Gamma_U}^U(\SCL, \partial V^\star)]\bigr\}, \\
					\text{and} \quad \bigl\{\SCB_t(I; D_{\Gamma_U}^U) : t \in [0, D_{\Gamma_U}^U(I, \partial V^\star)]\bigr\}
				\end{multline}
				the $(\Gamma_U|_{V_j}, D_{\Gamma_U|_{V_j}}^{V_j})$'s are independent and their conditional laws are those of $(\Gamma_{V_j}, D_{\Gamma_{V_j}}^{V_j})$, respectively. 
				\item For all $j$, for all $\SCL_1, \SCL_2 \in \Gamma_U|_{V_j}$, we have $D_{\Gamma_U}^U(\SCL_1, \SCL_2)\le D_{\Gamma_U|_{V_j}}^{V_j}(\SCL_1, \SCL_2)$.
				\item For all $j$, for all $\SCL \in \Gamma_U|_{V_j}, \ t \in [0, D_{\Gamma_U}^U(\SCL, \partial V_j)]$, we have $\SCB_t(\SCL; D_{\Gamma_U}^U) = \SCB_t(\SCL; D_{\Gamma_U|_{V_j}}^{V_j})$.
				
		\end{itemize}

		\item\label{it:weak_axiom_conformal_invariance} {\bfseries (Conformal invariance)} Let $\phi \colon U \to \phi(U)$ be a deterministic conformal mapping. Then,
		\begin{equation*}
				\left(\phi(\Gamma_U), \left( D_{\Gamma_U}^U\left(\phi^{-1}(\SCL_1), \phi^{-1}(\SCL_2) \right) \right)_{\SCL_1, \SCL_2 \in \phi(\Gamma_U)} \right)
				\overset{(\mathrm{d})}{=} \left( \Gamma_{\phi(U)}, \left(D_{\Gamma_{\phi(U)}}^{\phi(U)}(\SCL_1, \SCL_2) \right)_{\SCL_1, \SCL_2 \in \Gamma_{\phi(U)}} \right).
		\end{equation*}
		Here both sides are viewed as random elements of the space of pairs consisting of a loop ensemble in $\phi(U)$ and a symmetric function on its pairs of loops; equivalently, by the reformulation of \Cref{subsec:setup}, as random elements of $\BR^{\phi(U)_\BQ \times \phi(U)_\BQ}$ together with the ensemble.

		\item\label{it:weak_axiom_uniform_exploration} {\bfseries (Uniform exploration)} The collection $\{(\SCL, D_{\Gamma_U}^U(\SCL, \partial U))\}_{\SCL \in \Gamma_U}$ has the law of a uniform exploration of $\Gamma_U$ (cf.~\cite{CoInCLEExpl}; see \Cref{subsec:labeled_cle_4}). That is, the pair consisting of the ensemble and its labels has the law of the labeled CLE$_4$ in $U$, namely of a CLE$_4$ decorated by its uniform exploration.
	\end{enumerate}
\end{definition}
Note that Axiom~\eqref{it:weak_axiom_geodesic}\eqref{it:weak_axiom_geodesic_2} implies that
\begin{equation*}
	D_{\Gamma_U}^U(\SCL_1, \SCL_2) = \inf\{t \ge 0 : \SCB_t(\SCL_1; D_{\Gamma_U}^U) \cap \SCL_2 \neq \emptyset\}, \quad \forall \SCL_1, \SCL_2 \in \Gamma_U. 
\end{equation*}
In particular, $D_{\Gamma_U}^U(\SCL_1, z) = D_{\Gamma_U}^U(\SCL_1, \SCL_2)$ for every $z \in \SCL_2$: here $\le$ holds because $\SCB_t(\SCL_1; D_{\Gamma_U}^U)$ contains the domain surrounded by $\SCL_2$ as soon as $t \ge D_{\Gamma_U}^U(\SCL_1,\SCL_2)$, and $\ge$ because $z \in \SCB_t(\SCL_1; D_{\Gamma_U}^U)$ forces $\SCB_t(\SCL_1; D_{\Gamma_U}^U) \cap \SCL_2 \neq \emptyset$. Hence $D_{\Gamma_U}^U(\SCL_1, \cdot)$ is constant on each loop, and $\SCL_1 \cup \SCL_2 \subset K_{\SCL_1,\SCL_2}$ in Axiom~\eqref{it:weak_axiom_geodesic}\eqref{it:weak_axiom_geodesic_4}.
Moreover, by conformal invariance, the laws of all the couplings $\left( \Gamma_U, D_{\Gamma_U}^U \right)$ are determined by the law of $\left( \Gamma_{\BD}, D_{\Gamma_{\BD}}^{\BD} \right)$. The arc $I$ enters Axiom~\eqref{it:weak_axiom_locality} only through the conditioning, and the balls grown from $I$ are included there because the applications in \Cref{sec:distances_across_rectangles} and \Cref{sec:vertical_vs_horizontal} condition on balls grown from boundary arcs as well as on balls grown from loops. In that axiom, the exterior balls are stopped at $D_{\Gamma_U}^U(\bullet, \partial V^\star)$ while the third bullet matches interior balls up to $D_{\Gamma_U}^U(\SCL, \partial V_j)$; the two thresholds concern different families of loops (those not contained in $V$, and those contained in $V_j$), which is why they differ. 
The main result of this paper is that every weak geodesic CLE$_4$ metric coupling is a geodesic CLE$_4$ metric coupling in the sense of the following definition.
\begin{definition}\label{def:axioms}
	We define a \emph{geodesic CLE$_4$ metric coupling} to be a family of couplings
	\begin{equation*}
	 	D = \left\{\left( \Gamma_U, D_{\Gamma_U}^U \right)\right\}_U
	\end{equation*}
	where $U \subsetneq \BC$ ranges over all simply connected domains and $\Gamma_U$ is a non-nested CLE$_4$ in $U$, such that:
	\begin{enumerate}[label=(\Roman*), ref=\Roman*]
		\item\label{it:axiom_geodesic} {\bfseries (Geodesic metric)} $D_{\Gamma_U}^U$ is a.s.\ a metric on $\Gamma_U$ such that the following hold.
			\begin{enumerate}[label=(\roman*), ref=\roman*]
				\item\label{it:axiom_geodesic_path} For each $\SCL_1, \SCL_2 \in \Gamma_U$, there exists an admissible path $P$ (which we refer to as a \emph{$D_{\Gamma_U}^U$-geodesic}) connecting $\SCL_1$ and $\SCL_2$ such that $D_{\Gamma_U}^U(\SCL_1, \SCL_2) = \len(P; D_{\Gamma_U}^U)$. The same is true with a deterministic connected arc (resp.\ two deterministic and disjoint connected arcs) of $\partial U$ in place of $\SCL_1$ (resp.\ $\SCL_1$ and $\SCL_2$).
				\item\label{it:axiom_geodesic_cadlag} Let $\phi\colon U \to \BD$ be a conformal mapping. For every $\SCL \in \Gamma_U$, the process $(\phi(\SCB_t(\SCL; D^U_{\Gamma_U}){)})_{t\ge 0}$ is c\`adl\`ag for the Hausdorff distance, and the same is true with a deterministic connected arc of $\partial U$ in place of $\SCL$.
				\item\label{it:axiom_geodesic_twoball} For all $t, s\ge 0$, if $\SCB_t(\SCL_1; D^U_{\Gamma_U}) \cap \SCB_s(\SCL_2; D^U_{\Gamma_U}) \neq \emptyset$, then $D^U_{\Gamma_U} (\SCL_1, \SCL_2) \le t+s$. The same is true with a deterministic connected arc (resp.\ two deterministic and disjoint connected arcs) of $\partial U$ in place of $\SCL_1$ (resp.\ $\SCL_1$ and $\SCL_2$).
			\end{enumerate}
			Note that $\len(P; D_{\Gamma_U}^U) \ge D_{\Gamma_U}^U(\SCL_1,\SCL_2)$ for every admissible path $P$ connecting $\SCL_1$ and $\SCL_2$, by the triangle inequality along the chains in the definition of $\len$; a path attaining equality is thus 
			what the term geodesic refers to. 
		\item\label{it:axiom_locality} {\bfseries (Locality)} Let $V \subset U$ be a deterministic simply connected subdomain. Let $I \subset \partial U$ be a deterministic connected arc. Let $\{V_j\}_j$ be the connected components of $V^\star$. Then, conditionally on
				\begin{multline}\label{eq:axiom_locality}
					\bigl\{\SCL \in \Gamma_U : \SCL \not\subset V\bigr\}, \quad \bigl\{\SCB_t(\SCL; D_{\Gamma_U}^U) : \SCL \in \Gamma_U, \ \SCL \not\subset V, \ t \in [0, D_{\Gamma_U}^U(\SCL, \partial V^\star)]\bigr\}, \\
					\text{and} \quad \bigl\{\SCB_t(I; D_{\Gamma_U}^U) : t \in [0, D_{\Gamma_U}^U(I, \partial V^\star)]\bigr\},
				\end{multline}
			the $(\Gamma_U|_{V_j}, D_{\Gamma_U}^U(\bullet, \bullet; V_j))$'s are independent and their conditional laws are those of $(\Gamma_{V_j}, D_{\Gamma_{V_j}}^{V_j})$, respectively. 
		\item\label{it:axiom_conformal_invariance} {\bfseries (Conformal invariance)} Let $\phi \colon U \to \phi(U)$ be a deterministic conformal mapping. Then,
		\begin{equation*}
			\left(\phi(\Gamma_U), \left( D_{\Gamma_U}^U\left(\phi^{-1}(\SCL_1), \phi^{-1}(\SCL_2) \right) \right)_{\SCL_1, \SCL_2 \in \phi(\Gamma_U)} \right)
			 \overset{(\mathrm{d})}{=} \left( \Gamma_{\phi(U)}, \left(D_{\Gamma_{\phi(U)}}^{\phi(U)}(\SCL_1, \SCL_2) \right)_{\SCL_1, \SCL_2 \in \Gamma_{\phi(U)}} \right).
		\end{equation*}
		The equality in law is understood as in \Cref{def:weak_axioms}.

		\item\label{it:axiom_uniform_exploration} {\bfseries (Uniform exploration)} The collection $\{(\SCL, D_{\Gamma_U}^U(\SCL, \partial U))\}_{\SCL \in \Gamma_U}$ has the law of a uniform exploration of $\Gamma_U$ (cf.~\cite{CoInCLEExpl}), in the sense of \Cref{def:weak_axioms}.
	\end{enumerate}
\end{definition}

\begin{remark}
	In the next paper \cite{kkmt2026cle4_part3}, we prove the uniqueness of the CLE$_4$ metric and show that it is a measurable function of CLE$_4$. Combining this measurability with Axiom~\eqref{it:axiom_locality}, the internal metric $D_{\Gamma_U}^U(\bullet, \bullet; V_j)$ is a.s.\ determined by $V_j$ and $\Gamma_U|_{V_j}$. Moreover, by Axiom~\eqref{it:axiom_conformal_invariance}, $D$ is uniquely determined by $D^U$ for a fixed simply connected domain $U \subsetneq \BC$. 
\end{remark}

We now state the main results of this paper.

\begin{theorem}[Existence of geodesics]\label{thm:existence_geodesics}
Let $D$ be a weak geodesic CLE$_4$ metric coupling in the sense of \Cref{def:weak_axioms}. Then, for each simply connected domain $U \subsetneq \BC$, a.s., for all $\SCL, \SCL' \in \Gamma_U$, there exists a $D^{U}_{\Gamma_U}$-geodesic between $\SCL$ and $\SCL'$. In fact, $D$ is a geodesic CLE$_4$ metric coupling in the sense of \Cref{def:axioms}. Conversely, every geodesic CLE$_4$ metric coupling is a weak geodesic CLE$_4$ metric coupling.
\end{theorem}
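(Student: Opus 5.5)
The strategy is to build a geodesic as a limit of finer and finer chains of loops drawn from the set $K_{\SCL,\SCL'}$ of Axiom~\eqref{it:weak_axiom_geodesic}\eqref{it:weak_axiom_geodesic_4}, and to use quantitative estimates to show that the loops along the limiting path form a dense set. The remaining axioms of \Cref{def:axioms} are then obtained from the weak ones, and the converse direction is checked axiom by axiom.

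\emph{Step 1: a candidate path.} I start from the connected component $C$ of $K_{\SCL,\SCL'}$ that contains $\SCL\cup\SCL'$. On $C$ the function $f(z)=D(\SCL,z)$ is monotone. Since $C$ is connected, for each $t\in(0,T)$, with $T=D(\SCL,\SCL')$, the level set $\{f=t\}\cap C$ is nonempty. Using Axioms~\eqref{it:weak_axiom_geodesic}\eqref{it:weak_axiom_geodesic_1}--\eqref{it:weak_axiom_geodesic_2}, this level set is contained in $\partial\SCB_t(\SCL)\cap\partial\SCB_{T-t}(\SCL')$.

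I then extract a path by a dyadic scheme. At level $n$, I choose points $z_{k}$ of $C$ with $f(z_k)=kT2^{-n}$, or loops in $C$ with $f$-values close to these, taken consistently in $n$. Joining consecutive points by short paths inside the carpet and passing to a limit via Arzel\`a--Ascoli requires an equicontinuity bound: whenever $D$ between two points of $C$ is small, their Euclidean distance (after mapping to $\BD$) must be small. This is a "distance across annuli" estimate. I would deduce it from the exponential tail bounds for crossing distances of rectangles and annuli mentioned in the abstract, combined with locality (Axiom~\eqref{it:weak_axiom_locality}) and the uniform exploration (Axiom~\eqref{it:weak_axiom_uniform_exploration}). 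A multiscale argument then gives that, a.s., every annulus of small modulus near $C$ has crossing distance at least a power of its scale.

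\emph{Step 2: admissibility and length.} To obtain $\len(P;D)=T$, I note that any chain of loops visited in order along $P$ satisfies $\sum D(\SCL_{j-1},\SCL_j)\le\sum(f(\SCL_j)-f(\SCL_{j-1}))=T$, because the loops lie in $K_{\SCL,\SCL'}$ and the triangle inequality applies. The difficulty is admissibility, namely that loops are dense along $P$. I must rule out $P$ running through a portion of the gasket that is a positive-length segment free of loops, and also rule out $P$ touching $\partial U$. For the latter I would show that near a boundary point the distance from $\partial U$ grows only in a controlled way, so that a detour along the boundary is never optimal. For the former, the carpet has Lebesgue measure zero, and the set of times at which $P$ avoids loops must have $D$-length zero. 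I would prove this with the multi-scale four-arm SLE$_4$ non-intersection bound. At a gasket point where $P$ does not lie on a loop, the balls $\SCB_t(\SCL)$ and $\SCB_{T-t}(\SCL')$ must touch in a four-arm configuration on many scales. The bound makes the probability of such a configuration decay fast enough that, after a union bound, the exceptional set has Hausdorff dimension zero and carries no length. This gives the "off a set of Hausdorff dimension zero" statement, and I expect it to be the main obstacle.

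\emph{Step 3: the remaining axioms and the converse.} Boundary arcs in place of loops are handled by the same argument. The càdlàg property is Axiom~\eqref{it:weak_axiom_geodesic}\eqref{it:weak_axiom_geodesic_1}. The two-ball property follows by concatenating geodesics through a loop near an intersection point, using Axiom~\eqref{it:weak_axiom_geodesic}\eqref{it:weak_axiom_geodesic_2}. For strong locality, I identify the internal metric $D(\bullet,\bullet;V_j)$ with $D^{V_j}_{\Gamma_U|_{V_j}}$. The inequality $D^U\le D^{V_j}$ together with the ball equality up to $D(\SCL,\partial V_j)$ shows that geodesics for $D^{V_j}$ are admissible paths in $V_j$ with the same length. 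Conversely, every admissible path in $V_j$ has $D^U$-length at least its $D^{V_j}$-length, by applying the ball identity locally along the path.

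For the converse direction, existence of geodesics gives connectedness of balls and of $K_{\SCL_1,\SCL_2}$, since the geodesic lies in both. Axioms~\eqref{it:axiom_geodesic}\eqref{it:axiom_geodesic_cadlag} and \eqref{it:axiom_geodesic_twoball} give the Hausdorff continuity statements and the identity in Axiom~\eqref{it:weak_axiom_geodesic}\eqref{it:weak_axiom_geodesic_2}. Finally, taking $D^{V_j}:=D(\bullet,\bullet;V_j)$ yields the coupling required in the weak locality axiom.
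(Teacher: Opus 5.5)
\textbf{Gap 1: consecutive waypoints need not be comparable.} The joining step and the length computation in Steps 1--2 do not go through. For loops $A,B\in\Gamma_{\SCL,\SCL'}$ with $f(A)\le f(B)$, the triangle inequality only gives $D(A,B)\ge f(B)-f(A)$. Your bound $\sum D(\SCL_{j-1},\SCL_j)\le T$ needs the reverse inequality. That holds only when $A$ and $B$ are \emph{comparable}, i.e.\ $D(\SCL,A)+D(A,B)+D(B,\SCL')=D(\SCL,\SCL')$. Points chosen on the level sets $\{f=kT2^{-n}\}\cap C$ need not be comparable. The set $C$ may contain several geodesic branches, and consecutive waypoints can lie on different branches, at macroscopic $D$-distance and Euclidean distance. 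So neither the joining step nor $\len(P;D)=T$ follows, and monotonicity of $f$ along $P$ is never established. The missing idea is the paper's recursive construction of \emph{totally ordered} finite families. Between consecutive loops $\SCL'_{j-1}\preceq\SCL'_j$ that are not yet joined by a chain of squares of $\CS^n_{\SCL,\SCL'}$, one inserts loops of $\widetilde\Gamma_{\SCL'_{j-1},\SCL'_j}$ of diameter at least $2^{-n}$. Such loops exist because $\widetilde K_{\SCL'_{j-1},\SCL'_j}$ is connected (Axiom~\eqref{it:weak_axiom_geodesic}\eqref{it:weak_axiom_geodesic_4} applied to that pair). They remain intermediate for the original pair by \Cref{lem:building-geodesic}\eqref{it:building-geodesic-0}. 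The consecutive distances then telescope exactly to $T$.

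\textbf{Gap 2: the equicontinuity estimate is false, and the four-arm bound is used in the wrong place.} The estimate you want to feed into Arzel\`a--Ascoli is false as stated. Two points of one loop are at $D$-distance zero. Every small annulus centred on a point of a loop in $C$ is crossed by that loop at no cost. Even comparable waypoints with a tiny $D$-gap can be macroscopically far apart when a large intermediate loop lies between them. The paper needs no lower bound on $D$ at all. It lets the approximating path run along the loops of the ordered family, added consistently in $n$ so that $P^{n'}=P^n$ on them. It then only controls the connectors between consecutive loops. These connectors are chains of squares of $\CS^n_{\SCL,\SCL'}$, the dyadic squares meeting the non-loop part of $K_{\SCL,\SCL'}$. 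The $\varepsilon^{2+o(1)}$ estimate of \Cref{prop:hitting-two-metric-balls}, combined with a first-moment bound and Borel--Cantelli, gives $\#\CS^n_{\SCL,\SCL'}\le 2^{n\cdot o(1)}$. Hence every connector has diameter at most $2^{-n(1-o(1))}$, which gives uniform convergence and admissibility at once. So the four-arm input is the engine of the construction, not an a posteriori admissibility check.

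\textbf{Gap 3: boundary avoidance.} ``A detour along the boundary is never optimal'' is not an argument. The paper obtains boundary avoidance from \Cref{prop:boundary-hitting-two-metric-balls}, whose exponent $4+o(1)$ beats the $O(\varepsilon^{-1})$ boundary points in a union bound. Your Step~3 identification of $D^{V_j}_{\Gamma_U|_{V_j}}$ with the internal metric needs exactly this. It also needs the absence of distance gaps along geodesics (\Cref{lem:building-geodesic}\eqref{it:building-geodesic-6}), which your proposal does not address.
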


In this paper, we also establish three key properties of CLE$_4$ geodesics: there is no distance gap along them, they are supported on the loops of CLE$_4$, and they do not intersect the domain boundary.

\begin{theorem}[Behavior of geodesics]\label{thm:behavior-of-geodesics}
	Let $D$ be a weak geodesic CLE$_4$ metric coupling in the sense of \Cref{def:weak_axioms}. Let $U \subsetneq \BC$ be a deterministic simply connected domain. Then:
		\begin{enumerate}
			\item Almost surely, for each $\SCL_1, \SCL_2 \in \Gamma_U$, every $D_{\Gamma_U}^U$-geodesic $P$ connecting $\SCL_1$ and $\SCL_2$ is such that $\{D_{\Gamma_{U}}^{U}(\SCL_1,\SCL): \SCL \in \Gamma, \ \SCL\cap P([0,1]) \neq \emptyset\}$ forms a dense subset of the interval $[0, D_{\Gamma_{U}}^{U}(\SCL_1, \SCL_2)]$.
			\item Almost surely, for each $\SCL_1, \SCL_2 \in \Gamma_U$, every $D_{\Gamma_U}^U$-geodesic $P$ connecting $\SCL_1$ and $\SCL_2$ has the property that the subset $\{P(t) : t \in [0,1],\ P(t) \notin \bigcup_{\SCL \in \Gamma_U} \SCL\}$ of $\overline U$ is of Hausdorff dimension zero. 
			\item Almost surely, for each $\SCL_1, \SCL_2 \in \Gamma_U$, no $D_{\Gamma_U}^U$-geodesic connecting $\SCL_1$ and $\SCL_2$ touches $\partial U$. 
	\end{enumerate}
\end{theorem}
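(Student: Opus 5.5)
We prove the three items in the order (1), (3), (2); each later item uses the earlier ones, and all rely on the existence of geodesics from \Cref{thm:existence_geodesics}. Throughout we use the consequence of Axiom~\eqref{it:weak_axiom_geodesic}\eqref{it:weak_axiom_geodesic_2} that $D(\SCL_1,\cdot)$ is constant on each loop. We also use that $s\mapsto D(\SCL_1,\SCL)$ for $\SCL$ met by $P$ is monotone along a geodesic $P$. This holds because any chain $\SCL_0,\dots,\SCL_n$ along $P$ has total length at most $D(\SCL_1,\SCL_2)$, so the triangle inequalities along it are equalities. Consequently every loop met by $P$ lies in $K_{\SCL_1,\SCL_2}$.

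\textbf{Item (1).} Suppose the set of distances $D(\SCL_1,\SCL)$, over loops $\SCL$ met by $P$, misses an interval $(a,b)$. Monotonicity and admissibility then give a parameter $s_0$ at which the distances jump from $\le a$ to $\ge b$ across $P(s_0)$. Take loops $\SCL^-$ and $\SCL^+$ met by $P$ near $s_0$, on either side. Both accumulate at the single point $P(s_0)$ of the carpet, yet $D(\SCL^-,\SCL^+)\ge b-a$. We will rule this out by a quantitative continuity estimate: a.s., for every $\delta>0$ there is $\varepsilon>0$ such that any two loops within Euclidean distance $\varepsilon$ of a common carpet point are at $D$-distance at most $\delta$. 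We would derive this from the exponential tail bounds for rectangle crossing distances, which give that the distance across an annulus $A(z,r,2r)$ has super-polynomially small probability of exceeding a fixed constant. A union bound over dyadic annuli, together with a chaining/summability argument over scales, then gives H\"older-type continuity.

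\textbf{Item (3).} By conformal invariance we work in $\BD$ and must show that no geodesic touches $\partial\BD$. If a geodesic passes near a boundary point $x$, compare its length with a shortcut around a small half-annulus centered at $x$ that the geodesic crosses twice. The mechanism is that the uniform exploration (Axiom~\eqref{it:weak_axiom_uniform_exploration}) makes distances measured along the boundary large relative to distances through the interior. Concretely, we want to show that with high probability, uniformly in all scales, the distance between the two inner crossing points via $\partial\BD$ strictly exceeds the distance of a loop-chain around an interior semicircle. The multi-scale four-arm non-intersection bounds for metric balls, together with locality applied in independent annuli, make the failure probability at scale $2^{-k}$ decay fast enough that a union bound over $x$ on a $2^{-k}$-net and over $k$ converges.

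\textbf{Item (2).} This is the main obstacle. Points of $P$ lying on no loop are carpet points, and we will show they form a set of dimension zero. Cover $P$ at scale $2^{-k}$. If $P(t)$ is on no loop yet the distances along $P$ are dense (item (1)), then at every scale around $P(t)$ the geodesic must cross an annulus without the metric balls from the two sides meeting. This forces an alternating four-arm configuration for the ball boundaries, i.e.\ an event of the multi-scale four-arm SLE$_4$ non-intersection type in many dyadic annuli $A(z,2^{-j-1},2^{-j})$ with $j\le k$. The stated bounds, combined with locality to decorrelate annuli, should give that this event occurs in $(1-\eta)k$ of the scales with probability at most $2^{-Mk}$ for arbitrarily large $M$. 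A union bound over $2^{2k}$ boxes then shows that the number of bad boxes of size $2^{-k}$ is $2^{o(k)}$ a.s., yielding Hausdorff dimension $0$.

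The delicate point is converting a geodesic passing through a non-loop point into this arm event at \emph{most} scales, uniformly over all geodesics and all pairs of loops. We would first reduce to countably many pairs, namely loops surrounding rational points. Then we would use the characterization of $P$ as contained in $K_{\SCL_1,\SCL_2}$, and use Axiom~\eqref{it:weak_axiom_geodesic}\eqref{it:weak_axiom_geodesic_1} to see that the balls $\SCB_t(\SCL_1)$ and $\SCB_{d-t}(\SCL_2)$ touch exactly along $P$.
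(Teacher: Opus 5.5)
There is a genuine gap in item (1), and item (2) inherits it because you build (2) on (1). Your planned continuity estimate is false. By conformal invariance (Axiom~\eqref{it:weak_axiom_conformal_invariance}) and locality, the $D$-distance across an annulus $A_{r,2r}(z)$, or across any conformal rectangle of fixed modulus, is of order one in law uniformly in $r$. \Cref{cor:moment_across_rectangle} gives it an exponential tail, but crossings are only cheap across rectangles of large aspect ratio ($O(1/r)$ across $V_r$). So the probability of exceeding a fixed constant is bounded below, not super-polynomially small, and no H\"older-type or other uniform modulus can hold. At every scale, with probability bounded below, no loop crosses $A_{r,2r}(z)$ and crossing it costs at least some $c>0$. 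Loops just inside and just outside such an annulus are then within $O(r)$ of a common carpet point, yet at $D$-distance at least $c$. Using disjoint regions, this happens a.s.\ at arbitrarily small scales.

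What rules out a gap is that two \emph{macroscopic} sets nearly touch, not that two loops are Euclidean-close. Consider a square $S$ of side $2^{-n}$ near a jump of the distances along $P$. The balls $\SCB^S(\SCL_1; D_{\Gamma_\BD}^\BD)$ and $\SCB^S(\SCL_2; D_{\Gamma_\BD}^\BD)$ stopped on hitting $S$ are disjoint, have diameters bounded below, and come within $O(2^{-n})$ of each other. Hence \Cref{lem:Beurling} makes their extremal distance $O(1/n)$, and locality with \Cref{cor:moment_across_rectangle} makes their $D$-distance exceed $\varepsilon$ with conditional probability $O(e^{-\alpha n})$. That bound alone does not survive a union bound over $2^{2n}$ squares. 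You must multiply it by the four-arm bound $2^{-n(2+o(1))}$ of \Cref{prop:hitting-two-metric-balls}, which you reserve for item (2). This yields \Cref{lem:building-geodesic}\eqref{it:building-geodesic-6}, and loops $\SCL^\pm$ of $P$ on either side of a gap $(a,b)$ then satisfy $b-a\le D(\SCL^-,\SCL^+)\le D(\SCB^S(\SCL_1),\SCB^S(\SCL_2))\to 0$.

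For item (2) your mechanism is the paper's, but the exponent must be exactly $2$: the per-box probability is $2^{-k(2+o(1))}$, not $2^{-Mk}$ for arbitrary $M$. If it were $2^{-Mk}$ with $M>2$, your own union bound would show that a.s.\ $P$ has no off-loop points at all. That is impossible: $P([0,1])$ is a continuum meeting at least two disjoint loops, so by Sierpi\'nski's theorem the countably many disjoint loops cannot cover it. It is precisely the match between exponent $2$ and the $2^{2k}$ boxes that gives $2^{o(k)}$ bad boxes.

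Item (1) is also not needed for (2). Every point of $P$ lies in $K_{\SCL_1,\SCL_2}$. Its off-loop points lie in $\overline{K_{\SCL_1,\SCL_2}\setminus\bigcup_{\SCL\in\Gamma_{\SCL_1,\SCL_2}}\overline{\mathrm{int}(\SCL)}}$, and \Cref{lem:building-geodesic}\eqref{it:building-geodesic-2} counts the squares meeting that set.

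For item (3), your shortcut comparison is unsubstantiated. A comparison at a single scale succeeds with a scale-invariant probability below one, so it cannot beat the $2^k$ points of your net without a multi-scale exponent larger than $1$. It is also unnecessary. If a geodesic touches $w\in\partial\BH$, additivity along it forces the stopped balls from $\SCL(x)$ and $\SCL(y)$ at $B_\varepsilon(z)$ to be disjoint, where $z\in(\varepsilon/2)\BZ$ is the point nearest $w$. \Cref{prop:boundary-hitting-two-metric-balls} bounds this probability by $\varepsilon^{4+o(1)}$, and summing over the $O(R/\varepsilon)$ points gives $\varepsilon^{3+o(1)}\to 0$.
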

\begin{remark}
	In the next paper \cite{kkmt2026cle4_part3}, we also establish the uniqueness of geodesics. We postpone the uniqueness to \cite{kkmt2026cle4_part3} because the proof requires the fact that the metric is determined by the CLE$_4$, which is one of the main results of \cite{kkmt2026cle4_part3}.
\end{remark}

\subsection{Techniques}
Our toolbox combines several distinct perspectives on CLE$_4$: the connection between metric balls grown from the boundary and the uniform exploration, the couplings of CLE$_4$ and of SLE$_4$ with the Gaussian free field via level lines, and the Brownian loop soup construction of CLE$_4$.

It is well known that, due to the Poissonian construction of the uniform exploration \cite{CoInCLEExpl} (see \Cref{subsec:labeled_cle_4}), the time at which the loop surrounding the origin is discovered by the uniform exploration is an exponential random variable. We show that, similarly, the distance between the top and bottom sides of a conformal rectangle (defined in \Cref{sec:distances_across_rectangles}) has an exponential upper tail. This exponential tail is a fundamental ingredient in our analysis, as it yields bounds on all moments of the distance across a rectangle. In turn, these estimates provide tight a priori control on the macroscopic geometry of the space, preventing the metric from degenerating or blowing up.

Another key tool is the asymptotic behavior of the probability that two metric balls do not intersect before hitting a small Euclidean ball; see \Cref{prop:hitting-two-metric-balls}. More precisely, we show that the probability that the metric balls grown from two loops remain disjoint up until each of them first hits a Euclidean ball of radius $\varepsilon$ is at most $\varepsilon^{2 + o(1)}$ as $\varepsilon \to 0$. Geometrically, this event corresponds to a ``bottleneck'' where two distinct explorations come microscopically close without having merged. The main idea in the proof of \Cref{prop:hitting-two-metric-balls} is to translate this non-intersection event into a requirement for four macroscopic arms. This allows us to apply the four-arm exponents for bichordal SLE$_4$ across multiple scales, using the arm estimates obtained by Zhan \cite{zhan2020two,zhan2019two}. 

To establish the existence of geodesics, we employ a multi-scale construction that relies crucially on this $2+o(1)$ exponent bound from \Cref{prop:hitting-two-metric-balls}. To construct a geodesic between two given loops, we iteratively identify a sequence of intermediate loops that serve as nearly optimal ``waypoints''. At each step of the iteration, we refine the spatial scale and apply our non-intersection estimates to prove that this sequence of waypoints converges a.s. This procedure yields a continuous curve whose $D$-length equals the distance between the two loops, i.e., a geodesic.

\subsection{Outline}
The remainder of the paper is organized as follows. The subsequent sections contain several quantitative estimates that are used repeatedly in the final section of this paper and will also be essential in our sequel \cite{kkmt2026cle4_part3}.

\smallskip
\noindent\emph{\Cref{section:preliminaries}: Preliminaries.} We review the necessary background material on the Gaussian free field (GFF), Schramm--Loewner evolutions (SLE) with force points, conformal loop ensembles (CLE), the Brownian loop soup, couplings between GFF and CLE$_4$, Carath\'eodory convergence, and bichordal SLE$_4$ four-arm estimates.

\smallskip
\noindent\emph{\Cref{sec:distances_across_rectangles}: Distances across a rectangle.} We provide estimates for the distance across a conformal rectangle and control its $p$-th moment for all $p>0$.

\smallskip
\noindent\emph{\Cref{sec:hitting_two_metric_balls}: Hitting two disjoint metric balls.} We estimate the probability that the metric balls grown from two loops do not intersect up until each of them first hits a small Euclidean ball.

\smallskip
\noindent\emph{\Cref{sec:vertical_vs_horizontal}: Vertical versus horizontal distance across a rectangle.} We establish a bound that simultaneously controls both vertical and horizontal distances across a conformal rectangle; this result will be used in \cite{kkmt2026cle4_part3}.

\smallskip
\noindent\emph{\Cref{section:existence-of-geodesics}: Existence of geodesics.} We prove that every weak geodesic CLE$_4$ metric coupling is a geodesic CLE$_4$ metric coupling, the main points being Axioms~\eqref{it:axiom_geodesic} and~\eqref{it:axiom_locality} of \Cref{def:axioms}; the remaining two axioms are common to the two definitions. We also prove the converse assertion of \Cref{thm:existence_geodesics} at the end of the section. We also prove \Cref{thm:behavior-of-geodesics}.

\smallskip
\noindent\emph{\Cref{sec:cle_4_four_arm_exponents}: Bichordal SLE$_4$ four-arm exponents.} We derive the four-arm exponents for bichordal SLE$_4$ using the results of Zhan \cite{zhan2019two}.

\subsection*{Acknowledgements}

E.K.~acknowledges the support of a Research Fellowship from Emmanuel College, Cambridge. K.K.~was supported by the Simons Collaboration Grant \emph{Probabilistic Paths to Quantum Field Theory}. J.M.~received support from ERC Consolidator Grant ARPF (Horizon Europe UKRI G120614). Y.T.~was supported by a Cambridge International Scholarship from the Cambridge Trust. We thank Wendelin Werner for stimulating discussions at an early stage of this work. E.K.~also thanks Juhan Aru and Ellen Powell for insightful discussions on this topic before starting this work.

\section{Preliminaries}\label{section:preliminaries}

\subsection{Notation and conventions}

The notation $\BC$ (resp.\ $\BR$; $\BQ$; $\BZ$; $\BN$) will be used to denote the set of complex numbers (resp.\ real numbers; rational numbers; integers; positive integers). For $a \le b$, we shall write $[a, b]_\BZ \defeq [a, b] \cap \BZ$. For $0 \le r < s$ and $z \in \BC$, we write $A_{r,s}(z) \defeq \{w \in \BC : r < \vert w - z \vert < s\}$, and for $r > 0$ and $A \subset \BC$ we write $B_r(A) \defeq \{w \in \BC : \dist(w, A) < r\}$.

We call a nonempty connected open set $U \subset \BC$ a \emph{domain}. We call an open set $U \subset \BC$ a \emph{simply connected domain} if $U$ is connected, $U \neq \BC$, and $\BC \setminus U$ has no bounded connected component. We call a connected open set $A \subset \BC$ a \emph{doubly connected domain} if $\BC \setminus A$ consists of one unbounded connected component and one bounded connected component, and the bounded component is not a singleton. (These are exactly the domains conformally equivalent to a round annulus of finite modulus.)

\subsection{Gaussian free fields}
\label{subsec:gff}

Let $U \subsetneq \BC$ be a simply connected domain and let $H_0^1(U)$ denote the Hilbert space closure of $C_0^{\infty}(U)$ with respect to the Dirichlet inner product 
\begin{equation*}
    \langle f, g\rangle_{H_0^1(U)} = \frac{1}{2\pi} \int_{U} \nabla f(z) \cdot \nabla g(z) \, \rd z.
\end{equation*}
The \emph{zero-boundary Gaussian free field (GFF)} $\Psi$ is the random Schwartz distribution defined by setting
\begin{equation}\label{eqn:gff_series}
    \Psi = \sum_{n \geq 1} \alpha_n \psi_n,
\end{equation}
where $\{\psi_n\}_{n \geq 1}$ is an orthonormal basis for $H_0^1(U)$ and $\{\alpha_n\}_{n \geq 1}$ is a sequence of independent standard Gaussian random variables; the series converges a.s.\ in the space of distributions on $U$. The law of $\Psi$ does not depend on the choice of the orthonormal basis.

A GFF on $U$ is not a function but rather a random variable in the space of Schwartz distributions on $U$; moreover, the series in~\eqref{eqn:gff_series} a.s.\ does not converge in $H_0^1(U)$.

An important property of the GFF that we are going to use is the domain Markov property. More precisely, if $\Psi$ is a zero-boundary GFF on $U$ and $V \subset U$ is open, then we can write $\Psi = \kh_V + \Psi_V$, where $\Psi_V$ is a zero-boundary GFF on $V$ (defined as independent zero-boundary GFFs on the connected components of $V$, extended by zero to $U$) and $\kh_V$ is a distribution on $U$ that is a.s.\ harmonic on $V$, with $\Psi_V$ and $\kh_V$ being independent.

Let $f \colon \partial U \to \BR$ be bounded and measurable. We say that a GFF on $U$ has boundary conditions given by $f$ if it can be expressed as the sum of a zero-boundary GFF on $U$ and the harmonic extension of $f$ to $U$.

\subsection{Chordal SLE$_{\kappa}(\underline{\rho})$ processes}
\label{subsec:chordal_sle}

Chordal SLE$_{\kappa}(\underline{\rho})$ processes, first introduced in \cite{CoRestr}, are random fractal curves growing in a simply connected domain, from one marked boundary point to another. To define them, we fix $\kappa>0$, let $\underline{x}_{\rL} = (x_{\ell,\rL},\ldots,x_{1,\rL})$ and $\underline{x}_{\rR} = (x_{1,\rR},\ldots,x_{r,\rR})$, where $x_{\ell,\rL} < \cdots < x_{1,\rL} \leq 0 \leq x_{1,\rR} < \cdots < x_{r,\rR}$, and let $\underline{\rho}_\rL = (\rho_{1,\rL},\ldots,\rho_{\ell,\rL}) \in \BR^\ell$ and $\underline{\rho}_\rR = (\rho_{1,\rR},\ldots,\rho_{r,\rR}) \in \BR^r$. For $z \in \BH$, let $\{g_t(z)\}_{t \ge 0}$ denote the solution to the ordinary differential equation:
\begin{equation*}
\partial_t g_t(z) = \frac{2}{g_t(z) - W_t}; \quad g_0(z) = z, 
\end{equation*}
where $W$ is a solution to:
\begin{equation}\label{eqn:multiforce_point_sde}
    \begin{dcases}
        \mathrm{d} W_t = \sum_{j=1}^{\ell} \frac{\rho_{j,\rL}}{W_t - V_t^{j,\rL}} \, \rd t + \sum_{j=1}^r \frac{\rho_{j,\rR}}{W_t - V_t^{j,\rR}} \, \rd t + \sqrt{\kappa} \, \rd B_t, \quad W_0 = 0; \\
        \mathrm{d}V_t^{j,\rL} = \frac{2}{V_t^{j,\rL} - W_t} \, \rd t, \quad V_0^{j,\rL} = x_{j,\rL}, \quad \forall j \in [1, \ell]_{\BZ}; \\
        \mathrm{d}V_t^{j,\rR} = \frac{2}{V_t^{j,\rR} - W_t} \, \rd t, \quad V_0^{j,\rR} = x_{j,\rR}, \quad \forall j \in [1, r]_{\BZ}; 
    \end{dcases}
\end{equation}
where $\{B_t\}_{t\ge 0}$ is a standard Brownian motion. For each $z \in \BH$, the solution $g_t(z)$ exists up to the swallowing time $T(z) \defeq \sup\{t \ge 0 : \inf_{s \in [0,t]}\vert g_s(z) - W_s\vert > 0\}$, and we let $\BH_t \defeq \{z \in \BH : T(z) > t\}$ denote the domain of $g_t$.

For any value of $\underline{\rho}_\rL$ and $\underline{\rho}_\rR$, if $x_{1,\rL} < 0 < x_{1,\rR}$, then it is clear that~\eqref{eqn:multiforce_point_sde} has a unique strong solution until the first time $t$ such that $W_t=V^{j,\bullet}_t$ for some $\bullet \in \{\rL,\rR\}$ and some $j$. In fact, it was shown in \cite{IG1,LevelLineGFFI} that, when $\sum_{k = 1}^j \rho_{k,\rL} > -2$ for every $j \in [1, \ell]_\BZ$ and $\sum_{k = 1}^j \rho_{k,\rR} > -2$ for every $j \in [1, r]_\BZ$, there exists a solution to~\eqref{eqn:multiforce_point_sde} defined for all $t \ge 0$, for which the set of times $t$ with $W_t=V^{j,\bullet}_t$ for some $\bullet \in \{\rL,\rR\}$ and some $j$ a.s.\ has zero Lebesgue measure. The uniqueness in law of such a solution is also shown in \cite{IG1}. Moreover, it was shown in \cite{IG1,LevelLineGFFI} that there a.s.\ exists a continuous curve $\eta$ such that the domain $\BH_t$ of $g_t$ is given by the unbounded connected component of $\BH \setminus \eta([0,t])$, for all $t\ge 0$. The curve $\eta$ is called the chordal SLE$_{\kappa}(\underline{\rho}_\rL; \underline{\rho}_\rR)$ in $\BH$ from $0$ to $\infty$.

For a simply connected domain $U \subsetneq \BC$ with marked starting, target, and force points on $\partial U$, the SLE$_{\kappa}(\underline{\rho})$ in $U$ with this data is the image of an SLE$_{\kappa}(\underline{\rho})$ on $\BH$ under a conformal map $\phi \colon \BH \to U$ sending $0, \infty$ to the starting and target points, with force points the $\phi$-preimages of the marked ones.

\subsection{Conformal loop ensembles}
\label{subsec:cle}

Now, we will briefly review CLE and refer to \cite{CLE,CLEPerc,TreeCLE} for more details.

A \emph{conformal loop ensemble} (CLE) in $\BD$ is a random countable collection $\Gamma$ of non-nested and non-crossing loops in $\overline{\BD}$ that possesses the following properties.
\begin{enumerate}
\item \textbf{(Conformal invariance)} For any M\"obius transformation $\phi \colon \BD \to \BD$, the laws of $\Gamma$ and $\phi(\Gamma)$ are the same. This makes it possible to define CLE on any simply connected domain $U \subsetneq \BC$ as the image of a CLE on $\BD$ under a conformal mapping from $\BD$ onto $U$. (When the loops can touch $\partial\BD$, i.e., for $\kappa \in (4,8)$, the image is defined via prime ends.)
\item \textbf{(Domain Markov property)} For any simply connected subdomain $V \subset \BD$, we let $V^\star$ be defined as in~\eqref{eq:def-V-star} (with $\Gamma$ in place of $\Gamma_U$). Then, conditionally on the collection of loops of $\Gamma$ not contained in $V$, the restrictions $\{\SCL \in \Gamma : \SCL \subset \overline{V_j}\}$ for the connected components $V_j$ of $V^\star$ are independent, and each has the law of a CLE in $V_j$.
\end{enumerate}

It was shown in \cite{CLE,TreeCLE} that for every CLE there is a unique $\kappa \in (8/3,8)$ for which its loops can be constructed via branching variants of SLE$_\kappa$ called exploration trees; the loops are then SLE$_\kappa$-type loops. Conversely, for each $\kappa \in (8/3,8)$ there is a unique CLE with SLE$_\kappa$-type loops, and we denote it by CLE$_\kappa$.

If $\kappa \in (8/3,4]$, we have that CLE$_{\kappa}$ consists of disjoint simple loops which a.s.\ do not intersect the domain boundary. However, if $\kappa \in (4,8)$, we have that the loops in the CLE$_{\kappa}$ are self-touching and can touch each other and the domain boundary. In the present work, we will focus on the $\kappa = 4$ case.

\subsection{Brownian loop soup and CLE$_4$}
\label{subsec:brownian_loop_soup}

We now recall the construction of CLE$_4$ from the Brownian loop soup, referring to \cite{BLS,CLE} for details. The \emph{Brownian loop soup} with intensity $\lambda > 0$ in a domain $U \subset \BC$ is the Poisson point process with intensity measure $\lambda \mu_U^{\mathrm{loop}}$, where $\mu_U^{\mathrm{loop}}$ is the Brownian loop measure on $U$ of \cite{BLS}; it is conformally invariant, and for open $V \subset U$ the loops contained in $V$ form a loop soup in $V$. Two loops are in the same \emph{cluster} if they are joined by a finite chain of pairwise intersecting loops of the soup, and a cluster is \emph{outermost} if the closure of the union of its loops is not surrounded by that of another cluster. By \cite[Theorem~1.5]{CLE}, the outer boundaries of the closures of the outermost clusters of a loop soup in $\BD$ with intensity $1$ form a non-nested CLE$_4$ in $\BD$.

\subsection{Level lines of the GFF}
\label{subsec:level_lines_of_gff}

Let $\lambda = \pi / 2$ and let $\Psi$ be an instance of the GFF on $\BH$ with boundary conditions given by $-\lambda$ (resp.\ $\lambda$) on $\BR_-$ (resp.\ $\BR_+$). Then, it was shown in \cite{MR3101840} that a zero level line $\eta$ of $\Psi$ from $0$ to $\infty$ can be rigorously defined and has the law of a chordal SLE$_4$, and $\eta$ is a.s.\ determined by $\Psi$. Moreover, $\eta$ is characterized by the property that for all $t \geq 0$ simultaneously, the conditional law of $\Psi$ given $\eta|_{[0,t]}$ is that of a GFF on $\BH \setminus \eta([0,t])$ with boundary conditions given by $-\lambda$ (resp.\ $\lambda$) on the left (resp.\ right) side of $\eta([0,t])$ and $\BR_-$ (resp.\ $\BR_+$).

The results in \cite{MR3101840} were generalized in \cite{LevelLineGFFI} as follows. Fix a collection of force points $(\underline{x}_{\rL}; \underline{x}_{\rR})$ and weights $(\underline{\rho}_{\rL}; \underline{\rho}_{\rR})$ with $\sum_{i=1}^j \rho_{i,\bullet} > -2$ for all $j$ and $\bullet \in \{\rL, \rR\}$, and let $\Psi$ be a GFF on $\BH$ with boundary conditions given by $-\lambda( 1+ \sum_{i=0}^j \rho_{i,\rL})$ on $[x_{j+1,\rL}, x_{j,\rL})$ for $j \in [0, \ell]_\BZ$ and $\lambda (1+\sum_{i=0}^j \rho_{i,\rR})$ on $[x_{j,\rR}, x_{j+1,\rR})$ for $j \in [0, r]_\BZ$, where $\rho_{0,\rL} = \rho_{0,\rR} = 0$, $x_{0,\rL} = 0^-$, $x_{\ell + 1,\rL} = -\infty$, $x_{0,\rR} = 0^+$, and $x_{r+1,\rR} = \infty$. Then, the zero level line of $\Psi$ from $0$ to $\infty$ can be rigorously defined and it has the law of a chordal SLE$_4(\underline{\rho}_{\rL}; \underline{\rho}_{\rR})$, and it is a measurable function of $\Psi$. Moreover, the level line is characterized by the property that the conditional law of $\Psi$ given $\eta|_{[0,t]}$ is that of a GFF on $\BH \setminus \eta([0,t])$ with boundary conditions given by $-\lambda$ (resp.\ $\lambda$) on the left (resp.\ right) side of $\eta([0,t])$ and the same boundary values as $\Psi$ on $\BR$.

For all $u \in \BR$, the level line of $\Psi$ with height $u$ from $0$ to $\infty$ is given by the level line of $\Psi - u$ from $0$ to $\infty$. Then, we have the following interaction rules for level lines with different heights, which were proved in \cite{LevelLineGFFI}.

\begin{theorem}[{\cite[Theorem~1.1.4 and Remark 1.1.5]{LevelLineGFFI}}]\label{thm:level_line_interaction}
Suppose that $\Psi$ is a GFF on $\BH$ with piecewise constant boundary conditions that change only finitely many times. For all $u \in \BR$ and $x \in \BR$, we let $\gamma_u^x$ be the level line of $\Psi$ with height $u$ starting from $x$ and ending at $\infty$; the assertions below apply whenever the level lines in question are defined. Fix $u_1, u_2 \in \BR$ and $x_2 \leq x_1$.
\begin{enumerate}
\item If $u_2 < u_1$, then $\gamma_{u_2}^{x_2}$ a.s.\ stays to the left of $\gamma_{u_1}^{x_1}$ (possibly touching it).
\item If $u_2 = u_1$, then $\gamma_{u_2}^{x_2}$ may intersect $\gamma_{u_1}^{x_1}$ and, upon intersecting, the two curves merge and never separate.
\item If $u_1 - u_2 \geq 2 \lambda$, then $\gamma_{u_1}^{x_1}$ and $\gamma_{u_2}^{x_2}$ do not intersect each other a.s.
\end{enumerate}
\end{theorem}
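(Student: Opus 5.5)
This interaction theorem is quoted from \cite{LevelLineGFFI}; to reprove it I would follow the imaginary-geometry strategy of Miller--Sheffield as adapted to level lines. Throughout I use the characterization recalled above: a level line is the unique curve, a.s.\ determined by $\Psi$, along which the conditional law of the field is a GFF with boundary values $\mp\lambda$ (shifted by the height) on its two sides.

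\textbf{Step 1: boundary-hitting criterion.} Write $\gamma_u^x$ as the zero level line of $\Psi-u$, which is an SLE$_4(\underline\rho)$ whose weights are read off from the boundary data of $\Psi-u$. The only SLE input needed is the standard fact for SLE$_4(\underline\rho)$: the curve a.s.\ avoids a boundary arc on its right (resp.\ left) on which the boundary value of $\Psi-u$ is $\ge\lambda$ (resp.\ $\le-\lambda$). This is the same as $\rho\ge \kappa/2-2=0$ for the cumulative weight. The curve can hit arcs whose values lie in $(-\lambda,\lambda)$, and it cannot cross an arc where the boundary value has the wrong sign relative to $\mp\lambda$.

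\textbf{Step 2: the key step, a conditional-law commutation.} First condition on $\gamma_{u_1}^{x_1}$. The conditional field on the component $L$ to its left is a GFF whose boundary value on the left side of $\gamma_{u_1}^{x_1}$ is $u_1-\lambda$. I will show that $\gamma_{u_2}^{x_2}$ coincides a.s.\ with the height-$u_2$ level line of this conditional field in $L$, as long as it has not hit $\gamma_{u_1}^{x_1}$ and has not exited $L$. The argument:
\begin{itemize}
\item Build both curves as local sets of $\Psi$.
\item Use that the union of two local sets is local.
\item Use that a level line is determined by its martingale characterization, applied to the stopped curve.
\end{itemize}
I expect this to be the main obstacle, together with the extension past hitting times, which requires a continuity/resampling argument. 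I would attempt it first for $x_2<x_1$ with generic boundary data, approximating the level lines by their restrictions to stopping times.

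\textbf{Step 3: deducing items (1) and (3).} Relative to height $u_2$, the left side of $\gamma_{u_1}^{x_1}$ carries the value $u_1-u_2-\lambda$, seen as a boundary arc on the right of $\gamma_{u_2}^{x_2}$.
\begin{itemize}
\item If $u_2<u_1$, this value is $>-\lambda$. Then $\gamma_{u_2}^{x_2}$ may touch the arc, but by Step 1 it cannot cross to the right. Iterating the commutation across successive hitting times gives (1).
\item If $u_1-u_2\ge2\lambda$, the value is $\ge\lambda$, so Step 1 forbids any contact. This gives (3).
\end{itemize}

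\textbf{Step 4: merging, item (2).} Suppose $u_1=u_2$ and $\gamma_{u_2}^{x_2}$ first hits $\gamma_{u_1}^{x_1}$ at a time $\tau$. After $\tau$, both curves are, conditionally on their pasts, the height-$u_1$ level line of the same conditional GFF in the same unbounded remaining domain, started from the same point and aimed at $\infty$. Since a level line is a.s.\ determined by the field, the two curves agree after $\tau$, so they merge and never separate.
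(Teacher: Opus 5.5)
The paper gives no proof of this statement: it is quoted from \cite[Theorem~1.1.4 and Remark~1.1.5]{LevelLineGFFI}, where it is proved along the lines of the flow-line interaction theorem of \cite{IG1}. Your outline follows that architecture. Your derivation of~(3) for $x_2<x_1$ is fine modulo Step~2, because the conditional curve never touches $\gamma_{u_1}^{x_1}$, so only the identification up to the first contact is needed.

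The genuine gap is everything after a contact, which is where the content of~(1) and~(2) lies. Step~2, as you set it up, identifies the conditional law of $\gamma_{u_2}^{x_2}$ given $\gamma_{u_1}^{x_1}$ only up to the first contact time. It says nothing when $x_2=x_1$, which the statement allows. Near a contact point the local picture is essentially that of two level lines issued from a common point, so that case cannot be sidestepped.

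Your proposed way past contacts, iterating over successive hitting times, fails when $0<u_1-u_2<2\lambda$. The left side of $\gamma_{u_1}^{x_1}$ then carries relative value $u_1-u_2-\lambda\in(-\lambda,\lambda)$, i.e.\ weight $\rho=(u_1-u_2)/\lambda-2\in(-2,0)$. So the contact times of the curve you are trying to identify form a perfect set: the zero set of a Bessel process of dimension $1+(u_1-u_2)/(2\lambda)\in(1,2)$. The first contact time is a limit of later ones, and there is no next hitting time from which to restart.

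Nor does Step~1 supply the no-crossing property. It constrains the conditional SLE$_4(\underline\rho)$ in $L$, which lies in $\overline L$ by construction. The real question is whether the level line of $\Psi$ in all of $\BH$ keeps coinciding with that curve or passes to the right of $\gamma_{u_1}^{x_1}$ at a contact point, and that is assertion~(1) itself.

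What is missing is a global identification. Let $\widetilde\gamma$ be the height-$u_2$ level line of the conditional field in the component of $\BH\setminus\gamma_{u_1}^{x_1}$ to its left having $x_2$ on its boundary, run for its whole duration; it stays in $\overline L$ and may bounce off $\gamma_{u_1}^{x_1}$. You must show that for every stopping time $\tau$ of $\widetilde\gamma$, the conditional law of $\Psi$ given $\widetilde\gamma|_{[0,\tau]}$ \emph{alone} is the level-line law. Since level lines are characterized by this property and determined by the field, this forces $\widetilde\gamma=\gamma_{u_2}^{x_2}$ and gives~(1) with touching allowed.

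Integrating out $\gamma_{u_1}^{x_1}$, this amounts to reversing the order of conditioning. Given $\widetilde\gamma|_{[0,\tau]}$, the curve $\gamma_{u_1}^{x_1}$ must be the height-$u_1$ level line of the conditional field on $\BH\setminus\widetilde\gamma([0,\tau])$, including for $\tau$ after contacts have occurred. The tools you list in Step~2 (union of local sets, the martingale characterization) give such statements only before the first contact. So this step, which carries the real content, is not provided.

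Step~4 needs the same input. You condition on $\gamma_{u_1}^{x_1}$ up to the meeting point, which is not a stopping time for $\gamma_{u_1}^{x_1}$. Step~2 cannot be used there either: for $u_1=u_2$ the conditional curve in $L$ reaches a continuation threshold (relative value $-\lambda$) exactly when it meets $\gamma_{u_1}^{x_1}$. You have correctly located the obstacle, but the proposal does not overcome it.
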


\subsection{A coupling between CLE$_4$ and the GFF}
\label{subsec:nested_cle_gff}

Now we describe a coupling between a zero-boundary GFF and a nested CLE$_4$ that we are going to consider.
Let $U \subsetneq \BC$ be a simply connected domain. Recall that the nested CLE$_4$ $\overline{\Gamma}$ on $U$ is obtained from the non-nested CLE$_4$ $\Gamma$ on $U$ by the following inductive procedure. Let $\Gamma^1 \defeq \Gamma$ and inductively, we assume that for some $n \in \BN$ the collection of loops $\Gamma^n$ in $U$ (which we call the $n$-th generation loops) has been defined. Then conditional on $\{\Gamma^k\}_{1 \leq k \leq n}$, for each loop $\SCL$ in $\Gamma^n$, we let $\Gamma_{\SCL}$ be a non-nested CLE$_4$ in the domain surrounded by $\SCL$, and we take these CLEs to be conditionally independent given $\{\Gamma^k\}_{1 \leq k \leq n}$. We then let $\Gamma^{n+1} = \bigcup_{\SCL \in \Gamma^n} \Gamma_{\SCL}$. Finally, we let
\begin{equation*}
	\overline{\Gamma} \defeq \bigcup_{n=1}^{\infty} \Gamma^n.
\end{equation*}

Let us now review the coupling between $\overline{\Gamma}$ and a zero-boundary GFF $\Psi$ on $U$ that we are going to use (cf., e.g., \cite[Section~4]{aru2019bounded}). Let $\{X_{\SCL}\}_{\SCL \in \overline{\Gamma}}$ be conditionally independent Rademacher random variables given $\overline{\Gamma}$. 
For $n \in \BN$, we let $\Psi_n$ be the piecewise constant function on $U$ which is defined Lebesgue almost everywhere on $U$ (each fixed $z \in U$ is a.s.\ surrounded by a loop of $\Gamma^n$, so by Fubini's theorem the interiors of the $n$-th generation loops cover Lebesgue-almost all of $U$) and for each $n$-th generation loop $\SCL \in \Gamma^n$, satisfies 
\begin{align*}
	\Psi_n|_{\mathop{\mathrm{int}}(\SCL)} = 2\lambda \sum_{k=1}^n X_{\SCL^{(k)}},
\end{align*}
where $\SCL^{(n)} = \SCL$ and, for $k=1,\ldots,n-1, \SCL^{(k)}$ is the unique loop in $\Gamma^k$ which disconnects $\SCL$ from $\partial U$. Recall also that $\lambda = \frac{\pi}{2}$. Then we have that $\Psi_n$ converges a.s.\ in the space of distributions on $U$, as $n \to \infty$, to a zero-boundary GFF $\Psi$ on $U$; see \cite[Section~4]{aru2019bounded}.

In the coupling described above, we have that $\Psi$ and $\left(\overline{\Gamma}, \{X_{\SCL}\}_{\SCL \in \overline{\Gamma}}\right)$ are a.s.\ given by measurable functions of each other. Furthermore, the following Markov property holds. For each $n \in \BN$, the conditional law of $(\Psi,\overline{\Gamma})$ given 
\begin{align*}
	\bigcup_{k=1}^n \Gamma^k \quad \text{and} \quad \left\{X_{\SCL} : \SCL \in \bigcup_{k=1}^n \Gamma^k \right\}
\end{align*}
is described as follows. Let $\{\Psi_{\SCL}\}_{\SCL \in \Gamma^n}$ be conditionally independent zero-boundary GFFs on the domains $\mathop{\mathrm{int}}(\SCL)$ surrounded by the loops in $\Gamma^n$. Then, we have
\begin{align*}
	\Psi|_{\mathop{\mathrm{int}}(\SCL)} = \Psi_{\SCL} + 2\lambda \sum_{k=1}^n X_{\SCL^{(k)}}, \quad \forall\SCL \in \Gamma^n,
\end{align*}
where $\SCL^{(k)}$ is as above. This determines the conditional law of $\Psi$, since $U \setminus \bigcup_{\SCL \in \Gamma^n} \mathop{\mathrm{int}}(\SCL)$ a.s.\ carries none of the field, in the sense of condition~\eqref{it:thin_local_set} below. Moreover, the set $\overline{\Gamma}|_{\mathop{\mathrm{int}}(\SCL)}$ of loops of $\overline{\Gamma}$ which are contained in $\mathop{\mathrm{int}}(\SCL)$ is the nested CLE$_4$ coupled with $\Psi_{\SCL}$ as above.

\subsection{Uniform exploration of the CLE$_4$}
\label{subsec:labeled_cle_4}

The \emph{uniform exploration} of CLE$_4$ in $\BD$, also called the \emph{labeled} CLE$_4$ in $\BD$, was introduced in \cite{CoInCLEExpl} and corresponds to a Markovian exploration of CLE$_4$ loops, with labels on each loop which keep track of the time when each loop is discovered. We will briefly describe the construction and refer to \cite{CoInCLEExpl} and \cite{LevelLineGFFI} for more details.

To construct the exploration, we define the measure $M$ as the image of $\mu \otimes \omega$ under the mapping $(\ell, x) \mapsto x \cdot \ell$ (the rotation of $\ell$ by $x$), where $\omega$ denotes the harmonic measure on $\partial \BD$ as seen from $0$, and $\mu$ denotes the (infinite) SLE$_4$ bubble measure on $\BD$ pinned at $1$ introduced in \cite{CLE}, normalized so that the time at which the loop surrounding $0$ is discovered is an exponential random variable of parameter one. We also let $(\ell_t)_{t \geq 0}$ denote a Poisson point process (PPP) with intensity measure $M \otimes \mathrm{Leb}$, where $\mathrm{Leb}$ denotes the Lebesgue measure on $\BR_+$ ($\ell_t$ being defined only for the countably many atom times $t$). Almost surely no two atoms share a time, so this process defines an ordering on the (pinned) loops, also sometimes called bubbles, where each loop $\ell$ is equipped with time label $t_{\ell}$.

The exploration targeted at $0$ is defined as follows. Let $\tau_0$ denote the first time that the PPP discovers a (pinned) loop that surrounds $0$ and for all $\varepsilon \in (0,1)$, we let $\ell_{t_1^{\varepsilon}},\ldots, \ell_{t_{n_{\varepsilon}}^{\varepsilon}}$ denote the (pinned) loops discovered up to and including time $\tau_0$ with diameter (as subsets of $\overline\BD$, before any mapping) at least $\varepsilon$, and such that $t_1^{\varepsilon}<\cdots<t_{n_{\varepsilon}}^{\varepsilon}$. We define $D_0 = \BD$ and inductively let $D_i$ denote the connected component of $D_{i-1} \setminus \phi_i(\ell_{t_i^{\varepsilon}})$ containing $0$, where $\phi_i$ is the conformal mapping from $\BD$ onto $D_{i-1}$ such that $\phi_i(0) = 0$ and $\phi_i'(0) > 0$. We set $D^\varepsilon_t \defeq D_i$ for $t \in [t_i^\varepsilon, t_{i+1}^\varepsilon)$ and $0 \le i \le n_\varepsilon$, where $t_0^\varepsilon \defeq 0$ and $t^\varepsilon_{n_\varepsilon+1} \defeq \infty$. As $\varepsilon \downarrow 0$, a.s.\ $D^\varepsilon_t$ decreases and hence converges in the Carath\'eodory sense (seen from $0$), for each $t \ge 0$, to a simply connected open domain $C_t(0)$, called the connected component of the unexplored region containing the origin; see \cite{CoInCLEExpl}. We have $C_t(0) = C_{\tau_0}(0)$ for $t \ge \tau_0$ and, by \cite{CoInCLEExpl}, $C_{\tau_0}(0)$ is the interior of a loop which has the same law as the loop surrounding $0$ in a CLE$_4$. This loop is equipped with the time label $\tau_0$ and we say that it is discovered at time $\tau_0$.

It was shown in \cite[Lemma~6]{CoInCLEExpl} that the measure $M$ is invariant under M\"obius transformations on $\BD$. Therefore, by conformal invariance, we can define the exploration targeted at any fixed point $z \in \BD$ by applying to the exploration targeted at $0$ a M\"obius transformation of $\BD$ mapping $0$ to $z$; by the M\"obius invariance of $M$, the choice of transformation does not matter.

The labeled CLE$_4$ exploration is then defined by coupling the exploration processes targeted at all points $z \in \BD \cap \BQ^2$ simultaneously, so that for any two targets $z, w$, the corresponding explorations agree up until the first time that $z$ and $w$ no longer lie in the same connected component of the unexplored region, and then evolve independently. For all $z \in \BD$, we denote by $C_t(z)$ the connected component of the unexplored region containing $z$. The explored region is the compact set 
\[\SCB_t(\partial \BD) \defeq \overline{\BD} \setminus \bigcup_{z \in \BD \cap \BQ^2} C_t(z).\]
(The metric-ball notation is deliberate: by Axiom~\eqref{it:weak_axiom_uniform_exploration} of \Cref{def:weak_axioms}, the explored region agrees in law with the metric ball $\SCB_t(\partial\BD; D_{\Gamma_\BD}^\BD)$ of \Cref{subsec:setup}.)
Since every rational point is encircled by a loop and since $C_t(z)$ is open, and since $\mathop{\mathrm{int}}(\SCL(x)) \subset C_t(z)$ for all $x \in C_t(z) \cap \BQ^2$ (loops that are undiscovered at time $t$ lie in the unexplored region), we can also write $\overline{C_t(z)} = \overline{\bigcup_{x \in C_t(z) \cap \BQ^2} \mathrm{int}(\SCL(x)) }$.

\subsection{Two-valued sets of the GFF}
\label{subsec:two_valued_level_sets}

Next, we introduce the two-valued sets of a GFF $\Psi$ on $\BD$ that we are going to consider. Recall from \cite{MR3101840} that a random closed set $A$ coupled with a zero-boundary GFF $\Psi$ on a simply connected $U \subsetneq \BC$ is a \emph{local set} of $\Psi$ if there is a random distribution $\kh_A$, a.s.\ harmonic on $U \setminus A$, such that given $(A, \kh_A)$ the field $\Psi - \kh_A$ is a zero-boundary GFF on $U \setminus A$. We refer to \cite{aru2019bounded} and \cite{TVSGFF} for more detailed expositions of two-valued sets of the GFF.

\begin{definition}
Fix $a,b>0$ and let $\Psi$ be a zero-boundary GFF in a simply connected domain $U \subsetneq \BC$. We say that $\mathbb{A}_{-a,b}$ is a two-valued local set of $\Psi$ of levels $-a$ and $b$ if it is a local set of $\Psi$ and satisfies the following properties.
\begin{enumerate}
\item \label{it:boundary_conditions}
The distribution $\kh_{\mathbb{A}_{-a,b}}$ from the local set decomposition is a.s.\ constant on each connected component of $U \setminus \mathbb{A}_{-a,b}$, with value $-a$ or $b$.
\item \label{it:thin_local_set}
For any smooth test function $f \in C_0^{\infty}(U)$, we have that
\begin{align*}
\langle \kh_{\mathbb{A}_{-a,b}}, f\rangle = \int_{U \setminus \mathbb{A}_{-a,b}} \kh_{\mathbb{A}_{-a,b}}(x) f(x) \, \mathrm{d}x
\end{align*}
a.s.\ (that is, the distribution $\kh_{\mathbb{A}_{-a,b}}$ puts no mass on $\mathbb{A}_{-a,b}$: the set $\mathbb{A}_{-a,b}$ is a \emph{thin} local set, cf.~\cite{aru2019bounded}).
\item \label{it:finitely_many_components} The set $\mathbb{A}_{-a,b} \cup \partial U$ has a finite number of connected components.
\end{enumerate}
\end{definition}

It was shown in \cite{aru2019bounded} that if $a+b \geq 2\lambda$, then such a set $\mathbb{A}_{-a,b}$ exists and it is a.s.\ determined by $\Psi$. Moreover, $\mathbb{A}_{-a,b}$ is unique in the sense that if $A'$ is another local set of $\Psi$ satisfying~\eqref{it:boundary_conditions}, \eqref{it:thin_local_set} and \eqref{it:finitely_many_components}, we have that $A' = \mathbb{A}_{-a,b}$ a.s.

Let us now briefly review the construction of $\mathbb{A}_{-a,-a+2\lambda}$ for $a \in (0,2\lambda)$, which is going to be needed for our purposes. We will follow \cite[Section~3.2]{TVSGFF}. 

To begin with, we let $\eta$ denote the level line of $\Psi+a-\lambda$ from $-\ri $ to $\ri$ and set $A^1$ to be the range of $\eta$. In each connected component $O$ of $\BD \setminus A^1$ lying to the left of $\eta$, we let $x$ and $y$ be the two endpoints of $\partial O \cap \partial \BD$, labeled so that $-\ri$, $x$, $y$ are ordered counterclockwise. We then let $\eta^O$ denote the level line of $\Psi|_O + a -\lambda$ from $x$ to $y$.

As for the connected components $O$ of $\BD \setminus A^1$ lying to the right of $\eta$, we perform an analogous procedure except that we explore the level line of $\Psi|_O + a - \lambda$ from $y$ to $x$ (with $x$, $y$ labeled by the same convention). We denote those level lines by $\eta^O$.

In the next step of the exploration, we let $A^2$ denote the closure of the union of $A^1$ with the $\eta^O$'s for each connected component $O$ of $\BD \setminus A^1$. In the connected components of $\BD \setminus A^2$ whose boundaries are contained in $A^2$, we stop the exploration. Note that the boundary conditions of $\Psi$ on each of the aforementioned components are a.s.\ constant and given by either $-a$ or $2\lambda - a$. In the rest of the components, we iterate as before (i.e., in each such component $O$, we trace the level line of $\Psi|_O + a - \lambda$ between the endpoints of $\partial O \cap \partial\BD$, with the same orientation convention) to obtain $A^n$. We note that if $O$ is a connected component of $\BD \setminus A^2$ such that $\partial O \cap \partial \BD \neq \emptyset$, then we have that the boundary conditions of $\Psi|_O$ are given by $0$ on $\partial O \cap \partial \BD$, and $2\lambda - a$ (resp.\ $-a$) on $\partial O \setminus \partial \BD$ if $O$ is contained in a connected component of $\BD \setminus A^1$ lying to the left (resp.\ right) of $\eta$. Then, we set $\mathbb{A}_{-a,-a+2\lambda}\defeq\overline{\bigcup_{n \in \BN} A^n}$.

\begin{remark}\label{rem:target_invariance}
The uniqueness of $\mathbb{A}_{-a,b}$ implies that the starting and target points of the level lines and the order in which they were sampled to produce $\mathbb{A}_{-a,b}$ do not matter (the resulting closed set is a.s.\ the same).
\end{remark}

\begin{remark}\label{remark coupling TVS BCLE}
The set $\mathbb{A}_{-a,-a+2\lambda}$ constructed above is equal to the range of a BCLE$_4(\rho)$ process with $\rho = -a/\lambda$ introduced in \cite[Section~7]{CLEPerc}. By varying $a \in (0,2\lambda)$, we obtain BCLE$_4(\rho)$ for all $\rho \in (-2,0)$. (For a closed set $A \subset \overline\BD$, the \emph{loops} of $A$ are the boundaries of the connected components of $\BD \setminus A$.) More precisely, the loops labeled $2\lambda -a$ of $\mathbb{A}_{-a,{-a+2\lambda}}$ correspond to the \emph{true} loops of the BCLE$_4(\rho)$ and the loops labeled $-a$ of $\mathbb{A}_{-a,{-a+2\lambda}}$ correspond to the \emph{false} loops of the BCLE$_4(\rho)$.
\end{remark}

\subsection{Coupling between the GFF and the uniform exploration}
\label{subsec:gff_labeled_cle_4}

Now, we construct the coupling between a zero-boundary GFF $\Psi$ on $\BD$ and a labeled CLE$_4$ $\{(\SCL, t_{\SCL})\}_{\SCL\in \Gamma}$. We will follow the construction given in \cite[Section~6]{TVSGFF}.

Fix $r \in (0,2\lambda)$. For all $j \in \BN$, we define sets $\mathbb{B}_r^j$ iteratively as follows. First, we set $\mathbb{B}_r^1\defeq\mathbb{A}_{-r,-r+2\lambda}$. In each connected component $O$ of $\BD \setminus \mathbb{B}_r^1$ such that the boundary conditions of $\Psi|_O$ are given by $-r$, we explore the set $\mathbb{A}_{-r,-r+2\lambda}(O)$ which is defined in the same way as $\mathbb{A}_{-r,-r+2\lambda}$, but with the field $(\Psi - \kh_{\mathbb{B}_r^1})|_O$ in place of $\Psi$, where we recall that $\kh_{\mathbb{B}_r^1}$ is the harmonic extension of the boundary values of $\Psi|_{\BD \setminus \mathbb{B}_r^1}$. Then, we let $\mathbb{B}_r^2$ denote the closure of the union of $\mathbb{B}_r^1$ and the sets $\mathbb{A}_{-r,-r+2\lambda}(O)$ over all such components $O$, and note that the boundary conditions of $\Psi$ on the connected components of $\BD \setminus \mathbb{B}_r^2$ are constant and lie in $\{2\lambda-r,2\lambda-2r,-2r\}$.

Suppose that we have constructed $\mathbb{B}_r^j$ for some $j \in \BN$. Then, in each connected component $O$ of $\BD \setminus \mathbb{B}_r^j$ such that the boundary conditions of $\Psi|_O$ are given by $-jr$, we explore the set $\mathbb{A}_{-r,-r+2\lambda}(O)$ defined in the same way as $\mathbb{A}_{-r,-r+2\lambda}$ but with the field $(\Psi - \kh_{\mathbb{B}_r^j})|_O$ in place of $\Psi$. We define $\mathbb{B}_r^{j+1}$ as the closure of the union of $\mathbb{B}_r^j$ and the sets $\mathbb{A}_{-r,-r+2\lambda}(O)$ over all such components $O$.

We set $\mathbb{B}_r\defeq\overline{\bigcup_{j \in \BN} \mathbb{B}_r^j}$ and $\mathbb{B}_0\defeq\overline{\bigcup_{n \in \BN} \mathbb{B}_{2^{-n}}}$, where the union is increasing since $\mathbb{B}_{2^{-n}} \subset \mathbb{B}_{2^{-n-1}}$ for all $n \in \BN$ a.s.\ (see \cite[Section~6]{TVSGFF}). For every connected component $O$ of $\BD \setminus \mathbb{B}_0$, we let $\SCL \defeq \partial O$ and define the label $t_{\SCL} \defeq 2\lambda - v$, where $v$ is the constant boundary value of $\Psi|_O$. Then, we have the following.

\begin{proposition}[{\cite[Proposition~6.6]{TVSGFF}}]
\label{prop coupling TVS uniform explo}
Suppose that we have the setup described above. Then, the collection of loops of $\mathbb{B}_0$, together with the labels $t_{\SCL}$, has the law of a non-nested CLE$_4$ on $\BD$ decorated by its uniform exploration.
\end{proposition}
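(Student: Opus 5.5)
The plan is to show that, for each fixed target $z\in\BD\cap\BQ^2$, the nested sequence of components of $\BD\setminus\mathbb{B}_{2^{-n}}^j$ containing $z$ converges, as $n\to\infty$, to the exploration targeted at $z$ of \Cref{subsec:labeled_cle_4}. This has two parts: identify the loops and then the labels.

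\emph{Loops.} Fix $r\in(0,2\lambda)$. Inside a component $O$ carrying boundary value $-(j-1)r$ we run $\mathbb{A}_{-r,-r+2\lambda}(O)$, which is a BCLE$_4(-r/\lambda)$ by \Cref{remark coupling TVS BCLE}.
\begin{itemize}
\item A component of the new complement is either a \emph{true} loop, with boundary value $-(j-1)r+2\lambda-r=2\lambda-jr$, or a \emph{false} loop, with value $-jr$.
\item Only false loops are explored further, so the final components of $\BD\setminus\mathbb{B}_r$ carry values $2\lambda-jr$.
\item By the level-line interaction rules (\Cref{thm:level_line_interaction}) and uniqueness of two-valued sets, $\mathbb{B}_r$ is the two-valued set $\mathbb{A}_{-2\lambda+r',\,2\lambda-\cdots}$ of the appropriate levels. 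In particular, the complement components of $\mathbb{B}_0$ are those of $\mathbb{A}_{-2\lambda,2\lambda}$, whose boundaries form a non-nested CLE$_4$ (this is the coupling of \Cref{subsec:nested_cle_gff}).
\end{itemize}
So the loops of $\mathbb{B}_0$ form a CLE$_4$. The monotonicity $\mathbb{B}_{2^{-n}}\subset\mathbb{B}_{2^{-n-1}}$ holds because $\mathbb{B}_{2^{-n}}$ is a two-valued set contained in the finer exploration. This is \Cref{rem:target_invariance}-type reasoning.

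\emph{Labels via a discrete Poisson approximation.}
\begin{itemize}
\item Seen from $z$, each BCLE step is Markov. By conformal invariance and the domain Markov property of the GFF, the step applied to the current component containing $z$ is an i.i.d.\ copy, after uniformising to $(\BD,0)$, of a single BCLE$_4(-r/\lambda)$ exploration.
\item The label of $\SCL(z)$ is then $jr$, where $j$ is a geometric number of steps.
\item The key input is an asymptotic for small $\rho=-r/\lambda$: the law of the pinned ``macroscopic'' true and false loops produced in one step, divided by $r$, converges to a multiple of the bubble measure $M$. Only loops of diameter at least $\varepsilon$ matter; false loops correspond to small boundary bubbles that shift the domain.
\item I would derive this from the SLE$_4(\rho)$ description of the boundary-touching branch. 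As $\rho\to0$ the curve hugs the boundary, and the excursions away from it converge to a Poisson process of SLE$_4$ bubbles with intensity proportional to $\rho$.
\end{itemize}

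\emph{Passing to the limit.} Given the asymptotic, the sequence of steps of size $r$ is a Bernoulli thinning that converges, as $r\to0$, to a Poisson point process with intensity $M\otimes\mathrm{Leb}$ on bubbles of diameter at least $\varepsilon$. The constant is fixed by requiring that $\mathbb{P}(\text{label}>t)$ be computed from the GFF: the boundary value seen at $z$ is a martingale, giving mean one. This gives an $\mathrm{Exp}(1)$ label, matching the normalisation of $\mu$. Carathéodory convergence of the composed domains $D^\varepsilon_t$ then follows by letting $\varepsilon\to0$ as in \Cref{subsec:labeled_cle_4}. Finally, the consistency for pairs of targets $z,w$ is automatic, since both explorations coincide until $z$ and $w$ are separated and are then conditionally independent by the GFF Markov property.

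The main obstacle is the small-$\rho$ asymptotic identifying the rescaled BCLE$_4(\rho)$ loop measure with $M$. This requires uniform control on small loops so that the $\varepsilon$-truncation commutes with $r\to0$. I would attempt it using the boundary excursion decomposition of SLE$_4(\rho)$ together with the M\"obius invariance of $M$.
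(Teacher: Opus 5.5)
The paper does not prove this statement; it imports it from \cite[Proposition~6.6]{TVSGFF}. Your proposal therefore has to stand on its own, and it does not: the first half rests on a false identification, and the second half leaves the whole difficulty open.

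\emph{The ``Loops'' step is wrong.} The set $\mathbb{B}_r$ is not a two-valued set. Its complementary components carry all the values $2\lambda - jr$, $j \ge 1$, which are unbounded below, so no uniqueness statement for $\mathbb{A}_{-a,b}$ identifies it. Your justification of $\mathbb{B}_{2^{-n}} \subset \mathbb{B}_{2^{-n-1}}$ fails for a related reason: the elementary steps $\mathbb{A}_{-r,2\lambda-r}$ and $\mathbb{A}_{-r/2,2\lambda-r/2}$ correspond to non-nested intervals, so monotonicity of two-valued sets gives nothing. That inclusion has to be proved for the iterated sets.

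More seriously, $\mathbb{B}_0 \neq \mathbb{A}_{-2\lambda,2\lambda}$. Inside a loop of $\mathbb{B}_0$ the field has boundary value $2\lambda - t_{\SCL}$, which by the very statement you are proving has a continuous law. The loops of the coupling of \Cref{subsec:nested_cle_gff} instead carry $\pm 2\lambda$. Your two halves are thus incompatible: if the loops were those of $\mathbb{A}_{-2\lambda,2\lambda}$, every label would lie in $\{0, 4\lambda\}$.

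The loops of $\mathbb{B}_0$ do form a CLE$_4$, but one coupled to $\Psi$ differently from the nested coupling. The paper accordingly uses two different fields, in \Cref{sec:distances_across_rectangles} and in the proof of \Cref{lem:avoiding_two_metric_balls}. The CLE$_4$ property can only come out of the identification with the uniform exploration, whose discovered loops form a CLE$_4$ by \cite{CoInCLEExpl}.

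\emph{The ``Labels'' step is only a heuristic.} The martingale gives the geometric law of the number $j$ of steps: each step ends in a true loop with probability $r/(2\lambda)$. This says nothing about where loops are attached or what the domains look like. Everything else is the small-$\rho$ asymptotic you flag as the main obstacle, and it is not supplied. You would need all of the following for one step $\mathbb{A}_{-r,2\lambda-r}$ uniformised at $(\mathbb{D},0)$:
\begin{enumerate}
\item The true loops of diameter at least $\varepsilon$, rooted at their attachment points and rescaled by $1/r$, converge to a multiple of $M$ restricted to diameter at least $\varepsilon$. It is these true loops, not the false loops, that are the discovered bubbles cutting down the domain; the false loop containing $0$ is the new domain.
\item The steps producing nothing macroscopic act, to first order in $r$, like the bubbles of diameter below $\varepsilon$ run for time $r$.
\item These errors remain controlled after composing of order $t/r$ conformal maps, uniformly as $\varepsilon \to 0$.
\end{enumerate}
Without these estimates the ``Bernoulli thinning to Poisson'' step does not go through.

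Note also that your martingale computation yields $\mathbb{E}[t_{\SCL(z)}] = 2\lambda$, not $1$. So it cannot be what matches the $\mathrm{Exp}(1)$ normalisation of $M$.
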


\subsection{Background on Carath\'eodory convergence}
We gather here some equivalent definitions of Carath\'eodory convergence. Let $(U_n)_{n\ge 0}$ be a sequence of simply connected domains in $\BC$. Let $U \subset \BC$ be a simply connected domain. Let $z \in U$. Let $f$ be the unique conformal map from $\BD$ to $U$ with $f(0)=z$ and $f'(0)>0$; when $z \in U_n$, let $f_n$ be the analogous map from $\BD$ to $U_n$. We say that $(U_n,z)$ converges to $(U,z)$ as $n\to \infty$ in the sense of Carath\'eodory if $z \in U_n$ for $n$ large enough and $f_n$ converges to $f$ uniformly on compact subsets of $\BD$. The convergence does not depend on the choice of $z\in U$, so we will often say that $U_n$ converges to $U$ in the sense of Carath\'eodory.

Let us also recall the original definition of Carath\'eodory convergence. 
\begin{definition}
	Let $U\subset \BC$ be a domain. Let $z \in U$. Let $(U_n)_{n\ge 0}$ be a sequence of domains in $\BC$. The sequence $(U_n)_{n\ge 0}$ converges in the sense of Carath\'eodory toward $U$ relative to $z$ if 
	\begin{itemize}
		\item For each compact set $K\subset U$, we have $K\subset U_n$ for all $n$ large enough;
		\item For each connected open set $V\ni z$, if $V \subset U_n$ for infinitely many $n\ge 0$, then $V \subset U$.
	\end{itemize}
\end{definition}
The above definition is more general since it does not assume that the domains are simply connected. The equivalence between the two definitions in the case of simply connected domains is the content of the well-known Carath\'eodory kernel theorem (see, e.g., \cite[Theorem~1.8]{Pom92}).

Furthermore, by the Arzel\`a--Ascoli theorem, the convergence in the sense of Carath\'eodory implies the uniform convergence of $f_n^{-1}$ toward $f^{-1}$ on every compact subset of $U$. 

Finally, let us recall a result that we proved in the previous paper \cite{kkmt2026cle4_part1}.
\begin{proposition}[{\cite[Proposition~2.12]{kkmt2026cle4_part1}}]\label{prop BCLE rho to zero}
	Recall that for each $t \in [0, \tau_0]$, $C_t(0)$ is the connected component of the unexplored region containing the origin. Then, as $t\to 0$, $C_t(0)$ converges in probability to $\BD$ in the sense of Carath\'eodory. Moreover, the maximum of the diameters of the connected components of $\BD \setminus \overline{C_t(0)}$ converges to zero in probability as $t\to 0$. In particular, for all $\delta>0$, with probability $1-o(1)$ as $t \to 0$, for all $a \in \partial \BD$, there exists $x \in \partial C_t(0) \cap \partial \BD$ such that $\vert a - x \vert \le \delta$.
\end{proposition}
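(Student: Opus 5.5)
The plan is to get the Carath\'eodory statement from a conformal-radius argument, and then to upgrade it to the diameter statement by looking at the exploration from finitely many auxiliary targets close to $\partial\BD$.

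\emph{Step 1: Carath\'eodory convergence.} Let $\mathrm{CR}_t \defeq \mathrm{CR}(C_t(0),0)$. The Poissonian construction of \Cref{subsec:labeled_cle_4} has two features I will use. The unexplored domain $C_t(0)$ is obtained by composing the normalized maps $\phi_i$ over the successive atoms. The measure $M$ is M\"obius invariant, so the increments are i.i.d. Together these give that $t\mapsto -\log \mathrm{CR}_t$ is a nondecreasing process with stationary independent increments on $[0,\tau_0)$, i.e.\ a subordinator killed at the exponential time $\tau_0$. A subordinator started at $0$ is right-continuous at $0$, and $\BP(\tau_0\le t)=1-e^{-t}\to 0$. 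Hence $\mathrm{CR}_t\to 1$ in probability.

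Let $f_t\colon\BD\to C_t(0)$ be the normalized conformal map. Since $C_t(0)\subset\BD$, $f_t$ is a self-map of $\BD$ fixing $0$ with $f_t'(0)=\mathrm{CR}_t$. By the Schwarz lemma and a normal-families argument, a sequence of such maps with $f_t'(0)\to1$ converges to the identity uniformly on compacts. This is exactly Carath\'eodory convergence of $C_t(0)$ to $\BD$, so the first claim holds in probability.

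\emph{Step 2: small components of the complement.} Fix $\delta>0$ and pick finitely many points $z_1,\dots,z_N\in\BD$ with $1-|z_i|=\delta/100$, spaced at distance $\delta/100$ along the circle $\{|z|=1-\delta/100\}$. Suppose a connected component $K$ of $\BD\setminus\overline{C_t(0)}$ has diameter at least $\delta$. Then $K$ is a connected set attached to $\partial\BD$. The aim is to show that for some $i$ one of the following holds:
\begin{itemize}
\item $K$ separates $z_i$ from $0$; or
\item $K$ comes within distance $c\,(1-|z_i|)$ of $z_i$, for a small absolute constant $c$.
\end{itemize}
If $K$ reaches distance $\gg\delta/100$ into $\BD$, it must cross the circle of radius $1-\delta/100$ near some $z_i$. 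If instead $K$ stays close to the boundary, its diameter forces it to run along an arc of length at least about $\delta$, and it then either covers some $z_i$ or cuts it off from $0$.

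Now apply the target invariance of the labeled exploration. $C_t(z_i)$ agrees with $C_t(0)$ until $z_i$ and $0$ are disconnected, and in all cases $C_t(z_i)\subset \BD\setminus K$. By the Koebe quarter theorem, $\mathrm{CR}(C_t(z_i),z_i)\le 4c\,(1-|z_i|)$. Pulling back by a M\"obius map $\psi_i$ with $\psi_i(z_i)=0$, this says that the normalized conformal radius of the exploration targeted at $z_i$ is at most a constant $c'<1$. By M\"obius invariance, Step 1 applies equally to each $z_i$, so this event has probability $o(1)$ for each fixed $i$. A union bound over the $N=N(\delta)$ points then shows that the maximal component diameter is at most $\delta$ with probability $1-o(1)$.

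\emph{Step 3: the final assertion.} Suppose some $a\in\partial\BD$ has no point of $\partial C_t(0)\cap\partial\BD$ within distance $\delta$. Then the arc $\{x\in\partial\BD:|x-a|<\delta\}$ is hidden from $C_t(0)$. Because the explored region is a union of hulls attached to $\partial\BD$, that arc is then covered by the closure of a single component of $\BD\setminus\overline{C_t(0)}$, or by its boundary, and such a component has diameter at least about $\delta$. Step 2 rules this out with probability $1-o(1)$.

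I expect the main obstacle to be the geometric case analysis in Step 2. One has to check carefully that a component of diameter $\delta$ hugging the boundary really either cuts off some $z_i$ from $0$ or passes near it. One also has to make sure that in the separated case the domain $C_t(z_i)$, which is generated by independent later bubbles, is still contained in $\BD\setminus K$. This needs the consistency of the targeted explorations up to the disconnection time.
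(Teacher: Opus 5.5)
Your Step~1 is fine: it is the subordinator argument of \Cref{lem:uniform_exploration_conformal_radius} combined with \Cref{lem:set_almost_the_unit_disk}. Step~3 also follows correctly from the diameter statement. The gap is in Step~2, and it is exactly where the difficulty of the proposition lies.

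\emph{The dichotomy you assert is false.} A component $K$ of $\BD\setminus\overline{C_t(0)}$ can have diameter $\ge\delta$ while lying in a boundary layer $\{1-\eta<|z|<1\}$ with $\eta\ll\delta/100$, for instance as a long thin pocket along an arc of length $\delta$. Such a $K$ neither contains nor cuts off any $z_i$, since both would require it to reach radius $|z_i|=1-\delta/100$. It also stays at distance about $1-|z_i|$ from every $z_i$.

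No conformal-radius test at depth $h=\delta/100$ can detect such a $K$. The only information you use is $C_t(z_i)\subset\BD\setminus K$, and this is compatible with $\mathrm{CR}(C_t(z_i),z_i)$ being within a factor $1-O(\eta/h)$ of its maximal value. (In the half-plane picture, removing a layer of thickness $\eta$ below a point at height $h$ shrinks the conformal radius from $2h$ to no less than $2(h-\eta)$.)

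Meanwhile, the configurations your dichotomy does cover are components reaching radius at most $1-\delta/200$. These are already excluded with probability $1-o(1)$ by Step~1 at the origin alone, since then $B_{1-\delta/200}(0)\subset C_t(0)$ and $K\cap C_t(0)=\emptyset$. So the auxiliary targets add nothing.

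Two smaller inaccuracies: if $z_i\in K$ then $C_t(z_i)\subset K$, not $\BD\setminus K$. Also, the closure of a component need not meet $\partial\BD$; for example, the interior of a discovered loop does not, since CLE$_4$ loops avoid $\partial\BD$.

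\emph{Adding targets at all depths does not repair this.} By M\"obius invariance, every target has the same failure probability $p(t)>0$, independent of its depth; \Cref{lem:uniform_exploration_conformal_radius} shows this is $O(t)$. Detecting pockets of thickness $2^{-k}$ along an arc of length $\delta$ requires about $\delta 2^{k}$ targets, and $\sum_k \delta 2^k p(t)=\infty$, so the union bound fails.

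What is missing is a genuinely boundary-level input. You must show that, for small $t$, the boundary of $C_t(0)$ returns to $\partial\BD$ on a $\delta$-dense set and makes only small excursions in between. Carath\'eodory convergence, or any finite family of interior conformal radii, does not carry this information about the fine structure of the exploration near $\partial\BD$.

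The paper quotes the statement from \cite{kkmt2026cle4_part1} without proof. A natural source for the needed input is the GFF/BCLE$_4(\rho)$ description of the exploration in \Cref{subsec:gff_labeled_cle_4}, with $\rho=-2t/\pi\to 0$. There the SLE$_4(\rho;-2-\rho)$ level lines cling to $\partial\BD$ (compare the proof of \Cref{lem:guiding_sle4}).
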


\subsection{Four-arm estimates for the bichordal SLE$_4$}

For all $z_1, z_2, z_3, z_4 \in \partial\BD$ ordered counterclockwise, recall that a bichordal SLE$_4$ on $\BD$ with link pattern $\{\{z_1, z_2\}, \{z_3, z_4\}\}$, also sometimes called $2$-SLE$_4$, is made of two random curves, each one connecting one of the two pairs of points $\{z_1, z_2\}, \{z_3, z_4\}$, so that conditionally on one of the two curves, the other is a chordal SLE$_4$ in the connected component of the complement of the first curve whose closure contains the corresponding pair of points. There is a unique law with these properties \cite{IG2,beffara2021uniqueness}. It is a particular case of the multiple chordal SLEs defined in \cite{KL07} (see also \cite{NonsimSLERange,MultiSLE} for a construction of multiple SLE using CLE). We mention two useful four-arm estimates for bichordal SLE$_4$ which we are going to use in the subsequent sections. The first one (\Cref{lem:bichordal-bulk-4A}) estimates the probability that both curves of the bichordal SLE$_4$ intersect a small Euclidean ball centered at an interior point of the domain where the bichordal SLE$_4$ is defined. The second one (\Cref{lem:bichordal-boundary-4A}) estimates the probability that both curves of the bichordal SLE$_4$ intersect a small Euclidean ball centered at a boundary point. \Cref{lem:bichordal-bulk-4A} follows immediately from \cite[Theorem~1.1]{zhan2020two}. As for \Cref{lem:bichordal-boundary-4A}, we will give its proof in \Cref{sec:cle_4_four_arm_exponents}.

\begin{lemma}\label{lem:bichordal-bulk-4A}
    Let $z_1, z_2, z_3, z_4$ be points on $\partial\BD$ ordered counterclockwise. Let $\{\eta_{12}, \eta_{34}\}$ be a bichordal SLE$_4$ in $\BD$ with link pattern $\{\{z_1, z_2\}, \{z_3, z_4\}\}$. Then, there exists a universal constant $C \in (0,\infty)$ such that the following holds for all $\varepsilon \in (0,1)$. The probability that both $\eta_{12}$ and $\eta_{34}$ intersect $B_\varepsilon(0)$ is at most $C \varepsilon^2$. More generally, let $U \subsetneq \BC$ be a simply connected domain and let $z_1, z_2, z_3, z_4$ be prime ends of $U$ whose corresponding points of $\partial\BD$ are ordered counterclockwise. Let $\{\eta_{12}, \eta_{34}\}$ be a bichordal SLE$_4$ in $U$ with link pattern $\{\{z_1, z_2\}, \{z_3, z_4\}\}$. Then, there exists a universal constant $C \in (0,\infty)$ such that the following holds. Let $z \in U$ and $0 < \varepsilon < r < \dist(z, \partial U)$. The probability that both $\eta_{12}$ and $\eta_{34}$ intersect $B_\varepsilon(z)$ is at most $C (\varepsilon/r)^2$.
\end{lemma}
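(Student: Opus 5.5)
The plan is to derive the estimate directly from Zhan's two-curve Green's function for $2$-SLE$_\kappa$ \cite[Theorem~1.1]{zhan2020two}, specialized to $\kappa=4$, and to use conformal invariance to pass from $\BD$ to a general domain $U$.

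\emph{Step 1: the exponent.} Zhan's theorem treats a $2$-SLE$_\kappa$ $\{\eta_{12},\eta_{34}\}$ in a simply connected domain $D$ with an interior point $z$. It states that the probability that both curves hit $B_\varepsilon(z)$ is comparable to $G(D;z_1,\dots,z_4;z)\,\varepsilon^{\alpha}$, with
\[
	\alpha=\frac{(12-\kappa)(\kappa+4)}{8\kappa}.
\]
For $\kappa=4$ this gives $\alpha=\frac{8\cdot 8}{32}=2$, which is the exponent in the lemma. Zhan also gives a non-asymptotic upper bound of the form
\[
	\mathbb{P}\bigl[\eta_{12}\cap B_\varepsilon(z)\neq\emptyset,\ \eta_{34}\cap B_\varepsilon(z)\neq\emptyset\bigr]\le C\,G(D;z_1,\dots,z_4;z)\,\varepsilon^{\alpha},
\]
valid when $\varepsilon$ is small compared with $\dist(z,\partial D)$. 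I will use this upper bound rather than the asymptotic statement.

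\emph{Step 2: the disk case, uniformly in the marked points.} Take $D=\BD$ and $z=0$. The Green's function has the form
\[
	G(\BD;z_1,\dots,z_4;0)=\mathrm{crad}(0,\BD)^{-\alpha}\,F(\text{cross-ratio data of }z_1,\dots,z_4 \text{ seen from }0),
\]
where the conformal radius $\mathrm{crad}(0,\BD)$ equals $1$. The key point to check is that $F$ is bounded over all counterclockwise configurations of $z_1,\dots,z_4$, including degenerate ones where $z_1$ and $z_2$ (or $z_3$ and $z_4$) merge.

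\begin{itemize}
	\item Degenerate limits only make the event less likely, since a curve between nearly coincident endpoints stays near the boundary. So one expects $F\to 0$ there, and $\sup F<\infty$ should follow from the explicit hypergeometric expression in Zhan's paper.
	\item If Zhan's constant is not stated uniformly, I would argue as follows. Suppose $|z_1-z_2|$ is small. Then $\eta_{12}$ stays in a small neighbourhood of the arc between $z_1$ and $z_2$ except with small probability, by a one-arm boundary estimate for SLE$_4$. On that event it cannot reach $B_{1/2}(0)$. This reduces the problem to a compact set of configurations, on which continuity of the bound gives a uniform constant.
\end{itemize}

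Finally, for $\varepsilon$ bounded below (say $\varepsilon\ge 1/2$) the bound holds trivially after enlarging $C$, since the probability is at most $1$. This gives $C\varepsilon^2$ for all $\varepsilon\in(0,1)$.

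\emph{Step 3: general domain.} Let $\phi\colon U\to\BD$ be conformal with $\phi(z)=0$. By conformal invariance of bichordal SLE$_4$, $\phi(\eta_{12})$ and $\phi(\eta_{34})$ form a bichordal SLE$_4$ in $\BD$ with the image prime ends as marked points. Set $d=\dist(z,\partial U)\ge r$. By the Koebe quarter theorem, $|\phi'(z)|\le 1/d$. By Koebe distortion applied on $B_{d/2}(z)$, whenever $\varepsilon<d/2$ we have
\[
	\phi(B_\varepsilon(z))\subset B_{c\varepsilon/d}(0)
\]
for a universal constant $c$. Step 2 then bounds the probability by $C(c\varepsilon/d)^2\le C'(\varepsilon/r)^2$. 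If instead $\varepsilon\ge d/2\ge r/2$, the bound is trivial after enlarging the constant.

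The main obstacle is the uniformity in the four marked points in Step 2. Everything else is a routine distortion argument; this is the only place where the explicit form of Zhan's Green's function, or an auxiliary boundary one-arm estimate, must be invoked.
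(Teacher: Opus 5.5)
Your main line is the paper's proof. The disk estimate is read off from Zhan's two-curve Green's function \cite[Theorem~1.1]{zhan2020two}, with exponent $\alpha=2$ at $\kappa=4$, and the general domain is handled by a conformal map $\phi\colon U\to\BD$ with $\phi(z)=0$.

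The paper does the last step more directly. The Schwarz lemma applied to $\zeta\mapsto\phi(z+d\zeta)$, with $d=\dist(z,\partial U)$, gives $|\phi(w)|\le|w-z|/d$ on all of $B_d(z)$. Hence $\phi(B_\varepsilon(z))\subset B_{\varepsilon/r}(0)$, with no distortion theorem and no case split. Incidentally, it is this Schwarz bound, not the quarter theorem, that yields $|\phi'(z)|\le 1/d$. Koebe's quarter theorem gives the reverse inequality $|\phi'(z)|\ge 1/(4d)$.

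The fallback you sketch for uniformity in the marked points would not work as written. There are two problems.

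\emph{The exceptional event is not handled.} When $|z_1-z_2|$ is small, you show that $\eta_{12}$ reaches $B_{1/2}(0)$ only with small probability. That bound does not depend on $\varepsilon$. On the exceptional event you still need an $O(\varepsilon^2)$ bound, which is again a uniform estimate for a conditional, possibly still degenerate, configuration. So nothing has actually been reduced to a compact set.

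\emph{Some degenerations are missed.} The configuration space also degenerates when endpoints of \emph{different} curves merge ($z_2\to z_3$ or $z_4\to z_1$). There neither curve is confined near $\partial\BD$, and the bichordal SLE$_4$ converges to a fused pair of macroscopic curves. So the prefactor of $\varepsilon^2$ does not tend to zero, and your heuristic that $F\to 0$ at every degeneration applies only to merges of the two endpoints of the same curve. A compactness argument would need continuity of the bound up to these fused configurations. That is exactly the boundedness of Zhan's Green's function that the fallback was meant to avoid.

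So uniformity has to come from Zhan's explicit formula and his uniform error term, which is what the paper implicitly relies on. Compare the appendix: there, for $\kappa=4$, the hypergeometric factor in Zhan's boundary formulas is identically one, and the Green's functions become explicit bounded products of distances.
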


\begin{proof}
    The first assertion follows from \cite[Theorem~1.1]{zhan2020two}. The second assertion follows from the first assertion, together with the observation that if $\phi$ is a conformal map from $U$ onto $\BD$ with $\phi(z) = 0$, then $\zeta \mapsto \phi(z + \dist(z, \partial U)\zeta)$ maps $\BD$ into $\BD$ and fixes $0$, so the Schwarz lemma gives $\phi(B_\varepsilon(z)) \subset B_{\varepsilon/r}(0)$.
\end{proof}

\begin{lemma}\label{lem:bichordal-boundary-4A}
    Let $z_1, z_2, z_3, z_4$ be points on $\partial \BD$ ordered counterclockwise. Let $\{\eta_{12}, \eta_{34}\}$ be a bichordal SLE$_4$ in $\BD$ with link pattern $\{\{z_1, z_2\}, \{z_3, z_4\}\}$. Fix $\delta \in (0,1)$ and let $z_0 \in \partial \BD$ be such that $\dist(z_0, \{z_1,z_2,z_3,z_4\}) \geq \delta$. Then, there exists a constant $C =  C(\delta) > 0$, uniform over all such configurations, such that for all $\varepsilon > 0$, the probability that both $\eta_{12}$ and $\eta_{34}$ intersect $B_{\varepsilon}(z_0)$ is at most $C \varepsilon^4$. (For $\varepsilon$ bounded below, the estimate holds trivially after enlarging $C(\delta)$.)
\end{lemma}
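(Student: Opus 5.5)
The plan is to derive the lemma from Zhan's boundary two-curve Green's function for $2$-SLE$_\kappa$ \cite{zhan2019two}, specialised to $\kappa=4$. For $\kappa\in(0,8)$, Zhan studies a $2$-SLE$_\kappa$ in $\BH$ (equivalently in $\BD$, by conformal invariance) and a boundary point $z_0$ away from the four link endpoints. He shows that the probability that both curves come within distance $\varepsilon$ of $z_0$ is comparable to $\varepsilon^{\alpha}$ with
\[
\alpha = 2\Bigl(\frac{12}{\kappa}-1\Bigr),
\]
which equals $4$ at $\kappa=4$. His result also gives an explicit Green's function $G(z_0;z_1,\dots,z_4)$ and a uniform upper bound of the form $C\,G\,\varepsilon^{\alpha}$. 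So the proof is essentially a translation: pass from $\BD$ to Zhan's normalisation, identify the exponent, and check uniformity in the configuration.

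\textbf{Step 1: normalisation.} Map $\BD$ to $\BH$ by a M\"obius map sending $z_0$ to $0$ and one link endpoint to $\infty$. Since $\dist(z_0,\{z_1,\dots,z_4\})\ge\delta$, this map can be chosen with derivative at $z_0$ bounded above and below in terms of $\delta$ only. Hence $B_\varepsilon(z_0)$ is sent into a half-disk of radius $C(\delta)\varepsilon$ around $0$. The images of the remaining endpoints stay at distance $\ge c(\delta)$ from $0$ and at distance $\le C(\delta)$ from it, apart from the one sent to $\infty$. The case $\varepsilon\ge c(\delta)$ is trivial after enlarging $C(\delta)$.

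\textbf{Step 2: apply Zhan's estimate.} Apply the upper bound of \cite{zhan2019two} for the event that both curves hit $\{|z|<C(\delta)\varepsilon\}$. The step I expect to require real care is the arc on which $z_0$ lies, and this is the main obstacle.

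\begin{itemize}
\item If $z_0$ lies on an arc $(z_2,z_3)$ or $(z_4,z_1)$, i.e.\ on an arc separating the two curves, this is exactly Zhan's boundary setting.
\item If $z_0$ lies on $(z_1,z_2)$ (or symmetrically on $(z_3,z_4)$), the curves cannot cross. So for $\eta_{34}$ to reach $B_\varepsilon(z_0)$, the curve $\eta_{12}$ must pass between $\eta_{34}$ and $z_0$ inside $B_\varepsilon(z_0)$. If Zhan's theorem does not cover this configuration, I would bound it directly. Condition on $\eta_{34}$; then $\eta_{12}$ is a chordal SLE$_4$ in the component containing the arc $(z_1,z_2)$, and it must make a boundary excursion near $z_0$ that squeezes under $\eta_{34}$. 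Combining the boundary one-arm estimate for $\eta_{34}$ (exponent $8/\kappa-1=1$) with a boundary estimate for $\eta_{12}$ in the slit domain should give at least the exponent $4$. Alternatively, I would try to couple to the separating-arc case by resampling.
\end{itemize}

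\textbf{Step 3: uniformity.} The explicit Green's function $G$ (or, failing that, the constant in Zhan's upper bound) is continuous in the endpoint configuration. The set of normalised configurations allowed by Step 1 is compact. Therefore $G$, and hence the constant, is bounded by a constant depending only on $\delta$. This yields
\[
\mathbb P\bigl[\eta_{12}\cap B_\varepsilon(z_0)\neq\emptyset,\ \eta_{34}\cap B_\varepsilon(z_0)\neq\emptyset\bigr]\le C(\delta)\,\varepsilon^4 .
\]
Here one must also verify that Zhan's estimate is stated as a uniform upper bound valid for all small $\varepsilon$, not only as a limit as $\varepsilon\to0$.
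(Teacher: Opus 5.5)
Your route is the paper's: move to $\BH$ by a M\"obius map and apply Zhan's explicit upper bounds. Your first bullet is exactly the paper's use of \cite[Theorem~6.2]{zhan2019two}. The configuration you flag as the main obstacle is also covered by Zhan, so no separate argument is needed. When $z_0$ lies on the arc from $z_1$ to $z_2$, send $z_0$ to $\infty$. The two curves then have the nested link pattern $\{\{w_-,w_+\},\{v_-,v_+\}\}$, with $\psi(\eta_{34})$ the inner curve. The event that both curves meet $B_\varepsilon(z_0)$ is contained in the event that this inner curve exits a ball of radius of order $1/\varepsilon$. \cite[Theorem~6.4]{zhan2019two} bounds exactly that one-curve probability by $CL^{-4}G_2$, and this is what the paper does.

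Your fallback would not work as written. Condition on $\eta_{34}$ and suppose it meets $B_\varepsilon(z_0)$. Take the segment from $z_0$ to the first point of $\eta_{34}$ it meets. It is a crosscut of the component of $\BD\setminus\eta_{34}$ containing $z_1,z_2$, and it separates $z_1$ from $z_2$. So $\eta_{12}$ meets $B_\varepsilon(z_0)$ with conditional probability one, and no decay can come from a boundary estimate for $\eta_{12}$ in the slit domain. The entire factor $\varepsilon^4$ must come from the marginal law of $\eta_{34}$. That marginal is not chordal SLE$_4$, so your exponent $1$ does not apply to it, and quantifying the reweighting is precisely the content of Zhan's Theorem~6.4.

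Two smaller corrections. First, in Step~1, sending a link endpoint to $\infty$ does not keep the other endpoints bounded, because another endpoint may be arbitrarily close to it. Zhan's bounds are stated with $z_0$ at $\infty$, all four endpoints finite, and $0\in[v_-,v_+]$. The paper uses $\psi(z)=c(z-w)/(z-z_0)$, with $w$ in a different component from $z_0$ (of the complement of the two endpoint arcs in the first case, of their union in the second). This gives $|w-z_0|\ge\delta$, $|\psi(z_i)|\le 2/\delta$, and $\psi(\BD\cap B_\varepsilon(z_0))\subset \BH\setminus B_{\delta/(2\varepsilon)}(0)$. Second, in Step~3 the configuration set is not compact, since endpoints may merge, but compactness is not needed. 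Zhan's bounds hold for all $L>0$ with a universal constant, and $G_1,G_2$ are products of four differences of points in $[-2/\delta,2/\delta]$. They are therefore bounded in terms of $\delta$ alone, which also settles your concern about a limit versus a uniform bound.
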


\subsection{Locality property}

Throughout this subsection and the next, $D$ denotes a weak geodesic CLE$_4$ metric coupling in the sense of \Cref{def:weak_axioms}. In the present subsection, we record the fact that metric balls satisfy a locality property. Roughly speaking, this means that conditionally on a metric ball, the loop configuration in the complementary domain is a non-nested CLE$_4$, and the metric on this region provided by the coupling of Axiom~\eqref{it:weak_axiom_locality} is distributed as the metric associated with a CLE$_4$ there. This result is reminiscent of those in \cite{gwynne2020local} and \cite[Section~7]{TightSimCLE}.

\begin{lemma}\label{lem:metric-ball-locality}
Let $U \subsetneq \BC$ be a simply connected domain. Let $z \in U$ be deterministic, and let $V \subset U$ be a deterministic simply connected domain. Write $\{V_j\}_j$ for the connected components of $V^\star$ (recall~\eqref{eq:def-V-star} for the definition of $V^\star$). Let $(\Gamma_U, D_{\Gamma_U}^U, \{D_{\Gamma_U|_{V_j}}^{V_j}\}_j)$ be coupled as in \Cref{def:weak_axioms}, Axiom~\eqref{it:weak_axiom_locality} (locality). Let $\tau$ be a stopping time for the filtration $\{\mathcal{F}_t\}_{t \ge 0}$, where
\begin{equation*}
	\mathcal{F}_t \defeq \sigma\left(\SCB_s(\SCL(z); D_{\Gamma_U}^U), \Gamma_U|_{\SCB_s(\SCL(z); D_{\Gamma_U}^U)} : s \le t\right).
\end{equation*}
Then, conditionally on the $\sigma$-algebra generated by~\eqref{eq:weak_axiom_locality} and by $\SCB_\tau(\SCL(z); D_{\Gamma_U}^U)$, on the event that $\SCB_\tau(\SCL(z); D_{\Gamma_U}^U) \cap V^\star = \emptyset$, the pairs $(\Gamma_U|_{V_j}, D_{\Gamma_U|_{V_j}}^{V_j})$ are independent and their conditional laws are those of $(\Gamma_{V_j}, D_{\Gamma_{V_j}}^{V_j})$, respectively.

The same is true with a deterministic connected arc of $\partial U$ in place of $\SCL(z)$.
\end{lemma}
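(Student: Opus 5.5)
The natural approach is the standard strong-Markov argument: first treat stopping times taking countably many values, then pass to general $\tau$ by discretization. It uses the right-continuity of ball growth from Axiom~\eqref{it:weak_axiom_geodesic}\eqref{it:weak_axiom_geodesic_1} and the fact that the event $\{\SCB_\tau \cap V^\star = \emptyset\}$ is open in a suitable sense.

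\emph{Deterministic times.} Fix a deterministic $t \ge 0$. On the event $E_t \defeq \{\SCB_t(\SCL(z)) \cap V^\star = \emptyset\}$, the loop $\SCL(z)$ is not contained in $V^\star$. Moreover, every loop in $\Gamma_U|_{\SCB_t(\SCL(z))}$ is either not contained in $V$, or is contained in $V$ but not in any $V_j$. The second case means it is surrounded by a removed region, hence determined by the loops not contained in $V$.

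The key claim is that on $E_t$ the pair $(\SCB_t(\SCL(z)), \Gamma_U|_{\SCB_t(\SCL(z))})$ is measurable with respect to the $\sigma$-algebra generated by~\eqref{eq:weak_axiom_locality}, or at least its restriction to $E_t$ is. To see this, split according to whether $\SCL(z) \subset V$.

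If $\SCL(z) \not\subset V$, then $\SCB_s(\SCL(z))$ for $s \le D(\SCL(z), \partial V^\star)$ belongs to~\eqref{eq:weak_axiom_locality}. On $E_t$ we have $t \le D(\SCL(z), \partial V^\star)$; this needs a short argument via connectedness of balls (Axiom~\eqref{it:weak_axiom_geodesic}\eqref{it:weak_axiom_geodesic_0}) and the fact that balls are closures of unions of loop interiors. Hence the ball is given by~\eqref{eq:weak_axiom_locality}.

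If $\SCL(z) \subset V$ but $\SCL(z)$ is not in any $V_j$, then $\SCL(z)$ lies inside the interior of some loop $\SCL' \not\subset V$. Because balls are connected and made of loop interiors, $\SCB_s(\SCL(z))$ stays inside $\overline{\mathrm{int}(\SCL')}$ until it reaches $\SCL'$. I expect this case to need care. I would handle it by noting that on $E_t$ the ball is a function of the configuration outside $V^\star$: to make this precise, use Axiom~\eqref{it:weak_axiom_geodesic}\eqref{it:weak_axiom_geodesic_2} to express $\SCB_t(\SCL(z))$ in terms of balls grown from loops not contained in $V$, which are in~\eqref{eq:weak_axiom_locality}.

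Granting this measurability, conditioning further on $\SCB_t$ and intersecting with $E_t$ does not alter the conditional law from Axiom~\eqref{it:weak_axiom_locality}, so the claim holds at deterministic $t$.

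\emph{Countably valued $\tau$.} If $\tau$ takes values in a countable set $\{t_k\}$, write the event as the disjoint union of $\{\tau = t_k\} \cap E_{t_k}$. Each $\{\tau = t_k\} \in \mathcal{F}_{t_k}$, and by the previous step $\mathcal{F}_{t_k} \cap E_{t_k}$ is contained in the conditioning $\sigma$-algebra, so the deterministic case applies on each piece.

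\emph{General $\tau$.} Set $\tau_n \defeq 2^{-n}\lceil 2^n \tau \rceil \downarrow \tau$. By Axiom~\eqref{it:weak_axiom_geodesic}\eqref{it:weak_axiom_geodesic_1}, $\SCB_{\tau_n} \to \SCB_\tau$ in the Hausdorff sense after conformal mapping. On $\{\SCB_\tau \cap V^\star = \emptyset\}$ we need $\SCB_{\tau_n} \cap V^\star = \emptyset$ for $n$ large. This is the main obstacle, since $V^\star$ is not closed and a Hausdorff limit could touch $\partial V^\star$. I would first prove the statement on the smaller event $\{\SCB_\tau \cap \overline{V_\delta^\star} = \emptyset\}$ for compact approximations $V^\star_\delta$ of $V^\star$, where closedness does give the conclusion for large $n$. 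Then I would pass to the limit $\delta \to 0$ by monotone class, using that conditional laws are determined by bounded test functions. The arc case is identical, with the balls grown from $I$ in~\eqref{eq:weak_axiom_locality}.
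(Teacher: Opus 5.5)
The step that fails is the passage to general $\tau$. Your discretization $\tau_n \downarrow \tau$ gives the conclusion only on $\bigcup_n \{\SCB_{\tau_n}(\SCL(z)) \cap V^\star = \emptyset\}$. It misses the part of $\{\SCB_\tau(\SCL(z)) \cap V^\star = \emptyset\}$ on which the stopped ball touches $\partial V^\star$ while the ball meets $V^\star$ at every later time. This part is not negligible in general. For instance, let $\tau$ be the hitting time of $\overline V$ by a ball started away from $\overline V$: the loop reached at time $\tau$ crosses $\partial V$, so $\SCB_\tau$ touches $\partial V^\star$ without meeting $V^\star$, and nothing you have available guarantees that $\SCB_{\tau_n}$ avoids $V^\star$ for some $n$.

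Your remedy does not close this. If $\overline{V^\star_\delta} \subset V^\star$, then $\{\SCB_\tau \cap \overline{V^\star_\delta} = \emptyset\}$ \emph{contains} the target event rather than being contained in it. On the part where the ball has entered $V^\star \setminus \overline{V^\star_\delta}$, conditioning on the ball reveals loops of $\Gamma_U|_{V_j}$, so the conclusion is false there. Moreover, on that event Hausdorff convergence gives only $\SCB_{\tau_n} \cap \overline{V^\star_\delta} = \emptyset$, not the hypothesis $\SCB_{\tau_n} \cap V^\star = \emptyset$ that your countable case needs. You also cannot switch to a smaller domain, because the metrics $D_{\Gamma_U|_{V_j}}^{V_j}$ belong to the coupling attached to $V$. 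Outer approximations only recover $\{\SCB_\tau \cap \overline{V^\star} = \emptyset\}$.

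The discretization is unnecessary anyway. The paper argues directly at the stopping time: on $\{\SCB_\tau(\SCL(z)) \cap V^\star = \emptyset\}$, the stopped ball is a.s.\ determined by~\eqref{eq:weak_axiom_locality}. Conditioning additionally on it therefore does not change the conditional laws from Axiom~\eqref{it:weak_axiom_locality}.

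The idea behind that direct argument, which your proposal is missing, is non-nesting. A loop of $\Gamma_U$ contained in $V$ cannot lie in $V \setminus V^\star$, since it would then lie inside $\mathop{\mathrm{int}}(\SCL')$ for some $\SCL' \not\subset V$. So such a loop lies in some $V_j$, and if it were in the ball, the ball would meet $V^\star$. Hence, on the event, $\SCL(z)$ and every loop of $\SCB_\tau(\SCL(z))$ are not contained in $V$. The stopped ball is then built from loops appearing in~\eqref{eq:weak_axiom_locality} and their stopped ball processes. In particular your ``second case'' is empty, and the appeal to Axiom~\eqref{it:weak_axiom_geodesic}\eqref{it:weak_axiom_geodesic_2} there is moot.

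Separately, your justification of $t \le D_{\Gamma_U}^U(\SCL(z), \partial V^\star)$ on $E_t$ runs the wrong way. Connectedness of balls shows that if $\SCB_t(\SCL(z))$ meets $V^\star$ then $D_{\Gamma_U}^U(\SCL(z), \partial V^\star) \le t$, not the converse. The converse would require the ball to enter $V^\star$ instantly after touching $\partial V^\star$, which connectedness does not give. For example, if $\SCL(z)$ crosses $\partial V$, its points in $V$ lie on $\partial V^\star$. Then $D_{\Gamma_U}^U(\SCL(z), \partial V^\star) = 0$, and the ball process of $\SCL(z)$ recorded in~\eqref{eq:weak_axiom_locality} is trivial. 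The paper's argument accordingly rests on the loops of the ball and their stopped ball processes, not on the process of $\SCL(z)$ alone.
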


\begin{proof}
This follows immediately from \Cref{def:weak_axioms}, Axiom~\eqref{it:weak_axiom_locality} (locality), together with the fact that, on the event that $\SCB_\tau(\SCL(z); D_{\Gamma_U}^U) \cap V^\star = \emptyset$, the metric ball $\SCB_\tau(\SCL(z); D_{\Gamma_U}^U)$ is a.s.\ determined by~\eqref{eq:weak_axiom_locality}. Indeed, $\SCL(z) \not\subset V$ on this event: if $\SCL(z) \subset V_j$, then $\SCB_\tau(\SCL(z); D_{\Gamma_U}^U) \supseteq \mathop{\mathrm{int}}(\SCL(z))$ would meet $V^\star$; and if $\SCL(z) \subset V \setminus V^\star$, then $\SCL(z) \subset \mathop{\mathrm{int}}(\SCL^\prime)$ for some $\SCL^\prime \not\subset V$, contradicting non-nesting. The same applies to every loop contained in $\SCB_\tau(\SCL(z); D_{\Gamma_U}^U)$, so the stopped ball is built from the loops appearing in~\eqref{eq:weak_axiom_locality} and their stopped ball processes.
\end{proof}

A simple consequence of the above lemma is the following.

\begin{lemma}\label{lem:metric-ball-locality-segment}
	Let $\alpha \subset \partial \BD$ be a deterministic connected arc. Let $\tau$ be a stopping time for the filtration generated by 
	\begin{equation*}
		\left(\SCB_t(\alpha; D_{\Gamma_\BD}^\BD), \Gamma_\BD|_{\SCB_t(\alpha; D_{\Gamma_\BD}^\BD)}\right)_{t \ge 0}. 
	\end{equation*}
	Let $\{V_j\}_j$ denote the connected components of $\BD \setminus \SCB_\tau(\alpha; D^\BD_{\Gamma_\BD})$.
	Then, there is a coupling $\{(\Gamma_\BD\vert_{V_j}, D^{V_j}_{\Gamma_\BD \vert_{V_j}})\}_j$ such that, conditionally on $\SCB_\tau(\alpha; D^\BD_{\Gamma_\BD})$, the $(\Gamma_\BD\vert_{V_j}, D^{V_j}_{\Gamma_\BD \vert_{V_j}})$'s are independent and have the law of $(\Gamma_{V_j}, D^{V_j}_{\Gamma_{V_j}})$ respectively. Moreover,
		\begin{itemize}
	\item For all $j$, for all $\SCL_1, \SCL_2 \in \Gamma_\BD|_{V_j}$, we have $D_{\Gamma_\BD}^\BD(\SCL_1, \SCL_2)\le D_{\Gamma_\BD|_{V_j}}^{V_j}(\SCL_1, \SCL_2)$.
	\item For all $j$, for all $\SCL \in \Gamma_\BD|_{V_j}, \ t \in [0, D_{\Gamma_\BD}^\BD(\SCL, \partial V_j)]$, we have $\SCB_t(\SCL; D_{\Gamma_\BD}^\BD) = \SCB_t(\SCL; D_{\Gamma_\BD|_{V_j}}^{V_j})$.
	\end{itemize}
\end{lemma}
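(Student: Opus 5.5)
The plan is to reduce to \Cref{lem:metric-ball-locality}, with the boundary arc $\alpha$ in place of $\SCL(z)$, after identifying the complementary components of the stopped ball with the components of a suitable $V^\star$. The difficulty is that \Cref{lem:metric-ball-locality} is stated for a deterministic subdomain $V$, while here the domains $V_j$ are random, being the components of $\BD \setminus \SCB_\tau(\alpha; D^\BD_{\Gamma_\BD})$. I will handle this by a discretization argument that approximates the random domains by deterministic ones drawn from a countable family.

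\emph{Step 1: the approximating family.} Fix $n \in \BN$ and let $\mathcal{V}_n$ be the countable family of simply connected domains $V \subset \BD$ whose boundary consists of finitely many arcs of $\partial\BD$ together with polygonal curves on the grid $2^{-n}\BZ^2$. Suppose $V \in \mathcal{V}_n$ satisfies $V \cap \SCB_\tau(\alpha) = \emptyset$. Then every loop of $\Gamma_\BD$ not contained in $V$ either meets $\partial V$ or lies outside $V$. Consequently each component $V_j$ of $\BD \setminus \SCB_\tau(\alpha)$ that is contained in $V$ is exactly a component of $V^\star$, and the loops it contains are undiscovered at time $\tau$.

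\emph{Step 2: locality on each event.} On the event $E_V \defeq \{\SCB_\tau(\alpha) \cap V^\star = \emptyset\}$, \Cref{lem:metric-ball-locality} (arc version) shows the following. Conditionally on \eqref{eq:weak_axiom_locality} and on $\SCB_\tau(\alpha)$, the pairs $(\Gamma_\BD|_{V_j}, D^{V_j}_{\Gamma_\BD|_{V_j}})$ indexed by components of $V^\star$ are independent, each with the law of $(\Gamma_{V_j}, D^{V_j}_{\Gamma_{V_j}})$. Moreover, the two bullet comparisons hold for them, by the second and third bullets of Axiom~\eqref{it:weak_axiom_locality}.

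Since $\tau$ is a stopping time, the event $E_V$ together with $\SCB_\tau(\alpha)$ is measurable with respect to the conditioning $\sigma$-algebra. We then take the component $V_j$ of $\BD\setminus\SCB_\tau(\alpha)$ and choose $V = V_{n}(j)$, a measurable selection: for instance, the first domain of $\mathcal{V}_n$ in a fixed enumeration that contains a maximal grid approximation of $V_j$ from inside and still contains all loops in $V_j$ of diameter at least $2^{-n}$. Summing over the countably many choices of $V$ gives the conditional law statement for the loops in $V_j$ of size at least $2^{-n}$, which are the loops of the corresponding component of $V^\star$.

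\emph{Step 3: consistency and the limit $n\to\infty$.} This step is the main obstacle. Different choices of $V$ yield different metrics $D^{V^\star_j}$, and we need a single consistent coupling on $V_j$ itself. To obtain it I would exploit that the laws form a projective family. The conditional law of the restriction to smaller subdomains is again given by locality applied inside $V_j$, via Axiom~\eqref{it:weak_axiom_locality} for $(\Gamma_{V_j}, D^{V_j})$ combined with conformal invariance. The Kolmogorov extension theorem, applied in the Polish space $\BR^{(V_j)_\BQ\times (V_j)_\BQ}$, then produces a random element $D^{V_j}_{\Gamma_\BD|_{V_j}}$ with the law of $D^{V_j}_{\Gamma_{V_j}}$, conditionally independent across $j$.

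The bullet inequalities pass to the limit for the following reasons. The inequality $D^\BD\le D^{V_j}$ holds because, for fixed loops $\SCL_1,\SCL_2$, the inner domains $V^\star_n \uparrow V_j$, and by Axiom~\eqref{it:weak_axiom_geodesic}\eqref{it:weak_axiom_geodesic_1} and Carathéodory convergence (the kernel theorem) the distances $D^{V^\star_n}(\SCL_1,\SCL_2)$ converge to $D^{V_j}(\SCL_1,\SCL_2)$. The equality of balls holds up to $D^\BD(\SCL,\partial V_j)=\lim_n D^\BD(\SCL,\partial V^\star_n)$. I would verify this convergence using the Hausdorff continuity of balls in Axiom~\eqref{it:weak_axiom_geodesic}\eqref{it:weak_axiom_geodesic_1}, and this is where I expect the delicate work to lie.
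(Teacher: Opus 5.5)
Your overall plan is the paper's. Its proof is only the two-line sketch ``apply \Cref{lem:metric-ball-locality} to interiors of unions of dyadic squares and let $n\to\infty$''. However, two details you add are wrong, and the limiting step you single out as the main obstacle would not go through with the tools you propose.

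In Step~1 the containment is backwards. If $V\cap\SCB_\tau(\alpha)=\emptyset$, the connected set $V$ lies in a single component $V_{j_0}$. A component contained in $V$ would then have to equal $V$, which never happens for a grid domain. What is true for $V\subset V_j$ is weaker:
\begin{itemize}
\item the components of $V^\star$ are strictly smaller than $V_j$, since they miss the filled loops of $\Gamma_\BD|_{V_j}$ crossing $\partial V$;
\item they only increase to $V_j$, in the Carath\'eodory sense, as $V$ exhausts $V_j$ (only finitely many loops have diameter $\ge\varepsilon$, each compactly contained in $V_j$).
\end{itemize}
In Step~2 the selected domain should be a function of $\SCB_\tau(\alpha)$ alone, e.g.\ the maximal inner dyadic approximation of $V_j$ with holes filled. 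Then $\{V_n(j)=V\}\in\sigma(\SCB_\tau(\alpha))$ and \Cref{lem:metric-ball-locality} applies on that event. Demanding that the domain contain all loops of $V_j$ of diameter $\ge 2^{-n}$ is unnecessary, and such a grid domain inside $V_j$ need not exist. Worse, a ``first in an enumeration'' rule that inspects loops inside candidate domains biases their conditional law.

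The genuine gap is Step~3. The metrics $D^{V_n^\star}$ do not form a projective family. For a fixed pair of loops they are different random variables: at best $D^{V_{n+1}^\star}\le D^{V_n^\star}$, with agreement only for balls not reaching the boundary. So Kolmogorov's theorem has nothing to extend, and it cannot manufacture an object tied to $D^\BD_{\Gamma_\BD}$ through the two bullets. Axiom~\eqref{it:weak_axiom_geodesic}\eqref{it:weak_axiom_geodesic_1} does not help either: it is continuity in $t$ of the balls of one fixed metric, not continuity of the metric in the domain. The kernel theorem with conformal invariance gives only convergence \emph{in law} of $D^{V_n^\star}$, not almost sure convergence to a pre-existing $D^{V_j}$.

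What works is compactness:
\begin{itemize}
\item View $D_n\defeq D^{V_n^\star}$ as a random element of the compact space $[0,\infty]^{\BD_\BQ\times\BD_\BQ}$ (value $\infty$ off $V_n^\star$), jointly with $(\Gamma_\BD,D^\BD_{\Gamma_\BD},\SCB_\tau(\alpha))$.
\item Take a subsequential limit in law and glue it onto the original space by a regular conditional distribution given $(\Gamma_\BD,D^\BD_{\Gamma_\BD})$.
\item Identify the conditional law of the limit given $\SCB_\tau(\alpha)$ from $V_n^\star\to V_j$ and conformal invariance. Pulling back a fixed CLE$_4$ metric on $\BD$ by the normalized maps $\phi_n\colon V_n^\star\to\BD$ and $\phi\colon V_j\to\BD$, one has $\SCL(\phi_n(x))=\SCL(\phi(x))$ for large $n$, a.s., for each fixed rational $x$.
\item The first bullet is a closed condition and passes to the limit.
\item Encode the second bullet as $D_n(\SCL,\cdot)\wedge d_n=D^\BD_{\Gamma_\BD}(\SCL,\cdot)\wedge d_n$, with $d_n\defeq D^\BD_{\Gamma_\BD}(\SCL,\partial V_n^\star)$. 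This $d_n$ increases to $D^\BD_{\Gamma_\BD}(\SCL,\partial V_j)$, because the balls are connected and closed and $\partial V_n^\star$ lies in a vanishing neighborhood of $\partial V_j$.
\end{itemize}
Even so, the endpoint $t=D^\BD_{\Gamma_\BD}(\SCL,\partial V_j)$ and the independence across $j$ need separate care. Neither your proposal nor the paper's sketch supplies it.
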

\begin{proof}
	We apply \Cref{lem:metric-ball-locality} to the interiors of unions of dyadic squares of the form $[k2^{-n}, (k+1)2^{-n}] \times [\ell2^{-n}, (\ell+1)2^{-n}]$ for $k,\ell \in \BZ$ and $n \in \BZ_{\ge 0}$. Then, we let $n\to \infty$ in order to approximate the $V_j$'s.
\end{proof}
\subsection{Lower semi-continuity}
Next, let us state the following lower semi-continuity property of the metric.
\begin{lemma}\label{lem:lower-semi-continuity}
	Let $U \subsetneq \BC$ be a simply connected domain. Almost surely, simultaneously for all $\SCL \in \Gamma_U$, the map $x \mapsto D_{\Gamma_U}^U(\SCL, x)$ is lower semi-continuous on $\overline{U}$. The same is true if we replace $\SCL$ by a deterministic connected arc $I$ of $\partial U$.
\end{lemma}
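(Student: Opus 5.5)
The claim is that $f(x) \defeq D_{\Gamma_U}^U(\SCL, x)$ is lower semi-continuous on $\overline U$. I would reduce this to closedness of the sublevel sets. By definition,
\begin{equation*}
	f(x) = \inf\{t \ge 0 : x \in \SCB_t(\SCL; D_{\Gamma_U}^U)\},
\end{equation*}
since $\SCB_t(\SCL)\cap\{x\}\neq\emptyset$ just means $x\in\SCB_t(\SCL)$. The balls $\SCB_t(\SCL)$ are closed by definition (they are closures) and nondecreasing in $t$. So the natural target is
\begin{equation*}
	\{x \in \overline U : f(x) \le t\} = \bigcap_{t' > t} \SCB_{t'}(\SCL; D_{\Gamma_U}^U) \quad\text{for every } t \ge 0.
\end{equation*}
This holds by monotonicity: if $f(x)\le t$, then $x\in\SCB_{t'}(\SCL)$ for every $t'>t$. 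Conversely, if $x$ lies in all these balls, then $f(x)\le t'$ for all $t'>t$. An intersection of closed sets is closed, so every sublevel set $\{f\le t\}$ is closed. Closed sublevel sets for all $t$ is exactly lower semi-continuity.

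The key point is that this argument is deterministic given the metric. It does not even need Axiom~\eqref{it:weak_axiom_geodesic}\eqref{it:weak_axiom_geodesic_1}, although that axiom gives the same identity in the form $\SCB_t=\lim_{t'\downarrow t}\SCB_{t'}$. Because of this, the conclusion holds on the full-probability event where $D^U_{\Gamma_U}$ is a metric, simultaneously for all $\SCL\in\Gamma_U$. No union over countably many loops is needed.

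For a deterministic arc $I$ the same argument applies verbatim with $\SCB_t(I)$. That set is closed by definition (the closure of $I$ together with the interiors of loops at distance $\le t$) and nondecreasing in $t$.

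The only delicate point is a definitional subtlety. The definition of $D(\SCL,x)$ uses $\SCB_t(\SCL;D)\cap\{x\}\neq\emptyset$, and for $x$ on the boundary of some loop interior or in the carpet, membership in $\SCB_t$ comes only through the closure. I would check that nothing in the definition makes $f$ use $\SCB^-_t$ or open sets. Since $f$ is defined via the closed balls, the argument goes through unchanged. I expect no real obstacle here; the substance is just monotonicity plus closedness.
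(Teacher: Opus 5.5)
Your proof is correct and uses the same mechanism as the paper's proof: the balls $\SCB_t(\SCL; D_{\Gamma_U}^U)$ are closed and nondecreasing in $t$. You phrase it through closed sublevel sets $\{f\le t\}=\bigcap_{t'>t}\SCB_{t'}(\SCL; D_{\Gamma_U}^U)$, where the paper uses a sequence $x_n\to x$. You are also right that the right-continuity in Axiom~\eqref{it:weak_axiom_geodesic}\eqref{it:weak_axiom_geodesic_1} is dispensable: the paper uses it to get $x_n\in\SCB_{t_n}(\SCL; D_{\Gamma_U}^U)$ and to pass from $\bigcap_{\varepsilon>0}\SCB_{t+\varepsilon}(\SCL; D_{\Gamma_U}^U)$ to $\SCB_t(\SCL; D_{\Gamma_U}^U)$, but $D_{\Gamma_U}^U(\SCL,x)\le t+\varepsilon$ for every $\varepsilon>0$ already gives $D_{\Gamma_U}^U(\SCL,x)\le t$.
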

\begin{proof}
	Recall from \Cref{subsec:setup} that for all $x \in \overline{U}$, we have
	\[
	D_{\Gamma_U}^U(\SCL, x) = \inf \{t\ge 0 : x\in \SCB_t(\SCL; D_{\Gamma_U}^U) \}.
	\]
	Thus, it suffices to show that the right-hand side in the above display is lower semi-continuous in $x$. Let $x_n \to x$ as $n\to \infty$, and let $t_n = D_{\Gamma_U}^U(\SCL, x_n)$. Let $t = \liminf_{n \to \infty} t_n$. We may assume without loss of generality that $t < \infty$.
	
	By Axiom~\eqref{it:weak_axiom_geodesic}\eqref{it:weak_axiom_geodesic_1}, for all $n\ge 1$ we have $x_n \in \SCB_{t_n}(\SCL; D_{\Gamma_U}^U)$ (indeed, $x_n \in \SCB_s(\SCL; D_{\Gamma_U}^U)$ for every $s > t_n$ by the definition of $t_n$, and we let $s \downarrow t_n$ using right-continuity). Let $\varepsilon > 0$. Then there exists a subsequence $(t_{n_k})_{k \ge 1}$ of $(t_n)_{n \ge 1}$ such that $t_{n_k} \le t + \varepsilon$ for all $k \ge 1$. Since metric balls are non-decreasing in $t$, we have $x_{n_k} \in \SCB_{t_{n_k}}(\SCL; D_{\Gamma_U}^U) \subseteq \SCB_{t + \varepsilon}(\SCL; D_{\Gamma_U}^U)$ for all $k \ge 1$. Since $x_{n_k} \to x$ and the metric balls are closed subsets of $\overline{U}$, we deduce that $x \in \SCB_{t + \varepsilon}(\SCL; D_{\Gamma_U}^U)$. Since this holds for all $\varepsilon > 0$, we have
	\[
	x \in \bigcap_{\varepsilon > 0} \SCB_{t + \varepsilon}(\SCL; D_{\Gamma_U}^U). 
	\]
	By Axiom~\eqref{it:weak_axiom_geodesic}\eqref{it:weak_axiom_geodesic_1}, $\lim_{\varepsilon \downarrow 0} \SCB_{t + \varepsilon}(\SCL; D_{\Gamma_U}^U) = \SCB_t(\SCL; D_{\Gamma_U}^U)$ in the Hausdorff metric. Hence $x \in \SCB_t(\SCL; D_{\Gamma_U}^U)$, as this set is closed, which implies that $D_{\Gamma_U}^U(\SCL, x) \le t$. This completes the proof. The same proof works if we replace $\SCL$ with a boundary arc.
\end{proof}

\section{Distances across rectangles}
\label{sec:distances_across_rectangles}

Throughout this section, $D$ denotes a weak geodesic CLE$_4$ metric coupling in the sense of \Cref{def:weak_axioms}. The present section is devoted to proving the following result (\Cref{prop:distance_across_rectangle}), which, combined with the locality property, implies that the distance between the top and bottom sides of an $r \times 1$ rectangle is stochastically dominated by $1/r$ times a geometric random variable with a universal success probability when $r$ is large.

\begin{lemma}\label{rem:rectangle-monotonicity}
    For $0 < s < t$, the distance between the top and bottom sides of a $t \times 1$ rectangle is stochastically dominated by that of an $s \times 1$ rectangle.
\end{lemma}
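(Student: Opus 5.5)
We would prove this by a direct coupling: realize the $s \times 1$ rectangle inside the $t \times 1$ rectangle, so that every crossing of the smaller rectangle is also a crossing of the larger one, and then use locality to compare the two distances.

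\textbf{Setup.} Concretely, let $U$ be a simply connected domain (say the $t \times 1$ rectangle $R_t = (0,t)\times(0,1)$ itself, or $\BD$ after a conformal map). The distance between the top side $T_t$ and bottom side $B_t$ is understood in the sense of \Cref{subsec:setup}: $D(T_t, B_t)$ computed with the metric $D^{R_t}_{\Gamma_{R_t}}$ of the domain $R_t$, with the two sides playing the role of disjoint boundary arcs. Set $V \defeq (0,s)\times(0,1) \subset R_t$. We then apply the locality Axiom~\eqref{it:weak_axiom_locality} with this $V$ and $I = T_t$. Let $\{V_j\}_j$ be the components of $V^\star$.

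\textbf{Key steps.}

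1. By the second bullet of Axiom~\eqref{it:weak_axiom_locality}, internal distances in each $V_j$ dominate the ambient ones.

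2. The quantity for the $s$-rectangle is not naturally a distance in a single $V_j$. So the cleaner route is a time-dilation of conformal rectangles using conformal invariance (Axiom~\eqref{it:weak_axiom_conformal_invariance}) and domain monotonicity. Concretely:
   (a) By the uniform-exploration Axiom~\eqref{it:weak_axiom_uniform_exploration} and \Cref{lem:metric-ball-locality-segment}, the ball $\SCB_u(T_t)$ grown from the top side of $R_t$ has, restricted to the loops it discovers, a Markovian description.
   (b) On the event that it reaches $B_t$ by time $u$, some connected chain of discovered loops joins $T_t$ to $B_t$ (connectedness, Axiom~\eqref{it:weak_axiom_geodesic}\eqref{it:weak_axiom_geodesic_0}).

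3. To compare with $R_s$, we embed $R_s$ in $R_t$ with common top and bottom segments, $T_s = (0,s)\times\{1\}$ and $B_s = (0,s)\times\{0\}$.

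4. We couple the CLE$_4$ in $R_t$ with the one in $V = R_s$ via the domain Markov property. The loops $\Gamma_{R_t}|_{V_j}$ are CLE$_4$ in the smaller domains $V_j \subset R_s$. Any admissible chain crossing $R_s$ from top to bottom, built from loops of $\Gamma_{R_s}$, has to be compared with a crossing in $R_t$. The crossing in $R_t$ may use loops meeting $\partial V$, and its loops are surrounded by loops of $\Gamma_{R_t}$ not contained in $V$, which only helps.

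5. What we actually need is the inequality, in law,
\[ D^{R_t}(T_t,B_t) \le D^{R_t}(T_s, B_s) \le_{\mathrm{st}} D^{R_s}(T_s,B_s). \]
The first inequality is immediate since $T_s\subset T_t$ and $B_s \subset B_t$, so the infimum defining $D(A,B)$ is over a larger set. For the second, we grow the ball from $T_s$ in $R_t$ and compare it with the ball in $R_s$, using \Cref{lem:metric-ball-locality-segment} and the first bullet of locality to show that distances inside each $V_j$ are at most those of an independent copy of the $V_j$-metric. Since $V_j$ is a subdomain of $R_s$, a monotonicity statement for CLE$_4$ restricted to subdomains finishes the argument.

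\textbf{Main obstacle.} The main obstacle is step 5's second inequality. It requires that enlarging the domain from $R_s$ to $R_t$ only decreases the top–bottom distance stochastically. My first attempt is to avoid it altogether with a cleaner observation: the top–bottom distance of $R_t$ is at most the minimum over the $\lfloor t/s\rfloor$ disjoint sub-rectangles of width $s$. Each of these is, by locality, stochastically dominated by the distance in a disjoint $R_s$-copy; more precisely, the first bullet of Axiom~\eqref{it:weak_axiom_locality} gives $D^{R_t}\le D^{V_j}$.

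The remaining issue is that the loops of $R_t$ which cross $\partial V$ remove parts of $V$. A crossing of $R_s$ by a chain of $\Gamma_{R_s}$ loops should then be dominated by a crossing in $R_t$ that uses the crossing exterior loops as free shortcuts. Making this rigorous requires justifying that such exterior loops cost zero additional distance. This holds because the ball from the top arc reaching such a loop absorbs its interior (definition of $\SCB_t$), and the exterior loops have conditional law matching the conditioning in~\eqref{eq:weak_axiom_locality}.
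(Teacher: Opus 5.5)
There is a genuine gap, and it sits exactly where you suspect: the second inequality in Step 5,
\[
D^{R_t}(T_s,B_s)\le_{\mathrm{st}} D^{R_s}(T_s,B_s).
\]
This is a domain-monotonicity statement: enlarging the domain from $R_s$ to $R_t$ while keeping the two arcs fixed should decrease the distance. Nothing in \Cref{def:weak_axioms} provides it, and locality cannot produce it.

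The first bullet of Axiom~\eqref{it:weak_axiom_locality} bounds $D^{R_t}$ by the internal metrics on the components $V_j$ of $V^\star$. Conditionally, these are CLE$_4$ metrics on random domains that are strictly smaller than $R_s$, because the filled loops crossing $\{s\}\times(0,1)$ are carved out. The top--bottom extremal distance of such a $V_j$ is at least $1/s$, so its crossing distance should be stochastically larger than that of $R_s$, not smaller. The ``monotonicity statement for CLE$_4$ restricted to subdomains'' you invoke to finish therefore points the wrong way, and in any case it is essentially the lemma itself.

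The ``free shortcut'' argument in your last paragraph has a separate problem. It presupposes a coupling of $(\Gamma_{R_s},D^{R_s})$ with $(\Gamma_{R_t},D^{R_t})$ in which chains of $\Gamma_{R_s}$-loops can be compared with $D^{R_t}$-distances. The axioms give no such coupling: even under the loop-soup coupling, the loops of $\Gamma_{R_s}$ are not loops of $\Gamma_{R_t}$, and the two metrics are unrelated. The minimum over $\lfloor t/s\rfloor$ sub-rectangles does not avoid any of this. Each sub-rectangle faces the same comparison, with $V_j$ again not an $R_s$-copy, and for $t<2s$ there is only one sub-rectangle.

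The missing idea is to vary the arc in a fixed domain rather than the domain with fixed arcs. By conformal invariance (Axiom~\eqref{it:weak_axiom_conformal_invariance}), the crossing distance depends only on the conformal modulus. The paper maps both rectangles onto $\BH$ with the bottom side sent to $[1,\infty)$ and the top side to $[0,x(t)]$, respectively $[0,x(s)]$. Monotonicity of the modulus gives $x(s)<x(t)$.

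Within the single coupling on $\BH$ one then has $\SCB_u([0,x(s)])\subset \SCB_u([0,x(t)])$ for all $u$. This yields $D([0,x(t)],[1,\infty))\le D([0,x(s)],[1,\infty))$ almost surely. That is the same trivial arc-inclusion you already used for the first inequality in Step 5.

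Equivalently, in your own setting, choose $a\in(0,t)$ so that the quadrilateral $(R_t;[0,a]\times\{1\},B_t)$ has modulus $1/s$. Its crossing distance then has the law of the $R_s$-crossing, and it is almost surely at least $D^{R_t}(T_t,B_t)$.
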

\begin{proof}
    Conformally map the $t \times 1$ (resp.\ $s \times 1$) rectangle onto $\BH$ so that the bottom side is mapped to $[1, \infty)$ and the top side to $[0, x(t)]$ (resp.\ $[0, x(s)]$); the monotonicity of the conformal modulus gives $x(s) < x(t)$. By conformal invariance (Axiom~\eqref{it:weak_axiom_conformal_invariance}) we may compare the two distances within a single coupling on $\BH$, and $\SCB_u([0, x(s)]; D_{\Gamma_\BH}^\BH) \subset \SCB_u([0, x(t)]; D_{\Gamma_\BH}^\BH)$ for all $u \ge 0$ by the definition of the balls, so $D_{\Gamma_{\BH}}^{\BH}([0, x(t)], [1, \infty)) \le D_{\Gamma_{\BH}}^{\BH}([0, x(s)], [1, \infty))$ a.s.
\end{proof}

\begin{proposition}\label{prop:distance_across_rectangle}
For each $r > 0$, write
\begin{equation*}
	V_r \defeq \left\{z \in \BC : 0 < \Re(z) < r, \ 0 < \Im(z) < 1\right\};
\end{equation*}
write $T_r$ (resp.\ $B_r$) for the closed top (resp.\ bottom) side of $V_r$; let $\Gamma_{V_r}$ be a non-nested CLE$_4$ in $V_r$. (By convention, the quadrilateral $(V_r; T_r, B_r)$ has conformal modulus $1/r$.) Then there is a deterministic constant $c \in (0, 1)$ such that 
\begin{equation*}
	\BP\!\left\lbrack D_{\Gamma_{V_r}}^{V_r}(T_r, B_r) \le 1/r\right\rbrack \ge c
\end{equation*}
for all sufficiently large $r > 0$. 
\end{proposition}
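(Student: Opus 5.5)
We first map to a canonical domain: by conformal invariance (Axiom~\eqref{it:weak_axiom_conformal_invariance}) it suffices to compare $D(T_r,B_r)$ with the uniform exploration, whose law we know from Axiom~\eqref{it:weak_axiom_uniform_exploration}. The scaling heuristic is as follows. A long flat rectangle $V_r$ of height $1$ and width $r$ contains about $r$ unit squares. The time for the exploration from the whole boundary to reach the loop surrounding a given point is $\mathrm{Exp}(1)$. However, the distance from the \emph{top} side $T_r$ (a long arc, of harmonic measure close to one half seen from points in the middle) to the bottom should be much smaller, of order $1/r$. The cleanest route is to grow the ball from $T_r$ and ask when it first touches $B_r$.

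\emph{Step 1: a time change.} We use the Poissonian description. By conformal invariance, map $V_r$ to $\BD$ so that $T_r$ becomes an arc $\alpha_r$ and $B_r$ an arc $\beta_r$. Both have harmonic measure close to $1/2$ from the image of the centre, but the two separating side arcs are tiny, of size $e^{-\pi r/2}$ roughly, since the modulus is $1/r$. The ball growth from $\alpha_r$ is a restriction of the uniform exploration, in which only bubbles rooted on $\alpha_r$ (and subsequently on explored boundary) are added. Rather than analysing this directly, we use the monotonicity of \Cref{rem:rectangle-monotonicity}: it suffices to show that, for $r$ large, with probability at least $c$ some loop $\SCL$ touches neither side and has $D(\SCL,T_r)\le 1/(2r)$ and $D(\SCL,B_r)\le1/(2r)$. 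By Axiom~\eqref{it:weak_axiom_geodesic}\eqref{it:weak_axiom_geodesic_2} and the two-ball inequality, this gives $D(T_r,B_r)\le 1/r$.

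\emph{Step 2: use the whole-boundary exploration.} Partition $V_r$ into about $r$ disjoint unit squares $Q_1,\dots,Q_{\lfloor r\rfloor}$. By the uniform exploration from $\partial V_r$, for each centre $z_k$ the discovery time $\tau_{z_k}$ of $\SCL(z_k)$ is $\mathrm{Exp}(1)$. Using the Markov structure of the exploration (loops in distant squares are explored nearly independently once the unexplored region splits, via \Cref{lem:metric-ball-locality-segment}), we argue that with probability bounded below, some $k$ has $\tau_{z_k}\le 1/(4r)$. Indeed, the expected number of such $k$ is about $r(1-e^{-1/(4r)})\approx 1/4$, and a second-moment bound using the approximate independence gives a probability at least $c>0$. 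Moreover, the chosen loop has macroscopic size with conditionally positive probability.

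\emph{Step 3: from $\partial V_r$ to the two sides.} A loop discovered at time $\le 1/(4r)$ lies in the ball $\SCB_{1/(4r)}(\partial V_r)$. We must ensure that the ball reached it from \emph{both} $T_r$ and $B_r$ and not via the vertical sides, which are at distance $\gg 1$ from $Q_k$ for $k$ in the middle third. Then $\SCB_{1/(4r)}(T_r)\cup\SCB_{1/(4r)}(B_r)$ contains $\mathrm{int}(\SCL(z_k))$. By connectedness of balls (Axiom~\eqref{it:weak_axiom_geodesic}\eqref{it:weak_axiom_geodesic_0}) and the Poissonian rooting, conditionally the loop surrounding $z_k$ is hit through a chain of bubbles. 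The two sub-balls from $T_r$ and from $B_r$ both reaching it within time $1/(4r)$ happens with probability bounded below. Combining this with Step 1 yields the claim.

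\emph{Main obstacle.} The hard part is Step 3 together with the decorrelation in Step 2. We must show that a short path from the top exists simultaneously with one from the bottom, and control the dependence between distant squares. I would first try to replace Step 2--3 by a direct Poisson computation in the $\BD$ picture: the bubble intensity rooted on $\alpha_r$ that disconnects $\alpha_r$ from $\beta_r$ is of order $r$, because the bubble measure of bubbles crossing a conformal rectangle of modulus $1/r$ scales like $r$. Under this approach, a single bubble discovered within time $1/r$ already connects $T_r$ to $B_r$ with probability $1-e^{-\Theta(1)}\ge c$.
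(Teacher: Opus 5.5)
There is a genuine gap, and it is the step you flag yourself. Write $D$ for $D_{\Gamma_{V_r}}^{V_r}$. Since $\SCB_t(\partial V_r)$ is the union of the balls of radius $t$ grown from the four sides, Step~2 at best produces a loop $\SCL(z_k)$ within $1/(4r)$ of \emph{one} side. Step~3 then asserts that, with conditional probability bounded below, this loop is within $1/(4r)$ of both $T_r$ and $B_r$. That is essentially the whole content of the proposition, and neither connectedness of balls nor the Poissonian rooting gives it: an early-discovered loop is reached from one side, and nothing forces the ball from the opposite side to reach the same loop.

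The supporting claims also need more than you give:
\begin{itemize}
\item Being Euclidean-far from the vertical sides does not mean being $D$-far from them. You need a metric statement such as \Cref{lem:distance_across_rectangle-proof}.
\item \Cref{lem:metric-ball-locality-segment} gives independence only after the unexplored region has split between two squares. That split is itself an event of the type you are trying to produce, so it cannot supply the decorrelation.
\end{itemize}

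Your fallback does not work either. The axioms identify only the law of the ball process grown from all of $\partial V_r$, so there is no Poissonian description of $\SCB_t(T_r)$ as bubbles rooted on $T_r$. Moreover, an SLE$_4$ bubble meets the boundary of the unexplored region only at its root, so a single bubble can neither connect $T_r$ to $B_r$ nor disconnect them.

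The missing idea is that you do not need one loop close to both sides. You only need a connected set that crosses from $B_r$ to $T_r$, lies inside $\SCB_{1/(2r)}(\partial V_r)$, and stays away from the vertical sides. The paper obtains it as follows.
\begin{itemize}
\item \emph{Level lines lie in the explored region.} Use the GFF coupling of \Cref{subsec:gff_labeled_cle_4}. A level line of $\Psi+t-\pi/2$ started on the boundary is an SLE$_4(-2t/\pi;2t/\pi-2)$ contained in the BCLE$_4(-2t/\pi)$, hence in $\SCB_t(\partial V_r)$.
\item \emph{A crossing exists with positive probability.} Take $t=1/(2r)$. The Bessel estimate behind \Cref{lem:guiding_sle4} shows that the level line from an integer point $x$ of $B_r$ in the middle third crosses $V_{x-1/4,x+1/4}$ to $T_r$ with probability $\gtrsim 1/r$. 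Absolute continuity of level lines in separated boxes (via the Markov property of the field) dominates the number of crossings by a binomial with $\asymp r$ trials and success probability $\asymp 1/r$.
\item \emph{The vertical sides do not reach the crossing.} Work on the event of \Cref{lem:distance_across_rectangle-proof} with $\delta>1/r$. Connectedness of balls (Axiom~\eqref{it:weak_axiom_geodesic}\eqref{it:weak_axiom_geodesic_0}) then prevents the balls of radius $1/(2r)$ from $I_0$ and $I_r$ from reaching the crossing.
\item \emph{Topological conclusion.} The crossing is therefore covered by $\{w:D(T_r,w)\le 1/(2r)\}$ and $\{w:D(B_r,w)\le 1/(2r)\}$. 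Both sets are closed by \Cref{lem:lower-semi-continuity}, and the crossing meets each of them. Being connected, it contains a common point. Axiom~\eqref{it:weak_axiom_geodesic}\eqref{it:weak_axiom_geodesic_2}, together with the symmetry of set distances, then gives $D(T_r,B_r)\le 1/r$.
\end{itemize}
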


\begin{corollary}\label{cor:moment_across_rectangle}
For each $p > 0$, there exists $C = C(p) > 0$ such that
\begin{equation*}
	\BE\!\left\lbrack D_{\Gamma_{V_r}}^{V_r}(T_r, B_r)^p\right\rbrack \le C r^{-p}
\end{equation*}
for all sufficiently large $r > 0$. In fact, for all sufficiently large $r > 0$, $r D_{\Gamma_{V_r}}^{V_r}(T_r, B_r)$ is stochastically dominated by a geometric random variable taking values in $\BN$ with a universal success probability.
\end{corollary}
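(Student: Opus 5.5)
We derive \Cref{cor:moment_across_rectangle} from \Cref{prop:distance_across_rectangle} by slicing the rectangle and iterating with the locality property. The moment bound then follows from the geometric domination, since a geometric random variable $G$ with success probability $c$ has $\BE[G^p] \le C(p,c) < \infty$. So the real task is to show that $rD_{\Gamma_{V_r}}^{V_r}(T_r,B_r)$ is dominated by such a $G$.

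\emph{Slicing.} Fix $r$ large and cut $V_r$ into horizontal strips. Let $S_k \defeq \{z : 0<\Re z<r,\ (k-1)/r' < \Im z < k/r'\}$ for a suitable $r'$, so that each strip is, after scaling, a rectangle of aspect ratio $r r'$. To keep moduli comparable, it is more convenient to proceed dynamically rather than by a fixed slicing:
\begin{itemize}
\item Grow the ball $\SCB_t(T_r)$ and stop at the first time it reaches $B_r$.
\item At each stage, let $\tau_k$ be the time at which the ball from $T_r$ has advanced by the distance $1/r$ budget.
\item Condition on $\SCB_{\tau_k}(T_r)$ using \Cref{lem:metric-ball-locality-segment}. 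In the complementary component adjacent to $B_r$, the CLE$_4$ and its internal metric are a fresh copy, and by the second bullet of that lemma the distance in the full domain is at most the internal distance.
\end{itemize}

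\emph{Monotonicity.} The key observation is that the remaining domain, with the arc $\partial\SCB_{\tau_k} \cap V_r$ as the new ``top'' and $B_r$ as the ``bottom'', is a quadrilateral whose modulus is no larger than that of $(V_r;T_r,B_r)$. This holds because the explored region only shrinks the separating distance, by monotonicity of extremal length. By conformal invariance (Axiom~\eqref{it:weak_axiom_conformal_invariance}) and \Cref{rem:rectangle-monotonicity}, the conditional law of the remaining distance is therefore stochastically dominated by $D_{\Gamma_{V_r}}^{V_r}(T_r,B_r)$ itself. Using Axiom~\eqref{it:weak_axiom_geodesic}\eqref{it:weak_axiom_geodesic_2} to add distances, and \Cref{prop:distance_across_rectangle}, each $1/r$-block of time independently has conditional probability at least $c$ of reaching $B_r$. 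Hence the number of blocks needed is dominated by a geometric variable with success probability $c$.

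\emph{Main obstacle.} The delicate step is making the comparison with a fresh rectangle rigorous. The arc from which we regrow is random and is a union of loop interiors rather than a boundary arc of a domain carrying its own CLE, so the hypotheses of the locality lemma (deterministic arc $I$) are not literally met. I would handle this in two ways:
\begin{itemize}
\item Apply the locality axiom with the full ball from $T_r$ as the conditioning datum, since the $\sigma$-algebra in \eqref{eq:weak_axiom_locality} includes balls grown from boundary arcs.
\item Approximate the unexplored component by dyadic domains as in the proof of \Cref{lem:metric-ball-locality-segment}. Within this approximation, the distance from $B_r$ to the explored set in the full metric is bounded by the distance between the two marked arcs in the component's internal metric, whose law is that of a conformal rectangle of modulus at most $1/r$.
\end{itemize}
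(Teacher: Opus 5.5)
Your proposal is correct and is essentially the paper's argument. You grow the metric ball from one side in increments of at most $1/r$ and use locality to obtain a fresh CLE$_4$ metric in the complementary conformal rectangle, whose width is at least $r$ by monotonicity of the modulus. You then apply the proposition at each step to get a per-step success probability at least $c$, which yields geometric domination and hence all moments. The differences are only cosmetic: the paper grows from $B_r$ with steps $1/r_j$ and applies the proposition directly at $r_j \ge r$ rather than going through the rectangle-monotonicity lemma. The concern about regrowing from a random arc is handled in the paper at the same level of detail you propose, via the locality lemma for stopped balls.
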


\begin{proof}[Proof of \Cref{cor:moment_across_rectangle} assuming \Cref{prop:distance_across_rectangle}]
\stepn{step:mom-first}{The first stage} Let $c$ be as in \Cref{prop:distance_across_rectangle}. Write $X \defeq \left\lceil rD_{\Gamma_{V_r}}^{V_r}(T_r, B_r)\right\rceil$. We claim that for all $r>0$ sufficiently large, $X$ is stochastically dominated by $Y+1$ where $Y$ is a geometric random variable with success probability $c$ and $\BP[Y = k] = c(1-c)^k$ for $k \ge 0$ (so $Y + 1$ is the $\BN$-valued geometric variable in the statement). Indeed, let $r>0$ be sufficiently large such that the statement of \Cref{prop:distance_across_rectangle} holds. Suppose that we are working on the event that $\SCB_{1/r}(B_r;D_{\Gamma_{V_r}}^{V_r})$ does not touch the top side of $V_r$ and note that the probability of this event is at most $1-c$ by \Cref{prop:distance_across_rectangle}. (This event equals $\{D_{\Gamma_{V_r}}^{V_r}(T_r, B_r) > 1/r\}$ by right-continuity and closedness of the ball process and the symmetry of the set-to-set distance; on its complement, $X \le 1$.) Let $a_1$ (resp.\ $b_1$) be the point on $(\{0\} \times [0,1]) \cap \SCB_{1/r}(B_r;D_{\Gamma_{V_r}}^{V_r})$ (resp.\ $(\{r\} \times [0,1]) \cap \SCB_{1/r}(B_r;D_{\Gamma_{V_r}}^{V_r})$) with the largest imaginary part; these intersections are a.s.\ nonempty, as the bottom corners lie in the closed ball (loops accumulating there are discovered at arbitrarily small times). Moreover, let $\phi_1$ denote the conformal mapping from $U_1$ onto $V_{r_1}$ for some $r_1>0$ such that $\phi_1(a_1) = 0, \phi_1(\ri) = \ri, \phi_1(r+\ri ) = r_1 + \ri $, and $\phi_1(b_1) = r_1$ (such a map exists and is unique, for a unique $r_1 > 0$, by the uniformization of quadrilaterals; see \cite{CoInAhl}), where $U_1$ denotes the connected component of $V_r \setminus \SCB_{1/r}(B_r;D_{\Gamma_{V_r}}^{V_r})$ whose boundary contains $T_r$. By the monotonicity of conformal modulus (cf., e.g., \cite[Section~4-3]{CoInAhl}), we have $r_1 \geq r$. Therefore, combining with \Cref{lem:metric-ball-locality} and \Cref{prop:distance_across_rectangle}, we obtain that, given $U_1$, with conditional probability at least $c$, $D_{\Gamma_{V_{r_1}}}^{V_{r_1}}(T_{r_1}, B_{r_1}) \leq 1 / r$. In particular, given $U_1$, with conditional probability at least $c$, we have
\begin{align*}
D_{\Gamma_{V_r}}^{V_r}(T_r,\SCB_{1/r}(B_r;D_{\Gamma_{V_r}}^{V_r}); U_1) \leq 1 / r,
\end{align*}
and hence $D_{\Gamma_{V_r}}^{V_r}(T_r, B_r) \leq 2 / r$.

\stepn{step:mom-induction}{The inductive step} Suppose that for some $n \ge 2$, we have defined simply connected domains $U_n \subset \cdots \subset U_1 \subset V_r$ and conformal mappings $\phi_1,\ldots,\phi_n$, such that for all $2 \le j \le n$, $U_j$ is the connected component of $U_{j-1} \setminus \phi_{j-1}^{-1}(\SCB_{1/r_{j-1}}(B_{r_{j-1}};D_{\Gamma_{V_{r_{j-1}}}}^{V_{r_{j-1}}}))$ whose boundary contains $T_r$, and $\phi_j$ maps $U_j$ onto $V_{r_j}$ for some $r_j>0$ with $\phi_j(a_j) = 0$, $\phi_j(\ri) = \ri$, $\phi_j(r+\ri ) = r_j+\ri$, and $\phi_j(b_j) = r_j$, where $a_j$ (resp.\ $b_j$) is the point on $(\{0\} \times [0,1]) \cap \phi_{j-1}^{-1}( \SCB_{1/r_{j-1}}(B_{r_{j-1}};D_{\Gamma_{V_{r_{j-1}}}}^{V_{r_{j-1}}}))$ (resp.\ $(\{r\} \times [0,1]) \cap \phi_{j-1}^{-1}(\SCB_{1/r_{j-1}}(B_{r_{j-1}};D_{\Gamma_{V_{r_{j-1}}}}^{V_{r_{j-1}}}))$) with the largest imaginary part, and such that the following holds:
\begin{align*}
 D_{\Gamma_{V_{r_j}}}^{V_{r_j}}(T_{r_j}, B_{r_j}) > 1 / r,\quad \forall j \in [1, n - 1]_\BZ.
\end{align*}
Again, $r_n \geq r$. Therefore, \Cref{lem:metric-ball-locality} together with \Cref{prop:distance_across_rectangle} imply that conditionally on $U_n$, we have, off an event of probability at most $1-c$, that $D_{\Gamma_{V_{r_n}}}^{V_{r_n}}(B_{r_n}, T_{r_n}) \leq 1 / r$. In particular, on this event, we have that
\begin{align*}
D_{\Gamma_{V_r}}^{V_r}(T_r,\phi_{n-1}^{-1}( \SCB_{1/r_{n-1}}(B_{r_{n-1}};D_{\Gamma_{V_{r_{n-1}}}}^{V_{r_{n-1}}})); U_n) \leq 1 / r.
\end{align*}
It follows that $r D_{\Gamma_{V_r}}^{V_r}(T_r, B_r) \leq N+1$, where $N$ is the first $n \in \BN$ such that $D_{\Gamma_{V_{r_n}}}^{V_{r_n}}(B_{r_n}, T_{r_n}) \leq 1 / r$ (each successive ball lies within distance $1/r_j \le 1/r$ of the previous one in the corresponding internal metric, internal distances dominate $D_{\Gamma_{V_r}}^{V_r}$ by Axiom~\eqref{it:weak_axiom_locality}, and these $N+1$ bounds telescope via the triangle inequality). This proves the claim since $N$ is stochastically dominated by a geometric random variable with success probability $c$ (the step-$n$ bound holds conditionally on $\sigma(U_1, \ldots, U_n)$).

\stepn{step:mom-concl}{Conclusion of the proof} Thus, we obtain that $\BE\lbrack D_{\Gamma_{V_r}}^{V_r}(T_r, B_r)^p\rbrack \le \BE\lbrack X^p\rbrack r^{-p} = O(r^{-p})$ as $r \to \infty$. This completes the proof. 
\end{proof}

The primary ingredient in the proof of \Cref{prop:distance_across_rectangle} is the following property of SLE$_4$ with force points.

\begin{lemma}\label{lem:guiding_sle4}
Let $\gamma$ be a continuous path in $\overline\BD$ from $-\ri$ to $\ri$ that meets $\partial \BD$ only at $-\ri$ and $\ri$, and fix $\varepsilon \in (0,1)$. Then there exist constants $c = c(\gamma, \varepsilon) > 0$ and $t_\ast = t_\ast(\gamma, \varepsilon) \in (0, \pi)$ such that for each $t \in (0, t_\ast]$, the following is true: Let $\eta|_{[0, \tau]}$ be an SLE$_4(-2t/\pi; 2t/\pi - 2)$ curve in $\BD$ starting from $-\ri$, with force points at $(-\ri)^-$ and $(-\ri)^+$, targeting $\ri$, and stopped at the first time $\tau$ at which it hits $B_\varepsilon(\ri) \cap \partial\BD$. (Since $-2t/\pi + (2t/\pi - 2) = \kappa - 6$, this curve is target invariant \cite{SLE-CoC}, so the law of $\eta|_{[0,\tau]}$ does not depend on the choice of target.) Then it holds with probability at least $ct$ that $\eta([0, \tau]) \subset B_\varepsilon(\gamma)$. 
\end{lemma}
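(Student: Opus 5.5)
The plan is to split the event $\{\eta([0,\tau]) \subset B_\varepsilon(\gamma)\}$ into two parts. First, an initial macroscopic departure from the boundary near $-\ri$; this costs a factor of order $t$. Second, a guided continuation along $\gamma$; this costs only a constant that is uniform in $t \in (0,t_\ast]$.

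The heuristic for the factor $t$ comes from the GFF picture of \Cref{subsec:level_lines_of_gff}. With $\lambda=\pi/2$ and weights $\rho_{\rL}=-2t/\pi$, $\rho_{\rR}=2t/\pi-2$, the boundary data are $-\lambda(1+\rho_{\rL}) = -\lambda+t$ on the left and $\lambda(1+\rho_{\rR}) = -\lambda+t$ on the right. These are the same constant, so $\eta$ is a level line of a field with constant boundary value $-\lambda+t$. As $t\downarrow 0$ it degenerates to a curve hugging $\partial\BD$, and it should make a macroscopic excursion into the bulk only with probability of order $t$. This matches the normalization of the uniform exploration, in which a macroscopic bubble appears during time $t$ with probability of order $t$.

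\emph{Step 1 (initial excursion).} Fix small $\delta>0$ (depending on $\gamma,\varepsilon$) and a point $\gamma(s_0)$ at distance $\gg\delta$ from $\partial\BD$ but within $\varepsilon/4$ of $-\ri$ along $\gamma$. I would show that with probability at least $c_1 t$ the curve reaches $B_\delta(\gamma(s_0))$ while staying in $B_{\varepsilon/2}(\gamma)$, and before $\eta$ exits $B_{\varepsilon/2}(-\ri)$ along the boundary. To do this I would work in $\BH$ with the driving function and force points $V^{\rL},V^{\rR}$ of~\eqref{eqn:multiforce_point_sde}. The process $Z=(W-V^{\rR})/\sqrt{\kappa}$ is a Bessel process of dimension $d=1+2(\rho_{\rR}+2)/\kappa = 1+t/\pi$. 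Its excursions away from $0$ correspond to the curve leaving the right boundary. The rate, in local time, of excursions reaching height $h$ is proportional to $(d-1)h^{d-2}$, which is of order $t$ for fixed $h$; this gives the factor $t$. One then has to check two things: that the left force point (weight $-2t/\pi$, which is close to $0$) does not destroy this estimate, and that, given a macroscopic excursion, the curve reaches $B_\delta(\gamma(s_0))$ inside $B_{\varepsilon/2}(\gamma)$ with probability bounded below uniformly in $t$. For the latter I would use the Markov property at the hitting time of height $h$ together with Step 2.

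\emph{Step 2 (following $\gamma$).} At the stopping time of Step 1, the tip is at distance comparable to $\delta$ from $\partial\BD$, and the force points have harmonic-measure separation bounded below. Conditionally, the remaining curve is an SLE$_4(\underline\rho)$ whose weights lie in a compact set with partial sums $>-2$. The standard ``SLE follows any path with positive probability'' argument gives a conditional probability $c_2>0$, uniform in $t\in(0,t_\ast]$, that $\eta$ continues inside $B_\varepsilon(\gamma)$ to within $\varepsilon/4$ of $\ri$. This argument compares the driving function with a smooth Loewner driving function tracing a thin tube around $\gamma$, using the support theorem for Brownian motion and continuity of the Loewner chain in the driving function while the force points stay separated. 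After that, the right weight close to $-2$ makes the curve hit $\partial\BD$ quickly. Target invariance (noted in the statement) lets me retarget at a boundary point inside $B_{\varepsilon/4}(\ri)$. Then a second positive-probability estimate, again uniform in $t$, shows that $\eta$ hits $B_\varepsilon(\ri)\cap\partial\BD$ without leaving $B_\varepsilon(\gamma)$. Multiplying the two steps gives the bound $c_1c_2t$.

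The main obstacle is Step 1: obtaining a lower bound linear in $t$, uniformly, for a degenerate process whose dimension tends to $1$, where the excursion rate vanishes. I would first try the explicit Bessel excursion computation, absorbing the near-zero left weight by Girsanov on the event that $W-V^{\rL}$ stays bounded below. If that proves unwieldy, I would instead use the GFF/uniform exploration coupling of \Cref{prop coupling TVS uniform explo}, where the factor $t$ is built in through the Poissonian bubble intensity.
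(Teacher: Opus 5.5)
Your architecture matches the paper's. The factor $t$ comes from the dimension-$(1+t/\pi)$ Bessel process attached to the right force point, and the continuation along $\gamma$ costs only a $t$-independent constant, by absolute continuity with respect to ordinary SLE$_4$.

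\textbf{The gap.} The gap is the left force point, which is exactly what the paper's proof of this lemma deals with. Your proposed Girsanov argument, on the event that $W-V^{\rL}$ stays bounded below, cannot start at time $0$: since $W_0=V^{\rL}_0$, that event is empty. This is not just a matter of choosing a better event. With $\rho_{\rL}=-2t/\pi<0=\kappa/2-2$, the curve hits $\BR_-$ with positive probability, so by scaling it hits $(-\epsilon,0)$ with that same probability for every $\epsilon>0$. The curve with only the right force point never does this, so no density comparison between the two laws can begin at time $0$.

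Relatedly, your Step~2 claim that, at the end of Step~1, the tip is separated in harmonic measure from \emph{both} force points is unjustified on the left. On the right it can be built into the Bessel event. Your fallback via \Cref{prop coupling TVS uniform explo} does not help: it only gives the inclusion $\eta\subset\SCB_t(\partial\BD)$, which bounds probabilities from above, not from below.

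\textbf{The missing idea.} You need a preliminary phase that creates the separation and costs only a constant.
\begin{enumerate}
\item As $t\downarrow 0$, the curve converges in probability to $\BR_+$, by the argument of \cite[Proposition~3.1]{DKM24}. So with probability at least $1/2$ it first leaves $R=[-\delta,2\delta_0]\times[0,\delta]$ through its right side, at a time $\sigma$.
\item A Beurling estimate on harmonic measure from $\infty$ then gives $W_\sigma-V^{\rL}_\sigma>\delta_0$ and $V^{\rR}_\sigma-W_\sigma\le\delta_0/100$.
\item After $\sigma$, \cite[Lemma~2.8]{miller2017intersections} removes the left force point at bounded cost.
\item \Cref{lem:guiding_sle4_one_force_point} then applies to the remaining SLE$_4(2t/\pi-2)$. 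A nonnegative initial gap only stochastically increases the Bessel process and decreases $\int X^{-1}$.
\end{enumerate}

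If you prefer to keep both force points, note that $\rho_{\rL}$ enters $V^{\rR}-W$ only through the nonnegative drift $\frac{2t/\pi}{W-V^{\rL}}$. Its time integral is $\frac{t}{\pi}\lvert V^{\rL}_s\rvert$, so the Bessel bounds transfer by pathwise comparison rather than Girsanov. You would still need a separation step like the one above before handing off to Step~2.

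\textbf{Two smaller points.}
\begin{itemize}
\item The uniformity in Step~2 cannot come from the weights lying in a compact set with partial sums $>-2$, because $\rho_{\rR}=2t/\pi-2\to-2$. It must come from the curve staying away from the force points, where \cite[Lemma~2.8]{miller2017intersections} gives bounds depending only on the boundary data.
\item Your excursion heuristic needs the right clock, namely the force-point displacement $\int X_s^{-1}\,\mathrm{d}s$. For $d>1$ this is not a local time, which is why the paper instead proves the joint estimate of \Cref{lem:bessel_int_lbd}.
\end{itemize}
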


For the rest of this section, write
\begin{align*}
	V_{x,y} &\defeq \left\{z \in \BC : x < \Re(z) < y, \ 0 < \Im(z) < 1\right\}, \quad \forall x, y \in \BR \text{ with } x < y; \\
	I_x &\defeq \left\{z \in \BC : \Re(z) = x, \ 0 \le \Im(z) \le 1\right\}, \quad \forall x \in \BR.
\end{align*}

\begin{proof}[Proof of \Cref{prop:distance_across_rectangle} assuming \Cref{lem:guiding_sle4}]
\stepn{step:dar-levellines}{Level lines lie in metric balls} By Axiom~\eqref{it:weak_axiom_uniform_exploration}, the loops of $\Gamma_{V_r}$ with labels $t_\SCL \defeq D_{\Gamma_{V_r}}^{V_r}(\SCL, \partial V_r)$ form a CLE$_4$ decorated by its uniform exploration. Let $\Psi$ be a zero-boundary GFF in $V_r$ attached to it via the coupling of \Cref{subsec:gff_labeled_cle_4}, so that the boundary values of $\Psi$ on $\mathop{\mathrm{int}}(\SCL)$ are $\pi - t_\SCL$. First, we observe that for each $t \in (0, \pi)$, the following holds: Let $\eta$ be a level line of $\Psi + t - \pi/2$ starting from a boundary point. (Thus, $\eta$ is an SLE$_4(-2t/\pi; 2t/\pi - 2)$ curve.) Then $\eta$ is contained in the BCLE$_4(-2t/\pi)$ associated with $\Psi$. In particular, each point of $\eta$ is of $D_{\Gamma_{V_r}}^{V_r}$-distance at most $t$ to the boundary: $\eta$ lies in the range $\mathbb{A}_{-t,-t+\pi} = \mathbb{B}_t^1$ of this BCLE$_4$ (\Cref{remark coupling TVS BCLE}), which lies in the region explored by time $t$, and the latter equals $\SCB_t(\partial V_r; D_{\Gamma_{V_r}}^{V_r})$ by \Cref{prop coupling TVS uniform explo} and Axiom~\eqref{it:weak_axiom_uniform_exploration}.

\stepn{step:dar-crossing}{A crossing level line exists with positive probability}

\substepn{step:darc-event}{The crossing event} For each $x \in B_r$, write $\eta_x$ for the level line of $\Psi + 1/(2r) - \pi/2$ starting from $x$, targeting $x+\ri$, and stopped upon exiting $V_{x - 1/4,x + 1/4}$. Consider the event that
\begin{equation}\label{eq:distance_across_rectangle_proof_0}
	\parbox{.85\linewidth}{there exists $x \in [r / 3 + 1, 2r / 3 - 1]_{\BZ}$ such that $\eta_x$ hits $T_r$ before exiting $V_{x - 1/4,x + 1/4}$.}
\end{equation}

\substepn{step:darc-prob}{Comparison with an independent curve} We \emph{claim} that there is a deterministic constant $p \in (0, 1)$ such that~\eqref{eq:distance_across_rectangle_proof_0} holds with probability at least $p$ for all sufficiently large $r > 0$. For each $x \in [r / 3 + 1, 2r / 3 - 1]_{\BZ}$, let $\eta_x^\prime$ be an SLE$_4(-1/(\pi r) ; 1/(\pi r) - 2)$ curve in $V_{x - 1/2,x + 1/2}$ starting from $x$, targeting $x+\ri$, and stopped upon exiting $V_{x - 1/4,x + 1/4}$. By \Cref{lem:guiding_sle4} (applied with a conformal map from $V_{x-1/2,x+1/2}$ onto $\BD$ sending $x$ to $-\ri$ and $x+\ri$ to $\ri$, $\gamma$ the image of the segment from $x$ to $x+\ri$, and $\varepsilon$ small enough that the preimages of $B_\varepsilon(\gamma)$ and $B_\varepsilon(\ri) \cap \partial\BD$ lie in $V_{x-1/4,x+1/4}$ and $T_r$), for all $r>0$ sufficiently large, it holds with probability at least a universal constant multiple of $1/r$ that $\eta_x^\prime$ hits $T_r$ before exiting $V_{x - 1/4,x + 1/4}$. On the other hand, by the Markov property, given $\{\eta_y : y < x\}$ the field is a GFF on the complement of these curves, with boundary data bounded by universal constants uniformly over the realizations of the $\eta_y$, over $x$, and over $r$; since $\eta_x$ and $\eta_x^\prime$ stay in $V_{x - 1/4, x + 1/4}$, at distance $1/4$ from where the two fields differ, \cite[Lemma~2.8]{miller2017intersections} gives that the conditional law of $\eta_x$ given $\{\eta_y : y \in [r / 3 + 1, 2r / 3 - 1]_{\BZ}, \ y < x\}$ and the law of $\eta_x^\prime$ are mutually absolutely continuous, and the Radon--Nikodym derivative is bounded below and above by deterministic constants. Thus, we conclude that the random variable
\begin{equation*}
	\#\!\left\{x \in [r / 3 + 1, 2r / 3 - 1]_{\BZ} : \eta_x \text{ hits } T_r \text{ before exiting } V_{x - 1/4,x + 1/4}\right\}
\end{equation*}
stochastically dominates a binomial random variable with $\#[r / 3 + 1, 2r / 3 - 1]_{\BZ}$ trials and success probability given by a deterministic constant multiple of $1/r$ (see \cite[Lemma~2.6]{GeoLQGnSLE}). Such a binomial random variable is positive with probability bounded below by a universal constant, since its number of trials is of order $r$ and its success probability is of order $1/r$; this completes the proof of the claim.

\stepn{step:dar-concl}{Conclusion of the proof} Now, by the following \Cref{lem:distance_across_rectangle-proof} (applied with $p/4$ in place of $p$, together with the left-right reflection symmetry and a union bound), we may choose a sufficiently small $\delta > 0$ so that
\begin{equation}\label{eq:distance_across_rectangle_proof_1}
	\BP\!\left\lbrack D_{\Gamma_{V_r}}^{V_r}(I_0, I_{r/3}) \wedge D_{\Gamma_{V_r}}^{V_r}(I_{2r/3}, I_r) \ge \delta\right\rbrack \ge 1 - p/2
\end{equation}
for all sufficiently large $r > 0$. Thus, it suffices to show that, on the event that~\eqref{eq:distance_across_rectangle_proof_0} and~\eqref{eq:distance_across_rectangle_proof_1} hold, we have $D_{\Gamma_{V_r}}^{V_r}(T_r, B_r) \le 1/r$. 
Indeed, suppose that~\eqref{eq:distance_across_rectangle_proof_0} and~\eqref{eq:distance_across_rectangle_proof_1} hold and note that this event has probability at least $p/2$ for all $r>0$ sufficiently large. Fix $x \in [r / 3 +1, 2r/3 - 1]_{\BZ}$ such that $\eta_x$ hits $T_r$ before exiting $V_{x-1/4, x+1/4}$. Also, it follows from the discussion of the first paragraph of the proof that each point of $\eta_x$ has $D_{\Gamma_{V_r}}^{V_r}$-distance at most $1 / (2r)$ to $\partial V_r$. On the other hand, when $1 / r < \delta$ and since $\eta_x$ does not intersect $I_{r/3} \cup I_{2r/3}$, it follows from~\eqref{eq:distance_across_rectangle_proof_1} that any point of $\eta_x$ has $D_{\Gamma_{V_r}}^{V_r}$-distance at least $1/r$ from both the left and right sides of $V_r$ (by Axiom~\eqref{it:weak_axiom_geodesic}\eqref{it:weak_axiom_geodesic_0}, a ball from $I_0$ reaching $\eta_x$ is connected, so it meets the closed segment $I_{r/3}$, forcing $D_{\Gamma_{V_r}}^{V_r}(I_0, I_{r/3}) < 1/r < \delta$; likewise on the right). Thus, we conclude that each point of $\eta_x$ has $D_{\Gamma_{V_r}}^{V_r}$-distance at most $1/(2r)$ to either $T_r$ or $B_r$. This, together with the lower semi-continuity (\Cref{lem:lower-semi-continuity}), implies that there exists a point on $\eta_x$ that has $D_{\Gamma_{V_r}}^{V_r}$-distance at most $1/(2r)$ to both $T_r$ and $B_r$. 
\end{proof}

\begin{lemma}\label{lem:distance_across_rectangle-proof}
    For each $p \in (0, 1)$, we may choose a sufficiently small $\delta = \delta(p) > 0$ and a sufficiently large $r_\ast = r_\ast(p) > 0$ such that
    \begin{equation*}
    	\BP\!\left\lbrack D_{\Gamma_{V_r}}^{V_r}(I_0, I_{r/3}) \ge \delta\right\rbrack \ge 1 - p, \quad \forall r \ge r_\ast. 
    \end{equation*}
\end{lemma}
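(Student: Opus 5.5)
The plan is to reduce the statement to a single value of $r$, using a monotonicity argument in the spirit of \Cref{rem:rectangle-monotonicity}, and then to prove a.s.\ positivity of the distance for that one value. No explicit estimate is needed.

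\emph{Step 1: monotonicity in $r$.} Consider the conformal quadrilateral $(V_r; I_0, I_{r/3})$. It is the region $V_{0,r/3}$, as seen from the two marked vertical arcs, together with the extra region $V_{r/3,r}$ attached to the arc $I_{r/3}$. Map $V_r$ conformally onto $\BH$ so that $I_{r/3}$ is sent to $[1,\infty)$ and $I_0$ to $[0,x(r)]$, using the three-point normalization plus one real parameter. The cross-ratio of the four endpoints is determined by the conformal modulus of $V_r$ with marked arcs $I_0$ and $I_{r/3}$. That modulus, measured between these two sides, increases with $r$: the separation $r/3$ between them grows and the rest of the domain also grows. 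Hence $x(r)$ is decreasing in $r$. By conformal invariance (Axiom~\eqref{it:weak_axiom_conformal_invariance}), $D_{\Gamma_{V_r}}^{V_r}(I_0,I_{r/3})$ has the law of $D_{\Gamma_\BH}^\BH([0,x(r)],[1,\infty))$. As in the proof of \Cref{rem:rectangle-monotonicity}, within the single coupling on $\BH$ we have $\SCB_u([0,x];D_{\Gamma_\BH}^\BH)\subset \SCB_u([0,x'];D_{\Gamma_\BH}^\BH)$ for $x\le x'$. Therefore $D_{\Gamma_{V_r}}^{V_r}(I_0,I_{r/3})$ is stochastically non-decreasing in $r$. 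I would state the modulus comparison carefully, either via extremal length or by the explicit Schwarz--Christoffel description; this is the one place where care is needed.

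\emph{Step 2: a.s.\ positivity for fixed $r_\ast$.} Fix any $r_\ast$ and two disjoint closed boundary arcs $I=I_0$ and $J=I_{r_\ast/3}$ of $U=V_{r_\ast}$. I claim $D_{\Gamma_U}^U(I,J)>0$ a.s.

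First, for every loop $\SCL\in\Gamma_U$ we have $D(\SCL,I)\ge D(\SCL,\partial U)$. This holds because $I\subset\partial U$, so if $\SCB_t(\SCL)$ meets $I$ it also meets $\partial U$. By Axiom~\eqref{it:weak_axiom_uniform_exploration}, $D(\SCL,\partial U)$ is the uniform-exploration label, which is a.s.\ positive for all loops simultaneously. It follows that $\SCB_0(I)=I$: the closure of $I$ together with the interiors of loops at distance $0$ from $I$ is just $I$, since there are no such loops.

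Second, suppose $D(I,J)=0$. Then $\SCB_t(I)\cap J\neq\emptyset$ for every $t>0$. By the Hausdorff right-continuity in Axiom~\eqref{it:weak_axiom_geodesic}\eqref{it:weak_axiom_geodesic_1} (stated for boundary arcs, and applied after composing with a conformal map to $\BD$), together with compactness, we get $\SCB_0(I)\cap J\neq\emptyset$. This says $I\cap J\neq\emptyset$, a contradiction.

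\emph{Step 3: conclusion.} Since $D(I_0,I_{r_\ast/3})>0$ a.s., for a given $p$ we may choose $\delta>0$ with $\BP[D_{\Gamma_{V_{r_\ast}}}^{V_{r_\ast}}(I_0,I_{r_\ast/3})\ge\delta]\ge 1-p$. By Step 1 this bound persists for all $r\ge r_\ast$. In fact any fixed $r_\ast$ works, so $r_\ast$ can be taken to be $1$.

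The main obstacle is the modulus comparison in Step 1, namely checking that the normalized position $x(r)$ really moves monotonically. If a clean monotonicity is awkward to establish, the fallback is to note that as $r\to\infty$ the configuration converges (after normalization) to the half-infinite strip with two arcs at bounded cross-ratio. In that case I would replace monotonicity by domination: for all large $r$, $x(r)\le x_0$ for a fixed $x_0$, and then apply Step 2 with $[0,x_0]$ and $[1,\infty)$.
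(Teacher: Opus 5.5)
\textbf{There is a genuine gap in Step 1.} The segment $I_{r/3}=\{z : \Re(z)=r/3,\ 0\le \Im(z)\le 1\}$ is not a boundary arc of $V_r$. Apart from its two endpoints it lies in the interior of $V_r$, so it is a crosscut. Consequently:
\begin{itemize}
\item $(V_r; I_0, I_{r/3})$ is not a conformal quadrilateral.
\item No conformal map of $V_r$ onto $\BH$ can send $I_{r/3}$ to $[1,\infty)$.
\item The modulus and monotonicity argument borrowed from \Cref{rem:rectangle-monotonicity} does not apply. Your fallback has the same flaw.
\end{itemize}

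This is not a technicality; it is where the whole difficulty of the lemma sits. Because $I_{r/3}$ is interior, loops of $\Gamma_{V_r}$ can cross it, and each such loop is at distance $0$ from $I_{r/3}$. So $D_{\Gamma_{V_r}}^{V_r}(I_0,I_{r/3})$ is really a crossing distance of the random quadrilateral $V_{0,r/3}^\star$, whose right side is formed by the loops meeting $I_{r/3}$. A loop that crosses $I_{r/3}$ and reaches close to $I_0$ can make the distance tiny. The law of this configuration changes with $r$, and nothing in your argument controls it uniformly. The extra region $V_{r/3,r}$ does not help monotonically: it is exactly what produces these loops. So the claimed stochastic monotonicity in $r$, and with it your conclusion that any $r_\ast$, e.g.\ $r_\ast=1$, works, is unjustified.

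Your Step 2 is fine, and it works even though $J$ is interior. It matches the paper's observation that the unit-square crossing distance is a.s.\ positive by right-continuity of the ball process at $0$. But it only gives positivity for each fixed $r$, not uniformity over $r\ge r_\ast$.

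\textbf{The missing ingredient} is a uniform-in-$r$ lower bound, with high probability, on the left--right extremal distance of $V_{0,r/3}^\star$. The paper obtains it as follows.
\begin{itemize}
\item For large $r$, with probability at least $1-p/2$, no loop of $\Gamma_{V_r}$ connects $I_1$ and $I_{r/3}$. To see this, couple $\Gamma_{V_r}$ with a CLE$_4$ in the half-strip $W=\{z : \Re(z)>0,\ 0<\Im(z)<1\}$ through a single Brownian loop soup. Clusters only grow with the domain, and the union of the loops of $\Gamma_W$ meeting $I_1$ is a.s.\ bounded.
\item On that event, every left--right crossing of $V_{0,r/3}^\star$ crosses $V_{0,1}$, so the extremal distance is at least $1$.
\item Locality, applied to the deterministic $V_{0,r/3}$, together with \Cref{rem:rectangle-monotonicity} then dominates the distance from below by the unit-square crossing distance. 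The a.s.\ positivity of that distance is exactly your Step 2.
\end{itemize}
This is why the lemma is stated only for $r\ge r_\ast$ with $r_\ast$ large.
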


\begin{proof}
    First, after a $90^\circ$ rotation, the left-right crossing of $V_r$ is the top-bottom crossing of a $(1/r) \times 1$ rectangle, so \Cref{rem:rectangle-monotonicity} shows that $D_{\Gamma_{V_r}}^{V_r}(I_0, I_r)$ stochastically dominates the crossing distance of the unit square for $r \ge 1$; the latter is a.s.\ positive (by right-continuity of the ball process at $0$), so there exists $\delta > 0$ such that $\BP\lbrack D_{\Gamma_{V_r}}^{V_r}(I_0, I_r) \ge \delta\rbrack \ge 1 - p/2$ for all $r \ge 1$. Since $D_{\Gamma_{V_r}}^{V_r}(I_0, I_{r/3})$ is equal to the $D_{\Gamma_{V_r}|_{V_{0,r/3}^\star}}^{V_{0,r/3}^\star}$-distance between the left and right sides of the conformal quadrilateral $V_{0,r/3}^\star$, it suffices to show that there exists $r_\ast > 0$ such that for each $r \ge r_\ast$, it holds with probability at least $1 - p/2$ that $V_{0,r/3}^\star$ is conformally equivalent to an $r^\prime \times 1$ rectangle with $r^\prime \ge 1$ (i.e., the extremal distance between the left and right sides of $V_{0,r/3}^\star$ is at least $1$) (on this event, by the conditional application of Axiom~\eqref{it:weak_axiom_locality} to the deterministic $V_{0,r/3}$ together with \Cref{rem:rectangle-monotonicity}, the crossing distance stochastically dominates that of the unit square, and the two $1-p/2$ bounds combine). It suffices to show that there exists $r_\ast > 3$ such that for each $r \ge r_\ast$, it holds with probability at least $1 - p/2$ that there is no loop of $\Gamma_{V_r}$ connecting $I_1$ and $I_{r/3}$ (on that event, every left-right crossing of $V_{0,r/3}^\star$ crosses $V_{0,1}$, so the extremal distance is at least $1$). Consider the domain $W \defeq \{z \in \BC : \Re(z) > 0, \ 0 < \Im(z) < 1\}$ and a non-nested CLE$_4$ $\Gamma_W$ in $W$. Couple $\Gamma_{V_r}$ and $\Gamma_W$ by constructing both from a single Brownian loop soup in $W$ and its restriction to $V_r$ (\Cref{subsec:brownian_loop_soup}). Note that if there is a loop of $\Gamma_{V_r}$ connecting $I_1$ and $I_{r/3}$, then by the monotonicity of this coupling (each cluster of the soup in $V_r$ is contained in a cluster of the soup in $W$), there is a loop of $\Gamma_W$ satisfying the same property. On the other hand, we may choose a sufficiently large $r_\ast > 3$ such that for each $r \ge r_\ast$, it holds with probability at least $1 - p/2$ that there is no loop of $\Gamma_W$ connecting $I_1$ and $I_{r/3}$ (continuity from above: the union of the loops of $\Gamma_W$ that intersect $I_1$ is a.s.\ bounded, so these events decrease to a null event as $r \to \infty$). This completes the proof of \Cref{lem:distance_across_rectangle-proof}.
\end{proof}

It remains to prove \Cref{lem:guiding_sle4}. As an intermediate step, we will first prove a version in which we only have one force point.
\begin{lemma}\label{lem:guiding_sle4_one_force_point}
Let $\gamma$ be a continuous path in $\overline\BD$ from $-\ri$ to $\ri$ that meets $\partial \BD$ only at $-\ri$ and $\ri$, and fix $\varepsilon \in (0,1)$. Then there exist $c = c(\gamma, \varepsilon) > 0$ and $t_\ast = t_\ast(\gamma, \varepsilon) \in (0, \pi)$ such that for each $t \in (0, t_\ast]$, the following is true: Suppose that $\eta|_{[0, \tau]}$ is an SLE$_4(2t/\pi - 2)$ curve in $\BD$ starting from $-\ri$ with a single force point located at $(-\ri)^+$ (i.e., immediately counterclockwise of $-\ri$), targeting $\ri$, and stopped upon hitting $B_\varepsilon(\ri) \cap \partial\BD$. Then it holds with probability at least $ct$ that $\eta([0, \tau]) \subset B_\varepsilon(\gamma)$.
\end{lemma}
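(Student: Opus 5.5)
The plan is to work in the upper half-plane and analyze the driving process explicitly. The estimate splits into two stages. In the first, the curve must escape from the boundary near the force point at a cost of order $t$. In the second, once it is macroscopically away from the boundary, it follows $\gamma$ with probability bounded below uniformly in $t$.

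\emph{Setup.} Map $\BD$ to $\BH$ sending $-\ri \mapsto 0$ and $\ri \mapsto \infty$, and write $\rho \defeq 2t/\pi - 2$. Let $W$ be the driving function and $V$ the force point, with $V_0 = 0^+$. Then
\begin{equation*}
	\rd (V_s - W_s) = \frac{\rho + 2}{V_s - W_s}\, \rd s - 2\, \rd B_s .
\end{equation*}
So $X \defeq (V - W)/2$ is a Bessel process of dimension $d = 1 + (\rho+2)/2 = 1 + t/\pi$, which is barely above $1$. The danger as $t \downarrow 0$ is not that $X$ stays small, but that the force point $V$ (equivalently, the swallowed part of the right boundary arc) moves very fast. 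This is the degeneracy of SLE$_4(-2)$: the curve crawls along $\partial \BD$ toward $\ri$. To quantify this, write $V_s = \int_0^s \rd u / X_u$ and $W_s = 2B_s + (\rho/2) \int_0^s \rd u / X_u$. Subtracting and using $V - W = 2X$ gives the exact identity
\begin{equation*}
	\int_0^s \frac{\rd u}{X_u} = \frac{4\pi}{t}\,(X_s - B_s), \qquad A_s \defeq X_s - B_s \ \text{nondecreasing}.
\end{equation*}
Hence $V_s$ and $W_s$ are both controlled by $A_s / t$, $B_s$ and $X_s$.

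\emph{Step 1: escape from the boundary, probability $\gtrsim t$.} Fix a small $h > 0$ and a small $\eta > 0$, depending only on $(\gamma, \varepsilon)$. Consider the event $E$ that $X$ reaches $h$ before $A$ reaches $\eta t$ and before time $h^2$. On $E$, at the hitting time $\sigma$ we have $|V_\sigma|, |W_\sigma| \lesssim \eta + h$. So the hull $K_\sigma$ has small half-plane capacity and lies in a small neighborhood of $0$, while $V_\sigma - W_\sigma = 2h$.

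To bound $\BP[E]$ from below, note that $A$ increases only when $X$ is near $0$, since $\rd A = (t/4\pi)\,\rd s/X$. Hence $X = B + A$ behaves like a Skorokhod reflection, with $A_s \approx -\min_{u \le s} B_u$ up to an error controlled by comparing $d$-dimensional Bessel excursions with those of reflected Brownian motion. Granting this, $E$ essentially contains the event that $B$ reaches about $h$ before $\min B$ reaches $-\eta t/2$ (and within time $h^2$). By the gambler's ruin formula this has probability of order $\eta t / h$, which gives $\BP[E] \ge c_1 t$.

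\emph{Step 2: following $\gamma$, probability bounded below.} Condition on $\mathcal{F}_\sigma$ on $E$, and run the process until $X$ first drops below $h/2$. On this interval the drift $(\rho+2)/(V - W)$ is at most $2t/(\pi h)$, which is bounded. By Girsanov, the law of $(W, V)$ is mutually absolutely continuous, with Radon--Nikodym derivative bounded above and below uniformly in $t \le t_\ast$, with respect to the case where $V - W$ is $2h$ minus twice a Brownian motion. The steering argument then goes as follows.
\begin{itemize}
	\item Choose a smooth driving function whose Loewner curve, started from the hull $K_\sigma$, stays in $B_{\varepsilon/2}(\gamma)$ and reaches $B_\varepsilon(\ri) \cap \partial \BD$ (image in $\BH$). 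Along it the corresponding $V - W$ stays above $h$ until the final time.
	\item By the support theorem for Brownian motion together with continuity of the Loewner map in the driving function (uniformly over the small initial hulls allowed on $E$), the curve stays in $B_\varepsilon(\gamma)$ until time $\tau$ with probability at least $c_2 > 0$, uniformly in $t$.
\end{itemize}
Multiplying the two stages gives the bound $c_1 c_2\, t$.

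\emph{Main obstacle.} I expect the hardest step to be making Step 1 rigorous. One must compare the additive functional $A = (t/4\pi) \int \rd u / X_u$ of a Bessel process of dimension $1 + t/\pi$ with the running minimum of $B$, uniformly as $t \downarrow 0$. My first attempt would use excursion theory for Bessel processes:
\begin{itemize}
	\item the excursion measure of excursions reaching height $h$ is of order $h^{d-2}$ per unit local time;
	\item the normalization is such that $A$ is a constant multiple of local time;
	\item the probability of an excursion of height $h$ occurring before local time $\eta t$ is of order $\eta t / h$.
\end{itemize}
Alternatively, the scale function $x^{2-d}$ can be used directly to show that $X$ hits $h$ before $A$ exceeds $\eta t$ with probability $\gtrsim t$.
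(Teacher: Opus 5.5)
Your architecture matches the paper's. You use the same exact identity relating $\int_0^s X_u^{-1}\,\rd u$ to $X_s$ minus its driving Brownian motion. (With your own SDE it reads $\int_0^s X_u^{-1}\,\rd u=\frac{2\pi}{t}(X_s+B_s)$ and $W_s=2B_s-\frac{\rho}{2}\int_0^s X_u^{-1}\,\rd u$; your constants and signs are off, harmlessly.) You then have an escape stage of probability of order $t$, followed by a tube-following stage with probability bounded below uniformly in $t$, and the order $\eta t/h$ you predict is right. The gap is that the escape bound $\BP[E]\ge c_1 t$, which is the real content of the lemma, is only granted, and none of your three suggested justifications works as stated.

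(i) With $X=B+A$ and $A$ nondecreasing, minimality of the Skorokhod regulator gives $A_s\ge\sup_{u\le s}(-B_u)^+$. That is a \emph{lower} bound on $A$, so the gambler's-ruin event for $B$ does not imply $E$, which needs an \emph{upper} bound on $A$.

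(ii) For $d>1$, $A$ is absolutely continuous and strictly increasing, so it is not a multiple of the local time at $0$, which is carried by a Lebesgue-null set. In fact all of $A$ accrues inside excursions, and relating it to local time uniformly as $d\downarrow 1$ is exactly the difficulty.

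(iii) The scale function gives $(a/h)^{2-d}$ only from a level $a>0$, and this is of order $t$ only if $a\gtrsim t$. You would still have to show that $X$ reaches a level of order $t$ while $\int X^{-1}$ stays $O(1)$, with probability bounded below uniformly in $t$.

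That first phase is the missing idea, and the paper supplies it in \Cref{lem:bessel_int_lbd}. Brownian scaling to the natural time $t_0=((d-1)/2)^2$ turns $\int_0^{t_0}X_s^{-1}\,\rd s$ into $\widetilde X_1-\widetilde B_1$ for a Bessel process $\widetilde X$ of the same dimension. This quantity is tight as $d\downarrow1$ (convergence to reflected Brownian motion). So with probability bounded below, $X_{t_0}\ge c_0(d-1)/2$ and $\int_0^{t_0}X^{-1}\le M$. From there the scale function gives probability at least $c_0(d-1)/2$ of reaching level $1$ before $0$. The Doob transform, a Bessel process of dimension $4-d$, then controls $\int X^{-1}$ on that leg.

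A shorter route to the first phase: optional stopping gives $\BE[A_{\sigma_a}]=a$ for the hitting time $\sigma_a$ of $a$. Taking $a\asymp t$, Markov's inequality yields $\int_0^{\sigma_a}X^{-1}=O(1)$ with probability at least $1/2$.

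Your Step 2 is right in substance but rests on an invalid shortcut. Uniform convergence of driving functions only gives Carath\'eodory convergence of hulls, not closeness of traces, so you need a genuine tube estimate for SLE$_4$. Moreover, the curve can reach $B_\varepsilon(\ri)\cap\partial\BD$ only on the force-point side, at a moment when $X$ hits $0$, or at the target itself. A Girsanov comparison stopped when $X<h/2$, over bounded capacity time, therefore cannot cover the final hitting. The paper handles this with an absolute-continuity comparison to ordinary chordal SLE$_4$ away from the force point, together with a tube-following lemma for SLE$_4$.
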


Before proceeding, we recall how to construct the SLE$_4(\rho)$ process for $\rho > -2$ from a Bessel process (see, e.g., \cite{lawler2008conformally,IG1}). Let $X$ be a Bessel process of dimension $d = 1 + (\rho+2)/2$ driven by a standard Brownian motion~$B$, so that $\mathrm{d}X_s = \mathrm{d}B_s + \frac{\rho+2}{4X_s} \mathrm{d}s$. We define the distance between the force point $V$ and the driving function $W$ as $Z \defeq 2X$. By the Loewner equation, $\mathrm{d}V_s = \frac{2}{Z_s} \mathrm{d}s$, and the driving function satisfies $\mathrm{d}W_s = 2 \mathrm{d}B_s - \frac{\rho}{Z_s} \mathrm{d}s$. Taking a linear combination yields $\mathrm{d}(W_s + \frac{\rho}{2}V_s) = 2 \mathrm{d}B_s$. We can therefore define $V_s \defeq \frac{2}{\rho+2}(2B_s + Z_s)$ and $W_s \defeq V_s - Z_s$ ( so that $W_0 = V_0 = 0$). This construction is well-defined for all $s \ge 0$, even when the curve hits the boundary (which corresponds to $X$ hitting zero), because it avoids directly integrating $1/Z_s$.
    
Furthermore, we recall that the scale function for a Bessel process of dimension $d \in (1,2)$ is given by $s(x) = x^{2-d}$. Consequently, the probability that such a Bessel process starting from $X_0 = \varepsilon \in (0,1)$ hits $1$ before $0$ is exactly $\varepsilon^{2-d}$.

\begin{lemma}
\label{lem:bessel_int_lbd}
For each $d_0 > 1$, there exist constants $c > 0$ and $c_0 \in (0,1)$ such that the following is true. Suppose that $X$ is a Bessel process of dimension $d \in (1,d_0)$ starting from $0$. Then
\[ \BP\!\left[ \int_0^1 X_s^{-1} \mathrm{d}s \leq 1, \ \sup_{0 \le s \le 1} X_s \le c_0^{-1}, \ X_1 \ge c_0\right] \geq c(d-1).\]
\end{lemma}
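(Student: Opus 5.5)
The plan is to obtain the factor $d-1$ from a single ``escape'' event for the Bessel process, and to show that all the other requirements hold with probability bounded below uniformly in $d\in(1,d_0)$. Write $\delta \defeq d-1$ and use the semimartingale decomposition $X_t = B_t + \frac{\delta}{2}\int_0^t X_s^{-1}\,\mathrm{d}s$, valid for $d>1$ because $\int_0^t X_s^{-1}\,\mathrm{d}s<\infty$ a.s. For a level $a>0$ let $T_a$ be the hitting time of $a$. We split $[0,1]$ into three pieces: $[0,T_h]$ with $h$ of order $\delta$, the escape from $h$ up to level $1$, and the remaining time after reaching $1$.

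\emph{First piece.} Optional stopping applied to the decomposition at $T_1\wedge n$, letting $n\to\infty$ and using $\BE[B_{T_1}]=0$ (as $B$ is bounded above by $1$ on $[0,T_1]$ and $T_1$ is integrable), gives $\BE\bigl[\int_0^{T_1}X_s^{-1}\,\mathrm{d}s\bigr] = 2/\delta$. Brownian scaling ($X$ started at $0$) shows that $\int_0^{T_h}X_s^{-1}\,\mathrm{d}s$ has the law of $h\int_0^{T_1}X_s^{-1}\,\mathrm{d}s$, and that $T_h$ has the law of $h^2T_1$. Take $h=\delta/16$. By Markov's inequality, with probability at least $3/4$ we have $\int_0^{T_h}X^{-1}\le 1/4$. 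Also $\BE[T_1]=1/d\le 1$, so $T_h\le 1/4$ with probability $1-O(\delta^2)$. We also have $\sup_{[0,T_h]}X = h$. Hence this first event has probability bounded below uniformly in $d$, for $\delta$ small; the case of $d$ bounded away from $1$ is covered by the same argument with constants.

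\emph{Second piece (this is where the factor $\delta$ comes from).} By the strong Markov property at $T_h$, let $E$ be the event that $X$ reaches $1$ before $h/2$. The scale function is $x^{2-d}=x^{1-\delta}$, so
\[
\BP[E]=\frac{h^{1-\delta}-(h/2)^{1-\delta}}{1-(h/2)^{1-\delta}} \ge c\,h \ge c'\delta .
\]
On $E$, the process is the Doob $h$-transform of $X$ killed at $h/2$, conditioned to hit $1$. Its Green function on $(h/2,1)$ started from $h$ has density bounded by a constant times $x$ for $x\ge h$, and by a constant times $h$ for $x\in(h/2,h)$. The expected value of $\int X^{-1}$ up to $T_1$ is therefore bounded by a universal constant $K$. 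The conditional expected duration is bounded in the same way. By Markov's inequality, conditionally on $E$, with probability at least $1/2$ we have $\int X^{-1}\le 4K$ and duration at most $4K'$. I expect the uniform Green-function bound for this conditioned process to be the main technical step. If it is messy, I would compare with a Bessel process of dimension $4-d\in(3-\delta,3)$, which is what the $h$-transform produces.

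\emph{Adjusting constants and the third piece.} The bounds $4K$ and $4K'$ are not $\le 1/4$, so I would run the escape argument to a small level $\beta$ rather than to $1$. By scaling, the integral then picks up a factor $\beta$ and the duration a factor $\beta^2$. Choose $\beta$ small enough that both are at most $1/4$, with $h=\beta\delta/16$ in the first piece, so that $\BP[E]\ge c\beta\delta$. Finally, from $X=\beta$ at a time $\le 1/2$, a uniform-in-$d$ positive probability estimate covers the rest of $[0,1]$. Comparing with Brownian motion via the decomposition (the drift is nonnegative and at most $\delta/\beta$ while $X\ge\beta/2$), $X$ stays in $[\beta/2, c_0^{-1}]$ on the remaining time, which has length at most $1$, with probability bounded below. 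On that event $\int X^{-1}\le 2/\beta$ over the remaining time, which is too large. So I would instead let $X$ move from $\beta$ to level $1$ quickly on a time window of length $O(1)$, then require it to stay in $[1/2,2]$ and require the leftover time to be at most $1/4$. To ensure the leftover time is short, pick the time budgets in the first two pieces so that their sum is close to $1$. This still has probability bounded below uniformly in $d$. Multiplying the three conditional probabilities gives the bound $c\,(d-1)$ with $c_0$ chosen as $\min(1/2,\beta/2)$.
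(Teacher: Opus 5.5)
Your first two pieces are correct, and they take a more elementary route than the paper. The identity $\BE[\int_0^{T_1}X_s^{-1}\,\mathrm{d}s]=2/\delta$, combined with scaling and Markov, replaces the paper's argument on the time scale $t_0=(\delta/2)^2$, which uses tightness/convergence of $(\widetilde X,\widetilde X-\widetilde B)$ to reflected Brownian motion and its local time. Your Green-function bounds for the $h$-transform killed at $h/2$ (density $\lesssim y$ on $[h,1]$ and $\lesssim h$ on $[h/2,h]$) check out, and they replace the paper's comparison with a Bessel process of dimension $4-d$. One harmless slip: Markov gives probability at least $1/2$ in the first piece, not $3/4$.

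The gap is in the third piece. After the first two pieces you only have an \emph{upper} bound $T_\beta\le 1/2$; on the good event $T_\beta=O(\beta^2)$. So the remaining time is at least $1/2$ and in fact close to $1$.

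Your proposed fix is to make the first two pieces last until nearly time $1$, so that the time left after reaching level $1$ is at most $1/4$. This is incompatible with their integral budget: during those pieces $X\le\beta$, so every unit of time spent there costs at least $1/\beta$ in $\int X^{-1}$. Otherwise, keeping $X$ in $[1/2,2]$ on a leftover interval of length $r\ge 1/2$ only gives $\int X^{-1}\le 2r$, which does not fit the budget. More generally, holding $X$ at a level of order one for a time of order one costs an integral of order one.

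The missing idea is to use the freedom in $c_0$ and spend the bulk of $[0,1]$ at a \emph{high} level. Starting from $X_{T_\beta}=\beta$, proceed in two stages:
\begin{enumerate}
\item Let $X$ reach a level $L\ge 8$ within time $\beta/8$ while staying above $\beta/2$. This costs at most $1/4$.
\item Then keep $X$ in $[L/2,2L]$ until time $1$. This costs at most $2/L\le 1/4$.
\end{enumerate}
Both stages have probability bounded below uniformly in $d$, by comparison with $B$: the drift is nonnegative, so $X\ge \beta+(B-B_{T_\beta})$, and the drift is at most $\delta/L$ while $X\ge L/2$. Then take $c_0=1/(2L)$.

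This is the paper's mechanism in scale-free form. After the two initial phases (cost $M+M'$), the paper sends $X$ to $L$ and keeps it in $(2L/3,c_0^{-1}\sqrt{\lambda})$ up to time $\lambda=L^2/4$. The fixed cost $M+M'+2T'$ is at most $L/8$ and the bulk costs at most $3\lambda/(2L)=3L/8$, for a total of at most $\sqrt{\lambda}$. Brownian scaling by $\lambda$ then gives the statement on $[0,1]$.
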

\begin{proof}
\stepn{step:bes-reduce}{Reduction to $d \in (1,3/2)$} We first observe that for any $d > 1$, the integral $\int_0^1 X_s^{-1} \mathrm{d}s$ is a.s.\ finite. For $d \in [3/2, d_0]$ and fixed $c_0 \in (0,1/2)$, the event of the lemma has positive probability, continuous in $d$, hence bounded below on this compact interval; shrinking $c$ so that this bound is at least $c(d_0 - 1) \ge c(d-1)$ gives the lemma in this range. Therefore, for the remainder of the proof, we may assume without loss of generality that $d \in (1, 3/2)$.

\stepn{step:bes-first}{The process up to time $t_0$} We bound the integral by analyzing the process in two phases, the first up to time $t_0 \defeq ((d-1)/2)^2$. By Brownian scaling, we may write $X_t = \sqrt{t_0}\, \widetilde{X}_{t/t_0}$ for all $t \ge 0$, where $\widetilde{X}$ is a Bessel process of dimension $d$ starting from $0$. Then $X_{t_0} = \frac{d-1}{2} \widetilde{X}_1$, and the integral up to this time satisfies
    \begin{equation*}
        \int_0^{t_0} X_s^{-1} \mathrm{d}s = \sqrt{t_0} \int_0^1 \widetilde{X}_u^{-1} \mathrm{d}u = \frac{d-1}{2} \int_0^1 \widetilde{X}_u^{-1} \mathrm{d}u.
    \end{equation*}

  By integrating the stochastic differential equation for $\widetilde{X}$,  we have the exact identity $\frac{d-1}{2} \int_0^1 \widetilde{X}_u^{-1} \mathrm{d}u = \widetilde{X}_1 - \widetilde{B}_1$,  where $\widetilde{B}$ is a standard one-dimensional Brownian motion starting from zero.  As $d \downarrow 1$,  the pair $(\widetilde{X} ,  \widetilde{X} - \widetilde{B})$ is tight in $C([0,1])^2$ by a simple application of the Burkholder-Davis-Gundy inequality to the square Bessel process $\widetilde{X}^2$ and its associated SDE at the stopping time $\inf\{t\ge 0: \widetilde{X}_t^2 \ge n\}\wedge1$ for $n\ge 1$. Actually, one can see that $\widetilde{X}$ converges in law on $C([0,1])$ to a reflected Brownian motion $\widehat{B}$ (see the discusion following \cite[Definition 1.9, Chapter XI]{RY05} for the density of the semi-group of $\widetilde{X}$ which can be used to identify the limit of $\widetilde{X}$ as $d \downarrow 1$). More precisely (but this is not needed for our purpose), $(\widetilde{X} ,  \widetilde{X} - \widetilde{B})$ converges in law on $C([0,1])^2$ to  $(\widehat{B}, \ell)$, where $\widehat{B}$ is a reflected Brownian motion and $\ell$ is the local-time term in the decomposition $\widehat{B} = W + \ell$, where $W$ is a standard Brownian motion (the decomposition comes from Tanaka's formula, \cite[Theorem 1.2, Chapter VI]{RY05}); the identification of the limit of $\widetilde{X} - \widetilde{B}$ as $\ell$ is obtained via the uniqueness of the solution to the Skorokhod problem. Thus,  we can find constants $c_0 \in (0,1)$ and $M>0$ such that the event $E_1 \defeq \{X_{t_0} \geq c_0 \frac{d-1}{2},  \,  \sup_{0 \leq t \leq t_0} X_t \leq c_0^{-1} \frac{d-1}{2},\,\,\text{and} \,\,  \int_0^{t_0} X_s^{-1} \mathrm{d}s \leq M\}$ has a probability bounded from below by a uniform constant $p_1>0$ for all $d \in (1,3/2)$ (apply the convergence to the open version of $E_1$, whose limiting probability is positive for suitable $c_0$, $M$).

\stepn{step:bes-hit}{Reaching level one} Next, we condition on $E_1$ and apply the Markov property at time $t_0$. The process starts the second phase from $x_0 \defeq X_{t_0} \ge c_0 \frac{d-1}{2}$. By the properties of the scale function $s(x) = x^{2-d}$, the probability that $X$ subsequently hits $1$ before returning to $0$ is exactly $(x_0 \wedge 1)^{2-d}$ (it equals one if $x_0 \ge 1$). Since $2-d < 1$ and $c_0 \frac{d-1}{2} \le 1$, we have $(x_0 \wedge 1)^{2-d} \ge x_0 \wedge 1 \ge c_0 \frac{d-1}{2}$. Thus, the conditional probability of reaching $1$ before $0$ is bounded below by $c_0 \frac{d-1}{2}$.
    
    Conditioned on hitting $1$ before $0$, the process up to the hitting time $\tau_1$ has the law of a Bessel process of dimension $4-d$ 
    (this comes from \cite[Exercise 1.22, Chapter XI]{RY05} and from the Doob $h$-transform construction of the conditioned process, via the harmonic function $s(x) = x^{2-d}$)
    . For $d \in (1, 3/2)$, we have $4-d \in (5/2, 3)$. For such a process, $X_s^{-1}$ is integrable even when starting from $0$. Since starting from $x_0 > 0$ only stochastically decreases the hitting time of $1$ and the corresponding inverse integral compared to starting from $0$, there exist uniform constants $M' > 0$ and $T > 0$ such that the transformed process satisfies $\int_{t_0}^{\tau_1} X_s^{-1} \mathrm{d}s \le M'$ and $\tau_1 - t_0 \le T$ with some uniform probability $p_2 > 0$. These constants can be taken uniform over $d \in (1,3/2)$, again by the  continuity of the Bessel law in its dimension.
    
\stepn{step:bes-concl}{Conclusion of the proof} Combining these estimates, with probability at least $p_1 \times c_0 \frac{d-1}{2} \times p_2 = c'(d-1)$, the process reaches $1$ before time $t_0 + T$ and accumulates a total inverse integral of at most $M + M'$. Now fix $T^\prime > 0$ and $L > 4$ such that $L/8 \ge M + M^\prime + 2T^\prime$ 
and $\lambda \defeq L^2/4 \ge t_0 + T + T^\prime$, so that $M + M^\prime + 2T^\prime + (3/2)\lambda/L \le \sqrt{\lambda}$. After shrinking $c_0$ if necessary, conditionally on the above and with probability uniform over $d \in (1,3/2)$, the process reaches $L$ before time $\tau_1 + T^\prime$ while staying above $1/2$, and then stays in $(2L/3, c_0^{-1}\sqrt{\lambda})$ on $(\tau_L,\lambda)$, where $\tau_L$ is the first time that $X$ hits $L$. On this event $\int_0^{\lambda} X_s^{-1}\, \mathrm{d}s \le \sqrt{\lambda}$, $\sup_{0 \le s \le \lambda} X_s \le c_0^{-1}\sqrt{\lambda}$, and $X_\lambda \ge c_0\sqrt{\lambda}$. Indeed, the third inequality comes from the fact that $X_\lambda \ge 2L/3=(2/3)\sqrt{4\lambda}=(4/3)\sqrt{\lambda}\ge c_0 \sqrt{\lambda}$ and for the integral, we have
\[
\int_0^\lambda X_s^{-1} \mathrm{d}s =\int_0^{\tau_1}X_s^{-1} \mathrm{d}s+ \int_{\tau_1}^{\tau_L}X_s^{-1} \mathrm{d}s + \int_{\tau_L}^\lambda X_s^{-1} \mathrm{d}s \le M+M'+2T'+\frac{3\lambda}{2L}\le \sqrt{\lambda}.
\]
 
Since $(\lambda^{-1/2} X_{\lambda s})_{s \ge 0}$ is a Bessel process of dimension $d$ from $0$, this implies the event of the lemma for the rescaled process, completing the proof.
\end{proof}

\begin{proof}[Proof of \Cref{lem:guiding_sle4_one_force_point}]

\stepn{step:g1f-bessel}{A Bessel estimate} We first consider an $\SLE_4(2t / \pi - 2)$ process in $\BH$ from $0$ to $\infty$ with a single force point at $0^+$.  The distance between the force point and the driving function is given by $Z_s = 2X_s$,  where $X$ is a Bessel process of dimension $d = 1+t / \pi$.  The force point evolves as $\mathrm{d}V_s = X_s^{-1} \mathrm{d}s$,  so its position at time $s$ is $V_s = \int_0^s X_r^{-1} \mathrm{d}r$.  By \Cref{lem:bessel_int_lbd} (with $c_0 \in (0,1)$ as in its statement),  there is a uniform constant $c \in (0,1)$ such that
\begin{align*}
\BP\!\left[\int_0^1 X_r^{-1} \mathrm{d}r \leq 1,\, \sup_{0 \leq s \leq 1} X_s \leq c_0^{-1},\,\,  \text{and} \,\,  X_1 \geq c_0 \right] \geq c (t/\pi).
\end{align*}
By Brownian scaling,  for any $u>0$,  the process $u^{-1} X_{u^2 r}$ is a Bessel process of the same dimension,  yielding
\begin{align*}
\BP\!\left[\int_0^{u^2} X_r^{-1} \mathrm{d}r \leq u,  \,\,  \sup_{0 \leq s \leq u^2} X_s \leq c_0^{-1} u, \,\,  \text{and} \,\,  X_{u^2} \geq c_0 u \right] \geq c(t / \pi).
\end{align*}

\stepn{step:g1f-confine}{Confining the curve up to time $u^2$} Fix $u>0$ (to be chosen) and suppose that we are working on the event that $\int_0^{u^2} X_r^{-1} \mathrm{d}r \leq u$,  $\sup_{0 \leq s \leq u^2} X_s \leq c_0^{-1} u$,  and $X_{u^2} \geq c_0 u$.  We parameterize $\eta$ according to half-plane capacity.  Then,  we have that $0 \leq V_s \leq u$,  $-2c_0^{-1}u \leq W_s \leq u$ for all $0 \leq s \leq u^2$,  and $V_{u^2} - W_{u^2} = 2 X_{u^2} \geq 2 c_0 u$.  Hence,  combining with \cite[Lemma~4.13]{lawler2008conformally},  we obtain that there exists a universal constant $C>1$ such that $\eta([0,u^2]) \subset B_{C c_0^{-1} u}(0)$.  Moreover,  \cite[Corollary~3.44]{lawler2008conformally} implies that $|g_{u^2}(z) - z| \leq 6 C c_0^{-1} u$ for all $z$ lying in the unbounded connected component of $\BH \setminus \eta([0,u^2])$,  where $(g_s)$ denotes the family of conformal maps solving the Loewner equation with driving function $W$.  It follows that if $\psi : \BD \to \BH$ is any fixed conformal transformation such that $\psi(-\ri) = 0$ and $\psi(\ri) = \infty$,  then there exist constants $0<c_3 < c_2 < c_1 < 1/2$ depending only on $\varepsilon,\gamma,\psi$,  and $c_0$,  such that, on the above event, the following hold with $u = c_1 \varepsilon$.  We have that
\begin{align*}
&f_{u^2}^{-1}(\psi(B_{\varepsilon/2}(\gamma))) \cup \eta([0,u^2]) \subset \psi(B_{\varepsilon}(\gamma)) \text{ and } f_{u^2}^{-1}(\psi(B_{\varepsilon/2}(\ri) \cap \partial \BD)) \subset \psi(B_{\varepsilon}(\ri) \cap \partial \BD),\\
&\dist(V_{u^2} - W_{u^2} ,  \psi(B_{c_2 \varepsilon}(\gamma))) \geq c_3 \varepsilon,
\end{align*}
where $f_{u^2}(\cdot) = g_{u^2}(\cdot) - W_{u^2}$.

\stepn{step:g1f-concl}{The continuation after time $u^2$, and conclusion} Note that the conditional law of $\widetilde{\eta} = f_{u^2}(\eta|_{[u^2,\infty)})$ given $\eta|_{[0,u^2]}$ is that of a chordal $\SLE_4(2t / \pi - 2)$ process in $\BH$ from $0$ to $\infty$ with the force point located at $V_{u^2}-W_{u^2}$.  Thus,  \cite[Lemma~2.8]{miller2017intersections} implies that the conditional law of $\widetilde{\eta}$ given $\eta|_{[0,u^2]}$,  stopped at the first time it either exits $\psi(B_{c_2 \varepsilon}(\gamma))$ or hits $\psi(B_{c_2 \varepsilon}(\ri) \cap \partial \BD)$,  is mutually absolutely continuous with respect to the law of an ordinary chordal $\SLE_4$ process in $\BH$ from $0$ to $\infty$.  Moreover,  the corresponding Radon--Nikodym derivatives are uniformly bounded away from $0$ and $\infty$ by constants depending only on $\psi,\gamma,\varepsilon,c_2$,  and $c_3$.  Furthermore,  \cite[Lemma~A.1]{CoInCLERiemSph} implies that the probability that a chordal $\SLE_4$ in $\BH$ from $0$ to $\infty$ hits $\psi(B_{c_2 \varepsilon}(\ri) \cap \partial \BD)$ before exiting $\psi(B_{c_2 \varepsilon}(\gamma))$ is bounded away from $0$ by a constant $p \in (0,1)$ depending only on $\psi,\gamma,\varepsilon$,  and $c_2$.

On these events, the initial segment $\psi^{-1}(\eta([0,u^2]))$ lies in $B_\varepsilon(\gamma)$ by the above displays, and the continuation $\psi^{-1}(f_{u^2}^{-1}(\widetilde\eta))$ stays in $B_\varepsilon(\gamma)$ until hitting $B_\varepsilon(\ri) \cap \partial\BD$ since $c_2 \le 1/2$. Multiplying the two probabilities, $\psi^{-1}(\eta)$ remains in $B_{\varepsilon}(\gamma)$ and hits $B_{\varepsilon}(\ri) \cap \partial \BD$ with probability at least a constant multiple of $t$, completing the proof.
\end{proof}

\begin{proof}[Proof of \Cref{lem:guiding_sle4}]

\stepn{step:gs4-setup}{Setup} Fix $0 < \delta < \delta_0 <1$ (to be chosen).  Let also $\psi : \BD \to \BH$ be a fixed conformal transformation mapping $\BD$ onto $\BH$ such that $\psi(-\ri) = 0$ and $\psi(\ri) = \infty$.  Let $\eta$ be an $\SLE_4(\rho_1 ; \rho_2)$ process in $\BH$ from $0$ to $\infty$ with the force points located at $0^-$ and $0^+$ respectively,  and with $\rho_1 = -2t / \pi$ and $\rho_2 = 2t / \pi -2$.  Then,  the exact same argument used to prove \cite[Proposition~3.1]{DKM24} (using level lines in place of flow lines) implies that $\eta$ converges in probability to $\BR_+$ as $t \downarrow 0$ with respect to the bounded Hausdorff metric (Hausdorff convergence of $\eta \cap B_M(0)$ for every $M > 0$).  Consider the rectangle $R = [-\delta , 2 \delta_0] \times [0,\delta]$ and let $\sigma$ denote the first time that $\eta$ intersects $\partial R \setminus \partial \BH$.  Then,  the convergence of $\eta$ to $\BR_+$ in probability as $t \downarrow 0$ implies that there exists $t_{\star} = t_\star(\delta, \delta_0) > 0$ such that the following holds for all $t \in (0,t_{\star}]$.  With probability at least $1/2$,  we have that $\eta(\sigma) \in \{2\delta_0\} \times [0,\delta]$.  Fix $t \in (0,t_{\star}]$ and let $E$ denote the above event.

We also parameterize $\eta$ by half-plane capacity and let $W$ denote its driving function.  Let $V^1$ and $V^2$ be the force points starting at $0^-$ and $0^+$ associated with the weights $\rho_1$ and $\rho_2$.  Moreover,  we let $\BH_s$ denote the unbounded connected component of $\BH \setminus \eta([0,s])$ for all $s \geq 0$.   We \emph{claim} that if we choose $\delta \in (0,\delta_0)$ sufficiently small (in a way that depends only on $\delta_0$),  we have on the event $E$ that $W_{\sigma} - V_{\sigma}^1 > \delta_0$ and $V_{\sigma}^2 - W_{\sigma} \leq \delta_0 / 100$.  Indeed,  let $\beta = (\beta_s)$ denote a planar Brownian motion which is independent from $\eta$,  and let $\tau_s$ denote the first time that $\beta$ exits $\BH_s$ for all $s \geq 0$.  Let also $(g_s)$ denote the family of conformal maps solving the Loewner equation associated with $W$. All $\BP_{\ri y}$-probabilities below are conditional on $\eta|_{[0,\sigma]}$.

\stepn{step:gs4-hm}{A harmonic measure estimate} Fix $y>1$ large and let $A$ denote the event that $\beta$ exits $\{z \in \BH : \Im(z) > \delta\}$ for the first time on $[-\delta + \delta^{1/2} ,  2\delta_0 - \delta^{1/2}] \times \{\delta\}$ (its left endpoint is positive for small $\delta$; this is intentional and harmless).  Let also $\eta^{\mathrm{L}},  \eta^{\mathrm{R}}$ denote the left and right sides of $\eta$,  respectively.  Then,  the Beurling estimate implies that conditional on $A$,  the probability that $\beta$ exits $\BH_{\sigma}$ for the first time on $\BR \cup \eta^{\mathrm{R}}([0,\sigma])$ is at most $\widetilde{C} \delta^{1/4}$ for some universal constant $\widetilde{C}>1$.  Thus,  we obtain that $\BP_{\ri y}[\beta_{\tau_{\sigma}} \notin \eta^{\mathrm{L}}([0,\sigma]) \,\, |\,\, A] \leq \widetilde{C} \delta^{1/4}$,  which implies that
\begin{align*}
y \BP_{\ri y}[\beta_{\tau_{\sigma}} \in \eta^{\mathrm{L}}([0,\sigma])] \geq y (1 - \widetilde{C} \delta^{1/4}) \BP_{\ri y}[A].
\end{align*}
Note that
\begin{align*}
y \BP_{\ri y}[A] \to \tfrac{1}{\pi}\bigl(2 \delta_0 - 2\delta^{1/2} +\delta\bigr) \quad \text{and} \quad y\BP_{\ri y}[\beta_{\tau_{\sigma}} \in \eta^{\mathrm{L}}([0,\sigma])] \to \tfrac{1}{\pi}\bigl(W_{\sigma} - V_{\sigma}^1\bigr) \quad \text{as} \quad y \to \infty.
\end{align*}
Hence,  we can choose $\delta \in (0,\delta_0)$ sufficiently small (depending only on $\delta_0$) such that $2\delta_0 - 2\delta^{1/2} + \delta > \delta_0$ and so $W_{\sigma} - V_{\sigma}^1 > \delta_0$.  Similarly,  the Beurling estimate implies that conditionally on the event that $\beta$ exits $\{z \in \BH : \Im(z) > \delta\}$ for the first time on $\bigl((-\infty, -\delta - \delta^{1/2}] \cup [2\delta_0 + \delta^{1/2}, \infty)\bigr) \times \{\delta\}$,  the conditional probability that $\beta$ exits $\BH_{\sigma}$ on $\eta^{\mathrm{R}}([0,\sigma])$ is at most $\widetilde{C} \delta^{1/4}$.  Hence,  by possibly taking $\delta \in (0,\delta_0)$ to be smaller (in a way that depends only on $\delta_0$) and arguing as above,  we obtain that $V_{\sigma}^2 - W_{\sigma} < \delta_0 / 100$.  This proves the claim.

\stepn{step:gs4-confine}{Confining the curve after time $\sigma$} Using \cite[Corollary~3.44]{lawler2008conformally} and arguing as in the proof of \Cref{lem:guiding_sle4_one_force_point},  we obtain that we can choose $\delta_0 \in (0,\varepsilon / 2)$ sufficiently small (in a way that depends only on $\varepsilon,\gamma$,  and $\psi$) such that 
\begin{align*}
f_{\sigma}^{-1}(\psi(B_{\varepsilon/2}(\gamma))) \cup (\BH \cap B_{4\delta_0}(0)) \subset \psi(B_{\varepsilon}(\gamma)),\quad f_{\sigma}^{-1}(\psi(B_{\varepsilon/2}(\ri) \cap \partial \BD)) \subset \psi(B_{\varepsilon}(\ri) \cap \partial \BD),
\end{align*}
where $f_t(\cdot) = g_t(\cdot) - W_t$ for all $t \geq 0$.  Moreover,  conditional on $\eta|_{[0,\sigma]}$,  the curve $\widetilde{\eta} = f_{\sigma}(\eta|_{[\sigma,\infty)})$ has the law of an $\SLE_4(\rho_1 ; \rho_2)$ process in $\BH$ from $0$ to $\infty$ with the force points located at $V_{\sigma}^1 - W_{\sigma}$ and $V_{\sigma}^2 - W_{\sigma}$ respectively.  Note that $W_{\sigma} - V_{\sigma}^1 > \delta_0$ and so \cite[Lemma~2.8]{miller2017intersections} implies that the law of $\widetilde{\eta}$ stopped at the first time that it exits $\psi(B_{\delta_0 /2}(\gamma) \setminus (B_{\delta_0 / 2}(\ri) \cap \partial \BD))$ (i.e., the first time that it leaves the tube or reaches the arc) is mutually absolutely continuous with respect to the law of an $\SLE_4(\rho_2)$ process in $\BH$ from $0$ to $\infty$ stopped at the first time that it exits $\psi(B_{\delta_0 / 2}(\gamma) \setminus (B_{\delta_0 / 2}(\ri) \cap \partial \BD))$ and with force point located at $V_{\sigma}^2 - W_{\sigma}$.  Moreover,  the corresponding Radon--Nikodym derivatives are bounded away from $0$ and $\infty$ by deterministic constants depending only on $\psi,\delta_0$,  and $\gamma$.

\stepn{step:gs4-concl}{Conclusion of the proof} Furthermore,  since starting the force point at a non-negative distance $V_{\sigma}^2 - W_{\sigma} \geq 0$ stochastically increases the associated Bessel process and thereby decreases the inverse integral compared to starting at distance zero,  the exact same argument used to prove \Cref{lem:guiding_sle4_one_force_point} works in the case that the force point of the $\SLE_4(2t / \pi -2)$ process is located at $V_{\sigma}^{2} - W_{\sigma}$.  It follows that by possibly taking $t_{\star}>0$ to be smaller (in a way that depends only on $\varepsilon,\delta_0,\gamma$,  and $\psi$),  we have that there exists a constant $c>0$ such that the following holds for all $t \in (0,t_{\star}]$.  With conditional probability at least $ct$ given $\eta|_{[0,\sigma]}$ and on the event $E$,  the curve $\widetilde{\eta}$ intersects $\psi(B_{\delta_0 / 2}(\ri) \cap \partial \BD)$ before exiting $\psi(B_{\delta_0 / 2}(\gamma))$ for the first time.  Since $\BP[E] \geq 1/2$ for all $t \in (0,t_{\star}]$,  this completes the proof of the lemma.
\end{proof}

\section{Hitting two disjoint metric balls}\label{sec:hitting_two_metric_balls}

The main result of the present section (\Cref{prop:hitting-two-metric-balls}) shows that for deterministic $x, y, z \in \BD$, the metric balls around $\SCL(x)$ and $\SCL(y)$ stopped upon hitting $B_\varepsilon(z)$ remain disjoint with probability at most $\varepsilon^{2 + o(1)}$ as $\varepsilon \to 0$. (We will also present a boundary version of this result, stated as \Cref{prop:boundary-hitting-two-metric-balls}.) This will be used later to establish the existence of geodesics. Observe that if a geodesic connecting $\SCL(x)$ and $\SCL(y)$ exists and passes through $B_\varepsilon(z)$, then the aforementioned event must occur with the stopped balls replaced by their left limits.

\subsection{Some facts about extremal distances}
Before proceeding, we recall the following estimate for extremal distances.

\begin{lemma}[{\cite[Proposition~3.69]{lawler2008conformally}}]
	\label{lem:extremal_length}
	For $W>0$, we set $R_{W} \defeq (0,W) \times (0,\pi)$ and let $\partial_1 = [0,\pi \ri]$, $\partial_2 = \partial_{2,W} = [W,W+\pi \ri]$, $\partial_3 = \partial_{3,W} = [0,W]$, and $\partial_4 = \partial_{4,W} = [\pi \ri, W+\pi \ri]$. Let $\tau = \tau_{R_{W}}$ denote the first time that a complex Brownian motion $B$ started from $z \in R_W$ exits $R_{W}$, and set
    \begin{align*}
    	&f_1(z) = f_{1,W}(z) = 2 \min\{\BP_z[B_{\tau} \in \partial_1], \BP_z[B_{\tau} \in \partial_2]\}\\
    	&f_2(z) = f_{2,W}(z) = 2 \min\{\BP_z[B_{\tau} \in \partial_3], \BP_z[B_{\tau} \in \partial_4]\},
    \end{align*}
    and set $\Theta(W) \defeq \Theta(R_{W}; \partial_1, \partial_2) = \sup\{f_1(z) : z \in R_{W}\}$. Then $\Theta(W)$ is a continuous, strictly decreasing function of $W$ such that 
	\begin{align*}
		\Theta(W) = \left(\frac{8}{\pi}\right) e^{-W/2} + O(e^{-W}) \quad \text{as} \quad W \to \infty.
	\end{align*}
	Moreover, the supremum in the definition of $\Theta$ is attained when $z$ is the center $W/2 + \ri \pi/2$; in particular, $f_1(W/2 + \ri\pi/2) = \Theta(W)$, and likewise $\sup\{f_2(z) : z \in R_W\} = \Theta(\pi^2/W)$, by exchanging the two pairs of sides.
\end{lemma}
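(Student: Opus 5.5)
The plan is to compute everything from the explicit Fourier series for harmonic measure in $R_W$, together with a domain-monotonicity argument for the strict decrease. By separation of variables, the harmonic measure of $\partial_1$ seen from $z = x+\ri y$ is
\begin{equation*}
	h_1(z) \defeq \BP_z[B_\tau \in \partial_1] = \sum_{n \text{ odd}} \frac{4}{n\pi} \sin(ny) \frac{\sinh(n(W-x))}{\sinh(nW)},
\end{equation*}
and $h_2(z) = h_1(W - x + \ri y)$ by symmetry.

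\emph{Reduction to the midline.} For fixed $y$, $h_1$ is strictly decreasing in $x$ and $h_2$ is strictly increasing in $x$. Hence $\min\{h_1,h_2\}$ is maximized on the vertical midline $x = W/2$, where $h_1 = h_2$. On that line,
\begin{equation*}
	g(y) \defeq h_1(W/2+\ri y) = \sum_{n\text{ odd}} \frac{2}{n\pi}\frac{\sin(ny)}{\cosh(nW/2)}.
\end{equation*}
The function $g$ is symmetric about $y = \pi/2$. It then suffices to show that $g$ is nondecreasing on $(0,\pi/2)$, i.e.\ that
\begin{equation*}
	g'(y) = \frac{2}{\pi}\sum_{n \text{ odd}} \frac{\cos(ny)}{\cosh(nW/2)} > 0 \quad \text{on } (0,\pi/2).
\end{equation*}
This positivity is the step I expect to be the main obstacle. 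I would first try to identify the odd-cosine series, up to a positive factor, with the normal derivative of a positive harmonic function on a strip, namely a Poisson kernel for the strip $\{0<\Re<W/2\}$ evaluated at the boundary. Its positivity would then be automatic. Failing that, I would try a Fejér-type positivity argument using that $n \mapsto 1/\cosh(nW/2)$ is convex and decreasing. This would give that the supremum in $\Theta$ is attained at the center, so $\Theta(W) = 2g(\pi/2)$.

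\emph{Asymptotics and continuity.} Evaluating at the center, $\sin(n\pi/2) = (-1)^k$ for $n=2k+1$, so
\begin{equation*}
	\Theta(W) = \frac{4}{\pi}\sum_{k\ge0}\frac{(-1)^k}{(2k+1)\cosh((2k+1)W/2)}.
\end{equation*}
This series converges uniformly on $W \ge W_0 > 0$, which gives continuity. Its $k=0$ term is $\frac{4}{\pi}\operatorname{sech}(W/2) = \frac{8}{\pi}e^{-W/2} + O(e^{-3W/2})$, and the remaining terms are $O(e^{-3W/2})$. Together these give $\Theta(W) = \frac{8}{\pi}e^{-W/2}+O(e^{-W})$.

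\emph{Strict monotonicity.} Take $W<W'$ and translate so that both rectangles share the center $c$; then $R_W \subset R_{W'}$. A Brownian motion from $c$ that exits $R_{W'}$ through its left side must first exit $R_W$ through its left side, since it has to cross the vertical line carrying that side. Hence $h_{1,W'}(c) \le h_{1,W}(c)$, with strict inequality because, with positive probability, the path exits $R_W$ on its left side and then reaches the top side of $R_{W'}$ before its left side. Since $\Theta = 2h_1(\text{center})$ by the reduction above, $\Theta$ is strictly decreasing.

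\emph{The pair $\partial_3, \partial_4$.} Rotating $R_W$ by $90^\circ$ and rescaling by $\pi/W$ maps it conformally onto $R_{\pi^2/W}$, sending $\{\partial_3,\partial_4\}$ to the vertical sides. Harmonic measure is conformally invariant, so $\sup\{f_2(z) : z \in R_W\} = \Theta(\pi^2/W)$.
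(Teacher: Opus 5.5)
\textbf{The main gap.} The missing step is exactly the one you flagged. Positivity of $g'$ on $(0,\pi/2)$ is not a technicality. It is the lemma's assertion that the supremum is attained at the center, and both your identity $\Theta(W)=2g(\pi/2)$ and your monotonicity step rest on it. Yet the proposal only lists two possible strategies.

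Your second strategy cannot work as stated.
\begin{itemize}
\item Its premise is false for small $W$: $\operatorname{sech}$ is concave near $0$, and the second difference of $\operatorname{sech}(W/2),\operatorname{sech}(3W/2),\operatorname{sech}(5W/2)$ is $\approx -W^2<0$.
\item More fundamentally, Fej\'er's criterion does not carry over to cosine series with only odd frequencies. The coefficients $(2,1,0,0,\dots)$ are convex and decreasing, yet $2\cos y+\cos 3y<0$ just below $\pi/2$.
\end{itemize}

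Your first strategy can be made rigorous, but the right domain is a rectangle, not a strip. Expand in the Dirichlet eigenfunctions of $(-\pi/2,\pi/2)$. One checks that $\frac{2}{\pi}\sum_{n\ \mathrm{odd}}\frac{\cosh(nx)\cos(ny)}{\cosh(nW/2)}$ is the sum of the Poisson kernels of $(-W/2,W/2)\times(-\pi/2,\pi/2)$ at the two boundary points $(\pm W/2,0)$. At $x=0$ this sum is $g'(y)$, so $g'>0$ on $(-\pi/2,\pi/2)$.

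There is also a shorter argument with no series. Let $u=\partial_y h_1$, harmonic on $(0,W)\times(0,\pi/2)$.
\begin{itemize}
\item It vanishes on the two vertical sides, where $h_1$ is constant, and on $\{y=\pi/2\}$ by symmetry.
\item It is nonnegative on the bottom side.
\item Near the corner $0$ it equals $\frac{2}{\pi}\frac{x}{x^2+y^2}$ plus a function that extends harmonically across the corner. Near the other corners it is smooth. Hence $u$ is bounded below.
\end{itemize}
The maximum principle then gives $u>0$. The same argument applied to $\partial_x h_1$ on $R_W$ proves your unjustified claim that $h_1$ is strictly decreasing in $x$.

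\textbf{The monotonicity step.} Your strict-monotonicity argument relies on a false inclusion. A path from the center can leave $R_W$ through its \emph{right} side, come back, and then exit $R_{W'}$ through its left side. So exiting $R_{W'}$ on the left does not force a first exit from $R_W$ on the left.

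Use instead the event of exiting through either vertical side, for which the inclusion does hold. Put both rectangles on a common vertical midline. From a midline point $h_1=h_2$, so this event has probability $2h_1$. This gives $g_{W'}(y)<g_W(y)$ for every $y\in(0,\pi)$. Evaluating at a maximizer of $g_{W'}$ then yields $\Theta(W')<\Theta(W)$ without knowing where the maximum lies.

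Similarly, continuity and the asymptotics do not need the center claim. Your tail bound $O(e^{-3W/2})$ is uniform in $y$, so $\max_y g=\frac{2}{\pi}\operatorname{sech}(W/2)+O(e^{-3W/2})$ directly. With these repairs, and with the positivity of $g'$ proved as above, your outline yields the full statement.
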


The next lemma gives an upper bound for the extremal distance between two compact subsets of the unit disk.

\begin{lemma}\label{lem:Beurling}
    There is a universal constant $C > 0$ such that the following is true: Let $K_1, K_2 \subset \overline\BD$ be two disjoint, connected, simply connected, and compact subsets such that $\partial \BD \not\subset K_1 \cup K_2$. Then the extremal distance in $\BD$ between $K_1$ and $K_2$ is at most 
    \begin{equation}\label{eq:Beurling}
        \left.C\middle/\log\left(\frac{\diam(K_1) \wedge \diam(K_2)}{\dist(K_1, K_2)} \vee 1\right)\right., 
    \end{equation}
    where $\diam$ and $\dist$ denote Euclidean diameter and distance, respectively, with the convention that the bound is $+\infty$ when the logarithm vanishes.
\end{lemma}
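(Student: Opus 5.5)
Throughout, write $\delta \defeq \dist(K_1,K_2)$ and $m \defeq \diam(K_1)\wedge\diam(K_2)$. The plan is to bound the extremal distance $\mathrm{EL}(K_1,K_2;\BD)$ from above by exhibiting a single metric $\rho$ on $\BD$ with small area and large minimal crossing length. Equivalently, since extremal distance is the reciprocal of the modulus of the curve family separating $K_1$ from $K_2$, we can argue with separating curves. If $m/\delta \le e$, say, the logarithm in \eqref{eq:Beurling} is at most $1$. In that case we only need a universal bound, except that the convention returns $+\infty$ when the ratio is at most $1$. So we first reduce to the case $m/\delta$ large, and absorb the intermediate range $m/\delta \in (1, e^{C_0}]$ by enlarging $C$, after showing the extremal distance is bounded by a universal constant there. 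This bound will come from the same argument run at one scale.

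For the main case, pick $x_1 \in K_1$ and $x_2 \in K_2$ with $|x_1-x_2|=\delta$, and set $x \defeq x_1$. Consider the annuli $A_{r,2r}(x)$ for dyadic $r$ ranging from $2\delta$ up to $m/4$; there are about $\log_2(m/\delta)$ of them. Since $K_1$ and $K_2$ are connected, each contains a point within $\delta$ of $x$, and each has diameter at least $m$. Hence each of $K_1$ and $K_2$ crosses every such annulus, meaning it meets both the inner and the outer boundary circle.

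Use the metric $\rho(z)=\frac{1}{|z-x|}\one_{\{2\delta<|z-x|<m/4\}\cap\BD}$. Its area is at most $2\pi\log(m/(8\delta))$. We then need every curve in $\BD$ joining $K_1$ to $K_2$ to have $\rho$-length bounded below by a universal constant, and this is the main obstacle. A curve joining $K_1$ and $K_2$ near $x$ might stay inside $B_{2\delta}(x)$ and have zero $\rho$-length.

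So we should dualize instead. The extremal distance between $K_1$ and $K_2$ in $\BD$ equals the reciprocal of the extremal distance of the family $\Gamma^*$ of curves in $\BD$ that separate $K_1$ from $K_2$. The separating curves are curves, or crosscuts with endpoints on $\partial\BD$, and here we use $\partial\BD\not\subset K_1\cup K_2$ and simple connectivity. It therefore suffices to show $\mathrm{EL}(\Gamma^*)\ge c\log(m/\delta)$, i.e.\ to find a metric whose area is $O(\log(m/\delta))$ and under which every separating curve has length at least of order $\log(m/\delta)$.

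Take the same logarithmic metric $\rho$. Any curve $\gamma$ separating $K_1$ from $K_2$ in $\BD$ must, in each annulus $A_{r,2r}(x)$, separate the crossing piece of $K_1$ from that of $K_2$ within the annulus, so it must cross the annulus. Hence it has $\rho$-length at least $\log 2$ in each annulus. Summing over the roughly $\log_2(m/\delta)$ disjoint annuli gives $\rho$-length $\gtrsim\log(m/\delta)$.

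The delicate points are the following. First, an annulus may be cut by $\partial\BD$. This is handled by noting that then the crossing pieces of $K_1$ and $K_2$ lie in $\overline\BD\cap A_{r,2r}(x)$, and $\gamma$ must still join the appropriate boundary arcs; the $\rho$-length bound persists up to a constant factor. Second, we must make sure $\gamma$ does not escape around through $\partial\BD$. This is where $\partial\BD\not\subset K_1\cup K_2$ and the topology are used.

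Combining the two bounds gives $\mathrm{EL}(\Gamma^*)\ge \frac{(c\log(m/\delta))^2}{2\pi\log(m/\delta)}$, and hence $\mathrm{EL}(K_1,K_2;\BD)\le C/\log(m/\delta)$.
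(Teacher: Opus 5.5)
There is a genuine gap in how you handle moderate ratios. You center the annuli at $x=x_1\in K_1$. Then $K_2$ meets $B_s(x)$ only for $s>\delta$, while $K_1$ and $K_2$ are only guaranteed to leave $\overline{B_s(x)}$ for $s<m/2$; your dyadic annuli need even more, namely $2\delta\le r<m/4$. So for $m\le 2\delta$ there is no admissible scale at all, and for $m\le 8\delta$ none of your annuli exists. In that range ``the same argument run at one scale'' produces nothing. The universal bound you would need there is a different (Teichm\"uller-type) estimate that you do not prove. You cannot skip this range, because the right-hand side of \eqref{eq:Beurling} is finite for every $m>\delta$. The missing idea is the choice of center: take $z$ to be the midpoint of a shortest segment joining $K_1$ and $K_2$. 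Then $\mathrm{dist}(z,K_i)=\delta/2$, so for every $s\in(\delta/2,m/2)$ the circle $\partial B_s(z)$ meets both sets. This range is nonempty as soon as $m>\delta$, and $\int_{\delta/2}^{m/2}\mathrm{d}s/s=\log(m/\delta)$ exactly, so no case distinction is needed.

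Second, the detour through the separating family is unjustified as written, and also unnecessary. The identity $\mathrm{EL}(\Gamma)=1/\mathrm{EL}(\Gamma^*)$ is a real theorem whose hypotheses you do not check. Here $\BD\setminus(K_1\cup K_2)$ may be triply connected (both sets in $\BD$) or disconnected (some $K_i$ meeting $\partial\BD$ in several places). Your two ``delicate points'' are exactly where this matters, and you leave them open. Your opening plan also points the wrong way: a single metric $\rho$ can only give a \emph{lower} bound on an extremal length, which is a supremum over metrics. Upper bounds come from exhibiting enough joining curves. Since two circles meet in at most two points, $\partial B_s(z)\cap\overline{\BD}$ is a single arc. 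It meets $K_1$ and $K_2$, so it contains a sub-arc $\sigma_s$ joining them with interior in $\BD$. The arcs $\sigma_s$, $s\in(\delta/2,m/2)$, are disjoint members of the joining family. For any $\rho$, with $L=\inf_s\int_{\sigma_s}\rho\,|\mathrm{d}w|$, Cauchy--Schwarz gives $L^2\le 2\pi s\int_{\sigma_s}\rho^2 s\,\mathrm{d}\theta$. Integrating in $s$ yields $\int_{\BD}\rho^2\ge\frac{L^2}{2\pi}\log(m/\delta)$, i.e.\ an extremal distance at most $2\pi/\log(m/\delta)$. This is the paper's proof. These same arcs are exactly what you would need to justify your claim that every separating curve crosses every annulus, since a separating set must meet each $\sigma_s$. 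So your dual argument already contains the direct one and then adds a duality theorem on top.
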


\begin{figure}[ht!]
    \centering
    \includegraphics[width=.4\linewidth]{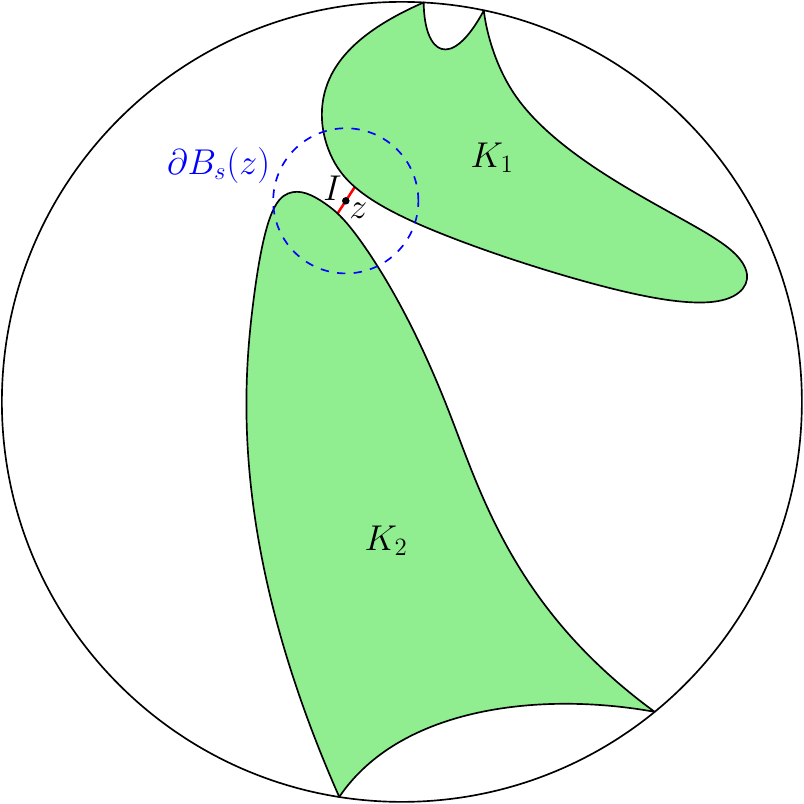}
    \caption{Illustration of the proof of \Cref{lem:Beurling}. The red line segment $I$ is a shortest line segment connecting $K_1$ and $K_2$, and the point $z$ is its midpoint, so that $\dist(z, K_1) = \dist(z, K_2) = \dist(K_1,K_2)/2$. For every $s$ between $\dist(K_1,K_2)/2$ and $(\diam(K_1) \wedge \diam(K_2))/2$, the circle $\partial B_s(z)$ meets both $K_1$ and $K_2$, since each of them has a point within distance $\dist(K_1,K_2)/2$ of $z$ and a point at distance greater than $s$ from $z$. The resulting family of circular arcs joining $K_1$ and $K_2$ has extremal length at most $2\pi/\log\bigl((\diam(K_1) \wedge \diam(K_2))/\dist(K_1,K_2)\bigr)$.}
    \label{fig:Beurling}
\end{figure}

\begin{proof}
\stepn{step:beu-setup}{Setup} Write $d \defeq \dist(K_1, K_2)$ and $D \defeq \diam(K_1) \wedge \diam(K_2)$. We may assume that $d < D$, since otherwise the asserted bound is $+\infty$. Let $z$ be the midpoint of a shortest line segment connecting $K_1$ and $K_2$; since $\BD$ is strictly convex and the segment has positive length, $z \in \BD$, and $\dist(z, K_i) = d/2$ for $i = 1,2$.

\stepn{step:beu-arcs}{Circular arcs joining $K_1$ and $K_2$} Fix $s \in (d/2, D/2)$. Each $K_i$ meets $B_s(z)$, since $\dist(z,K_i) = d/2 < s$, and is not contained in $\overline{B_s(z)}$, since $\diam(K_i) \ge D > 2s$; being connected, it therefore meets $\partial B_s(z)$. As two distinct circles meet in at most two points, $\partial B_s(z) \cap \overline{\BD}$ is a single closed arc, which consequently meets both $K_1$ and $K_2$ (since $s<D/2$) and hence contains a sub-arc $\sigma_s$ joining them whose interior lies in $\BD$.

\stepn{step:beu-el}{The extremal length computation} The extremal distance in question is the extremal length of the family of curves in $\BD$ joining $K_1$ and $K_2$; since $\{\sigma_s\}_{s \in (d/2,\, D/2)}$ is contained in that family, it suffices to bound the extremal length of the smaller family from above. Let $\rho\colon \BD \to [0,\infty]$ be a Borel function and put $L \defeq \inf_s \int_{\sigma_s} \rho(w) \lvert \mathrm{d}w\rvert$. Parameterizing $\sigma_s$ by the angle $\theta$ and applying the Cauchy--Schwarz inequality,
    \begin{equation*}
        L^2 \le \left(\int_{\sigma_s} s \, \mathrm{d}\theta\right)\left(\int_{\sigma_s} \rho^2 s \, \mathrm{d}\theta\right) \le 2\pi s \int_{\sigma_s} \rho^2 s \, \mathrm{d}\theta, \quad \forall s \in (d/2, D/2),
    \end{equation*}
    so that, integrating over $s$,
    \begin{equation*}
        \int_{\BD} \rho^2 \, \mathrm{d}A \ \ge \ \int_{d/2}^{D/2} \int_{\sigma_s} \rho^2 s \, \mathrm{d}\theta \, \mathrm{d}s \ \ge \ \frac{L^2}{2\pi} \int_{d/2}^{D/2} \frac{\mathrm{d}s}{s} \ = \ \frac{L^2}{2\pi} \log\!\left(\frac{D}{d}\right).
    \end{equation*}
    Taking the supremum over $\rho$ shows that the extremal distance is at most $2\pi/\log(D/d)$, which is~\eqref{eq:Beurling} with $C = 2\pi$.
\end{proof}
\subsection{Absolute continuity of the hitting point by a metric ball} Next, let us prove that a deterministically chosen hitting point by a metric ball of a segment is absolutely continuous.
\begin{lemma}\label{lemma hitting point absolutely continuous}
	The following absolute continuity relationships hold:
	\begin{itemize}
	\item Let $\alpha$, $\beta$ be two disjoint non-trivial segments of $\partial \BD$. Let $\tau$ be the first time $t\ge 0$ that the ball $\SCB_t(\alpha; D^\BD_{\Gamma_\BD})$ hits $\beta$. Let $X$ be a deterministically selected point (e.g., the leftmost in some parameterization) in $\beta \cap \SCB_\tau(\alpha;D^\BD_{\Gamma_\BD})$. Then the law of $X$ is absolutely continuous with respect to the Lebesgue measure on $\beta$. 
	\item Similarly, let $\tau'$ be the first time $t\ge 0$ that $\SCB_t(\alpha;D^\BD_{\Gamma_\BD})$ hits $\SCL(0)$ and let $X$ be a deterministically selected point in $\SCL(0) \cap \SCB_{\tau'}(\alpha;D^\BD_{\Gamma_\BD})$. Then, conditionally on $\SCL(0)$, the conditional law of $X$ is absolutely continuous with respect to the harmonic measure on $\SCL(0)$ seen from $\infty$ in $\BC \setminus \overline{\mathop{\mathrm{int}}(\SCL(0))}$.
	\item Furthermore, let $\tau''$ be the first time $t\ge 0$ that $\SCB_t(\SCL(0); D^\BD_{\Gamma_{\BD}})$ hits $\partial \BD$ and let $X$ be a rotationally equivariant selected point on $\partial \BD \cap \SCB_{\tau''}(\SCL(0); D^\BD_{\Gamma_\BD})$ (i.e., the selection commutes with rotations of $\BD$). Then, the law of $X$ is absolutely continuous with respect to the Lebesgue measure on $\partial \BD$ and is actually the uniform distribution on $\partial \BD$.
	\end{itemize}
\end{lemma}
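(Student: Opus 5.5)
The plan is to use conformal invariance (Axiom~\eqref{it:weak_axiom_conformal_invariance}) together with locality in the form of \Cref{lem:metric-ball-locality-segment}, via a ``shift and compare'' argument. I would first prove the third item, since it is the cleanest. Let $\rho_\theta$ be the rotation of $\BD$ by angle $\theta$. Since $\SCL(0)$ is the loop surrounding the fixed point $0$, and $\rho_\theta$ fixes $0$, conformal invariance gives that $(\Gamma_\BD, D^\BD_{\Gamma_\BD})$ and its image under $\rho_\theta$ have the same law, and $\rho_\theta$ maps $\SCL(0)$ to the loop surrounding $0$ in the rotated ensemble. The hitting time $\tau''$ and the ball $\SCB_{\tau''}(\SCL(0))$ transform covariantly. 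Because the selection rule is rotationally equivariant, $\rho_\theta(X)$ has the same law as $X$ for every $\theta$. The only rotation-invariant probability measure on $\partial\BD$ is the uniform one, which gives the third item.

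For the first item I would prove that $\BP[X \in N] = 0$ for every Lebesgue-null $N \subset \beta$. The idea is to perturb the target. Write $\beta = [b_1,b_2]$, and consider the family of M\"obius maps $\phi_s$ of $\BD$ fixing the two endpoints of $\alpha$, or more conveniently fixing $\alpha$ setwise, that slide points of $\beta$ along $\partial\BD$. Conformal invariance shows that the law of the whole ball process from $\alpha$ is invariant under such $\phi_s$. Under $\phi_s$, however, $\beta$ is mapped to a different arc $\phi_s(\beta)$, so this alone only relates the hitting point of $\beta$ to the hitting point of $\phi_s(\beta)$.

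To close the argument I would use locality. Grow the ball from $\alpha$ up to the stopping time $\sigma$ at which it first comes within distance $\delta$ of $\beta$. By \Cref{lem:metric-ball-locality-segment}, conditionally on $\SCB_\sigma(\alpha)$, the configuration in the complementary component $V$ adjacent to $\beta$ is a fresh CLE$_4$ with metric, and the remainder of the exploration toward $\beta$ is the ball grown from the arc $\partial V \cap \SCB_\sigma(\alpha)$ inside $V$. Mapping $V$ to $\BD$ and using \Cref{prop BCLE rho to zero}, the point $X$ is the image, under the uniformizing map, of a hitting point for the same problem with a random, non-degenerate configuration of arcs. Averaging over the random geometry then smooths the law. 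Concretely, I would condition on the harmonic measure data of $V$, which has a continuous law because $\sigma$ is approached continuously. Integrating over the one-parameter family of conformal reparametrizations by the M\"obius group fixing the two arcs' images then shows that $\BP[X\in N]$ is an average of $\BP[\phi^{-1}(Y)\in N]$ over a family of $\phi$ with a smooth density. Since $N$ is null, $\phi(N)$ is null for almost every parameter, and Fubini gives zero.

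The second item follows the same scheme. Conditionally on $\SCL(0)$, map the exterior $\BC\setminus\overline{\mathop{\mathrm{int}}(\SCL(0))}$, intersected with $\BD$, to an annulus. Rotational averaging, as in the third item, together with locality once the ball is close to $\SCL(0)$, gives absolute continuity with respect to harmonic measure from $\infty$, since that measure corresponds to Lebesgue measure on the inner circle.

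I expect the main obstacle to be the first two items. There is no exact symmetry fixing both arcs, so one must manufacture enough randomness in the conformal reparametrization from the random stopped ball to get a genuine density. I would first try to obtain this randomness by stopping at a time $\sigma$ when the ball is at a fixed small extremal distance from $\beta$. At that time the remaining problem is a conformal image of a two-arc problem whose cross-ratio has a nondegenerate law. The step that may fail is showing that this law is absolutely continuous in the parameter that moves the target point along $\beta$.
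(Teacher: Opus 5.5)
Your argument for the third item is correct and is the paper's. For the first two items, however, the step that is supposed to produce a density is never supplied, so those items remain unproved.

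\emph{What fails.} Your Fubini step is the wrong way round. That $\phi(N)$ is null holds for every conformal $\phi$, and it is useless unless the law of $Y$ is already absolutely continuous. What is needed is that, for each fixed $y$, the point $\phi^{-1}(y)$ has a density as $\phi$ varies, with $\phi$ independent of $Y$. The random geometry of $\SCB_\sigma(\alpha)$ does not provide this, for two reasons.
\begin{itemize}
\item The ``one-parameter family of M\"obius maps fixing the two arcs' images'' does not exist. An automorphism of $\BD$ preserving two disjoint arcs permutes their four endpoints, so such maps form a finite group. Once the uniformizing map of $V$ is normalized, the only free parameter is the conformal modulus, and varying it changes the law of the residual hitting problem rather than transporting a fixed law along $\beta$.
\item The claim that the ``harmonic measure data of $V$'' has a continuous law is unjustified. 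The ball process actually jumps when loops are discovered, and continuity in time says nothing about absolute continuity of a law. This claim is of the same type as the lemma itself.
\end{itemize}
You flag this as the step that may fail; it is in fact the whole content of the first item. The second item has the same problem: in the uniformized annulus there is no exact rotational symmetry, because a rotation moves the image of $\alpha$.

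\emph{The missing idea.} Pair an exact one-parameter symmetry that fixes the target and moves only the source with an approximate invariance: a small displacement of the source leaves $X$ unchanged with high probability. The paper maps $\BD$ to the strip $S=\BR\times(0,1)$, sending $\beta$ to the \emph{whole} top line and $\alpha$ to $[a,b]\subset\BR$, so that horizontal translations preserve $\beta$.
\begin{itemize}
\item Given $\varepsilon>0$, pick $t$ with $\BP[\tau<t]\ge 1-\varepsilon$.
\item By \Cref{lemma balls from small segment are small}, pick $\delta$ so that $\SCB_t([a-\delta,a+\delta])$ and $\SCB_t([b-\delta,b+\delta])$ each have diameter $<1/2$ with probability at least $1-\varepsilon$; such balls then miss $\beta$.
\item Let $E$ be the intersection of $\{\tau<t\}$ with these two events, so $\BP[E]\ge 1-3\varepsilon$.
\end{itemize}
Balls are monotone in the source and $\SCB_t(A\cup B)=\SCB_t(A)\cup\SCB_t(B)$. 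Hence on $E$,
\[
\SCB_s(\alpha+\theta)\cap\beta=\SCB_s([a+\delta,b-\delta])\cap\beta=\SCB_s(\alpha)\cap\beta,\qquad s\le t,\ \theta\in[0,\delta],
\]
so $X$ is unchanged when $\alpha$ is replaced by $\alpha+\theta$. Take $U$ uniform on $[0,\delta]$ and independent of everything. Then $X=Y$ on $E$, where $Y$ is the hitting point for the source $\alpha+U$. By translation invariance $Y\overset{d}{=}X'+U$ for an independent copy $X'$ of $X$, and this law has a density. Since $\varepsilon$ is arbitrary, the first item follows.

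The second item is the same argument, with a uniform rotation of the round annulus in place of $U$. No locality is needed anywhere.

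Your first idea, a M\"obius flow fixing the endpoints of $\alpha$ and sliding $\beta$, is the mirror image of this and could have been closed the same way. One would use \Cref{lemma ball from segment does not hit a given point} to see that, at the hitting time, the ball stays a positive distance from the endpoints of $\beta$, so a small slide of $\beta$ leaves $X$ unchanged with high probability. You abandoned that route exactly where this comparison was needed.
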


Before proving the above result, let us gather a few useful lemmas. The first one states that a.s., one can find polygonal paths from $0$ to $\partial \BD$ such that the associated unions of loops (without $\SCL(0)$) along these paths are disjoint.
\begin{lemma}\label{lemma existence of disjoint paths from zero to the boundary}
	For all $k\ge 1$, there exist a.s.\ $k$ polygonal paths $\gamma_1, \ldots, \gamma_k:[0,1] \to \overline{\BD}$ from $0$ to $\partial \BD$, such that $\gamma_1((0,1)), \ldots, \gamma_k((0,1))$ are disjoint subsets of $\BD$, which are made of finitely many segments whose vertices lie in $\BQ^2$ except possibly for their endpoints on $\partial \BD$ which  lie in a countable subset of $\partial \BD$, and such that the
	following holds. For all $i \in [1,k]_{\BZ}$, let $F_i$ denote the closure of the union of the domains encircled by loops of $\Gamma_\BD \setminus \{\SCL(0)\}$ which intersect $\gamma_i$. Then, the sets $F_1,\ldots,F_k$ are disjoint.
\end{lemma}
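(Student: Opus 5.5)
The plan is to build the paths one at a time inside disjoint angular sectors, using the Brownian loop soup representation of CLE$_4$ from \Cref{subsec:brownian_loop_soup}. Two facts drive the argument. First, loops of $\Gamma_\BD$ are a.s.\ at positive distance from $\partial\BD$ and from each other. Second, for any fixed compact $K\subset\BD$, only finitely many loops of $\Gamma_\BD$ have diameter larger than any given $\delta>0$. Fix $k$ and split $\BD\setminus\{0\}$ into $k$ open sectors $S_i$ with rational directions. We would then try to find, a.s., in each sector a polygonal path $\gamma_i$ from $0$ to $\partial\BD$ such that every loop $\SCL\neq\SCL(0)$ meeting $\gamma_i$ has its closed interior strictly inside a slightly smaller sector $S_i'\Subset S_i$, away from $\SCL(0)$. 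Disjointness of the $F_i$ would then follow, provided we also handle the behaviour near $0$ and near $\partial\BD$.

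Near $0$, every $\gamma_i$ starts inside $\mathrm{int}(\SCL(0))$, and loops other than $\SCL(0)$ do not enter that region. So we take the initial segment of $\gamma_i$ along a rational ray up to a rational point of $\mathrm{int}(\SCL(0))$ close to $\SCL(0)$, then cross $\SCL(0)$. The difficulty is that the loops just outside $\SCL(0)$ accumulate on $\SCL(0)$, which prevents separating the $F_i$ near $\SCL(0)$. To deal with this, we would use that $\SCL(0)$ has positive distance from the other loops only in a local sense: a.s.\ the carpet near $\SCL(0)$ contains points of $\SCL(0)$'s exterior boundary accessible along small segments that meet only tiny loops.

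Concretely, the key step is the following a.s.\ statement. For each fixed rational direction and each $\delta>0$, there exists a polygonal path from $\SCL(0)$ to $\partial\BD$ within a $\delta$-neighbourhood of a ray such that every loop it meets has diameter at most $\delta$. We would prove this by a multiscale argument. The set of points in the carpet that are hit only by small loops has full "crossing" probability: by the Markov property and conformal invariance of CLE$_4$, the probability that a fixed annulus $A_{r,2r}(w)$ is crossed by a single loop is bounded by a constant $q<1$. Using independence across scales (the Markov property with respect to $V^\star$), we would then construct such paths by a percolation-type argument, choosing countably many rational vertices. Endpoints on $\partial\BD$ come from the countable set of limit points of rational rays, which handles the claim that the endpoints lie in a countable subset.

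Given such a path for each $i$, with $\delta$ smaller than half the minimal separation between the sectors' paths, the loops met by $\gamma_i$ have interiors contained in the $\delta$-neighbourhood of $\gamma_i$. The closures of these unions are then disjoint as long as the $\gamma_i$ are at distance more than $2\delta$ apart away from $\SCL(0)$. Near $\SCL(0)$ and $\partial\BD$ the $\gamma_i$ stay in separated sectors, so this also holds there. Accumulation of infinitely many small loops does not harm disjointness, since the closures stay within $\overline{B_\delta(\gamma_i)}$ minus $\mathrm{int}(\SCL(0))$. The main obstacle is the near-$\SCL(0)$ crossing: the random loop $\SCL(0)$ is not deterministic, so we cannot apply the Markov property directly. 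To get around this, we would try to condition on $\SCL(0)$ and use that, given $\SCL(0)$, the exterior loops form CLE$_4$ in the annular region, which is the exterior of $\mathrm{int}(\SCL(0))$ in $\BD$. We would then run the annulus-crossing estimate in conformal coordinates mapping that annular domain to a round annulus, where the loops that hug $\SCL(0)$ become small.
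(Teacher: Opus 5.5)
The key step of your plan is false, and the sector scheme fails with it. Fix a rational direction and $\delta>0$. With positive probability, some loop of $\Gamma_\BD$ other than $\SCL(0)$ surrounds a disk of radius $2\delta$ centred at a point of the ray lying between $\SCL(0)$ and $\partial\BD$. The interior of that loop then contains a whole cross-section of the $\delta$-tube. So every path in the tube from $\SCL(0)$ to $\partial\BD$ meets this loop, whose diameter exceeds $\delta$.

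For the same reason, with positive probability a single loop surrounds a circular arc running across a fixed sector $S_i$. Every path in $S_i$ from $\SCL(0)$ to $\partial\BD$ then meets a loop whose closed interior is not contained in $S_i'$. So no scheme built on deterministic tubes or sectors can succeed almost surely.

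The multiscale input you invoke cannot repair this. That each annulus $A_{r,2r}(w)$ avoids being crossed by a single loop with probability at least $1-q$, roughly independently over scales, gives almost sure statements about infinitely many good scales around a fixed point. An unblocked tube of width $2\delta$ and length of order one is instead a macroscopic event whose probability is bounded away from one. It even degenerates as $\delta\to0$, since blocking can occur roughly independently in each of the order $1/\delta$ disjoint $\delta$-boxes along the tube. The accumulation of loops on $\SCL(0)$, which you single out as the main obstacle, is harmless once the paths cross $\SCL(0)$ at distinct points.

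The missing idea is that no estimate is needed. The statement is topological and uses only that the loops are disjoint, non-nested, at positive distance from $\partial\BD$, and locally finite. The paper builds the paths one at a time. Given $\gamma_1,\dots,\gamma_i$, it picks a rational $z_i\in\mathrm{int}(\SCL(0))$ with $[0,z_i]$ disjoint from the previous paths. It then joins $z_i$ to $\BC\setminus\overline{\BD}$ by a polygonal path with rational vertices inside the open set $\BC\setminus(F_1\cup\cdots\cup F_i)$, stopped at its first exit from $\overline{\BD}$. The endpoints therefore automatically lie in a countable set.

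Disjointness of the $F_j$ then comes for free, for four reasons:
\begin{enumerate}
\item A loop meeting $\gamma_{i+1}$ cannot contribute to $F_j$ for $j\le i$, since its closed interior would lie in $F_j$.
\item Closed interiors of distinct loops are disjoint.
\item A path starting at $0\in\mathrm{int}(\SCL(0))$ that meets the closed interior of another loop must cross that loop.
\item By local finiteness, every point of $F_j$ outside the closed interiors of its contributing loops lies on $\gamma_j$.
\end{enumerate}

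The one point in this construction that genuinely needs care is that $\mathrm{int}(\SCL(0))$ must lie in the unbounded component of $\BC\setminus(F_1\cup\cdots\cup F_i)$, and this is not automatic. Almost surely $F_1$ contains the part of $\gamma_1$ outside $\overline{\mathrm{int}(\SCL(0))}$. So suppose, say, $\gamma_1=[0,1]$, $\SCL(0)$ is small, and another loop spirals once around $\SCL(0)$ (without surrounding it) while crossing $[0,1]$ at two different radii. Then $F_1$ contains a closed curve around $\SCL(0)$. The first path therefore has to be chosen with this in mind, but that is a topological issue about the configuration, not something a percolation estimate addresses.
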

\begin{proof}
	We start with $\gamma_1=[0,1]$. Then, let $F_1$ be defined as in the statement. Since $\gamma_1$ is a line segment and $\mathrm{int}(\SCL(0))$ is an open neighborhood of $0$ disjoint from $F_1$, we can choose a point $z_1 \in \BQ^2 \cap \mathrm{int}(\SCL(0))$ such that the segment $[0, z_1]$ is disjoint from $\gamma_1((0,1))$. Note that $\BC \setminus F_1$ is an open connected set, so it is path-connected by polygonal arcs whose vertices have rational coordinates. Take one such polygonal path from $z_1$ to $\BC \setminus \BD$ and let $\gamma_2$ be the concatenation of $[0, z_1]$ with this path, stopped when it first exits $\overline{\BD}$. Note that the vertices of $\gamma_2$ belong to $\BQ^2$ except for its endpoint on $\partial \BD$, which ensures that these paths can be drawn from a fixed countable set. The desired result follows from a simple induction using that for all $k\ge 1$, the set $\BC \setminus (F_1 \cup\ldots \cup F_k)$ is open, connected, and unbounded. It remains to check that $F_1, \ldots, F_k$ are disjoint, and for this it suffices to treat $F_i$ and $F_{i^\prime}$ with $i < i^\prime$. No loop $\SCL$ contributing to $F_{i^\prime}$ meets $\gamma_i$: it would then contribute to $F_i$, so that $\overline{\mathop{\mathrm{int}}(\SCL)} \subset F_i$, contradicting $\gamma_{i^\prime} \cap F_i = \emptyset$. Nor does $\overline{\mathop{\mathrm{int}}(\SCL)}$ contain a point of $\gamma_i$, since $\gamma_i$ starts at $0 \in \mathop{\mathrm{int}}(\SCL(0))$, which is disjoint from $\overline{\mathop{\mathrm{int}}(\SCL)}$, so $\gamma_i$ would have to cross $\SCL$. The loops contributing to $F_i$ and to $F_{i^\prime}$ are therefore distinct, whence their filled interiors are disjoint. Finally, since only finitely many loops have diameter at least any given $\varepsilon > 0$, all but finitely many of the filled interiors contributing to $F_i$ lie within distance $\varepsilon$ of $\gamma_i$; hence every point of $F_i$ that is not in one of these filled interiors lies on $\gamma_i$. Combining the three observations with $\gamma_i((0,1)) \cap \gamma_{i^\prime}((0,1)) = \emptyset$ gives $F_i \cap F_{i^\prime} = \emptyset$.
\end{proof}

The second one states that a metric ball from a segment in $\partial \BD$ a.s.\ does not hit a given point in the complement of this segment in $\partial \BD$.
\begin{lemma}\label{lemma ball from segment does not hit a given point}
	Let $\alpha$ be a non-trivial segment of $\partial \BD$ and $z \in \partial \BD \setminus \alpha$. Let $t>0$. Then, a.s., $ z \not\in\SCB_t(\alpha;D^\BD_{\Gamma_\BD}) $.
\end{lemma}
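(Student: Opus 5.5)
The plan is to reduce to a statement about the uniform exploration from the whole boundary and then use rotation invariance together with a zero-one/measure argument. First, by Axiom~\eqref{it:weak_axiom_locality} and the monotonicity of balls, $\SCB_t(\alpha;D^\BD_{\Gamma_\BD}) \subset \SCB_t(\partial\BD;D^\BD_{\Gamma_\BD})$, but this inclusion is useless since $\partial\BD$ is contained in the latter. So we work instead directly with the ball from $\alpha$. Fix $z\in\partial\BD\setminus\alpha$ and a small arc $\beta\ni z$ at positive distance from $\alpha$. The key observation is that if $z\in\SCB_t(\alpha)$, then since the ball is closed and (by definition) is the closure of $\alpha$ together with interiors of loops at distance $\le t$ from $\alpha$, and loops do not touch $\partial\BD$, there must be loops of $\Gamma_\BD$ within distance $\le t$ from $\alpha$ accumulating at $z$.

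The main step is to show the law of the set $\SCB_t(\alpha)\cap\beta$ charges no fixed point. I would argue by contradiction using rotation: suppose $\BP[z\in\SCB_t(\alpha)]=p>0$. By rotation invariance (Axiom~\eqref{it:weak_axiom_conformal_invariance}) applied to a slightly rotated arc $\alpha_\theta$ which still avoids $z$, we get $\BP[z_{-\theta}\in \SCB_t(\alpha)] = \BP[z\in\SCB_t(\alpha_\theta)]$, and by monotonicity in the arc, choosing $\alpha'\supset\alpha\cup\alpha_\theta$ for all small $\theta$ still avoiding $\beta$, we get $\BP[w\in\SCB_t(\alpha')]\ge p$ for all $w$ in a nontrivial subarc $\beta'$ of $\beta$. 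By Fubini, $\BE[\mathrm{Leb}(\SCB_t(\alpha')\cap\beta')]\ge p\,\mathrm{Leb}(\beta')>0$. So it suffices to show that $\SCB_t(\alpha')\cap(\partial\BD\setminus\alpha')$ has zero Lebesgue measure a.s.

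For this last claim I would use the GFF coupling of \Cref{subsec:gff_labeled_cle_4} and Step~\ref{step:dar-levellines}-style reasoning in reverse: points of $\partial\BD\setminus\alpha'$ lying in $\SCB_t(\alpha')$ are boundary points accumulated by loops at distance $\le t$ from $\alpha'$. Using locality (\Cref{lem:metric-ball-locality-segment}) at the stopping time where the ball from $\alpha'$ first reaches distance $\delta$ (Euclidean) from a given boundary point $w$, together with \Cref{prop BCLE rho to zero} (in small time the exploration from the boundary of the remaining domain leaves most of the boundary untouched), one shows that each fixed $w$ is a.s.\ not hit; equivalently the expected measure is zero. Concretely, conditionally on the complement component $O$ containing $w$ on its boundary, the remaining boundary arc of $O$ on $\partial\BD$ near $w$ is reached by the ball only via loops in $O$ which a.s.\ do not touch $\partial\BD$ at $w$; iterating over scales gives probability tending to zero. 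The main obstacle is precisely this circularity: the rotation trick reduces a point statement to a measure statement, and the measure statement again needs a point estimate. I would break it by a scale-invariance argument: the probability $q(\delta)$ that the ball reaches within $\delta$ of $w$ before time $t$ while $w$ is at positive distance from $\alpha'$ should decay, since conditionally on reaching distance $\delta$, by locality and conformal invariance the future is a fresh exploration in a domain where, mapped to $\BD$, the harmonic measure of the hit region near $w$ is bounded away from $1$, and the remaining time shrinks with positive probability; a geometric-decay over dyadic $\delta$ then gives $q(\delta)\to0$, hence $\BP[z\in\SCB_t(\alpha)]=0$.
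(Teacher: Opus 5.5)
Your proposal has a genuine gap, and it sits exactly where you flag the ``main obstacle''.

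\textbf{The reduction does not help.} The rotation/Fubini step gains nothing. By Fubini, $\mathrm{Leb}(\SCB_t(\alpha')\cap\beta')=0$ a.s.\ is equivalent to $\BP[w\in\SCB_t(\alpha')]=0$ for Lebesgue-a.e.\ $w\in\beta'$, which is the original claim. The step is also unnecessary. M\"obius automorphisms of $\BD$ act transitively on cyclically ordered triples of boundary points. So by conformal invariance, $\BP[z\in\SCB_t(\alpha;D^\BD_{\Gamma_\BD})]$ is the same for \emph{every} arc $\alpha$ and every $z\in\partial\BD\setminus\alpha$.

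\textbf{The multi-scale step is only asserted.} The argument meant to break the circularity amounts to ``should decay''. Since the metric is conformally invariant, crossing one dyadic scale around $w$ costs an order-one amount of time. To get $q(\delta)\to0$ you would therefore need a \emph{uniform} positive conditional probability that the next crossing costs at least some fixed $s_0>0$. This must hold at every scale and over all the random domains that arise at your stopping times. Establishing it would require three things you do not address:
\begin{enumerate}
\item handling the loop discovered when the ball first reaches $\overline{B_\delta(w)}$, which may penetrate many scales at once;
\item a lower bound on the global crossing cost, which is the direction you need to show the ball stays away from $w$. The axioms only give $D\le D^{V_j}$, the wrong direction here, and agreement of balls grown from loops inside $V_j$ up to their exit time. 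In particular they do not say directly that the continued growth of the ball from $\alpha'$ inside $O$ is a fresh ball grown from the arc $\partial O\cap\SCB_\tau(\alpha')$;
\item a uniform lower bound on crossing distances of random conformal half-annuli.
\end{enumerate}
``Harmonic measure bounded away from $1$'' is a statement about the domain, not the metric. Also, \Cref{prop BCLE rho to zero} concerns the exploration from the whole boundary at small times, not a ball from an arc with a fixed time budget.

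\textbf{The missing idea.} Configuration-independence by itself yields a contradiction, with no multi-scale estimate. Suppose $c\defeq\BP[z\in\SCB_t(\alpha)]>0$. Since $c$ does not depend on the conformal modulus, every conformal rectangle, however long and thin, is crossed within time $t$ by the ball grown from a subarc of one side, with probability at least $c$.
\begin{itemize}
\item The paper uses \Cref{lemma existence of disjoint paths from zero to the boundary} to cut $\BD\setminus\overline{\mathrm{int}(\SCL(0))}$ along $k$ disjoint loop-fattened paths. This gives $k$ conformal rectangles with bottom on $\partial\BD$ and top on $\SCL(0)$.
\item By locality, their internal metrics are conditionally independent fresh copies. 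The internal balls lie inside $\SCB_t(\partial\BD;D^\BD_{\Gamma_\BD})$; only the easy comparison $D\le D^{R_i}$ is needed here, since the goal is to show the global ball is \emph{large}.
\item Hence $\SCB_t(\partial\BD)$ meets $\SCL(0)$ with probability at least $1-(1-c/2)^k$ for every $k$, i.e.\ almost surely.
\end{itemize}
This contradicts the exponential law of the time at which the uniform exploration discovers $\SCL(0)$.
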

\begin{proof}
	Assume by contradiction that $\BP[ z\in \SCB_t(\alpha;D^\BD_{\Gamma_\BD})]=c>0$. By conformal invariance,  this probability does not depend on the choice of $z \in \partial \BD \setminus \alpha$ or on the length of $\alpha$. 
	
	Let $k\ge 1$. Let $P_k$ be the set of families of $k$ deterministic polygonal paths from $0$ to $\partial \BD$ which are disjoint except at their starting point and made of finitely many segments with vertices lying in a countable subset of $\overline{\BD}$. We also assume that the $k$ paths are ordered in counterclockwise order around zero. Let $((\gamma_1^j, \ldots, \gamma_k^j))_{j\ge 1}$ be an enumeration of $P_k$. For all $i \in [1,k]_\BZ$ and $j\ge 1$, let $F^j_i$ be the closure of the union of the domains encircled by the loops of $\Gamma_\BD \setminus \{\SCL(0)\}$ which touch $\gamma^j_i$. Let $\mathcal{A}_j$ be the event that the $F^j_i$'s for $i \in [1,k]_\BZ$ are disjoint.

	Moreover, on the event $\mathcal{A}_j$, let $R^j_1, \ldots, R^j_k$ be the conformal rectangles which are connected components of $\BD \setminus (\overline{\mathrm{int}(\SCL(0))}  \cup \bigcup_{i=1}^k F^j_i) $ and such that for all $i\in [1,k]_\BZ$, the top boundary $\beta^j_i$ of $R^j_i$ is included in $\SCL(0)$, the bottom boundary 
	of $R^j_i$ is included in $\partial \BD$, the left boundary of $R^j_i$ is included in $F^j_{i-1}$ where by convention $F^j_{0}= F^j_k$, and the right boundary of $R^j_i$ is included in $F^j_i$.
	
	For all $j\ge 1$, for all $i \in [1,k]_\BZ$, let $\alpha_i^j$ be a segment of $\partial \BD$ between $\gamma^j_{i-1}$ and $\gamma^j_i$ which does not touch $\gamma^j_{i-1}$ and $\gamma^j_i$ (where by convention $\gamma^j_0= \gamma^j_k$).
	
	Consider the metric balls $\SCB^j_i \defeq \SCB_{t}(\alpha^j_i; D^{R^j_i}_{\Gamma_\BD\vert_{R^{j}_i}})$ for $i \in [1,k]_\BZ$, defined via the coupling in Axiom~\eqref{it:weak_axiom_locality} in \Cref{def:weak_axioms}. Note that, by Axiom~\eqref{it:weak_axiom_locality} in \Cref{def:weak_axioms}, on the event $\mathcal{A}_j$, conditionally on the $F^{j}_i$'s for $1 \le i \le k$ and on $\SCL(0)$, the restrictions $\Gamma_\BD\vert_{R^{j}_i}$ equipped with the distances $D^{R^j_i}_{\Gamma_\BD\vert_{R^{j}_i}}$ are independent and their conditional laws are those of $(\Gamma_{R^j_i}, D^{R^j_i}_{\Gamma_{R^j_i}})$ respectively. 
	
	But, on the event $\mathcal{A}_j$, conditionally on the $F^{j}_i$'s for $1 \le i \le k$ and on $\SCL(0)$, the probability that the ball $\SCB^j_i$ intersects  $\beta^{j}_i$ is larger than $\BP[z \in \SCB_t(\alpha;D^\BD_{\Gamma_\BD})]/2 \ge c/2$. Therefore, on the event $\mathcal{A}_j$, conditionally on $\SCL(0)$ and on the $F^j_i$'s, the number of $i \in [1,k]_\BZ$ such that $\SCB^j_i$ intersects $\SCL(0)$ stochastically dominates a binomial random variable of parameters $k$ and $c/2$. 
	
	 Since the union of the events $\mathcal{A}_j$ has full probability by~\Cref{lemma existence of disjoint paths from zero to the boundary}, and using that $\SCB^j_i \subset \SCB_t(\partial \BD)$ for all $i \in [1,k]_\BZ$ and $j\ge 1$, we conclude that the probability that $\SCB_t(\partial \BD;D^\BD_{\Gamma_\BD})$ intersects $\SCL(0)$ is at least $1-(1-c/2)^k$. Since $k$ can be taken arbitrarily large, we deduce that $\BP[\SCB_t(\partial \BD;D^\BD_{\Gamma_\BD}) \cap \SCL(0) \neq \emptyset ]=1$. This is absurd, because it would entail that the loop surrounding the origin is a.s.\ discovered before time $t$ by the uniform exploration, contradicting the fact that the time of discovery of that loop is an exponential random variable.
\end{proof}

Then, let us state a lemma that says that balls from small segments are small.
\begin{lemma}\label{lemma balls from small segment are small}
	Consider a sequence $\alpha_n$ of segments of $\partial \BD$ whose length goes to zero. Let $t>0$. Then,
	\[
	\diam (\SCB_t(\alpha_n;D^\BD_{\Gamma_\BD})) \mathop{\longrightarrow}\limits_{n\to \infty}^{(\BP)} 0.
	\]
\end{lemma}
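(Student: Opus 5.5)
We need to show that $\diam(\SCB_t(\alpha_n;D^\BD_{\Gamma_\BD}))\to 0$ in probability when $|\alpha_n|\to 0$. The plan is to exploit monotonicity and conformal invariance to reduce to a single statement about a fixed arc, and then use \Cref{lemma ball from segment does not hit a given point} to rule out macroscopic growth.

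\emph{Reductions.} Balls grow with the arc, so $\SCB_t(\alpha;D)\subset\SCB_t(\alpha';D)$ whenever $\alpha\subset\alpha'$, by the definition of the balls. By rotation invariance (Axiom~\eqref{it:weak_axiom_conformal_invariance}) we may therefore assume that the $\alpha_n$ are nested and shrink to a point, say $1\in\partial\BD$. The diameter is then monotone in $n$, so it suffices to show that for every fixed $\delta>0$,
\[
\BP\big[\SCB_t(\alpha_n)\not\subset B_\delta(1)\big]\to 0.
\]

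\emph{Conformal rescaling.} Fix $\delta$ and a point $w\in\partial\BD\setminus B_\delta(1)$. Choose a M\"obius map $\phi_n$ of $\BD$ sending $\alpha_n$ to a fixed arc $\alpha$, say of length $\pi$. Then $\phi_n$ sends $\partial\BD\setminus B_\delta(1)$ into an arc $J_n$ that shrinks to a single point $z_\ast\notin\alpha$. By conformal invariance,
\[
\BP[\SCB_t(\alpha_n)\cap(\partial\BD\setminus B_\delta(1))\ne\emptyset]=\BP[\SCB_t(\alpha)\cap J_n\ne\emptyset].
\]
Since $J_n$ decreases to $\{z_\ast\}$ and balls are closed, the right-hand side converges to $\BP[z_\ast\in\SCB_t(\alpha)]$, which vanishes by \Cref{lemma ball from segment does not hit a given point}. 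So with high probability $\SCB_t(\alpha_n)$ does not reach the boundary outside $B_\delta(1)$.

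\emph{Main obstacle: interior reach.} It remains to exclude the ball reaching an interior point far from $1$ without touching the boundary there. This is the step I expect to be hardest. The idea is to use the cross-cut $\sigma\defeq\partial B_\delta(1)\cap\BD$ and show that with high probability $\SCB_t(\alpha_n)\cap\sigma=\emptyset$. Then, by connectedness (Axiom~\eqref{it:weak_axiom_geodesic}\eqref{it:weak_axiom_geodesic_0}), the ball is confined to $\overline{B_\delta(1)}$.

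Under $\phi_n$, the arc $\sigma$ becomes a cross-cut $\phi_n(\sigma)$ separating $\alpha$ from $z_\ast$, and it collapses into shrinking neighborhoods of $z_\ast$ (its endpoints lie in $J_n$). So it suffices to show that $\BP[\SCB_t(\alpha)\cap B_r(z_\ast)\ne\emptyset]\to 0$ as $r\to 0$. Since balls are closed and decreasing intersections of the events $\{\SCB_t(\alpha)\cap \overline{B_r(z_\ast)}\neq\emptyset\}$ decrease to $\{z_\ast\in\SCB_t(\alpha)\}$, this follows from compactness and \Cref{lemma ball from segment does not hit a given point} again.

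To be careful, one must check that $\SCB_t(\alpha)$ is a.s.\ compact and that "meets $\overline{B_r(z_\ast)}$ for all $r$" implies "contains $z_\ast$". Both hold since the ball is closed in $\overline\BD$.

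Combining the boundary and interior estimates gives $\diam\to0$ in probability.
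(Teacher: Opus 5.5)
Your proof is correct and is essentially the paper's argument run in reverse. The paper fixes an arc $\alpha \ni 1$ and uses \Cref{lemma ball from segment does not hit a given point} to keep $\SCB_t(\alpha;D^\BD_{\Gamma_\BD})$ a.s.\ away from $-1$; it then pushes this ball forward by the hyperbolic M\"obius maps $f_u$ fixing $\pm 1$, which contract $\overline{\BD}\setminus B_\varepsilon(-1)$ uniformly onto $1$, whereas you pull back by the inverse maps. The one thing to tighten is the choice of $\phi_n$: take it to be such a hyperbolic map, for arcs centered at $1$, so that $\phi_n \to -1$ uniformly on all of $\overline{\BD}\setminus B_\delta(1)$. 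This is what actually justifies the collapse of $\phi_n(\sigma)$ (its endpoints lying in $J_n$ do not suffice on their own), and it makes your boundary/interior split and the appeal to connectedness unnecessary.
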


\begin{proof}
	By conformal invariance (specifically, rotation invariance, which preserves Euclidean diameters), the law of $\diam(\SCB_t(\alpha_n);D^\BD_{\Gamma_\BD})$ depends only on the length of $\alpha_n$. Thus, using also that $\SCB_t(\cdot)$ is monotone in the source arc (after a rotation, a shorter arc yields a smaller ball), it suffices to focus on a specific family of segments whose lengths go to zero. Let $\alpha$ be a segment containing $1$ and not $-1$. For all $u>0$, let $f_u: \overline{\BD} \to \overline{\BD}$ be the conformal mapping defined by setting for all $z \in \overline{\BD}$,
	\[
	f_u(z) = \frac{z+ e^{-u}}{e^{-u} z+ 1}.
	\]
	Note that $f_u(-1) = -1$ and for all $z \in \overline{\BD}\setminus \{-1\}$ we have
	$f_u(z) \to 1$ as $u\downarrow 0$. This convergence is actually uniform on $\overline{\BD} \setminus B_\varepsilon(-1)$ for all $\varepsilon >0$, given that for all $z \in \overline{\BD} $ such that $\Re (z)>-1+\varepsilon$,
	\[
	\left\vert 1- \frac{z+e^{-u}}{e^{-u} z+1}  \right\vert = \vert 1-e^{-u}\vert \frac{\vert z-1 \vert}{\vert e^{-u} z +1 \vert} \le \vert 1-e^{-u} \vert \frac{2}{e^{-u} \varepsilon}.
	\]
	By~\Cref{lemma ball from segment does not hit a given point}, we know that a.s., $\SCB_t(\alpha;D^\BD_{\Gamma_\BD})$ is a compact set which does not contain $-1$. Thus, there a.s.\ exists some $\varepsilon > 0$ such that $\SCB_t(\alpha;D^\BD_{\Gamma_\BD}) \subset \{ z \in \overline{\BD} : \Re(z) > -1+\varepsilon \}$. Therefore, a.s.,
	\[
	\diam(f_u(\SCB_t(\alpha;D^\BD_{\Gamma_\BD}))) \mathop{\longrightarrow}\limits_{u\to 0} 0.
	\]
	We conclude by conformal invariance, since the random variable $f_u(\SCB_t(\alpha;D^\BD_{\Gamma_\BD}))$ has the same law as $\SCB_t(f_u(\alpha);D^\BD_{\Gamma_\BD})$, and the length of the segment $f_u(\alpha)$ goes to zero as $u \downarrow 0$.
\end{proof}
We use this lemma to prove \Cref{lemma hitting point absolutely continuous}.

\begin{proof}[Proof of \Cref{lemma hitting point absolutely continuous}]
\stepn{step:hp-setup}{Setup in the strip} Let $\varepsilon>0$. It suffices to show that, on an event with probability at least $1-3\varepsilon$, the law of $X$ restricted to this event is absolutely continuous with respect to the Lebesgue measure. 
	
	By conformal invariance, we conformally map $\BD$ to the infinite strip $S \defeq (-\infty, \infty) \times (0,1)$ and we may assume that $\beta$ is the top boundary of the strip, so that $\alpha$ is included in the bottom boundary of the strip, i.e.\@ we can identify the bottom boundary with $\BR$ and write $\alpha= [a,b]  \subset\BR$ with $a<b$. 
	
	Note that $\tau <\infty$ a.s.\ since $D^S_{\Gamma_S}(\alpha, \SCL(\ri/2)) + D^S_{\Gamma_S}(\SCL(\ri/2), \beta)<\infty$ a.s.\ and since for all $t \ge D^S_{\Gamma_S}(\alpha, \SCL(\ri/2))$, we have $\SCB_{t-D^S_{\Gamma_S}(\alpha, \SCL(\ri/2))}(\SCL(\ri/2);D^S_{\Gamma_S}) \subseteq \SCB_t(\alpha)$ (by the triangle inequality for $D^S_{\Gamma_S}$). One can thus take $t\ge 0$ such that $\BP[\tau <t]\ge 1-\varepsilon$.
	
\stepn{step:hp-event}{A good event} Next, using~\Cref{lemma balls from small segment are small}, let $\delta<(b-a)/2$ be small enough that for any segment $\gamma \subset \BR$ of length at most $2\delta$ contained in $[a-2\delta, b+2\delta]$,
	\[
	\BP[\diam(\SCB_t(\gamma;D^S_{\Gamma_S})) \ge 1/2] \le \varepsilon.
	\]
	But note that for all $\theta \in [0, \delta]$,
	\[
	\SCB_t(\alpha_{\mathrm{I}};D^S_{\Gamma_S}) \subseteq \SCB_t( \alpha + \theta;D^S_{\Gamma_S}) \subseteq \SCB_t([a-\delta, a+\delta];D^S_{\Gamma_S}) \cup \SCB_t(\alpha_{\mathrm{I}};D^S_{\Gamma_S}) \cup \SCB_t([b-\delta, b+\delta];D^S_{\Gamma_S}),
	\]
	where $ \alpha_\mathrm{I}\defeq [a+\delta, b-\delta]$.
	
	Let $E$ be the event that $\diam(\SCB_t([a-\delta, a+\delta];D^S_{\Gamma_S})) < 1/2$, $\diam(\SCB_t([b-\delta, b+\delta];D^S_{\Gamma_S})) < 1/2$, and $\tau<t$. By a union bound, $\BP[E] \ge 1-3\varepsilon$. On $E$, neither $\SCB_t([a-\delta, a+\delta];D_{\Gamma_S}^S))$ nor $\SCB_t([b-\delta, b+\delta];D^S_{\Gamma_S})$ intersects the top boundary $\beta$. Thus, by the containment relations above, for any $\theta \in [0, \delta]$, the intersection of $\SCB_s(\alpha+\theta;D^S_{\Gamma_S})$ with $\beta$ is exactly the intersection of $\SCB_s(\alpha_{\mathrm{I}};D^S_{\Gamma_S})$ with $\beta$ for all $s \le t$, which is in turn exactly the intersection of $\SCB_s(\alpha;D^S_{\Gamma_S})$ with $\beta$. Consequently,  $X$ is identically equal to the point on $\beta$ chosen in the same deterministic way with the balls growing from $\alpha$ replaced by the balls growing from $\alpha + \theta$.
	
\stepn{step:hp-first}{Conclusion for the first assertion} Now, let $U$ be an independent uniform random variable in $[0, \delta]$. By the above, on the event $E$, $X$ coincides with the random variable $Y$,  defined in the same deterministic way as $X$ with the balls growing from $\alpha$ replaced by the balls growing from $\alpha + U$.  However, by horizontal translation invariance of the CLE in the strip (which follows from conformal invariance), the unconditional law of $Y$ is the same as the law of $X'+U$, where $X'$ is an independent copy of $X$. Since the law of $X'+U$ is absolutely continuous with respect to the Lebesgue measure, the law of $Y$ is as well. This proves that on an event with probability at least $1-3 \varepsilon$, the law of $X$ is absolutely continuous with respect to the Lebesgue measure.
	
\stepn{step:hp-rest}{The second and third assertions} The second point of the lemma follows from a similar reasoning, except that instead of mapping the disk to the infinite strip, we conformally map the annulus $\BD \setminus \overline{\mathrm{int}(\SCL(0))}$ to an annulus $\BD \setminus \overline{r\BD}$ for some random $r \in (0,1)$, and the role of the translation $U$ is replaced by an independent uniform rotation.
	
	The third point of the lemma comes from the invariance by rotation of $(\Gamma_\BD, D^\BD_{\Gamma_\BD})$.
\end{proof}

\subsection{Metric balls a.s.\ hit a set with a loop} Using \Cref{lemma hitting point absolutely continuous} which was proved in the previous subsection, let us show that a metric ball a.s.\ hits a deterministic compact set by discovering a loop whose interior intersects the compact set.
\begin{lemma}\label{lemma metric ball hitting a loop}
	Let $K \subset \overline\BD$ be a connected compact set. Let $x \in \BD_{\BQ}$. Let $\tau$ be the first time $t$ that $\SCB_t(\SCL(x); D^\BD_{\Gamma_\BD})$ intersects $K$. Then, a.s., there exists exactly one loop $\SCL$ with $\overline{\mathop{\mathrm{int}}(\SCL)} \cap K \neq \emptyset$ such that $D^\BD_{\Gamma_\BD}(\SCL(x), \SCL)= \tau$. The same holds for a ball from a non-trivial arc $\alpha \subset \partial \BD$.
\end{lemma}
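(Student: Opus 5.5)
We split the statement into existence and uniqueness of a loop $\SCL$ with $\overline{\mathop{\mathrm{int}}(\SCL)} \cap K \neq \emptyset$ and $D^\BD_{\Gamma_\BD}(\SCL(x),\SCL) = \tau$.

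\emph{Existence.} By Axiom~\eqref{it:weak_axiom_geodesic}\eqref{it:weak_axiom_geodesic_1} and closedness of the balls, $\SCB_\tau(\SCL(x)) \cap K \neq \emptyset$, while $\SCB_\tau^-(\SCL(x)) \cap K = \emptyset$ if $\tau>0$. (The case $\tau=0$, i.e.\ $K$ meeting $\overline{\mathop{\mathrm{int}}(\SCL(x))}$, is trivial.) Write $\SCB_\tau(\SCL(x))$ as the closure of $\SCB_\tau^-(\SCL(x))$ together with the union of $\overline{\mathop{\mathrm{int}}(\SCL)}$ over the loops at distance exactly $\tau$. We must rule out a point of $K \cap \SCB_\tau$ that is only an accumulation point, either of filled loops at distance $<\tau$ (whose limit lies in $\SCB^-_\tau$, contradiction) or of infinitely many loops at distance exactly $\tau$. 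The latter is a point-contact, since such loops have diameters tending to zero, and it is the real issue. To exclude it, we reduce to a boundary statement: such a contact means that the ball first touches the boundary of $\BD \setminus K$ (or of a component of the complement of $K$ relative to the explored region) at a point not in the filled interior of any discovered loop.

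Our plan is to use locality (\Cref{lem:metric-ball-locality}) on deterministic dyadic approximations $V$ of $\BD\setminus K$, reducing to the following claim: a ball grown from a boundary arc first hits a second disjoint boundary arc, or a loop, only through the filled interior of a loop. For the arc-to-arc version in the strip picture of \Cref{lemma hitting point absolutely continuous}, the hitting point $X$ has an absolutely continuous law. Meanwhile, the set of points of $\beta$ lying in closures of filled loops touching $\beta$ has full Lebesgue measure, or at least must be shown to carry $X$. Here we would argue that $X$ is a.s.\ not a point of $\beta$ avoided by all loop interiors. This follows from a Fubini argument plus \Cref{lemma ball from segment does not hit a given point}: a fixed point $z$ is a.s.\ not in $\SCB_t$, and by absolute continuity the same holds for $X$ unless it is an endpoint region covered by loop closures. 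More precisely, we would show that if $X$ is hit at the first time, then the loop accumulation structure near $X$ forces $X$ to be an accumulation point of discovered loops touching $\beta$.

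\emph{Uniqueness.} Two distinct loops at the same distance $\tau$ both touching $K$ would need two Poissonian discoveries at the same time. We handle this by the locality/Markov property: condition on the first such loop and use that the label of another fixed loop has an atomless conditional law given the rest (exponential clocks in the uniform exploration, transported via locality to balls from loops). A countable union over rational points inside candidate loops then gives probability zero.

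The main obstacle is excluding the point-contact scenario in the existence part. We would attack it first via the absolute continuity of \Cref{lemma hitting point absolutely continuous} combined with \Cref{lemma ball from segment does not hit a given point}, translated to general $K$ by locality.
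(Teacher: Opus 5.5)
There is a genuine gap in the existence part.

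\emph{The first step.} You assert $\SCB^-_\tau(\SCL(x)) \cap K = \emptyset$ for $\tau>0$, citing Axiom~\eqref{it:weak_axiom_geodesic}\eqref{it:weak_axiom_geodesic_1} and closedness. This does not follow: the balls $\SCB_{t'}(\SCL(x))$, $t'<\tau$, avoid $K$, but their Hausdorff limit $\SCB^-_\tau(\SCL(x))$ may touch it. So the scenario in which loops at distances $<\tau$ accumulate at a point of $K$, with no loop at distance exactly $\tau$ touching $K$, is dismissed without proof. That scenario is precisely part of what the lemma must exclude.

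\emph{The point-contact mechanism.} The ``arc-to-arc'' claim is false as stated. CLE$_4$ loops never touch $\partial\BD$, so a ball grown from $\alpha$ always reaches a disjoint arc $\beta$ through accumulating loops, and there are no loop closures on $\beta$ at all. Moreover, \Cref{lemma ball from segment does not hit a given point} concerns a fixed point. No Fubini argument transfers it to the random hitting point $X$, which lies in $\SCB_\tau$ by construction.

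\emph{The missing idea.} Replace $K$ by $F$, the closure of the union of the filled loops meeting $K$; the ball reaches $K$ and $F$ at the same time. Then prove that $\partial F \setminus \bigcup_{\SCL\in\Gamma_\BD}\SCL$ has zero harmonic measure seen from $x$. This is \Cref{lemma harmonic measure of non loop points is zero}: an independent Brownian motion first hits $\partial F$ on one of the finitely many loops of diameter above a threshold. Combine this with absolute continuity of a deterministically chosen hitting point of $F$ with respect to harmonic measure on $\partial F$. That absolute continuity comes from locality in a simply connected complementary region viewed as a conformal rectangle, together with \Cref{lemma hitting point absolutely continuous}. Together these place $X$ on a loop $\SCL$ with $\overline{\mathop{\mathrm{int}}(\SCL)}\cap K\neq\emptyset$, which forces $D^\BD_{\Gamma_\BD}(\SCL(x),\SCL)=\tau$.

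\emph{The topology.} Your ``dyadic approximations of $\BD\setminus K$'' skip the topology that makes this work. When $\BD\setminus(\SCB_t\cup F)$ is not simply connected, you need either a cut path to $\partial\BD$ or the annulus version of the absolute continuity. The case where the ball disconnects $K$ from $\partial\BD$ exactly at time $\tau$ needs a separate approximation argument.

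\emph{Uniqueness.} Conditioning ``on the first such loop'' is not conditioning on a stopping-time $\sigma$-algebra. Also, when the complementary component meets $\partial\BD$, the exploration is from an arc, so the exponential clocks of the uniform exploration are not directly available. The paper instead uses \Cref{lemma intersection with boundary}, which dominates the ball from an arc by the PPP of SLE$_4$ bubbles.
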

 Let us start with a lemma about the harmonic measure of non-loop points is zero.
\begin{lemma}\label{lemma harmonic measure of non loop points is zero}
	Let $K\subset \BD$ be a compact set and let $F$ be the closure of the union of the domains which are encircled by loops of $\Gamma_\BD$ and intersect $K$. Fix $z \in \BD$. Then, a.s.\ on the event $\{z \notin F\}$, the harmonic measure of $\partial F \setminus \bigcup_{\SCL \in \Gamma_\BD} \SCL$ as a subset of $\partial F$ seen from $z$ is zero.
\end{lemma}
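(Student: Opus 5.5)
The plan is to work with a planar Brownian motion $\beta$ started from $z$, independent of $\Gamma_\BD$, and stopped at the first time $T$ it hits $F$. On $\{z\notin F\}$, the harmonic measure from $z$ of $\partial F \setminus \bigcup_{\SCL\in\Gamma_\BD}\SCL$ is the conditional probability, given $\Gamma_\BD$, that $\beta_T$ lies on no loop. By Fubini it therefore suffices to show that, for the joint law of $(\Gamma_\BD,\beta)$, a.s.\ either $T=\infty$ or $\beta_T$ lies on a loop.

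\emph{Step 1: what a non-loop hitting point looks like.} Suppose $w\defeq\beta_T$ lies on no loop. The loops of $\Gamma_\BD$ are disjoint, and only finitely many have diameter above any fixed threshold. Hence $w$ is an accumulation point of interiors of loops that meet $K$ and whose diameters tend to $0$. This gives $w\in K\cap\Upsilon_\BD$, and $w$ is not surrounded by any loop.

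For $s<T$, the point $\beta_s$ lies outside $F$. If $\beta_s$ lies in $\mathop{\mathrm{int}}(\SCL)$ for some loop $\SCL$, then $\SCL$ does not meet $K$. Since $w\in K$ and $w\notin\mathop{\mathrm{int}}(\SCL)$, we get $\diam(\SCL)\le \dist(\beta_s,K)\le|\beta_s-w|$.

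So the event to rule out is the following: every loop surrounding a point of $\beta$ at distance about $\rho$ from $w$ has diameter at most about $\rho$, at every scale $\rho\to0$. In particular, no loop of diameter $\ge 4\rho$ surrounds a point of $\beta\cap A_{\rho,2\rho}(w)$.

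\emph{Step 2: a one-scale estimate.} I would show there is a universal $p>0$ with the following property. For any $w'\in\BD$ and any $\rho$ small compared to $\dist(w',\partial\BD)$, conditionally on everything outside $B_{8\rho}(w')$, there is probability at least $p$ that some loop of diameter $\ge4\rho$ meets $A_{\rho,2\rho}(w')$ and surrounds a positive-area region of $A_{\rho,2\rho}(w')$ that $\beta$ visits before reaching $B_\rho(w')$.

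For the CLE part I would use the domain Markov property of CLE$_4$ together with conformal invariance applied to the domain $V=B_{8\rho}(w')$. For the Brownian part I would use the strong Markov property of $\beta$ at its hitting time of $\partial B_{2\rho}(w')$, together with scale invariance: once it enters $A_{\rho,2\rho}(w')$, $\beta$ covers macroscopic chunks of the annulus with positive probability before reaching $B_\rho(w')$. Iterating over dyadic scales $\rho=2^{-k}$, the probability that scales $k=m,\dots,n$ are all bad is at most $(1-p)^{n-m}$.

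\emph{Step 3: a union bound over boxes.} This step is where I expect the main obstacle. The point $w$ is random, so I would cover $K$ by dyadic boxes $Q$ of side $2^{-n}$ and bound $\BP[\beta \text{ reaches } Q \text{ and all scales between } 2^{-m} \text{ and } 2^{-n} \text{ around } Q \text{ are bad}]$. The Brownian factor alone is not small enough: $\beta$ hits a given small box with probability of order $1/n$ only. The geometric factor $(1-p)^{n}$ must beat the roughly $4^n$ boxes, and a bare $(1-p)^n$ does not do that. To get enough decay I would use that the bad event forces, at each scale, the carpet to come near $w$ without a large surrounding loop. Its probability per box should decay like $2^{-n(2-15/8)}$ (the carpet dimension $15/8$), times a Brownian non-disconnection-type factor. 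Alternatively, and more robustly, I would replace the union bound by a first-moment computation with respect to the occupation measure of $\beta$, which is a one-dimensional count along the path rather than a two-dimensional one. Combining the exponential decay in the number of bad scales with the fact that $\beta$ visits only $O(2^{n(2-\delta)})$ boxes with non-negligible weight should make the expected number of bad boxes tend to zero.

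If that step goes through, letting $n\to\infty$ gives that a.s.\ $\beta_T$ lies on a loop whenever $T<\infty$, which is the claim.
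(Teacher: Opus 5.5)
Your Step 1 reduction is correct, and it is most of the proof. A first hitting point $\beta_T$ of $F$ that lies on no loop cannot lie in $\mathop{\mathrm{int}}(\SCL)$ for a loop whose interior meets $K$, since that open set is contained in $F$. So it must be a point of $K$ surrounded by no loop.

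After that the argument breaks down. The inequality $\diam(\SCL)\le\dist(\beta_s,K)$ is false. Knowing that $\beta_s\in\mathop{\mathrm{int}}(\SCL)$ and $\mathop{\mathrm{int}}(\SCL)\cap K=\emptyset$ only tells you that $\SCL$ separates $\beta_s$ from $K$. The loop can be macroscopic while $\beta_s$ and $K$ sit very close to each other on opposite sides of it. Consequently:
\begin{itemize}
\item The ``bad event'' of Steps 2--3 is not implied by the configuration you want to exclude.
\item The ``good event'' of Step 2 (a large loop surrounding a region that $\beta$ visits) contradicts nothing unless that loop's interior meets $K$.
\item Step 2 also conditions on ``everything outside $B_{8\rho}(w')$''. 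The domain Markov property only gives a CLE$_4$ in $V^\star$, whose boundary can come arbitrarily close to $w'$, so a uniform $p$ is not immediate.
\item As you acknowledge, Step 3 does not close. With up to $4^n$ boxes and a per-box bound like $(1-p)^n/n$ or $2^{-n/8}/n$ for small universal $p$, the union bound diverges, and the occupation-measure variant is only a hope.
\end{itemize}
As written, the proposal does not prove the lemma.

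The missing idea is a Fubini argument at the first hitting point of the \emph{deterministic} set $K$, rather than of the random set $F$.
\begin{itemize}
\item Let $\sigma$ be the first time $\beta$ hits $K$, and set $X\defeq\beta_\sigma$.
\item Since $\beta$ is independent of $\Gamma_\BD$ and each deterministic point is a.s.\ surrounded by a loop, a.s.\ $X\in\mathop{\mathrm{int}}(\SCL(X))$.
\item This open set meets $K$, so it is contained in $F$. By continuity, $\beta$ already lies in it at some time $t<\sigma$ (note $\sigma>0$ on $\{z\notin F\}$).
\item Hence $T<\sigma$, so $\beta_T\notin K$, and your Step 1 shows that $\beta_T$ lies on a loop. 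If $\beta$ never hits $K$ (e.g.\ $K$ polar), Step 1 alone concludes.
\end{itemize}
This is essentially the paper's proof. The paper phrases the same observation by taking a time before $\sigma$ at which $B$ is on $\SCL(X)$, noting that $B$ stays at positive distance $\varepsilon$ from $K$ up to that time, and using that only finitely many loops have diameter at least $\varepsilon/2$. No multi-scale estimate is needed.
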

\begin{proof}
	Let $(B_t)_{t\ge 0}$ be a planar Brownian motion, independent from $\Gamma_\BD$, starting from $z$ and stopped when it hits $\partial F$. Let $X$ be the first hitting point of $K$ by $(B_t)_{t\ge 0}$. Since $X$ is independent from $\Gamma_\BD$, the point $X$ is a.s.\ encircled by a loop $\SCL(X)$. Let $T$ be the last time that $(B_t)_{t\ge 0}$ intersects $\SCL(X)$ before hitting $X$. By continuity of $(B_t)_{t\in [0, T]}$ we have
	\[
	\varepsilon \defeq \min_{t \in [0, T]} \mathrm{dist}(B_t, K)>0.
	\]
	Furthermore, by local finiteness of $\Gamma_\BD$, there are finitely many loops $\Gamma_\BD$ with diameter at least $\varepsilon/2$. In particular, $(B_t)_{t\in [0, T]}$ intersects finitely many loops of $\Gamma_\BD$ that are included in $F$ and does not intersect the closure of the union of the domains encircled by the loops of $\Gamma_\BD$ that are included in $F$ and have diameter at most $\varepsilon/2$. Moreover, the Brownian motion hits $\partial F$ by time $T$, since $B_T \in \SCL(X)$ and $\mathop{\mathrm{int}}(\SCL(X)) \subset F$; its first hitting point of $\partial F$ therefore lies on one of the finitely many loops of diameter at least $\varepsilon/2$ met by $B|_{[0,T]}$, hence on $\bigcup_{\SCL \in \Gamma_\BD} \SCL$. This concludes the proof.
\end{proof}

Before proving \Cref{lemma metric ball hitting a loop}, let us state the following lemma which says that loops are attached at a single point when they are first hit by the metric ball from a segment.

\begin{lemma}\label{lemma intersection with boundary}
	Let $\alpha \subset\partial \BD$ be a segment. Let $\SCL \in \Gamma_\BD$ and let $t=D^\BD_{\Gamma_\BD}(\alpha, \SCL)$. Then, the intersection $\SCL \cap \SCB^-_t(\alpha; D^\BD_{\Gamma_\BD})$ contains only one point and $\SCL$ is the only loop at $D^\BD_{\Gamma_\BD}$-distance $t$ from $\alpha$.
\end{lemma}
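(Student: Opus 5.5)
The plan is to reduce both assertions to the corresponding properties of the uniform exploration from an entire boundary, where they hold by the Poissonian construction. We use the locality of metric balls (\Cref{lem:metric-ball-locality-segment}) together with Axiom~\eqref{it:weak_axiom_geodesic}\eqref{it:weak_axiom_geodesic_2}.

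\emph{Poissonian input.} In the uniform exploration of $\Gamma_V$ from $\partial V$ (Axiom~\eqref{it:weak_axiom_uniform_exploration}), loops are added as bubbles of the Poisson point process $(\ell_t)$. Each bubble is attached to the boundary of the current unexplored component at a single root point. Two atoms a.s.\ never share a time, and each target's exploration is a single Poisson process up to the separation times. Hence a.s.:
\begin{enumerate}
\item every loop meets $\SCB^-_{t_\SCL}(\partial V)$ at exactly one point;
\item the labels $t_\SCL$ are pairwise distinct.
\end{enumerate}
For item 2, there are countably many pairs of rational targets. Two distinct loops with equal labels would either be discovered by the same exploration process, which is excluded by the simplicity of the Poisson process, or after separation by two conditionally independent explorations whose first-discovery times are non-atomic, which is also excluded.

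\emph{Reduction to stopped balls.} Fix a rational $s>0$ and stop the ball at time $s$. By \Cref{lem:metric-ball-locality-segment}, conditionally on $\SCB_s(\alpha)$, each component $V_j$ of $\BD\setminus\SCB_s(\alpha)$ carries an independent copy $(\Gamma_{V_j},D^{V_j})$. Moreover, $\SCB_u(\SCL';D)=\SCB_u(\SCL';D^{V_j})$ as long as the ball has not reached $\partial V_j$. Let $\SCL\subset V_j$ with $t=D(\alpha,\SCL)>s$. By Axiom~\eqref{it:weak_axiom_geodesic}\eqref{it:weak_axiom_geodesic_2}, $t=s+D(\SCB_s(\alpha),\SCL)$. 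The boundary $\partial V_j$ splits into the arc $\partial V_j\cap\SCB_s(\alpha)$ and a part $\partial V_j\cap(\partial\BD\setminus\alpha)$.

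We then want to show the following: for rational $s<t$ close enough to $t$, the distance in $V_j$ from the arc $\partial V_j\cap\SCB_s(\alpha)$ to $\SCL$, and the way $\SCL$ is hit, coincide with those for the full-boundary exploration of $V_j$ near $\SCL$. For this we use:
\begin{itemize}
\item \Cref{lemma ball from segment does not hit a given point} and \Cref{lemma balls from small segment are small}, which show that the part of $\partial V_j$ lying in $\partial\BD\setminus\alpha$ contributes only balls of small diameter that stay away from $\SCL$ in short times;
\item \Cref{lemma metric ball hitting a loop}, in its already available form for the whole boundary, to identify the loop realizing $D(\SCB_s(\alpha),\SCL)$.
\end{itemize}
Both assertions then follow from items 1 and 2 applied in $V_j$, since the arc-ball and the full-boundary ball agree near $\SCL$ up to time $t-s$. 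The case of a loop reached exactly at time $s$ is excluded by taking $s$ outside the countable set of loop labels; this is legitimate because the labels form a countable set, so a rational $s$ can be chosen avoiding them.

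\emph{Closing the argument.} Take a countable union over rational $s$ and over the countably many components, indexed by rational points. Every loop at positive distance is covered. Loops at distance $0$ would intersect $\alpha$, which is impossible since CLE$_4$ loops do not touch the boundary.

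\emph{Main obstacle.} The hard step is the comparison between the ball grown from the partial arc $\partial V_j\cap\SCB_s(\alpha)$ and the exploration from all of $\partial V_j$: balls from an arc are not given a Poissonian description by the axioms. If the direct comparison above fails, I would first try a symmetrization argument in the spirit of the proof of \Cref{lemma hitting point absolutely continuous}. There one shifts the source arc by an independent uniform translation in the strip, which does not change the balls near the far side. The aim would be to show that a second contact point, or a tie between two loops, would require a coincidence of two independent absolutely continuous random variables, and therefore has probability zero.
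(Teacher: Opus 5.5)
There is a genuine gap, at the step you flag as the main obstacle. You never show that $\SCL$ is discovered by the full-boundary exploration of $V_j$ at time exactly $t-s$, nor how the two contact sets with $\SCL$ compare, and the tools you list cannot give this. The piece $\partial V_j\cap(\partial\BD\setminus\alpha)$ is in general a macroscopic arc. So \Cref{lemma ball from segment does not hit a given point} and \Cref{lemma balls from small segment are small} say nothing about it, and its ball is certainly not of small diameter. \Cref{lemma metric ball hitting a loop} is not available here: its proof invokes the present lemma to exclude a second loop discovered at the hitting time. It is not needed either, since the loop realizing $D(\SCB_s(\alpha),\SCL)$ is $\SCL$ itself. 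The symmetrization fallback is too vague to assess. Separately, ``a rational $s$ can avoid the countable set of labels'' is false as stated, since a countable set may contain every rational; but you do not need $s$ to avoid labels at all.

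The missing idea is to compare via the ball grown from $\SCL$ rather than from the arc. With it your reduction works, by a route different from the paper's. The paper dominates the arc-ball increments on a mesh $1/n$ by i.i.d.\ full-boundary exploration increments, using that the ball from an arc is contained in the ball from the whole boundary. It then passes to a point-process limit to couple the arc-ball with a PPP of SLE$_4$ bubbles containing every discovered loop.

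Your route runs as follows. Write $D\defeq D^\BD_{\Gamma_\BD}$, $D^{V_j}\defeq D^{V_j}_{\Gamma_\BD|_{V_j}}$ and $t_\SCL\defeq D(\SCL,\partial\BD)>0$, and take a rational $s\in(t-t_\SCL,t)$.
\begin{enumerate}
\item By Axiom~\eqref{it:weak_axiom_geodesic}\eqref{it:weak_axiom_geodesic_2} for arcs, $D(\SCL,\SCB_s(\alpha))=t-s$.
\item By connectedness (Axiom~\eqref{it:weak_axiom_geodesic}\eqref{it:weak_axiom_geodesic_0}), $u\defeq D(\SCL,\partial V_j)\le t-s<t_\SCL$.
\item So the ball from $\SCL$ at time $u$ meets $\partial V_j\subset\SCB_s(\alpha)\cup\partial\BD$ but not $\partial\BD$. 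Hence it meets $\SCB_s(\alpha)$, which forces $u=t-s$.
\item The ball-consistency bullet of \Cref{lem:metric-ball-locality-segment} then gives $D^{V_j}(\partial V_j,\SCL)=t-s$.
\item The same reasoning gives $D^{V_j}(\partial V_j,\SCL')<t-s$ for every $\SCL'\subset V_j$ with $D(\alpha,\SCL')<t$.
\item Loops not contained in $V_j$ cannot accumulate on the compact set $\SCL\subset V_j$. Therefore $\SCL\cap\SCB^-_t(\alpha;D)\subset\SCL\cap\SCB^-_{t-s}(\partial V_j;D^{V_j})$, and the right side has one point by your item~1. Inclusion, not equality, is all that is needed.
\end{enumerate}

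For the uniqueness assertion, a second loop at distance $t$ in the same $V_j$ contradicts your item~2. You must also treat a second loop lying in a different component $V_i$. For this, use the conditional independence of the components given $\SCB_s(\alpha)$, together with the fact that a fixed value is a.s.\ not a label. Non-emptiness of the contact set then follows from uniqueness and the connectedness of $\SCB_t(\alpha;D)$.
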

\begin{proof}
\stepn{step:att-locality}{Locality in the complementary component} Let $z \in \BD$. For all $t\ge 0$, let $C_t$ be the connected component of $\BD \setminus\SCB_t(\alpha; D^\BD_{\Gamma_\BD})$ that contains $z$. By \Cref{lem:metric-ball-locality-segment}, we know that there is a coupling of $(\Gamma_\BD, D^\BD_{\Gamma_\BD})$ with $(\Gamma\vert_{C_t}, D^{C_t}_{\Gamma\vert_{C_t}})$ such that
		\begin{itemize}
			\item Conditionally on $\SCB_t(\alpha; D^\BD_{\Gamma_\BD})$, the pair $(\Gamma\vert_{C_t}, D^{C_t}_{\Gamma\vert_{C_t}})$ has the same law as $(\Gamma_{C_t}, D^{C_t}_{\Gamma_{C_t}})$.
			\item For all $\SCL_1, \SCL_2 \in \Gamma_\BD\vert_{C_t}$, we have $D^\BD_{\Gamma_\BD}(\SCL_1, \SCL_2) \le D^{C_t}_{\Gamma_\BD\vert_{C_t}}(\SCL_1, \SCL_2)$.
			\item For all $\SCL \in \Gamma_\BD\vert_{C_t}$, for all $s \in [0, D^\BD_{\Gamma_\BD}(\SCL, \partial C_t)]$, we have $\SCB_s(\SCL; D^\BD_{\Gamma_\BD})= \SCB_s(\SCL; D^{C_t}_{\Gamma_\BD\vert_{C_t}})$.
		\end{itemize}  
		One can thus consider $\SCB_s(\partial C_t; D^{C_t}_{\Gamma_\BD\vert_{C_t}})$ for all $s \ge 0$. Note that for all $\SCL \in \Gamma\vert_{C_t}$, we have $D^\BD_{\Gamma_\BD}(\partial C_t, \SCL)= D^{C_t}_{\Gamma_\BD\vert_{C_t}}(\partial C_t, \SCL)$ (recall that $D^\BD_{\Gamma_\BD}(\partial C_t, \SCL)$ is the first time $s\ge 0$ such that $\SCB_s(\SCL; D^\BD_{\Gamma_\BD}) \cap \partial C_t \neq \emptyset $ by definition).
	
\stepn{step:att-domination}{Stochastic domination by independent samples} For all $t\ge 0$ and $s\ge 0$, we see that $C_{t+s}$ is included in the connected component of $C_t \setminus \SCB_s(\partial C_t; D^{C_t}_{\Gamma_\BD\vert_{C_t}})$ containing $z$. For all $n\ge 1$ and $t\ge 1/n$, consider $Y_n(t)\defeq C_{t-1/n} \setminus C_{t}$. 
		Let $\phi_t\colon C_{t} \to \BD$ be the unique conformal mapping such that $\phi_t(z)=0$ and $\phi_t'(z)>0$. By the previous paragraph and a straightforward induction using \Cref{lem:metric-ball-locality-segment}, the family $(\phi_{(k-1)/n}(Y_n(k/n)))_{k\ge 1}$ is stochastically dominated by a family $(X^n_k)_{k\ge 1}$ of i.i.d.\@ random variables with the same law as $\BD \setminus \overline{C_{1/n}(0)}$, where we recall that $C_{1/n}(0)$ is the connected component containing zero of $\BD \setminus \SCB_{1/n}(\partial \BD; D^\BD_{\Gamma_\BD})$, in the sense that there is a coupling such that for all $k\ge 1$,
		\begin{equation}\label{eq stochastic domination uniform exploration}
			 \phi_{(k-1)/n}(Y_n(k/n)) \subseteq X^n_k
		\end{equation}
		Actually, one can couple the $ \phi_{(k-1)/n}(\SCB_{k/n}(\alpha; D^\BD_{\Gamma_\BD}) \cap \overline{C_{(k-1)/n}})$ for $k\ge 1$ with i.i.d.\@ random variables $\widetilde{X}^n_k$ having the same law as $\SCB_{1/n}(\partial \BD; D^\BD_{\Gamma_\BD})$ so that 
		\[
		\phi_{(k-1)/n}(\SCB_{k/n}(\alpha; D^\BD_{\Gamma_\BD}) \cap \overline{C_{(k-1)/n}}) \subseteq \widetilde{X}^n_k
		\]
		for all $k\ge 1$. More precisely, every loop forming $ \phi_{(k-1)/n}(\SCB_{k/n}(\alpha; D^\BD_{\Gamma_\BD}) \cap \overline{C_{(k-1)/n}})$ is a loop discovered by $\widetilde{X}^n_k$. Moreover, $X^n_k$ is the complement of the connected component of $\BD \setminus \overline{\widetilde{X}^n_k}$ that contains the origin.

\stepn{step:att-convergence}{Convergence of the point processes} 
		Note that $t \mapsto C_t$ is c\`adl\`ag for the Carath\'eodory topology seen from $z$ (because $t\mapsto \SCB_t(\alpha; D^\BD_{\Gamma\vert_\BD})$ is c\`adl\`ag for the Hausdorff distance). In particular, $t\mapsto \phi_t^{-1}$ is c\`adl\`ag for the uniform convergence on compact sets. Therefore, 
		we have the convergence of the point process
		\[
		\sum_{k\ge 1}	 
		\delta_{\left( \frac{k}{n}, \BD \setminus \phi_{(k-1)/n}(Y_n(k/n)) \right)} \mathop{\longrightarrow}\limits_{n\to \infty}^{(\mathrm{a.s.})} \sum_{t>0} 
		\delta_{\left(t, \phi_{t-}(C_{t})\right)},
		\]
		where the second coordinate is equipped with the Carath\'eodory topology seen from zero and where $\BD$ is seen as a cemetery point. More precisely, the above convergence holds in the following sense: for all continuous bounded function $f$ defined on $\BR_{\ge 0}\times \SCO$, where $\SCO$ is the set of simply connected open subsets of $\BD$ containing $0$ (equipped with the Carath\'eodory topology seen from $0$), with compact support in the first variable and vanishing on a neighborhood of $\BD$, the integral of $f$ against the left-hand side converges to the integral of $f$ against the right-hand side. Moreover, since $(\SCB_t(\partial \BD; D^\BD_{\Gamma_\BD}))_{t\ge 0}$ is the uniform exploration, and since the uniform exploration is also càdlàg in the same sense as above, we also get the convergence
		\[
		\sum_{k\ge 1}	 
		\delta_{\left( \frac{k}{n}, \BD \setminus X^n_k \right)} \mathop{\longrightarrow}\limits_{n\to \infty}^{(\mathrm{d})} \sum_{t>0}  \delta_{\left(t, \BD \setminus \overline{\mathop{\mathrm{int}}(\ell_t)}\right)},
		\]

Here $(\ell_t)_{t\ge 0}$ is the PPP of $\SLE_4$ bubbles rooted according to the harmonic measure defined in \Cref{subsec:labeled_cle_4}. Recall that the $\SLE_4$ bubbles only intersect $\partial \BD$ at one point, their root. Combining the two convergences above with the inclusion~\eqref{eq stochastic domination uniform exploration}, we see that there is a coupling of $(\SCB_t(\alpha; D^\BD_{\Gamma_\BD}))_{t\ge 0}$ with a PPP of $\SLE_4$ bubbles rooted according to the harmonic measure so that a.s., for all $t\ge 0$,
		\[
		\BD \setminus \overline{\mathop{\mathrm{int}}(\ell_t)} \subseteq \phi_{t-}(C_t).
		\]
		In particular, any component that is discovered is attached at one point. Moreover, recall that for all $n$, a.s., the loops forming $ \phi_{(k-1)/n}(\SCB_{k/n}(\alpha; D^\BD_{\Gamma_\BD}) \cap \overline{C_{(k-1) /n}})$ are loops of $\widetilde{X}^n_k$. Therefore, a.s., for all $t\ge 0$, when $\phi_{t-}(C_t) \neq \BD$, we have that $\phi_{t-}(\SCB_t(\alpha; D^\BD_{\Gamma_\BD}) \cap \overline{C_{t^-}})$ is made of one loop that is equal to $\ell_t$.
		
\stepn{step:att-concl}{Conclusion of the proof} Thus, a.s., when it is discovered at some time $t\ge 0$, the loop $\SCL(z)$ only intersects $\partial C_{t-}$ at one point. This gives the desired result.
\end{proof}
Let us now prove \Cref{lemma metric ball hitting a loop}.
\begin{proof}[Proof of \Cref{lemma metric ball hitting a loop}]
\stepn{step:mbl-setup}{Setup} Let $F$ be the closure of the union of the domains that are encircled by loops that intersect $K$. Let us deal with the case of the metric ball from a loop $\SCL(x)$ with $x \in \BD_{\BQ}$. Without loss of generality, we may assume that $x \not\in F$. 
		
\stepn{step:mbl-simple}{The case of a simply connected complementary component} For simplicity, let us first explain the proof in the case where $K \cap \partial\BD \neq \emptyset$ (so that here $K \subset \overline\BD$) and on the event that $\SCB_t(\SCL(x)) \cap \partial\BD \neq \emptyset$ for some $t<\tau$. 
		We consider the connected component $R$ of $\BD \setminus (\SCB_t(\SCL(x))  \cup F)$ whose closure intersects $\SCB_t(\SCL(x)) $ and $F$. Note that $R$ is simply connected. We see $R$ as a conformal rectangle whose top boundary is $\partial R \cap F$ and whose bottom boundary is $\partial R \cap \SCB_t(\SCL(x))$. By locality and by the first case of \Cref{lemma hitting point absolutely continuous}, we deduce that, conditionally on $F$ and $\SCB_t(\SCL(x))$, a deterministically chosen hitting point $X$ of $F$ by the metric ball starting from the bottom boundary of $R$ is absolutely continuous with respect to the harmonic measure on $\partial F $ seen from $x$. We deduce that conditionally on $F$, the law of a deterministically chosen hitting point of $F$ by the metric ball from $\SCL(x)$ is absolutely continuous with respect to the harmonic measure on $\partial F$ seen from $x$. By applying \Cref{lemma harmonic measure of non loop points is zero}, we deduce that this hitting point of $F$ is on a loop $\SCL$ encircling a point of $K$. Therefore, $\tau$ is a time of discovery of a loop. By \Cref{lemma intersection with boundary}, no other loop than $\SCL$ is discovered at time $\tau$. This is the desired result.
		
\stepn{step:mbl-general}{The general case} The main difficulty in the general case is that the above connected component $R$ may not be simply connected. We proceed as follows.

		\substepn{step:mbl-case1}{The ball does not disconnect $K$ from $\partial\BD$} In this case, there exists a.s.\ a polygonal path $P$ made of finitely many segments with rational endpoints (recall that $\BQ^2 \cap \partial \BD$ is dense in $\partial \BD$) connecting $F$ to $\partial \BD$ and such that $\SCB_\tau(\SCL(x))\cap P =\emptyset$. Since there are countably many such paths, we may fix one deterministic $P$ and work on the event that its starting point lies in $F$ and $\SCB_\tau(\SCL(x)) \cap P = \emptyset$, taking a union over $P$; on this event $\tau$ is the hitting time of $K \cup P$. We may also assume that $x \notin F$, since otherwise $\tau = 0$ and the statement is trivial. Let $\widetilde{P}$ be the closure of the union of the domains that are encircled by loops of $\Gamma_\BD$ and that intersect $P$. 
		\begin{itemize}
			\item If there exists $t \in (0, \tau)$ such that $\SCB_t (\SCL(x))\cap \partial \BD \neq \emptyset$, then we take the first time $t$ such that this occurs and we consider the connected component $R$ of $\BD \setminus (\SCB_t(\SCL(x)) \cup F \cup \widetilde{P})$. Note that $R$ is simply connected. We view $R$ as a conformal rectangle whose bottom boundary is $\partial R \cap \SCB_t(\SCL(x))$ and whose top boundary is $\partial R \cap (F \cup \widetilde{P})$. By locality and the first case of \Cref{lemma hitting point absolutely continuous}, we deduce that, conditionally on $F$, $\widetilde{P}$ and $\SCB_t(\SCL(x))$, a deterministically chosen hitting point $X$ of $F \cup \widetilde{P}$ by the metric ball starting from the bottom boundary of $R$ is absolutely continuous with respect to the harmonic measure on $\partial F $ seen from $x$. We conclude as in the first part of the proof on the event that $\SCB_\tau(\SCL(x))\cap P =\emptyset$. The desired result then follows by taking a union over $P$.
			\item If for all $t \in (0, \tau)$, we have $\SCB_t(\SCL(x)) \cap \partial \BD = \emptyset$, then $\tau$ is the first hitting time of $\partial \BD \cup \widetilde{P} \cup F$ by the metric ball from $\SCL(x)$. Consider the connected component $C$ of $\BD \setminus (\widetilde{P} \cup F)$. Note that $C$ is simply connected. We apply locality to the simply connected component $C$ and the third point of \Cref{lemma hitting point absolutely continuous} to get the absolute continuity of a deterministically chosen hitting point of $\partial \BD \cup \widetilde{P} \cup F$ by the metric ball starting from $\SCL(x)$. Then, as before, we conclude by applying \Cref{lemma harmonic measure of non loop points is zero} and \Cref{lemma intersection with boundary}.
		\end{itemize}

		\substepn{step:mbl-case2}{The ball disconnects $K$ from $\partial\BD$ before hitting it} In this case, we take the first time $t$ such that this occurs and we let $C$ be the connected component of $\BD \setminus \SCB_t(\SCL(x))$ that contains $K$. By locality, conditionally on $\SCB_t(\SCL(x))$, the intersection $(\SCB_{t+s}(\SCL(x)) \cap C)_{s\ge 0}$ has the law of a uniform exploration of a $\CLE_4$ in $C$. We then conformally map the annulus $C \setminus F$ to an annulus of the form $\BD \setminus \overline{\mathrm{int}(\widetilde{\SCL}(0))}$, where $\widetilde{\SCL}(0)$ has the law of a $\CLE_4$ origin containing loop conditioned so that $\BD \setminus \overline{\mathrm{int}(\widetilde{\SCL}(0))}$ has the same conformal modulus as $C \setminus F$. We then apply the second point of \Cref{lemma hitting point absolutely continuous} with $\alpha =\partial \BD$.
		
		\substepn{step:mbl-case3}{The ball disconnects $K$ from $\partial\BD$ exactly when it hits it} In this case, $\tau$ is a continuity time of $(\SCB_t(\SCL(x)))_{t\ge 0}$ for the Hausdorff topology (since the discovery of a loop cannot disconnect $K$ from the boundary: by \Cref{lemma intersection with boundary}, a discovered loop intersects the boundary of the unexplored region at one point only, so its filled closure is attached to the explored region at a single point, and removing such a set from a domain leaves the complement of the explored region connected). For all $\varepsilon>0$, let 
		\[\tau_\varepsilon\defeq \inf \{t\ge 0: \mathrm{dist}(\SCB_t(\SCL(x)), K)\le \varepsilon\}.\]
		Let $Z_\varepsilon$ be a point in the connected component of $\BD \setminus (\SCB_{\tau_\varepsilon}(\SCL(x)) \cup F)$ whose closure intersects both $\SCB_{\tau_\varepsilon}(\SCL(x)) $ and $ F$ such that $\mathrm{dist}(Z_\varepsilon,  \SCB_{\tau_\varepsilon}(\SCL(x)) ) \le \varepsilon$ and $\mathrm{dist}(Z_\varepsilon,  K ) \le \varepsilon$,  where we choose $Z_{\varepsilon}$ in a way that is measurable with respect to $\SCB_{\tau_{\varepsilon}}(\SCL(x)),  F$,  and $F_1$.
		
		\substepn{step:mbl-avoidbdry}{The ball avoids $\partial\BD$ for every $\varepsilon$} Suppose that $\SCB_{\tau_\varepsilon}(\SCL(x)) \cap \partial \BD= \emptyset $ for all $\varepsilon>0$. Draw a segment $P_1$ whose endpoints have rational coordinates (recall that $\partial \BD \cap \BQ^2$ is dense in $\partial \BD$) and work on the event that it connects $\SCB_{\tau}(\SCL(x))$ to $\partial \BD$ and that $P_1$ does not intersect $F$. Such a segment exists since $\SCB_{\tau}(\SCL(x))$ disconnects $K$ from $\partial \BD$. Let $F_1$ be the closure of the union of the domains that are encircled by loops and that intersect $P_1$. Let $\delta>0$. Let $\widetilde{F_1}$ be the closure of the union of the domains that are encircled by loops and that intersect $\partial B_\delta(F_1)$. Let $\eta>0$. Note that by taking $\delta$ small enough, there exists $\varepsilon_0$ such that for all $\varepsilon\in (0, \varepsilon_0)$, with probability at least $1-\eta$, the point $Z_\varepsilon$ does not belong to $\overline{B_\delta(F_1)}$. Consider the conformal rectangle $R_1^{\mathrm{left}}$ (resp.\@ $R_1^{\mathrm{right}}$) which is given by the connected component of $\BD \setminus (\SCB_{\tau_\varepsilon}(\SCL(x)) \cup F \cup \widetilde{F}_1 \cup F_1)$ that is encircled by $\partial B_\delta(P_1)$, that lies on the left (resp\@ right) of $F_1$ and such that $\partial R_1^{\mathrm{left}}$ (resp.\ $\partial R_1^{\mathrm{right}}$) intersects both $\SCB_{\tau_\varepsilon}(\SCL(x))$ and $\partial \BD$. These conformal rectangles are well defined with probability at least $1-\eta$ by taking $\delta$ small enough. The bottom boundary of $R_1^{\mathrm{left}}$ (resp.\@ $R_1^{\mathrm{right}}$) is $\partial R_1^{\mathrm{left}} \cap \SCB_{\tau_\varepsilon}(\SCL(x))$ (resp.\@ $\partial R_1^{\mathrm{right}} \cap \SCB_{\tau_\varepsilon}(\SCL(x))$) and its top boundary is $\partial R_1^{\mathrm{left}} \cap \partial \BD$ (resp.\@ $\partial R_1^{\mathrm{right}} \cap \partial \BD$). One can couple $(\Gamma_\BD, D^\BD_{\Gamma_{\BD}})$ with $(\Gamma_\BD\vert_{R_1^{\mathrm{left}}}, D^{R_1^{\mathrm{left}}}_{\Gamma_{\BD}\vert_{R_1^{\mathrm{left}}}})$ and $(\Gamma_\BD\vert_{R_1^{\mathrm{right}}}, D^{R_1^{\mathrm{right}}}_{\Gamma_{\BD}\vert_{R_1^{\mathrm{right}}}})$ as in Axiom~\eqref{it:weak_axiom_locality} (locality) in \Cref{def:weak_axioms}. Note that the $D^{R^{\mathrm{left}}_1}_{\Gamma_{\BD}\vert_{R^{\mathrm{left}}_1}}$-distance from the left to the right boundary of $R^{\mathrm{left}}_1$ stochastically dominates a positive random variable which does not depend on $\varepsilon$ since $\SCB_{\tau_\varepsilon}(\SCL(x))$ converges to $\SCB_\tau(\SCL(x))$ in the sense of Hausdorff when $\varepsilon \to 0$. Same for $R^{\mathrm{right}}_1$. Let $A$ be the connected component of $\BD \setminus (\SCB_{\tau_\varepsilon}(\SCL(x)) \cup F \cup F_1)$. Note that $A$ is a topological annulus. Let us conformally map $A$ to an annulus of the form $A'=\BD \setminus r_\varepsilon \overline{\BD}$ using some conformal mapping $\phi_\varepsilon$, where, by \Cref{lem:Beurling}, we have $r_\varepsilon \to 1$ in probability as $\varepsilon \to 0$ on the event that $Z_\varepsilon$ does not belong to $F_1$.  Moreover,  by possibly applying a rotation,  we can assume that $\phi_{\varepsilon}(A \cap B_{\varepsilon}(Z_{\varepsilon})) \cap B_{\varepsilon}([r_{\varepsilon},1]) = \emptyset$ with probability tending to $1$ as $\varepsilon \to 0$.  Let $\widetilde{F}_2$ (resp.\@ $F_2$) be the closure of the union of the domains that are encircled by loops of $\phi_\varepsilon(\Gamma_\BD\vert_A)$ and that intersect $\partial B_{\delta}([r_\varepsilon, 1])$ (resp.\@ $[r_\varepsilon, 1]$). Consider the conformal rectangle $R_2^{\mathrm{left}}$ (resp.\@ $R_2^{\mathrm{right}}$) given by the connected component of $A'\setminus (\widetilde{F}_2 \cup F_2)$ that is surrounded by $\partial B_{\delta}([r_\varepsilon, 1])$, that lies on the left (resp.\@ right) of $F_2$ and whose closure intersects the inner and outer boundaries of $A'$. Its top and bottom boundaries are given by the intersection of $\partial R_2^{\mathrm{left}}$ (resp.\@ $ \partial R_2^{\mathrm{right}}$) with the inner and outer boundaries of $A'$. These conformal rectangles are well defined with probability going to $1$ as $\varepsilon \to 0$ since $r_\varepsilon \to 1$ as $\varepsilon \to 0$.  Also,  we have that $\phi_{\varepsilon}(A \cap B_{\varepsilon}(Z_{\varepsilon})) \cap (R_2^{\mathrm{left}} \cup R_2^{\mathrm{right}}) = \emptyset$ with probability tending to $1$ as $\varepsilon \to 0$.  Note that the $\CLE_4$ metric distances between the left and right boundaries of both $R_2^{\mathrm{left}}$ and $R_2^{\mathrm{right}}$ stochastically dominate a positive random variable whose law does not depend on $\varepsilon$.  By locality, we couple $(\Gamma_\BD, D^\BD_{\Gamma_{\BD}})$, $(\Gamma_\BD\vert_{R_1^{\mathrm{left}}}, D^{R_1^{\mathrm{left}}}_{\Gamma_{\BD}\vert_{R_1^{\mathrm{left}}}})$ and $(\Gamma_\BD\vert_{R_1^{\mathrm{right}}}, D^{R_1^{\mathrm{right}}}_{\Gamma_{\BD}\vert_{R_1^{\mathrm{right}}}})$ with $(\Gamma_\BD\vert_{\phi_\varepsilon^{-1}(R_2^{\mathrm{left}})}, D^{\phi_\varepsilon^{-1}(R_2^{\mathrm{left}})}_{\Gamma_\BD\vert_{\phi_\varepsilon^{-1}(R_2^{\mathrm{left}})}})$ and $(\Gamma_\BD\vert_{\phi_\varepsilon^{-1}(R_2^{\mathrm{right}})}, D^{\phi_\varepsilon^{-1}(R_2^{\mathrm{right}})}_{\Gamma_\BD\vert_{\phi_\varepsilon^{-1}(R_2^{\mathrm{right}})}})$ as in Axiom~\eqref{it:weak_axiom_locality} (locality) in \Cref{def:weak_axioms} so that the $D^{\phi_\varepsilon^{-1}(R_2^{\mathrm{left}})}_{\Gamma_\BD\vert_{\phi_\varepsilon^{-1}(R_2^{\mathrm{left}})}}$-distance from left to right boundary of $\phi_\varepsilon^{-1}(R_2^{\mathrm{left}})$ stochastically dominates a positive random variable whose law does not depend on $\varepsilon$. The same holds when we replace $R_2^{\mathrm{left}}$ by $R_2^{\mathrm{right}}$. Finally, we define the conformal rectangle $R$ as the pre-image by $\phi_\varepsilon$ of the connected component of $A'\setminus {F}_2$ whose closure meets both boundary circles of $A'$. We view it as a conformal rectangle whose top (resp.\@ bottom) boundary is the intersection of $\partial R$ with the inner (resp.\@ outer) boundary of $A$. Again using locality, since $r_\varepsilon \to 1$ in probability as $\varepsilon \downarrow 0$, by \Cref{cor:moment_across_rectangle} we see that the $D^R_{\Gamma_\BD\vert_R}$-distance from bottom to top in $R$ goes to zero in probability.  Furthermore,  the $\CLE_4$ metric distance in $R$ between $\phi_{\varepsilon}(Z_{\varepsilon})$ and the left and right boundaries of $R$ stochastically dominates a positive random variable that does not depend on $\varepsilon$,  and both the $\CLE_4$ metric distances in $\widetilde{A}$ between $Z_{\varepsilon}$ and $F$ and $\SCB_{\tau_{\varepsilon}}(\SCL(x))$ respectively tend to $0$ in probability as $\varepsilon \to 0$,  where $\widetilde{A} = \BD \setminus (\SCB_{\tau_{\varepsilon}}^{\bullet}(\SCL(x)) \cup F_1)$ and $\SCB_{\tau_{\varepsilon}}^{\bullet}(\SCL(x))$ denotes the set of points that $\SCB_{\tau_{\varepsilon}}(\SCL(x))$ disconnects from $\infty$.  It follows that with probability tending to $1$ as $\varepsilon \to 0$,  we have that the $\CLE_4$ metric distance in $\phi_{\varepsilon}^{-1}(R)$ between the top and bottom boundaries of $\phi_{\varepsilon}^{-1}(R)$ is the same as the $\CLE_4$ metric distance in $\widetilde{A}$ between $\SCB_{\tau_{\varepsilon}}(\SCL(x))$ and $F$.  Suppose that we are working on the event that the above hold.
			
\substepn{step:mbl-compare}{Comparing the two explorations} We grow the $\CLE_4$ metric ball from the bottom boundary of $\phi_{\varepsilon}^{-1}(R)$ and stop it at the first time that it hits its top boundary.  Let $X$ be a point on the top boundary of $\phi_{\varepsilon}^{-1}(R)$ which also belongs to the aforementioned metric ball and is chosen according to some arbitrary but fixed deterministic way.  Then,  since we have assumed that the $\CLE_4$ metric distance in $\widetilde{A}$ between $F$ and $\SCB_{\tau_{\varepsilon}}(\SCL(x))$ is the same as the $\CLE_4$ metric distance in $\phi_{\varepsilon}^{-1}(R)$ between the top and bottom boundaries of $\phi_{\varepsilon}^{-1}(R)$,  we obtain that $X \in \SCB_{\tau}(\SCL(x)) \cap F$.  Also,  \Cref{lemma hitting point absolutely continuous} implies that the law of $X$ is absolutely continuous with respect to the harmonic measure of $F$ in $\BD \setminus F$ as seen from $x$.  Therefore,  combining with \Cref{lemma harmonic measure of non loop points is zero},  we obtain that it is a.s.\ the case that $X$ lies in some loop in $\Gamma_{\BD}$ intersecting $K$.  In particular,  we have that there exists a loop $\SCL \in \Gamma_{\BD}$ such that $D_{\Gamma_{\BD}|_{\widetilde{A}}}^{\BD}(\SCB_{\tau_{\varepsilon}}(\SCL(x)) ,  \SCL) = \tau-\tau_{\varepsilon}$.  Hence,  the desired claim follows from combining with \Cref{lemma intersection with boundary}.

		\substepn{step:mbl-meetbdry}{The ball meets $\partial\BD$ for some $\varepsilon$} If there exists $\varepsilon>0$ such that $\SCB_{\tau_\varepsilon}(\SCL(x)) \cap \partial \BD\neq \emptyset $, then we do exactly the same proof as above except that we do not need the segment $P_1$.
\end{proof}

\subsection{Estimate for two disjoint metric balls to hit a small Euclidean ball}
Let us now state the main results of the section, which estimate the probability that two metric balls do not meet before hitting a small Euclidean ball.
\begin{proposition}
\label{prop:hitting-two-metric-balls}
Let $\Gamma_{\BD}$ be a non-nested CLE$_4$ in $\BD$. Let $\delta > 0$. Then for each $x, y, z \in {\BD}$ with $\lvert x - z\rvert \wedge \lvert y - z\rvert \ge \delta$, 
	\begin{equation}\label{eq:hitting-two-metric-balls}
		\BP\!\left\lbrack\SCB^{B_\varepsilon(z)}(\SCL(x); D_{\Gamma_{\BD}}^{\BD}) \cap \SCB^{B_\varepsilon(z)}(\SCL(y); D_{\Gamma_{\BD}}^{\BD}) = \emptyset\right\rbrack \le \varepsilon^{2 + o(1)} \quad \text{as } \varepsilon \to 0, 
	\end{equation}
	at a rate depending only on $\delta$, where $\SCB^{B_\varepsilon(z)}(\SCL; D_{\Gamma_{\BD}}^{\BD}) \defeq \SCB_{D_{\Gamma_{\BD}}^{\BD}(\SCL, B_\varepsilon(z))}(\SCL; D_{\Gamma_{\BD}}^{\BD})$. The same is true with two deterministic and disjoint connected arcs of $\partial\BD$ in place of $\SCL(x)$ and $\SCL(y)$. (In this case, the rate of~\eqref{eq:hitting-two-metric-balls} depends only on the deterministic arcs and is uniform over all $z \in \BD$.)
	\end{proposition}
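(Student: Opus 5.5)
The proof will proceed by a multi-scale argument: at each of the $\asymp \log(1/\varepsilon)$ dyadic scales around $z$, we find a conditional $(1-o(1))$ chance of extracting a factor close to $1/4$, via the four-arm bound for bichordal SLE$_4$ (\Cref{lem:bichordal-bulk-4A}).

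\emph{Step 1: the event forces four arms.} Let $E_\varepsilon$ be the event in~\eqref{eq:hitting-two-metric-balls}. Write $\SCB_x \defeq \SCB^{B_\varepsilon(z)}(\SCL(x))$ and $\SCB_y \defeq \SCB^{B_\varepsilon(z)}(\SCL(y))$. By Axiom~\eqref{it:weak_axiom_geodesic}\eqref{it:weak_axiom_geodesic_0}, both are connected compact sets joining $\partial B_\delta(z)$ to $\overline{B_\varepsilon(z)}$. On $E_\varepsilon$ they are disjoint, so in each annulus $A_{2^{-k-1},2^{-k}}(z)$ with $\varepsilon<2^{-k-1}<2^{-k}<\delta$ there are two disjoint crossings. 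Consider the region between them. By \Cref{lemma metric ball hitting a loop} and \Cref{lemma intersection with boundary}, loops are attached to a ball at single points. Hence on each side between $\SCB_x$ and $\SCB_y$ there is a strip of carpet whose ``left'' and ``right'' boundaries are boundaries of metric balls. The next task is to produce SLE$_4$-type curves from this. I will use the GFF coupling of \Cref{subsec:gff_labeled_cle_4}, applied in the complement of $\SCB_x$ (and then of $\SCB_y$), where by \Cref{lem:metric-ball-locality} the configuration is a fresh CLE$_4$ with a fresh metric. In such a domain, a level line of height between the two boundary values runs along the outer boundary of the explored region, in the same way that level lines were placed inside balls in the proof of \Cref{prop:distance_across_rectangle}. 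Two boundary-to-boundary level lines separating the two balls, one on each side, then give two chords. Together with the two crossings of the balls themselves, this is a four-arm configuration at every scale. Conditionally on the exterior data, the pair of chords is a bichordal SLE$_4$ in the complementary domain, or absolutely continuous to one with bounded Radon--Nikodym derivative (\cite[Lemma~2.8]{miller2017intersections}).

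\emph{Step 2: iterate over scales.} Grow the two balls until they first hit $\partial B_{2^{-k}}(z)$; these are stopping times, so \Cref{lem:metric-ball-locality} applies. Conditionally on the past, the probability that the four arms persist from scale $2^{-k}$ to scale $2^{-k-m}$ is at most $C(2^{-m})^{2}$, by \Cref{lem:bichordal-bulk-4A} applied in the conformal image. Chaining over the $\log_2(\delta/\varepsilon)/m$ blocks gives a bound of $(C 4^{-m})^{\log_2(\delta/\varepsilon)/m}=\varepsilon^{2-O(\log C/m)}$, with constants depending on $\delta$. Letting $m\to\infty$ slowly yields $\varepsilon^{2+o(1)}$. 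For the uniformity I need the conditional domains not to degenerate. To arrange this, I will use a good-scale argument: a scale is good if the extremal distances (\Cref{lem:extremal_length}, \Cref{lem:Beurling}) and the boundary data are controlled. Each scale is good with conditional probability close to $1$ by \Cref{cor:moment_across_rectangle}, and a large-deviation estimate on the number of bad scales makes their contribution $\varepsilon^{o(1)}$.

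\emph{Step 3: boundary arcs.} For arcs in place of loops, the same argument applies. When $z$ is near $\partial\BD$, the boundary four-arm bound \Cref{lem:bichordal-boundary-4A} (exponent $4\ge2$) handles the scales at which $B_{2^{-k}}(z)$ meets $\partial\BD$. This gives uniformity in $z$.

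The main obstacle is Step 1: rigorously identifying the region between two disjoint metric balls with a pair of level lines, that is, a bichordal SLE$_4$ up to bounded distortion. The labels on the two sides come from different explorations, so this identification is not immediate. I will first try to explore $\SCB_x$ up to the stopping time, then in the complement use the uniform exploration from its boundary. By Axiom~\eqref{it:weak_axiom_uniform_exploration} this exploration coincides with the metric ball from $\SCB_x$, and it contains $\SCB_y$ only after the distance gap. The outer boundary of the BCLE$_4(\rho)$ with small $\rho$ then supplies the chord, with the $t$-dependence controlled as in \Cref{lem:guiding_sle4}.
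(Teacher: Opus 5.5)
There is a genuine gap, and it is in Step~1, where you suspected it. The event in~\eqref{eq:hitting-two-metric-balls} does not force a bichordal SLE$_4$ four-arm event for any pair of curves you have identified.

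\begin{itemize}
\item The two ball crossings are conditioned past data, not random arms. \Cref{lem:bichordal-bulk-4A} needs \emph{both} chords of a bichordal pair to enter the small ball, so all four arms must come from the random curves. ``Two chords plus two ball crossings'' is therefore not the configuration that lemma bounds.
\item Your level lines fall into one of two cases. If they are boundaries of explored regions wrapping around $\SCB_x$ and $\SCB_y$, their approach to $z$ follows deterministically from the conditioning and carries no probabilistic cost. If they are genuinely fresh chords, nothing in the event forces them near $z$.
\item The comparison to a bichordal SLE$_4$ via \cite[Lemma~2.8]{miller2017intersections} only gives bounded Radon--Nikodym derivatives for curves kept at positive distance from where the boundary data differ. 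It fails exactly in the $\varepsilon$-neighbourhood of the two balls near $z$, which is where the estimate is needed.
\item The same problem breaks the chaining in Step~2. Between scales the balls keep growing by discovering new loops, so continued disjointness does not imply that any fixed pair of chords persists from scale $2^{-k}$ to $2^{-k-m}$.
\end{itemize}

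Two ideas are missing.

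First, the arms should be the loops $\SCL_{x,j}$ and $\SCL_{y,j}$ discovered when the balls first hit $B_{\lambda^j}(z)$. By \Cref{lemma metric ball hitting a loop} and \Cref{lemma intersection with boundary}, each is attached to its pre-ball at a single point. Remove from each loop the arcs, taken from both ends, up to the first hit of $\partial B_{\lambda^j}(z)$. The two remainders are then conditionally a genuine bichordal SLE$_4$ in the complement of the pre-balls and those arcs. So the probability that both remainders reach $B_{\lambda^{j+1-\zeta}}(z)$ is at most $\lambda^{(1-\zeta)(2-\zeta)}$.

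Second, and this is what replaces your ``persistence'', you must show the following: if at least one continuation stays outside $B_{\lambda^{j+1-\zeta}}(z)$, then the balls merge before reaching $B_{\lambda^{j+1}}(z)$ with conditional probability $1-O(1/j)$.
\begin{itemize}
\item Both balls have diameter $\gtrsim\delta$ and lie within $2\lambda^j$ of each other. \Cref{lem:Beurling} then bounds their extremal distance by $O(1/j)$, so the complementary domain maps to a rectangle of length $\gtrsim j$.
\item \Cref{lem:avoiding_two_metric_balls} (which rests on \Cref{cor:moment_across_rectangle} and \Cref{lem:uniform_exploration_conformal_radius}) then shows that the metric ball from one side crosses this rectangle far from $z$ before it reaches the image of $B_{\lambda^{j+1}}(z)$.
\end{itemize}
Multiplying $\lambda^{(1-\zeta)(2-\zeta)}+O(1/j)$ over $j\lesssim\log(1/\varepsilon)/\log(1/\lambda)$ gives $\varepsilon^{(1-\zeta)(2-\zeta)+o(1)}$, and you then let $\zeta\to0$.

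In your proposal the rectangle estimates appear only to certify non-degenerate scales. The merging mechanism, which is what turns a one-step four-arm bound into a bound on the whole event, is absent.
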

	
	We also have the following boundary version of \Cref{prop:hitting-two-metric-balls}.
	
	\begin{proposition}
		\label{prop:boundary-hitting-two-metric-balls}
Let $\Gamma_{\BH}$ be a non-nested CLE$_4$ in $\BH$. Let $\delta > 0$. Then for each $x, y \in \BH$ with $\Im(x) \wedge \Im(y) \ge \delta$ and each $z \in \partial\BH$, 
	\begin{equation}\label{eq:boundary-hitting-two-metric-balls}
		\BP\!\left\lbrack\SCB^{B_\varepsilon(z)}(\SCL(x); D_{\Gamma_{\BH}}^{\BH}) \cap \SCB^{B_\varepsilon(z)}(\SCL(y); D_{\Gamma_{\BH}}^{\BH}) = \emptyset\right\rbrack \le \varepsilon^{4 + o(1)} \quad \text{as } \varepsilon \to 0, 
	\end{equation}
	at a rate depending only on $\delta$, where $\SCB^{B_\varepsilon(z)}(\SCL; D_{\Gamma_{\BH}}^{\BH}) \defeq \SCB_{D_{\Gamma_{\BH}}^{\BH}(\SCL, B_\varepsilon(z))}(\SCL; D_{\Gamma_{\BH}}^{\BH})$. The same is true with two deterministic and disjoint connected arcs $I_1$ and $I_2$ of $\partial\BH$ in place of $\SCL(x)$ and $\SCL(y)$. (In this case, the rate of~\eqref{eq:boundary-hitting-two-metric-balls} depends only on $I_1$ and $I_2$ and is uniform over all $z \in \partial\BH \setminus (I_1 \cup I_2)$.)
	\end{proposition}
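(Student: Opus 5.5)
The plan is to run the proof of \Cref{prop:hitting-two-metric-balls} with the boundary four-arm estimate \Cref{lem:bichordal-boundary-4A} in place of the bulk estimate \Cref{lem:bichordal-bulk-4A}. This upgrades the exponent from $2$ to $4$.

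\emph{Step 1: reduction to a multi-scale product.} By conformal invariance it suffices to take $z = 0 \in \partial\BH$ and to work inside the half-annuli $A_k \defeq \{w \in \BH : e^{-k-1} < |w| < e^{-k}\}$ for $k \in [k_0, \log(1/\varepsilon)]_\BZ$. Here $k_0$ depends on $\delta$ (or on $I_1, I_2$), chosen so that both starting sets lie outside $B_{e^{-k_0}}(0)$. On the event in~\eqref{eq:boundary-hitting-two-metric-balls}, each $A_k$ is crossed by two disjoint connected metric balls, namely the stopped balls from $\SCL(x)$ and $\SCL(y)$. These balls are connected by Axiom~\eqref{it:weak_axiom_geodesic}\eqref{it:weak_axiom_geodesic_0}, and each of them reaches $B_\varepsilon(0)$. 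Moreover, the two balls are separated by an unexplored region, and this region touches $\partial\BH$ on both sides of $0$.

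\emph{Step 2: four arms at each scale, via the GFF.} At each scale I would explore, from the outside inward, the balls grown from the two sources until they first hit $\partial B_{e^{-k}}(0)$. By \Cref{lem:metric-ball-locality}, what remains is a CLE$_4$ with its metric in the unexplored domain. Next I would attach to the explored sets the GFF coupling of \Cref{subsec:gff_labeled_cle_4}, arranged as in the bulk proof: the boundary of each ball carries level-line boundary data, since ball boundaries consist of pieces of BCLE$_4$/level lines by Step~\ref{step:dar-levellines}. With this data, the event that the two balls cross $A_k$ without meeting implies that two level-line arcs, whose conditional law is a bichordal SLE$_4$ in a suitable domain, both come close to the boundary point $0$. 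The four marked prime ends are the points where the ball boundaries meet $\partial\BH$ or the boundary of the previous scale. Some macroscopic distance between $0$ and these marked points is needed to apply \Cref{lem:bichordal-boundary-4A} with $C(\delta)$. I would obtain it with positive conditional probability, uniformly in the past, using \Cref{lemma hitting point absolutely continuous}: hitting points have absolutely continuous laws, so they are unlikely to lie within a small angle of $0$.

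\emph{Step 3: iteration.} Given the configuration outside $B_{e^{-k}}(0)$, the conditional probability of two disjoint crossings of the next block of $m$ scales is then at most $C(\delta')e^{-4m}$ on a good event, and at most $1$ otherwise. Chaining over $\log(1/\varepsilon)/m$ blocks with the Markov property gives a bound of the form $(C e^{-4m})^{\log(1/\varepsilon)/m}$, up to the bad-event fraction. Letting $m \to \infty$ slowly gives $\varepsilon^{4+o(1)}$. Bad blocks, where marked points are close to $0$, would be controlled by a binomial-domination argument as in \cite[Lemma~2.6]{GeoLQGnSLE}. The arc case is identical, with uniformity over $z \notin I_1 \cup I_2$ coming from conformal invariance.

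\emph{Main obstacle.} The main obstacle is Step 2. One has to show that the geometric non-intersection event for metric balls really forces a boundary four-arm event for a genuine bichordal SLE$_4$ at each scale. The hard part is handling the marked points, which may drift toward $0$, uniformly in the past, so that the constant $C(\delta)$ from \Cref{lem:bichordal-boundary-4A} stays bounded with high enough conditional probability to preserve the exponent $4$.
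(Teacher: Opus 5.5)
Your first sentence matches the paper exactly. The paper proves only \Cref{prop:hitting-two-metric-balls} and says that the boundary case follows from the same argument with \Cref{lem:bichordal-boundary-4A} in place of \Cref{lem:bichordal-bulk-4A}. However, your reconstruction of that argument has a genuine gap in Step~2.

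You claim that if the two stopped balls cross an annulus without meeting, then two level-line arcs with a bichordal SLE$_4$ conditional law must both come close to $0$. Nothing supports this, and it is false as stated. The fact you cite from the proof of \Cref{prop:distance_across_rectangle} only says that certain level lines are \emph{contained in} balls grown from the whole domain boundary. It gives no conditional SLE description of the future boundaries of balls grown from $\SCL(x),\SCL(y)$ or from arcs. Between two scales each ball keeps growing by discovering many new loops, so the arms that reach the next scale are unions of filled loops with no known conditional law.

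The only bichordal SLE$_4$ available is the one in the bulk proof. When the balls first hit $B_{\lambda^j}(z)$, each discovers a single loop attached at one point to its pre-ball (\Cref{lemma metric ball hitting a loop} and \Cref{lemma intersection with boundary}). Given the pre-balls and the arcs of these two loops up to their first hits of $\partial B_{\lambda^j}(z)$ in both directions, the two remaining arcs form a bichordal SLE$_4$ by locality. Disjointness of the balls at scale $j+1$ does \emph{not} force both of these arcs to reach $B_{\lambda^{j+1-\zeta}}(z)$, because a ball can reach the next scale through loops it discovers later.

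What is missing is the second half of the bulk argument: the trichotomy $F_j^1,F_j^2,F_j^3$ together with a metric merging estimate.
\begin{itemize}
\item On $F_j^1$, the boundary four-arm bound gives at most $\lambda^{4(1-\zeta)}$.
\item On $F_j^2\cup F_j^3$, one shows that the balls merge before reaching $B_{\lambda^{j+1}}(z)$, except with conditional probability $O(1/j)$.
\end{itemize}
The merging estimate works as follows. On $E_j$, both balls have diameter of order $\delta$ and come within $\lambda^j$ of $z$. Hence their extremal distance is $O(1/j)$, by the argument of \Cref{lem:Beurling}. Uniformize the region between them to $V_{r_j}$, so that $r_j\gtrsim j$. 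Then \Cref{cor:moment_across_rectangle} and \Cref{lem:avoiding_two_metric_balls} show that crossing a sub-rectangle far from the target is cheaper than reaching the image of $B_{\lambda^{j+1}}(z)$. This yields $\BP[E_{j+1}\mid E_j]\le \lambda^{4(1-\zeta)}+O(1/j)$, and multiplying over $j$ gives $\varepsilon^{4+o(1)}$. Your plan uses none of the metric estimates of \Cref{sec:distances_across_rectangles,sec:hitting_two_metric_balls}, and the SLE bound alone controls nothing.

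Conversely, the obstacle you single out does not arise with the right choice of marked points. Take them on $\partial B_{\lambda^j}(z)$. On $E_j$, the half-disk $B_{\lambda^j}(z)\cap\BH$ then lies in the unexplored domain. Schwarz reflection across $\partial\BH\cap B_{\lambda^j}(z)$ and Koebe distortion then place the four marked points at macroscopic distance from the image of $z$. Meanwhile $B_{\lambda^{j+1-\zeta}}(z)$ maps into a ball of radius $O(\lambda^{1-\zeta})$ around that image. So \Cref{lem:bichordal-boundary-4A} applies with a uniform constant, and no bad-block argument is needed. A binomial-domination argument would in any case lose part of the exponent, unless the bad blocks themselves had conditional probability of order $e^{-4m}$.
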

	
	For simplicity, we only present the proof of \Cref{prop:hitting-two-metric-balls}. \Cref{prop:boundary-hitting-two-metric-balls} then follows from \Cref{lem:bichordal-boundary-4A} combined with an argument analogous to that used in the proof of \Cref{prop:hitting-two-metric-balls}.
	
	We first establish the following lemma. Heuristically, it states that in an $r \times 1$ rectangle, the ``geodesic'' connecting the top and bottom sides passes through a fixed square with probability $O(1/r)$ as $r \to \infty$, even though the existence of such a geodesic has not yet been established at this stage. Recall that $V_r$ is the $r\times 1$ rectangle and that $T_r,B_r$ are the top and bottom sides of $V_r$, respectively.

\begin{lemma}\label{lem:avoiding_two_metric_balls}
Fix constants $0<c_1<c_2<c_2^\prime<c_3<c_4<1$.  Then,  there exists a constant $C>1$ depending only on $c_1,c_2,c_2^\prime,c_3$,  and $c_4$,  such that the following is true for each $0 < r \leq \widetilde{r}$ and each compact set $K \subset V_{\widetilde{r}}$ with $\dist(K , \partial V_{\widetilde{r}}) \geq c_4,  \sup_{z \in K} \Re(z) - \inf_{z \in K} \Re(z) \leq c_4^{-1}$,  and $K \subset V_{c_3 r , \widetilde{r}}$: with probability at least $1 - C / r$,  we have that
\begin{align*}
D_{\Gamma_{V_{\widetilde{r}}}}^{V_{\widetilde{r}}}(K ,  B_{\widetilde{r}}) > D_{\Gamma_{V_{\widetilde{r}}}|_{V_{c_1 r,c_2 r}^{\star}}}^{V_{c_1 r ,c_2  r}^{\star}}(T_{\widetilde{r}} \cap \partial V_{c_1 r, c_2 r}^\star ,  B_{\widetilde{r}} \cap \partial V_{c_1 r, c_2 r}^\star).
\end{align*}
\end{lemma}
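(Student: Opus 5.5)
The plan is to compare the distance from $K$ to $B_{\widetilde r}$ with the top–bottom crossing distance of the sub-rectangle $V^\star_{c_1 r, c_2 r}$. The two quantities are of the same order, about $1/r$ times a tight variable, so a direct comparison cannot give a failure probability of order $1/r$. Instead I will use the same independence/binomial mechanism as in \Cref{prop:distance_across_rectangle} and \Cref{cor:moment_across_rectangle}. The rectangle $V_{c_1 r,c_2 r}$ has width of order $r$, so it contains of order $r$ disjoint unit-width columns. Each column independently, via \Cref{lem:guiding_sle4}, carries a level line of height about $1/r$ crossing from bottom to top with probability of order $1/r$. Hence with probability bounded below the crossing distance of $V^\star_{c_1 r,c_2 r}$ is at most $a/r$ for a constant $a$, and by iterating as in \Cref{cor:moment_across_rectangle} the event that it exceeds $M/r$ has probability exponentially small in $M$. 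This is still not $O(1/r)$, so the real gain must come from the set $K$.

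The set $K$ sits at distance $\ge c_4$ from $\partial V_{\widetilde r}$, has horizontal extent $\le c_4^{-1}$, and lies to the right of $c_3 r > c_2' r > c_2 r$. The key step is a lower bound on the tail of $D(K, B_{\widetilde r})$:
\[
\BP\bigl[D(K,B_{\widetilde r}) \le s\bigr] \le C s
\]
for small $s$, uniformly in $r \le \widetilde r$. The intuition comes from the uniform exploration. By Axiom~\eqref{it:weak_axiom_uniform_exploration} and the Poissonian structure of \Cref{subsec:labeled_cle_4}, the time at which the loop around a fixed interior point is discovered from the boundary is exponential with rate one, so its probability of being $\le s$ is $O(s)$. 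To pass from a point to $K$, I would cover $K$ by $O(c_4^{-2})$ balls of radius $c_4/2$. Each ball is conformally comparable, with constants depending only on $c_4$, to a fixed compact subset of the disk. Two ingredients then control the event: monotonicity in the domain (distances to $B_{\widetilde r}$ dominate distances to the full boundary, which is how Axiom~\eqref{it:weak_axiom_uniform_exploration} is used), and a union bound over finitely many loops reaching each ball, which again uses the Poisson structure. Taking $s = M/r$ then gives $\BP[D(K,B_{\widetilde r}) \le M/r] \le CM/r$.

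Combining the two estimates, the event $D(K,B_{\widetilde r}) \le D^{V^\star}(\cdot,\cdot)$ is contained in
\[
\{D^{V^\star} > M/r\} \cup \{D(K,B_{\widetilde r}) \le M/r\}.
\]
With $M = C'\log r$ this gives a probability bound of $O((\log r)/r)$, which is off by a logarithm. To remove it I will integrate rather than truncate:
\[
\BP\bigl[D(K,B_{\widetilde r}) \le X\bigr] \le \BE\bigl[\min(1, C X)\bigr] \le C\,\BE[X] = O(1/r),
\]
where $X$ is the crossing distance of $V^\star_{c_1 r,c_2 r}$ and the last step uses the first-moment bound of \Cref{cor:moment_across_rectangle}. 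The inequality above requires that the conditional law of $D(K,B_{\widetilde r})$ given $X$ still satisfies the linear small-ball bound. For that I will use locality (Axiom~\eqref{it:weak_axiom_locality} and \Cref{lem:metric-ball-locality}) to separate the two regions. The gap $c_2 r < c_2' r < c_3 r$ leaves a buffer of width of order $r$. With high probability no CLE loop crosses this buffer (as in \Cref{lem:distance_across_rectangle-proof}), and on the exceptional event the bound is handled by the polynomially small probability of a loop of length order $r$. Conditionally on the configuration in $V_{0,c_2' r}$, the region around $K$ is then a CLE$_4$ in a domain containing a $c_4$-neighborhood of $K$, and the tail bound applies uniformly.

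The main obstacle is the linear small-ball estimate $\BP[D(K,B_{\widetilde r}) \le s] \le Cs$ holding uniformly and conditionally. The difficulty is that distances to $B_{\widetilde r}$ in the rectangle are not exactly the boundary-exploration times: balls grown from $B_{\widetilde r}$ rather than from all of $\partial V$ are only smaller, which gives the right direction of monotonicity, but the conditioning on the left part changes the domain. I would first try to reduce everything to the exponential discovery time in a disk of radius $c_4$ around each point of a finite net of $K$, using that the distance from $K$ to the boundary of a smaller domain is dominated by the distance in the larger one, together with conformal invariance.
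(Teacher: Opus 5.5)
Your skeleton is the paper's: a conditional linear small-ball bound for the exploration reaching $K$, decoupled by locality from the crossing distance $X$ of $V^\star_{c_1 r,c_2 r}$, then integrated against $\BE[X]=O(1/r)$ rather than truncated. (Note that $D(K,B_{\widetilde r})$ is of order $1$, not $1/r$; the $1/r$ comes from evaluating a linear small-ball bound at scale $X\asymp 1/r$.)

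The main gap is that the small-ball bound, which is the real content of the lemma, does not follow from your sketch. The exponential law of the discovery time of $\SCL(z)$ says nothing about the explored region reaching $K$ before $\SCL(z)$ is found. In the Poissonian construction, the bubbles surrounding the target form an independent thinning of the point process, while the unexplored domain is shrunk by the other bubbles. The explored region can reach $K$ through a chain of arbitrarily many small loops, and no union bound over finitely many loops controls that. Localizing to disks of radius $c_4$ around a net of $K$ also fails: with probability bounded below, the loop surrounding a point of $K\cap B_{c_4/2}(w)$ is not contained in $B_{c_4}(w)$, so that point does not even belong to $B_{c_4}(w)^\star$.

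The missing idea is \Cref{lem:uniform_exploration_conformal_radius}. There, $-\log\mathop{\mathrm{CR}}$ of the unexplored component seen from a point is a killed subordinator with $\BE[S_t]\le ct$, so by Markov's inequality it exceeds a fixed level by time $t$ with probability $O(t)$. As long as it has not, \Cref{lem:set_almost_the_unit_disk} shows that the unexplored domain still contains $B_{1-\varepsilon}(0)$, and discovering $\SCL(0)$ adds another $O(t)$. This must be applied to the exploration from $\partial V^\star_{c_2 r,\widetilde r}$ in the whole right-hand domain, uniformized with a point of $K$ sent to $0$. On $\{V_{c_2' r,\widetilde r}\subset V^\star_{c_2 r,\widetilde r}\}$ the image of $K$ lies in $B_{1-\delta}(0)$ with $\delta$ depending only on $c_2,c_2',c_3,c_4$, and $D^{V_{\widetilde r}}_{\Gamma_{V_{\widetilde r}}}(K,B_{\widetilde r})$ dominates the corresponding exploration time by locality.

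The second gap is quantitative. You need both $\{V_{c_2' r,\widetilde r}\not\subset V^\star_{c_2 r,\widetilde r}\}$ and, for fixed $c_1<\tilde c_1<\tilde c_2<c_2$, $\{V_{\tilde c_1 r,\tilde c_2 r}\not\subset V^\star_{c_1 r,c_2 r}\}$ to have probability $O(1/r)$. The second is needed because \Cref{cor:moment_across_rectangle} concerns a deterministic rectangle. So $\BE[X]=O(1/r)$ is only available for $X\one_{\{V_{\tilde c_1 r,\tilde c_2 r}\subset V^\star_{c_1 r,c_2 r}\}}$, which is stochastically dominated by the crossing distance of $V_{(\tilde c_2-\tilde c_1)r}$. \Cref{lem:distance_across_rectangle-proof} gives no rate (it is continuity from above), and the ``polynomially small'' bound is asserted rather than proved.

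The paper gets $1-C_1e^{-\alpha r}$ as follows. In each unit box of the buffer, a level line of a fixed height $u\in(0,\pi/2)$ of the GFF coupled with the CLE$_4$ crosses from bottom to top with conditional probability at least $p>0$, given the level lines in the other boxes (by absolute continuity, \cite[Lemma~2.8]{miller2017intersections}). No CLE$_4$ loop crosses such a level line. Hence the number of crossing boxes dominates a binomial variable with $\asymp r$ trials and success probability $p$.
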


The main ingredient in the proof of \Cref{lem:avoiding_two_metric_balls} is \Cref{lem:uniform_exploration_conformal_radius} below. Before stating and proving \Cref{lem:uniform_exploration_conformal_radius}, we establish the following useful elementary lemma.

\begin{lemma}\label{lem:set_almost_the_unit_disk}
Fix $\varepsilon \in (0,1)$. Then there exists $\delta = \delta(\varepsilon) \in (0,1)$ such that the following holds. Let $G \subset \BD$ be a simply connected domain such that $0 \in G$ and let $h : \BD \to G$ be the conformal transformation with $h(0) = 0$ and $h'(0) > 0$.  Suppose that $h'(0) \geq 1 - \delta$.  Then $B_{1-\varepsilon}(0) \subset G$.
\end{lemma}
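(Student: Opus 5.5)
The plan is to argue by contradiction and compactness via the Koebe distortion and growth theorems. The conclusion holds uniformly over all admissible $G$ once $\delta$ is small. Suppose not: then there exist $\varepsilon\in(0,1)$, a sequence of simply connected domains $G_n\subset\BD$ containing $0$ with conformal maps $h_n\colon\BD\to G_n$, $h_n(0)=0$, $h_n'(0)\ge 1-1/n$, and points $w_n\in B_{1-\varepsilon}(0)\setminus G_n$. Since each $h_n$ maps $\BD$ into $\BD$ and fixes $0$, the Schwarz lemma gives $h_n'(0)\le 1$, hence $h_n'(0)\to 1$. The family $\{h_n\}$ is uniformly bounded, so by Montel's theorem we may pass to a subsequence converging locally uniformly to a holomorphic $h\colon\BD\to\overline\BD$ with $h(0)=0$ and $h'(0)=1$. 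The equality case of the Schwarz lemma then forces $h=\mathrm{id}$.

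It remains to show that $G_n\supset B_{1-\varepsilon}(0)$ for $n$ large, which contradicts the choice of $w_n$. Two routes are available.

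\emph{Route via Carath\'eodory.} Since $h_n\to\mathrm{id}$ locally uniformly, the Carath\'eodory kernel theorem recalled above gives $G_n\to\BD$ in the Carath\'eodory sense relative to $0$. The first bullet of the original definition then applied to the compact set $\overline{B_{1-\varepsilon}(0)}$ yields $\overline{B_{1-\varepsilon}(0)}\subset G_n$ for all $n$ large.

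\emph{Direct route via Rouch\'e.} Fix $\rho=1-\varepsilon/2$. Uniform convergence on $\partial B_\rho(0)$ gives $|h_n-\mathrm{id}|<\varepsilon/2$ there for $n$ large. For any $w\in B_{1-\varepsilon}(0)$ we have $|z-w|\ge\varepsilon/2$ on $|z|=\rho$, so Rouch\'e's theorem shows that $h_n-w$ and $z-w$ have the same number of zeros in $B_\rho(0)$, namely one. Hence $w\in h_n(B_\rho(0))\subset G_n$.

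There is no real obstacle here. The only point needing care is that the limit is identified as the identity, and this uses both $h_n(\BD)\subset\BD$ and $h_n'(0)\to1$. A quantitative alternative is also possible: by the Koebe quarter theorem applied to $G$ together with the estimate $1-h'(0)\ge c\,(1-\mathrm{dist}(0,\partial G))^2$ from a comparison with the slit disk. The compactness argument above, however, suffices for the existence of $\delta(\varepsilon)$.
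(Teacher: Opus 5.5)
Your proof is correct and follows the paper's argument: contradiction, Montel's theorem, the equality case of the Schwarz lemma to identify the subsequential limit as the identity, and then Carath\'eodory convergence to get $\overline{B_{1-\varepsilon}(0)}\subset G_n$ for large $n$. Your Rouch\'e variant is a self-contained substitute for that last step, and neither of your routes needs the Hurwitz step the paper invokes, because the Schwarz lemma already shows the limit is the univalent map $z\mapsto z$.
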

\begin{proof}
We will argue by contradiction. Suppose that the claim in the statement of the lemma is not true. Then there exists $\varepsilon \in (0,1)$ such that for all $n \in \BN$, there exist $\delta_n \in (0,1/n)$ and a simply connected domain $U_n \subset \BD$ having the following properties. First, $0 \in U_n$. Next, let $h_n : \BD \to U_n$ be the conformal transformation with $h_n(0) =0$ and $h_n'(0) > 0$. Then $h_n'(0) \geq 1 - \delta_n$ and $B_{1-\varepsilon}(0) \not\subset U_n$.

By applying the Montel and Hurwitz theorems, we can assume without loss of generality that there exists an injective holomorphic function $h : \BD \to G$ for some simply connected domain $G \subset \BC$ such that $0 \in G, h(\BD) = G$, and $h_n \to h$ as $n \to \infty$ locally uniformly. Note that $G \subset \BD$ since $h_n(\BD) = U_n \subset \BD$ for all $n$. Moreover, we have that $h(0) = 0$ and $h'(0) \geq 1$ since $h_n'(0) \to h'(0)$ as $n \to \infty$ and $\liminf_{n \to \infty} h_n'(0) \geq 1$. Also, Schwarz's lemma implies that $|h'(0)| \leq 1$, and so $h'(0) = 1$. By applying Schwarz's lemma again, we obtain that $h(z) = z$ for all $z \in \BD$, and so $G = \BD$. Furthermore, for all $n \in \BN$, there exists $w_n \in B_{1-\varepsilon}(0) \setminus U_n$. Since $h_n\to h$ uniformly on compact subsets of $\BD$, we deduce that $U_n \to G$ in the sense of Carath\'eodory, so that $\overline{B_{1-\varepsilon}(0)} \subset U_n$ for all $n$ large enough. This contradicts the choice of the $w_n$'s and completes the proof of the lemma.
\end{proof}

\begin{lemma}\label{lem:uniform_exploration_conformal_radius}
For each $\varepsilon \in (0, 1)$, there exists $C = C(\varepsilon) > 0$ such that the following is true: Let $\Gamma_{\BD}$ be a non-nested CLE$_4$ in $\BD$. Then for each $t \ge 0$, it holds with probability at least $1 - Ct$ that $\SCB_t(\partial\BD; D_{\Gamma_{\BD}}^{\BD})$ does not hit $B_{1 - \varepsilon}(0)$. 
\end{lemma}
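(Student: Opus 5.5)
The plan is to reduce the statement to a bound on the conformal radius of the unexplored component containing the origin, and then to read that bound off the Poissonian structure of the uniform exploration.

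\emph{Reduction to the conformal radius.} By Axiom~\eqref{it:weak_axiom_uniform_exploration}, $\SCB_t(\partial\BD; D_{\Gamma_{\BD}}^{\BD})$ has the law of the explored region of the uniform exploration at time $t$ (\Cref{subsec:labeled_cle_4}), so it suffices to work with the exploration. Recall that $\tau_0$ is the discovery time of $\SCL(0)$ and is exponential of parameter one. Hence $\BP[\tau_0 \le t] \le t$, and we may restrict attention to the event $\{t < \tau_0\}$.

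On this event $0 \in C_t(0)$, and the explored region is disjoint from $C_t(0)$. So if the explored region meets $B_{1-\varepsilon}(0)$, then $B_{1-\varepsilon}(0) \not\subset C_t(0)$. Let $\delta = \delta(\varepsilon)$ be given by \Cref{lem:set_almost_the_unit_disk}, and let $h_t\colon \BD \to C_t(0)$ be the normalized conformal map. The lemma then forces $h_t'(0) < 1-\delta$. Write $S_t \defeq -\log h_t'(0)$ for $t < \tau_0$. It therefore suffices to show $\BP[S_t \ge a,\ t<\tau_0] \le C t$, where $a \defeq -\log(1-\delta)$.

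\emph{Subordinator structure.} Consider the $\varepsilon'$-truncated exploration targeted at $0$ from \Cref{subsec:labeled_cle_4}. Each discovered bubble $\ell_{t_i}$ not surrounding $0$ is mapped by the normalized map $\phi_i$ onto $D_{i-1}$. Its contribution to $S$ is $-\log$ of the conformal radius, seen from $0$, of the component of $\BD \setminus \ell_{t_i}$ containing $0$. That quantity depends only on the atom $\ell_{t_i}$ of the Poisson point process, because the composition $\phi_1\circ\dots\circ\phi_i$ multiplies the derivatives at $0$. So, for each $\varepsilon'$, the truncated $S$ is a compound Poisson process killed at the exponential time $\tau_0$. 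Its jumps are the pushforward $\nu_{\varepsilon'}$ of $M$, restricted to bubbles of diameter at least $\varepsilon'$ not surrounding $0$, under this functional.

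Letting $\varepsilon' \downarrow 0$, the Carath\'eodory convergence of $D^{\varepsilon'}_t$ to $C_t(0)$ gives monotone convergence of $S^{\varepsilon'}_t$ to $S_t$. Thus $(S_t)_{t<\tau_0}$ is a pure-jump subordinator with L\'evy measure $\nu = \lim \nu_{\varepsilon'}$. Because $S_t < \infty$ a.s. for $t<\tau_0$, we get $\int (x\wedge 1)\,\nu(\mathrm{d}x) < \infty$ by the standard characterization of subordinators. Equivalently, one can use the Campbell formula: if this integral were infinite, the Poisson sum $S_t$ would be infinite a.s. The killing at rate one by bubbles surrounding $0$ is independent of the other atoms by the Poisson property.

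\emph{Conclusion.} By Campbell's formula,
\[
\BE\bigl[(S_t\wedge a)\one_{t<\tau_0}\bigr] \le \BE\Bigl[\sum_{s\le t} (\Delta S_s \wedge a)\Bigr] = t\int (x\wedge a)\,\nu(\mathrm{d}x).
\]
Markov's inequality then gives $\BP[S_t\ge a,\ t<\tau_0] \le a^{-1}\, t\int(x\wedge a)\,\nu(\mathrm{d}x) \eqqcolon C' t$. The final constant is $C = C' + 1$.

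\emph{Main obstacle.} The delicate step is identifying $S$ as a subordinator with a genuine L\'evy measure. This requires two things: passing from the truncated explorations to the limit, and confirming that only atoms not surrounding $0$ contribute before $\tau_0$. I would handle it through monotonicity in $\varepsilon'$ together with the finiteness of $S_t$ on $\{t<\tau_0\}$, which is guaranteed by $C_t(0)$ being a nondegenerate domain (Werner--Wu).
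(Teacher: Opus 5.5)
Your proposal is correct. Its reduction is the same as the paper's: restrict to $\{t<\tau_0\}$ at cost $\BP[\tau_0\le t]\le t$, then use \Cref{lem:set_almost_the_unit_disk} to convert hitting $B_{1-\varepsilon}(0)$ into $S_t\ge -\log(1-\delta)$, where $S_t=-\log \mathrm{CR}(0,C_t(0))$ is the killed subordinator. The final estimate, however, is obtained differently.

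The paper bounds the full mean, $\BE[S_t]\le ct$. To do so it identifies the jumps of $S$ with durations of excursions of height at most $\pi$ of a reflected Brownian motion, which is an input from the first paper. It then computes $\int R(e)\one_{M(e)\le \pi}\,n^+(\mathrm{d}e)$ via the three-dimensional Bessel description of the It\^o measure.

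You instead truncate. Since $S_t\wedge a\le \sum_{s\le t}(\Delta S_s\wedge a)$, Campbell's formula gives $t\int (x\wedge a)\,\nu(\mathrm{d}x)$. This integral is finite for any subordinator, because a.s.\ finiteness of the Poisson sum forces $\int (x\wedge 1)\,\nu(\mathrm{d}x)<\infty$. (A drift term, if there were one, would only add $dt$, so the pure-jump claim is not even essential.)

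Your route needs no explicit knowledge of the L\'evy measure and no control of large jumps. It only uses that $S$ is a killed subordinator with finite values. You derive this directly from the truncated Poissonian construction, including the independence of the atoms surrounding $0$, which you need to transfer finiteness from $\{t<\tau_0\}$ to an unconditional statement. This makes your argument more elementary and self-contained than the paper's. The paper's computation yields the stronger fact that $S_t$ has a finite mean of order $t$, which is more than the lemma requires.
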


\begin{proof}
	By the definition of the uniform exploration from a PPP of SLE$_4$ bubbles (see~\cite{CoInCLEExpl}), $-\log(\mathop{\mathrm{CR}}(0, \BD \setminus \SCB_t(\partial\BD; D_{\Gamma_{\BD}}^{\BD})))$ is an increasing L\'evy process (killed at an exponential time), i.e., a (killed) subordinator, where $\mathop{\mathrm{CR}}(0, \BD \setminus \SCB_t(\partial\BD; D_{\Gamma_{\BD}}^{\BD}))$ denotes the conformal radius of $\BD \setminus \SCB_t(\partial\BD; D_{\Gamma_{\BD}}^{\BD})$ viewed from the origin. Let us denote this killed subordinator by $(S_u)_{u\ge 0}$. Note that the jumps of $S_u$ are given by the time-length of the excursions $e_u$ which have a height smaller than $2\pi$, where we recall from \cite[Section~3.3]{kkmt2026cle4_part1} that $(e_u)_{u\ge 0}$ is the PPP of the excursions of $\theta$, where $\theta$ is two times a Brownian motion in $[0,\pi]$ which is reflected at $0$ and $\pi$. Let us show that $\BE[S_1]<\infty$. To see this, by our description of the jumps of $(S_t)_{t\ge 0}$, it suffices to show that
\begin{equation}\label{eq expectation duration of the excursion finite}
	\int R(e) \one_{M(e)\le \pi } n^+(\mathrm{d}e) < \infty,
\end{equation}
where $n^+$ is the Itô excursion measure (see e.g.~\cite[Chapter XII, Paragraph 2]{RY05}) on positive excursions, where $R(e)$ is the duration of the excursion $e$ defined by $R(e)= \inf\{t>0: e(t)=0\}$ and $M(e)$ is the maximum of $e$. By \cite[Chapter~XII,  Theorem (4.5)]{RY05}, we can write the above integral in the form
\[
\int R(e) \one_{M(e)\le \pi} n^+(\mathrm{d}e) = \frac{1}{2} \int_0^\pi \frac{1}{x^2} \BE[ R(Z^x) ] \mathrm{d} x,
\]
where $Z^x$ is a process whose expected duration is twice the expected hitting time $\tau_x$ of $x>0$ by a $3$-dimensional Bessel process starting from $0$. One can for instance compute this expected time from the Laplace transform given in \cite[Equation (2.1)]{HM13}: the Laplace transform is given by $\BE[e^{-\lambda \tau_x}]= x\sqrt{2\lambda}/{\sinh(x \sqrt{2\lambda})}$ for all $\lambda>0$. By taking the derivative at zero, we deduce that the expectation of $\tau_x$ is $x^2/3$, hence~\eqref{eq expectation duration of the excursion finite} (one can alternatively compute this expectation using optional sampling and a suitable martingale). As a consequence, since $(S_t)_{t\ge 0}$ is a killed subordinator, we deduce that $\BE[S_t] \le ct$ for some constant $c>0$.

Using Markov's inequality, this implies that for each $\delta \in (0, 1)$, there exists $C = C(\delta) > 0$ such that for each $t \ge 0$, it holds with probability at least $1 - Ct$ that $\mathop{\mathrm{CR}}(0, \BD \setminus \SCB_t(\partial\BD; D_{\Gamma_{\BD}}^{\BD})) \ge 1 - \delta$. Note that \Cref{lem:set_almost_the_unit_disk} implies that there exists $\delta = \delta(\varepsilon) \in (0, 1)$ such that $B_{1 - \varepsilon}(0) \subset \BD \setminus \SCB_t(\partial\BD; D_{\Gamma_{\BD}}^{\BD})$ whenever $\mathop{\mathrm{CR}}(0, \BD \setminus \SCB_t(\partial\BD; D_{\Gamma_{\BD}}^{\BD})) \ge 1 - \delta$. 

Thus, we conclude from the above discussion that there exists $C = C(\varepsilon) > 0$ such that for each $t \ge 0$, the probability that $\SCB_t(\partial\BD; D_{\Gamma_\BD}^\BD)$ hits $B_{1-\varepsilon}(0)$ is at most $Ct + \BP[D_{\Gamma_\BD}^\BD(\SCL(0), \partial\BD) \le t]$, the first term bounding the probability that this happens before $\SCL(0)$ is discovered. The latter distance being exponential, the second term is $1 - \re^{-ct} \le ct$, so increasing $C$ completes the proof of \Cref{lem:uniform_exploration_conformal_radius}.
\end{proof}
We now turn to the proof of \Cref{lem:avoiding_two_metric_balls}.

\begin{proof}[Proof of \Cref{lem:avoiding_two_metric_balls}]
\step{step:avoid-outline}{Outline and setup} Let $\{V_{x,y}\}_{x < y}$ and $\{I_x\}_x$ be as in the proof of \Cref{prop:distance_across_rectangle}.  Let $\Gamma_{V_{\widetilde{r}}},  D_{\Gamma_{V_{\widetilde{r}}}|_{V_{c_1 r,c_2 r}^{\star}}}^{V_{c_1 r,c_2 r}^{\star}}$,  and $D_{\Gamma_{V_{\widetilde{r}}}|_{V_{c_2 r,\widetilde{r}}^{\star}}}^{V_{c_2 r,\widetilde{r}}^{\star}}$ be coupled as in \Cref{def:weak_axioms}, Axiom~\eqref{it:weak_axiom_locality} (locality).  

Let us briefly explain the strategy of the proof.  First,  we will show in \Cref{step:avoid-wide} that with very high probability,  the conformal rectangle $V_{c_2 r,\widetilde{r}}^{\star}$ is wide.  The exact same argument works to prove that $V_{c_1 r,c_2 r}^{\star}$ is very wide with high probability.  In \Cref{step:avoid-conclusion},  we will use the results of \Cref{step:avoid-wide} to complete the proof of the lemma.  In particular,  if we set
\begin{align*}
d\defeq D_{\Gamma_{V_{\widetilde{r}}}|_{V_{c_1 r,c_2r}^{\star}}}^{V_{c_1 r,c_2 r}^{\star}}(T_{\widetilde{r}},B_{\widetilde{r}}),
\end{align*}
\Cref{cor:moment_across_rectangle} implies that $d$ is small with high probability,  so combining with \Cref{lem:uniform_exploration_conformal_radius},  we deduce that with very high probability,  the $\CLE_4$ distance in $V_{c_2 r,\widetilde{r}}^{\star}$ between $\partial V_{c_2 r,\widetilde{r}}^{\star}$ and $K$ is larger than $d$.  This will complete the proof of the lemma.

\step{step:avoid-wide}{$V_{c_2 r, \widetilde{r}}^{\star}$ is wide with very high probability}

\substepn{step:avw-levellines}{The level lines $\eta_j$} Fix a constant $c_2' \in (c_2 ,  c_3)$.  We claim that there exist constants $\alpha ,  C_1>0$ depending only on $c_2'$ and $c_2$ such that $V_{c_2' r ,  \widetilde{r}} \subset V_{c_2 r,\widetilde{r}}^{\star}$ with probability at least $1 - C_1 e^{-\alpha r}$.  Indeed,  let $\Psi$ be a zero-boundary GFF on $V_{\widetilde{r}}$ coupled with $\Gamma_{V_{\widetilde{r}}}$ such that each loop $\SCL \in \Gamma_{V_{\widetilde{r}}}$ is labeled $\pm\pi$ (see \Cref{subsec:nested_cle_gff}).  For all $j \in [c_2 r , c_2' r -1]_{\BZ}$,  we set $R_j\defeq[j,j+1] \times [0,1]$ and let $x_j$ (resp.\ $y_j$) be the midpoint of the bottom (resp.\ top) boundary of $R_j$.  Fix $u \in (0,\pi/2)$ and let $\widetilde{\eta}_j$ denote an $\SLE_4(-1+2u/\pi ; -1 -2u / \pi)$ process in $R_j$ from $x_j$ to $y_j$ with the force points located at $x_j^-$ and $x_j^+$ respectively.   Also,  let $\eta_j$ denote the level line of $\Psi$ from $x_j$ to $y_j$ with height $u$,  stopped at the first time that it exits the Euclidean $1/2$-neighborhood of $[x_j,y_j]$.

\substepn{step:avw-lower}{A uniform lower bound} Note that \cite[Lemma~2.5]{miller2017intersections} implies that there exists some universal constant $p \in (0,1)$ such that for all $j \in [c_2 r ,  c_2' r -1]_{\BZ}$,  the curve $\widetilde{\eta}_j$ hits the top boundary of $R_j$ before leaving the Euclidean $1/2$-neighborhood of $[x_j,y_j]$ with probability at least $p$.  Note that the boundary conditions of $\Psi$ on $\cup_j \eta_j$ lie in $[-\pi,\pi]$ and that $\eta_j$ has the law of an $\SLE_4(-1+2u/\pi ; -1-2u/\pi)$ process in $V_{\widetilde{r}}$ from $x_j$ to $y_j$ with force points located at $x_j^-$ and $x_j^+$ respectively.  Therefore,  \cite[Lemma~2.8]{miller2017intersections} implies that the following is true for all $j \in [c_2 r ,  c_2' r -1 ]_{\BZ}$.  The Radon--Nikodym derivative between the conditional law of $\eta_j$ given $(\eta_{\lceil c_2 r \rceil},\cdots,\eta_{j-1},\eta_{j+1},\cdots, \eta_{\lfloor c_2' r - 1\rfloor})$ and that of $\widetilde{\eta}_j$ when both curves are stopped at the first time that they exit $B_{1/2}([x_j,y_j])$ is bounded from above and below by constants depending only on $u$.  In particular,  by possibly decreasing $p \in (0,1)$ (depending only on $u$),  we deduce that for all $j \in [c_2 r ,  c_2' r -1]_{\BZ}$,  the conditional probability that $\eta_j$ hits the top boundary of $R_j$ before exiting $B_{1/2}([x_j,y_j])$ given $(\eta_{\lceil c_2 r \rceil},\cdots,\eta_{j-1},\eta_{j+1},\cdots, \eta_{\lfloor c_2' r - 1\rfloor})$ is at least $p$.

\substepn{step:avw-count}{Counting the successful boxes} It follows that the number of $j \in [c_2 r,c_2' r-1]_{\BZ}$ for which $\eta_j$ hits the top boundary of $R_j$ before exiting $B_{1/2}([x_j,y_j])$ for the first time stochastically dominates a binomial random variable with success probability $p$ and $\lfloor (c_2' - c_2) r -2 \rfloor$ trials.  In particular,  there exist constants $\alpha ,  C_1>0$ depending only on $c_2 , c_2'$, and $u$, such that for all $r>0$ sufficiently large,  it holds with probability at least $1 - C_1 e^{-\alpha r}$ that there exists $j \in [c_2 r ,  c_2' r-1]_{\BZ}$ such that $\eta_j$ hits the top boundary of $R_j$ before exiting $B_{1/2}([x_j,y_j])$ for the first time.  Since there are no loops in $\Gamma_{V_{\widetilde{r}}}$ that cross any of the $\eta_j$'s (see \Cref{subsec:nested_cle_gff} together with the level line interaction rules),  we deduce that on this event,  we have that $V_{c_2' r,\widetilde{r}} \subset V_{c_2 r, \widetilde{r}}^{\star}$.  This proves the claim.

\step{step:avoid-conclusion}{Conclusion of the proof}

\substepn{step:avc-explore}{The exploration of $V_{c_2 r,\widetilde{r}}^{\star}$} Next,  given $V_{c_2 r,\widetilde{r}}^{\star}$,  we consider the uniform exploration
\begin{equation*}
	\left\{\SCB_t\left(\partial V_{c_2 r,\widetilde{r}}^\star; D_{\Gamma_{V_{\widetilde{r}}}|_{V_{c_2 r,\widetilde{r}}^\star}}^{V_{c_2 r,\widetilde{r}}^\star}\right)\right\}_{t \ge 0}
\end{equation*} 
associated with $\Gamma_{V_{\widetilde{r}}}|_{V_{c_2 r,\widetilde{r}}^{\star}}$.  We claim that there exists a constant $C_2>0$ depending only on $c_2,c_2',c_3$,  and $c_4$,  such that for all $t \geq 0$,  conditionally on $V_{c_2 r,\widetilde{r}}^{\star}$ and on the event that $V_{c_2' r , \widetilde{r}} \subset V_{c_2 r,\widetilde{r}}^{\star}$,  it holds with conditional probability at most $C_2 t$ that 
\begin{align}\label{eq:no_hit}
\SCB_t\left(\partial V_{c_2 r,\widetilde{r}}^{\star} ; D_{\Gamma_{V_{\widetilde{r}}}|_{V_{c_2 r,\widetilde{r}}^{\star}}}^{V_{c_2 r,\widetilde{r}}^{\star}}\right) \cap K \neq \emptyset.
\end{align}
Indeed,  let $\phi_r$ denote the conformal mapping from $V_{c_2 r,\widetilde{r}}^{\star}$ onto $\BD$ such that $\phi_r(z) = 0$ and $\phi_r'(z)>0$,  where $z \in K$ is fixed.  Note that the assumptions on $K$ (and since we are working on the event that $V_{c_2' r,\widetilde{r}} \subset V_{c_2 r,\widetilde{r}}^{\star}$) imply that there exists a constant $q \in (0,1)$ (depending only on $c_2,c_2',c_3$,  and $c_4$) such that the following holds for all $r>0$ sufficiently large (how large depending only on $c_2,c_2',c_3$,  and $c_4$).  With probability at least $q$,  a planar Brownian motion starting from $w = \Re(z) + \ri(1-c_4 / 2)$ makes a loop in $V_{c_2 r,\widetilde{r}}^{\star}$ that disconnects $K$ from $\partial V_{\widetilde{r}}$.  Therefore,  there exists a constant $\delta \in (0,1)$ depending only on $q$ such that $\phi_r(K) \subset B_{1-\delta}(0)$.  Hence,  the claim follows from combining \Cref{lem:uniform_exploration_conformal_radius} with the conformal invariance of the uniform $\CLE_4$ exploration.

\substepn{step:avc-expect}{Bounding the expectation of $d$} Next,  we note that by the locality property,  $d$ is conditionally independent of $\Gamma_{V_{\widetilde{r}}}|_{V_{c_2 r,\widetilde{r}}^{\star}}$ and $D_{\Gamma_{V_{\widetilde{r}}}|_{V_{c_2 r,\widetilde{r}}^{\star}}}^{V_{c_2 r,\widetilde{r}}^{\star}}$ given $V_{c_1 r, c_2 r}^{\star}$.  Thus,  conditionally on $V_{c_1 r ,  c_2 r}^{\star},  V_{c_2 r,\widetilde{r}}^{\star}$,  and $d$,  and on the event that $V_{c_2' r ,\widetilde{r}} \subset V_{c_2 r,\widetilde{r}}^{\star}$,  the probability that~\eqref{eq:no_hit} holds with $t=d$ is at most $C_2 d$.  Moreover,  for fixed constants $c_1 < \tilde{c}_1 < \tilde{c}_2 < c_2$,  we have that $d \one_{\{V_{\tilde{c}_1 r ,\tilde{c}_2 r} \subset V_{c_1 r,c_2 r}^{\star}\}}$ is stochastically dominated by $D_{\Gamma_{V_{(\tilde{c}_2-\tilde{c}_1)r}}}^{V_{(\tilde{c}_2-\tilde{c}_1)r}}(T_{(\tilde{c}_2 - \tilde{c}_1)r} ,  B_{(\tilde{c}_2 - \tilde{c}_1)r})$.  Thus,  combining with \Cref{cor:moment_across_rectangle},  we obtain that there exists a constant $C_3>0$ depending only on $c_1,\tilde{c}_1,\tilde{c}_2$,  and $c_2$,  such that
\begin{align*}
\BE\lbrack d \one_{\{V_{\tilde{c}_1 r,\tilde{c}_2 r} \subset V_{c_1 r, c_2 r}^{\star}\}}\rbrack \leq \BE\left\lbrack D_{\Gamma_{V_{(\tilde{c}_2-\tilde{c}_1)r}}}^{V_{(\tilde{c}_2-\tilde{c}_1)r}}(T_{(\tilde{c}_2-\tilde{c}_1)r}, B_{(\tilde{c}_2-\tilde{c}_1)r})\right \rbrack  \leq \frac{C_3}{r}
\end{align*}
for all $r>0$ sufficiently large.  It follows that there exists a constant $C_4>0$ depending only on $c_1,\tilde{c}_1,\tilde{c}_2,c_2,c_2',c_3$,  and $c_4$,  such that for all sufficiently large $r>0$ (how large depending only on $c_1,\tilde{c}_1,\tilde{c}_2,c_2,c_2',c_3$,  and $c_4$),  the following holds with probability at least $1-C_4 / r$: on the event that $V_{c_2' r,\widetilde{r}} \subset V_{c_2 r,\widetilde{r}}^{\star}$ and $V_{\tilde{c}_1 r ,  \tilde{c}_2 r} \subset V_{c_1 r,c_2 r}^{\star}$,  we have that~\eqref{eq:no_hit} does not hold with $t=d$.

\substepn{step:avc-combine}{Combining the estimates} By applying the same argument used to deduce that $V_{c_2' r,\widetilde{r}} \subset V_{c_2 r,\widetilde{r}}^{\star}$ with high probability,  we obtain (possibly by decreasing $\alpha$ and increasing $C_1$ in a way that depends only on $c_1,\tilde{c}_1,\tilde{c}_2,c_2,c_2'$) that 
\begin{align*}
\BP\lbrack \{V_{c_2' r,\widetilde{r}} \subset V_{c_2 r,\widetilde{r}}^{\star}\} \cap \{V_{\tilde{c}_1 r,\tilde{c}_2 r} \subset V_{c_1 r,c_2 r}^{\star}\} \rbrack \geq 1 - C_1 e^{-\alpha r}.
\end{align*}
Therefore,  combining these estimates shows that there exists a constant $C_5>0$ (depending only on $c_1,\tilde{c}_1,\tilde{c}_2,c_2,c_2',c_3$,  and $c_4$) such that for all $r>0$ sufficiently large,  the metric ball $\SCB_d\left(\partial V_{c_2 r,\widetilde{r}}^{\star} ; D_{\Gamma_{V_{\widetilde{r}}}|_{V_{c_2 r,\widetilde{r}}^{\star}}}^{V_{c_2 r,\widetilde{r}}^{\star}}\right)$ does not hit $K$ with probability at least $1-C_5 / r$.  If this occurs,  we have that
\begin{align*}
D_{\Gamma_{V_{\widetilde{r}}}}^{V_{\widetilde{r}}}(B_{\widetilde{r}} ,  K) > D_{\Gamma_{V_{\widetilde{r}}}|_{V_{c_1 r,c_2 r}^{\star}}}^{V_{c_1 r,c_2 r}^{\star}}(T_{\widetilde{r}} ,  B_{\widetilde{r}}).
\end{align*}
This completes the proof of the lemma.
\end{proof}
Another tool in the proof of \Cref{prop:hitting-two-metric-balls} is the following lemma.
\begin{lemma}\label{lem:from-CLE-2-SLE}
Let $\Gamma_{\BH}$ be a non-nested CLE$_4$ in $\BH$. Let $\gamma$ be an independent chordal SLE$_{8/3}$ curve in $\BH$ from $0$ to $\infty$. Write $U$ for the connected component of $\BH \setminus \gamma$ that is to the right of $\gamma$. Write $\eta \defeq \partial U^\star \setminus \partial\BH$, where $U^\star$ is defined by~\eqref{eq:def-V-star} with the random domain $U$ in place of $V$. Then $\eta$ is a chordal SLE$_4$ curve in $\BH$ from $0$ to $\infty$, and given $\eta$, $\Gamma_{\BH}|_{U^\star}$ is conditionally a non-nested CLE$_4$ in $U^\star$.
\end{lemma}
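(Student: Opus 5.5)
The plan is to deduce the statement from the Werner--Wu correspondence between CLE$_\kappa$ explorations along restriction samples and SLE$_\kappa(\rho)$ curves, together with the domain Markov property of CLE$_4$. I would treat the two assertions (the law of $\eta$, and the conditional law of the remaining loops) separately, first working conditionally on $\gamma$.

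\emph{Step 1 (Markov part, conditionally on $\gamma$).} Since $\gamma$ is independent of $\Gamma_\BH$, conditionally on $\gamma$ the domain $U$ is a fixed simply connected subdomain of $\BH$. The domain Markov property of CLE$_4$ (\Cref{subsec:cle}) then applies directly. Conditionally on $\gamma$ and on the collection of loops $\{\SCL \in \Gamma_\BH : \SCL \not\subset U\}$, the loops of $\Gamma_\BH|_{U^\star}$ in each connected component of $U^\star$ form independent CLE$_4$s.

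I would next check that $U^\star$ is a.s.\ connected, so that there is a single CLE$_4$. The loops not contained in $U$ that meet $\overline U$ are exactly the loops hitting $\gamma$. These loops are disjoint and accumulate only at $\gamma$. Their filled interiors are therefore attached to the left of $\partial U^\star$ in a way that cannot pinch off components of $U$, because distinct loops are at positive distance and each one touches $\gamma$.

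Since $\eta = \partial U^\star \setminus \partial\BH$ is measurable with respect to $\gamma$ and the loops hitting $\gamma$, I would then average out the extra information. This shows that, conditionally on $\eta$ alone, $\Gamma_\BH|_{U^\star}$ is still a non-nested CLE$_4$ in $U^\star$. This is the second assertion.

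\emph{Step 2 (law of $\eta$).} Here I would use the loop-soup construction (\Cref{subsec:brownian_loop_soup}) together with conformal restriction. The set $\BH \setminus U$ is a one-sided restriction sample of exponent $5/8$. Adding the filled outermost clusters of an intensity-$1$ Brownian loop soup that it meets produces a set $L$ whose right boundary is $\eta$.

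For a hull $A$ attached to $\BR_+$ away from $0$, I would compute $\BP[L \cap A = \emptyset]$ by conditioning first on $\gamma$. This gives the loop-soup (equivalently CLE$_4$) probability that no cluster meets both $\gamma$ and $A$. That probability factorizes as an explicit Brownian loop mass term times the restriction formula $\Phi_A'(0)^{5/8}$. The result is the Werner--Wu identity: the right boundary is SLE$_\kappa(\rho)$ with $\kappa=4$ and $\rho$ determined by matching the exponent $5/8$ with the one-sided restriction formula of \cite{CoInCLEExpl}-type explorations. I would quote the Werner--Wu theorem (``From CLE$(\kappa)$ to SLE$(\kappa,\rho)$'') for this computation and then check that the parameters give $\rho=0$, i.e.\ a chordal SLE$_4$.

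As a sanity check, and as an alternative route if the parameter bookkeeping is delicate, I would verify the claim via the GFF coupling of \Cref{subsec:nested_cle_gff}. Since $\eta$ should be a level line, conditionally on $\eta$ the field should have boundary values $\pm\lambda$ on its two sides, consistent with the CLE$_4$ in $U^\star$ of Step 1.

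\emph{Main obstacle.} I expect Step 2 to be the hardest part, specifically identifying the exact parameter so that the resulting curve is plain SLE$_4$ rather than SLE$_4(\rho)$ with $\rho\neq0$. This requires care about which side carries the force point and about the Brownian loop measure term in the restriction formula. I would first try to read it off directly from the Werner--Wu formula relating the restriction exponent $\beta$, $\kappa$ and $\rho$, and confirm by computing the one-point boundary exponent of $\eta$. Target-invariance and reversibility of chordal SLE$_4$ would then give the law from $0$ to $\infty$.
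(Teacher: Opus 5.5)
Your Step~1 is fine in substance. Applying the domain Markov property of $\Gamma_\BH$ to the independent domain $U$, and then the tower property (the relevant domain is $\eta$-measurable), does give the conditional CLE$_4$. However, your claim that $U^\star$ is connected is false, and your argument for it only handles distinct loops. A single loop $\SCL$ can cross $\gamma$ at points $p_1<p_2<p_3<p_4$ (ordered along $\gamma$), traversed in the order $p_1\to p_4\to p_3\to p_2\to p_1$, with the arcs $p_1\to p_4$ and $p_3\to p_2$ in $U$. The region of $U$ enclosed by the inner arc and $\gamma$ then lies outside $\mathrm{int}(\SCL)$ and is cut off from the rest of $U^\star$. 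Such configurations occur with positive probability. The Markov property still gives independent CLE$_4$'s in all components, but the statement must be read for the component adjacent to $\BR_+$, whose boundary is the right boundary of the filled set $L$.

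The genuine gap is Step~2. It contains no argument: like the paper, it only cites Werner--Wu, and the computation you sketch is not right. Conditioning on $\gamma$ gives the probability that no \emph{cluster} meets both $\gamma$ and $A$; this is not a loop-mass exponential, and it does not factor through $\Phi_A'(0)^{5/8}$.

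The correct decomposition splits the soup into the loops meeting $A$ and the soup $\mathcal L_A$ in $\BH\setminus A$. Build $\eta_A$ from $(\gamma,\mathcal L_A)$ in the same way as $\eta$. Then $\{\eta\cap A=\emptyset\}=\{\gamma\cap A=\emptyset\}\cap\{\text{no loop meeting } A \text{ meets } \eta_A\}$, and $\eta=\eta_A$ on this event. Poisson independence, the restriction property of $\gamma$ and conformal invariance of the soup then give
\[
\BE\bigl[\one_{\eta\cap A=\emptyset}\,e^{\mu(A,\eta)}\,F(\Phi_A(\eta))\bigr]=\Phi_A'(0)^{5/8}\,\BE[F(\eta)],
\]
where $\Phi_A\colon\BH\setminus A\to\BH$ is the normalized map and $\mu(A,\eta)$ is the Brownian loop mass of loops meeting both sets (intensity $1$, as for CLE$_4$).

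Chordal SLE$_4$ satisfies the same identity, by the Lawler--Schramm--Werner restriction martingale with $\lambda=-1$ at $\kappa=4$. But its exponent is $(6-\kappa)/(2\kappa)=1/4$ rather than $5/8$. Taking $F\equiv1$ and any nonempty $A$ shows the two laws differ. So with $\gamma$ an SLE$_{8/3}$, the curve $\eta$ is not a chordal SLE$_4$, and the check ``$\rho=0$'' you plan cannot succeed. In the relation $\beta=(\rho+2)(\rho+6-\kappa)/(4\kappa)$, the values $\beta=5/8$ and $\kappa=4$ give $\rho=\sqrt{10}-2$, with the force point on the left of $\eta$.

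Chordal SLE$_4$ with a CLE$_4$ on its right comes instead from a one-sided restriction sample of exponent $1/4$. An example is one whose right boundary is SLE$_{8/3}((2\sqrt7-8)/3)$. This is also all that the application in the proof of \Cref{prop:hitting-two-metric-balls} needs, so the same correction applies to the paper's citation.

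The GFF check does not rescue this either. In the coupling of \Cref{subsec:nested_cle_gff}, the field has value $0$ on the right side of $\eta$ and no constant value on its left, so $\eta$ is not a level line of that field.
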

\begin{proof}
See \cite[Theorem~1]{MR3035764}.
\end{proof}

Finally,  we state and prove one more lemma before giving the proof of \Cref{prop:hitting-two-metric-balls}.  Recall from \cite{IG1} that if $(A,\Psi)$ is a coupling between a closed set $A \subset \overline{\BD}$ and a GFF $\Psi$ on $\BD$,  then $A$ is said to be a \textbf{local set} of $\Psi$ if there exists a law on pairs $(A,\Psi_1)$ where $\Psi_1$ is a distribution on $\BD$ with $\Psi_1|_{\BD \setminus A}$ harmonic such that a sample from $(A,\Psi)$ can be produced as follows.  First,  we choose the pair $(A,\Psi_1)$,  and then we sample an instance $\Psi_2$ of the zero-boundary GFF on $\BD \setminus A$ and then setting $\Psi = \Psi_1 + \Psi_2$.  Then,  we have the following lemma which states that $\CLE_4$ metric balls are local sets for the GFF $\Psi$ on $\BD$ for an appropriate coupling between $\Gamma_{\BD}$ and $\Psi$.

\begin{lemma}\label{lem:metric_ball_local}
Fix distinct points $x,y \in \BD$ and $\delta \in (0,1)$.  Let also $I,J$ be two disjoint segments connecting $x$ and $y$ respectively to $\partial\BD$.  Then,  there exists a coupling between $(\Gamma_{\BD} ,  D_{\Gamma_{\BD}}^{\BD})$ and a zero-boundary GFF $\Psi$ on $\BD$ such that $\Psi$ and $\Gamma_{\BD}$ are coupled as in \Cref{subsec:nested_cle_gff} and the following holds.  Let $I^{\star}$ (resp.\ $J^{\star}$) denote the closure of the union of the loops in $\Gamma_{\BD}$ that intersect $I$ (resp.\ $J$),  each intersected with the closed Euclidean $\delta$-neighborhood of $I$ (resp.\ $J$).  Let also $\tau$ (resp.\ $\sigma$) be a stopping time with respect to the filtration generated by $(\SCB_t(\SCL(x) ; D^{\BD}_{\Gamma_{\BD}}))_{t \geq 0}$ (resp.\ $(\SCB_t(\SCL(y) ; D_{\Gamma_{\BD}}^{\BD}))_{t \geq 0}$).  Consider the set
\begin{align*}
A\defeq\SCB_{\tau}(\SCL(x) ; D_{\Gamma_{\BD}}^{\BD}) \cup \SCB_{\sigma}(\SCL(y) ; D_{\Gamma_{\BD}}^{\BD}) \cup I^{\star} \cup J^{\star}.
\end{align*}
Then,  $A$ is local for $\Psi$.
\end{lemma}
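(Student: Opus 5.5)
We construct the coupling from the nested CLE$_4$--GFF coupling of \Cref{subsec:nested_cle_gff}, taking the signs $\{X_{\SCL}\}$ of the outermost loops independent of $D_{\Gamma_\BD}^\BD$ given $\Gamma_\BD$. Likewise, the nested loops and signs inside each outermost loop are sampled conditionally independently of $D_{\Gamma_\BD}^\BD$ given $\Gamma_\BD$. Then $\Psi = \sum_{\SCL \in \Gamma_\BD} (2\lambda X_{\SCL} + \Psi_{\SCL})\one_{\mathop{\mathrm{int}}(\SCL)}$, where the $\Psi_{\SCL}$ are independent zero-boundary GFFs in $\mathop{\mathrm{int}}(\SCL)$, independent of $(\Gamma_\BD, D_{\Gamma_\BD}^\BD)$ given $\Gamma_\BD$. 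The carpet carries no mass, by the thinness condition~\eqref{it:thin_local_set}. The key point is that, given $\Gamma_\BD$, the field splits into independent pieces indexed by the outermost loops, and the metric only sees $\Gamma_\BD$ (and extra randomness independent of the pieces).

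\textbf{Step 1 (the set is a union of filled loops).} By definition, $\SCB_\tau(\SCL(x))$, $\SCB_\sigma(\SCL(y))$ are closures of unions of $\mathop{\mathrm{int}}(\SCL)$ over certain loops $\SCL \in \Gamma_\BD$. The sets $I^\star, J^\star$ are closures of unions of loops meeting $I$ or $J$, truncated to a $\delta$-neighborhood. So $\BD \setminus A$ consists of whole interiors $\mathop{\mathrm{int}}(\SCL)$ of loops not used by $A$, together with the part of the carpet (and pieces of truncated loop interiors near $I^\star$, $J^\star$) outside $A$. We take $\Psi_1 \defeq \sum_{\SCL \in \mathcal{S}} (2\lambda X_{\SCL} + \Psi_{\SCL})\one_{\mathop{\mathrm{int}}(\SCL)} + \sum_{\SCL \notin \mathcal{S}} 2\lambda X_{\SCL}\one_{\mathop{\mathrm{int}}(\SCL)}$, where $\mathcal{S}$ is the set of loops whose interior meets $A$. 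The truncation in $I^\star$, $J^\star$ is harmless if we simply include in $\mathcal{S}$ all loops meeting $I \cup J$. Then $\Psi - \Psi_1$ restricted to $\BD\setminus A$ is the sum of the $\Psi_{\SCL}$ over loops with $\mathop{\mathrm{int}}(\SCL) \subset \BD\setminus A$. That part is harmonic off $A$ in the appropriate sense: constant on each loop interior not meeting $A$. The carpet part needs the standard fact that a sum of independent zero-boundary GFFs in the loop interiors with piecewise-constant shifts $\pm 2\lambda$ is a GFF with the right boundary data, as in the Markov property of \Cref{subsec:nested_cle_gff}.

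\textbf{Step 2 (conditional law).} We need that, given $(A, \Psi_1)$, the fields $\Psi_{\SCL}$ for $\SCL \notin \mathcal{S}$ are independent zero-boundary GFFs. This follows if $A$ and $\mathcal{S}$ are measurable with respect to $(\Gamma_\BD, D_{\Gamma_\BD}^\BD, \{\Psi_{\SCL}, X_\SCL\}_{\SCL \in \mathcal{S}})$ in a stopping-set manner. The cleanest route is a discretization: approximate $\tau$ and $\sigma$ by dyadic stopping times $\tau_n \downarrow \tau$, $\sigma_n \downarrow \sigma$. Then the event $\{A_n = \text{given}\}$ is determined by $(\Gamma_\BD, D_{\Gamma_\BD}^\BD)$, which is independent of all $(X_\SCL, \Psi_\SCL)$ given $\Gamma_\BD$. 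So for discrete stopping times the conditional independence is immediate: the metric is independent of the field pieces given the loops, and hence so is $A_n$. The interior loops not in $\mathcal{S}_n$ remain independent GFFs given $(A_n, \Psi_{1,n})$. Passing to the limit uses the standard fact that decreasing limits of local sets are local (\cite{MR3101840}), together with right-continuity of balls (Axiom~\eqref{it:weak_axiom_geodesic}\eqref{it:weak_axiom_geodesic_1}), which gives $A_n \downarrow A$ in the Hausdorff sense.

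The main obstacle is Step 1's handling of the carpet: the claim that $\Psi - \Psi_1$ is exactly a zero-boundary GFF on $\BD \setminus A$ when $\BD\setminus A$ contains carpet points. One cannot just sum interior GFFs; one must use that, given the outermost loops, the field in the complement of $A$ is obtained by the CLE$_4$--GFF Markov property in $\BD\setminus A$. This is where the fact that $\BD \setminus A$ is a union of components $V_j$ with $V_j^\star = V_j$ (no loop crosses $\partial A$) matters. Then the restriction of $\Gamma_\BD$ to each component is a CLE$_4$ whose associated field, with the signs, gives a zero-boundary GFF there plus a harmonic function. I would handle this by appealing to uniqueness of the two-valued set $\mathbb{A}_{-2\lambda,2\lambda}$ (\cite{aru2019bounded}) in each component, identifying the loops there as that set for the conditional field.
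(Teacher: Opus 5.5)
Your coupling is the right one; it is the paper's, with signs and nested loops sampled conditionally independently of the metric given $\Gamma_\BD$. However, the proof that $A$ is local has a genuine gap, and it sits exactly where the content of the lemma lies.

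In Step~2 you call the conditional independence ``immediate'' for discrete stopping times, because $A_n$ is a function of $(\Gamma_\BD, D_{\Gamma_\BD}^\BD)$ and the pieces $(X_\SCL,\Psi_\SCL)$ are independent of $D_{\Gamma_\BD}^\BD$ given $\Gamma_\BD$. What this actually establishes is only that $A_n\cup\Upsilon_\BD$ is local, i.e.\ your set with the whole carpet adjoined, whose complement consists of loop interiors only. Localness of $A_n$ itself concerns the conditional law, given $A_n$, of the field on $\BD\setminus A_n$. That field is built from the loops of $\Gamma_\BD$ lying in $\BD\setminus A_n$, not just from the $\Psi_\SCL$'s. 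Since $D_{\Gamma_\BD}^\BD$ is not known to be a function of $\Gamma_\BD$, nothing you use prevents the shape of $\SCB_{\tau_n}(\SCL(x); D_{\Gamma_\BD}^\BD)$ from carrying information about loops far from it. In that case the loops in $\BD\setminus A_n$ are not conditionally CLE$_4$'s, and the field there is not a harmonic function plus an independent zero-boundary GFF. Discretizing $\tau$ does not help, since the problem is already present at deterministic times.

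What rules this out is the locality Axiom~\eqref{it:weak_axiom_locality}, which you never invoke; your last paragraph simply asserts that $\Gamma_\BD$ restricted to each component of $\BD\setminus A$ is a CLE$_4$. That axiom is only stated for deterministic $V$, so a transfer device is also needed. The paper handles both as follows. It fixes the countable family $\mathcal P$ of rational polygonal cuts avoiding $I\cup J$. It realizes Axiom~\eqref{it:weak_axiom_locality} for all $V_{I,P},V_{J,P}$ simultaneously in one coupling $\mathcal K$, and builds $\Psi$ conditionally independently of $\mathcal K$ given $\Gamma_\BD$. It then checks the criterion of \cite[Lemma~3.6]{IG1}. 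For each deterministic open $U$ (w.l.o.g.\ $(I\cup J)\cap\overline U=\emptyset$), the event $\{A\cap U=\emptyset\}$ is written as a countable intersection of countable unions of events. Each such event is determined by the internal data $(\Gamma_\BD|_{V^{\star,1}_{I,P}},D^{V^{\star,1}_{I,P}})$ and $(\Gamma_\BD|_{V^{\star,1}_{J,Q}},D^{V^{\star,1}_{J,Q}})$ with $V_{I,P}\cup V_{J,Q}\subset\BD\setminus\overline U$. These data are then shown to be conditionally independent of the projection $\Psi_0$ of $\Psi$ onto $H_0^1(U)$, given the complementary projection $\Psi_1$.

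This criterion also lets the paper avoid your Step~1, which fails as written. Your $\Psi_1$ contains $\sum_{\SCL\notin\mathcal S}2\lambda X_\SCL\one_{\mathop{\mathrm{int}}(\SCL)}$, which jumps across every loop in $\BD\setminus A$ and so is not harmonic there. Correspondingly, $\Psi-\Psi_1$ is a GFF on the union of those loop interiors, not on $\BD\setminus A$.

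Your proposed repair relies on no loop crossing $\partial A$, i.e.\ $V_j^\star=V_j$. That holds for the stopped balls, since a loop meeting a ball is contained in it. It fails for $I^\star$ and $J^\star$: these are loops meeting $I$ (resp.\ $J$) cut off at the closed $\delta$-neighbourhood, so such loops continue into $\BD\setminus A$. Consequently the loops in a component adjacent to $I^\star$ are not a CLE$_4$ of that component and the boundary data there are not constant. Component-wise uniqueness of $\mathbb A_{-2\lambda,2\lambda}$ is therefore not the right tool.

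Putting the whole loops meeting $I\cup J$ into $\mathcal S$ replaces $A$ by a larger set. Localness of a larger set does not imply localness of $A$, and the truncation is used later, where the continuations of these loops are controlled via level-line interaction.
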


\begin{proof}
\stepn{step:mbloc-coupling}{Construction of the coupling} First,  we will construct the coupling between $(\Gamma_{\BD} ,  D_{\Gamma_{\BD}}^{\BD})$ and $\Psi$.  Let $\mathcal{P}$ denote the collection of polygonal paths $P : [0,1] \to \overline{\BD}$ such that $P((0,1)) \subset \BD,  \{P(0) ,  P(1)\} \subset \{e^{\ri\theta} : \theta \in [0,2\pi) \cap \BQ\},  P|_{(0,1)}$ connects points in $\BQ^2 \cap \BD$,  and $P \cap (I \cup J) =\emptyset$.  For all $P \in \mathcal{P}$,  we let $V_{I,P}$ (resp.\ $V_{J,P}$) denote the connected component of $\BD \setminus P$ containing $I$ (resp.\ $J$).

Since $\mathcal{P}$ is a countable family of paths,  \Cref{def:weak_axioms}, Axiom~\eqref{it:weak_axiom_locality} implies that we can find a coupling 
\begin{align*}
\mathcal{K} = \left(\Gamma_{\BD} ,  D_{\Gamma_{\BD}}^{\BD} ,  \left \{D_{\Gamma_{\BD}|_{V_{I,P}^j}}^{V_{I,P}^j}\right\}_j, \left \{D_{\Gamma_{\BD}|_{V_{J,P}^j}}^{V_{J,P}^j}\right\}_j \right)_{P \in \mathcal{P}}
\end{align*}
such that the conditions in \Cref{def:weak_axioms}, Axiom~\eqref{it:weak_axiom_locality} hold simultaneously for all $V \in \{V_{I,P} ,  V_{J,P}\}$ and all $P \in \mathcal{P}$. Indeed, the coupling for a given $P$ is provided by Axiom~\eqref{it:weak_axiom_locality}, and one can couple terms of $\mathcal{K}$ for $P \in \mathcal{P}$ by taking the elements of the family $\mathcal{K}$ conditionally independent given $(\Gamma_\BD, D^\BD_{\Gamma_\BD})$. Moreover,  inside every loop in $\Gamma_{\BD}$,  we sample independently from $\mathcal{K}$ and from each other a nested $\CLE_4$ in the region bounded by the loop,  and so we obtain a nested $\CLE_4$ $\overline{\Gamma}_{\BD}$ on $\BD$.  Furthermore,  conditionally on $(\overline{\Gamma}_{\BD} ,  \mathcal{K})$,  we sample conditionally independent Rademacher random variables $\{X_{\SCL}\}_{\SCL \in \overline{\Gamma}_{\BD}}$ and construct a random field $\Psi$ on $\BD$ as in \Cref{subsec:nested_cle_gff}.  For the rest of the proof,  we will assume that we are working with the above coupling between $\mathcal{K}$ and $\Psi$.

\stepn{step:mbloc-criterion}{Verifying the criterion for local sets} Next,  we will show that $A$ is local for $\Psi$.  Indeed,  by \cite[Lemma~3.6]{IG1},  it suffices to show that for every fixed and deterministic open set $U \subset \BD$,  we have that the following holds.  Let $\Psi_0$ denote the projection of $\Psi$ onto $H_0^1(U)$ and set $\Psi_1 = \Psi - \Psi_0$.  Then,  conditionally on $\Psi_1$,  we have that the event $\{A \cap U = \emptyset\}$ is independent from $\Psi_0$. 

Fix an open set $U \subset \BD$.  Note that if either $I \cap \overline{U} \neq \emptyset$ or $J \cap \overline{U} \neq \emptyset$,  we have that $A \cap U \neq \emptyset$ a.s.,  and so without loss of generality,  we can assume that $(I \cup J) \cap \overline{U} = \emptyset$.  
Let $\delta_n \uparrow \delta$. For all $n\ge 1$, let $I^\star_n= I^\star \cap \overline{B_{\delta_n}(I)}$ and $J^\star_n= J^\star \cap \overline{B_{\delta_n}(J)}$. 
Suppose first that $I$ and $J$ lie in different connected components of $\BD \setminus \overline{U}$.  Then,  we have that $A \cap U = \emptyset$ if and only if for all $n \in \BN$,  there exist $P,Q \in \mathcal{P}$ such that $V_{I,P} \cap V_{J,Q} = \emptyset,  V_{I,P} \cup V_{J,Q} \subset \BD \setminus \overline{U}$,  and
\begin{align*}
\SCB_{\tau}(\SCL(x) ; D_{\Gamma_{\BD}}^{\BD}) \cup I_{n}^{\star} \subset V_{I,P}^{\star,1},\quad \SCB_{\sigma}(\SCL(y) ; D_{\Gamma_{\BD}}^{\BD}) \cup J_{n}^{\star} \subset V_{J,Q}^{\star,1},
\end{align*}
where $V_{I,P}^{\star,1}$ (resp.\ $V_{J,Q}^{\star,1}$) denotes the connected component of $V_{I,P}^{\star}$ (resp.\ $V_{J,Q}^{\star}$) containing $I$ (resp.\ $J$). Indeed, by \Cref{lemma metric ball hitting a loop}, one can see that $\SCB_{\tau}(\SCL(x) ; D_{\Gamma_{\BD}}^{\BD})\cap \overline{U} \neq\emptyset $ if and only if $\SCB_{\tau}(\SCL(x) ; D_{\Gamma_{\BD}}^{\BD}) \cap U \neq \emptyset$ because the metric ball from $\SCL(x)$ hits $\overline{U}$ with a loop that encircles a point of $\overline{U}$ and thus encircles a point of $U$ as well.  
Note also that if $n \in \BN,  P,Q \in \mathcal{P}$ are as above,  the events
\begin{align*}
\left\{\SCB_{\tau}(\SCL(x) ; D_{\Gamma_{\BD}}^{\BD}) \cup I_n^{\star} \subset V_{I,P}^{\star,1} \right\}, \quad \left\{\SCB_{\sigma}(\SCL(y) ; D_{\Gamma_{\BD}}^{\BD}) \cup J_{n}^{\star} \subset V_{J,Q}^{\star,1} \right\}
\end{align*}
are determined by $\left(\Gamma_{\BD}|_{V_{I,P}^{\star,1}} ,  D_{\Gamma_{\BD}|_{V_{I,P}^{\star,1}}}^{V_{I,P}^{\star,1}}\right)$ and $\left(\Gamma_{\BD}|_{V_{J,Q}^{\star,1}} ,  D_{\Gamma_{\BD}|_{V_{J,Q}^{\star,1}}}^{V_{J,Q}^{\star,1}}\right)$ respectively by \Cref{def:weak_axioms}, Axiom~\eqref{it:weak_axiom_locality}.

\stepn{step:mbloc-indep}{Conditional independence} Therefore,  to complete the proof of the lemma in the case that $I,J$ lie in different connected components of $\BD \setminus \overline{U}$,  it suffices to prove that the following holds for all $n \in \BN,  P,Q \in \mathcal{P}$ such that $V_{I,P} \cap V_{J,Q} = \emptyset$.  Conditionally on $\Psi_1$,  we have that the random variable
\begin{align*}
X \defeq \left(\Gamma_{\BD}|_{V_{I,P}^{\star,1}} ,  D_{\Gamma_{\BD}|_{V_{I,P}^{\star,1}}}^{V_{I,P}^{\star,1}},  \Gamma_{\BD}|_{V_{J,Q}^{\star,1}} ,  D_{\Gamma_{\BD}|_{V_{J,Q}^{\star,1}}}^{V_{J,Q}^{\star,1}}\right)
\end{align*}
and $\Psi_0$ are conditionally independent.  Let $Y$ be the restriction of $\overline{\Gamma}_\BD$ to $\BD \setminus \overline{V_{I,P}^{\star,1} \cup V_{J,Q}^{\star,1} \cup U}$ together with the signs of its loops. Let $Z$ be the restriction of $\Gamma_\BD$ to $V_{I,P}^{\star,1} \cup V_{J,Q}^{\star,1} $. Let $\overline{Z}$ be the restriction of $\overline{\Gamma}_\BD$ to $V_{I,P}^{\star,1} \cup V_{J,Q}^{\star,1}$ together with the signs of its loops. Note that conditionally on $(Y, Z)$, the random variables $X, \overline{Z}$ and $\Psi_0$ are independent. In particular, since $Z$ is a function of $\overline{Z}$, conditionally on $(Y, \overline{Z})$, the random variables $X$ and $\Psi_0$ are independent. Moreover, note that

$V_{I,P}^{\star}$ and $V_{J,Q}^{\star}$ are both a.s.\ determined by $\Psi_1$

 and that $\Psi_1$ is a.s.\ determined by $(Y, \overline{Z})$.

Therefore, the $\sigma$-field generated by $(Y, \overline{Z})$ is the same as the $\sigma$-field generated by $\Psi_1$. Thus, conditionally on $\Psi_1$, the random variables $X$ and $\Psi_0$ are independent.

\stepn{step:mbloc-rest}{The remaining case} Finally,  the case that both $I$ and $J$ lie in the same connected component of $\BD \setminus \overline{U}$ follows from a similar argument as the one applied above where we consider a path $P \in \mathcal{P}$ such that $V_{I,P}$ contains both $I$ and $J$,  and $V_{I,P} \subset \BD \setminus \overline{U}$.  This completes the proof of the lemma.
\end{proof}

\begin{figure}[ht!]
\centering
\includegraphics[width=.49\linewidth]{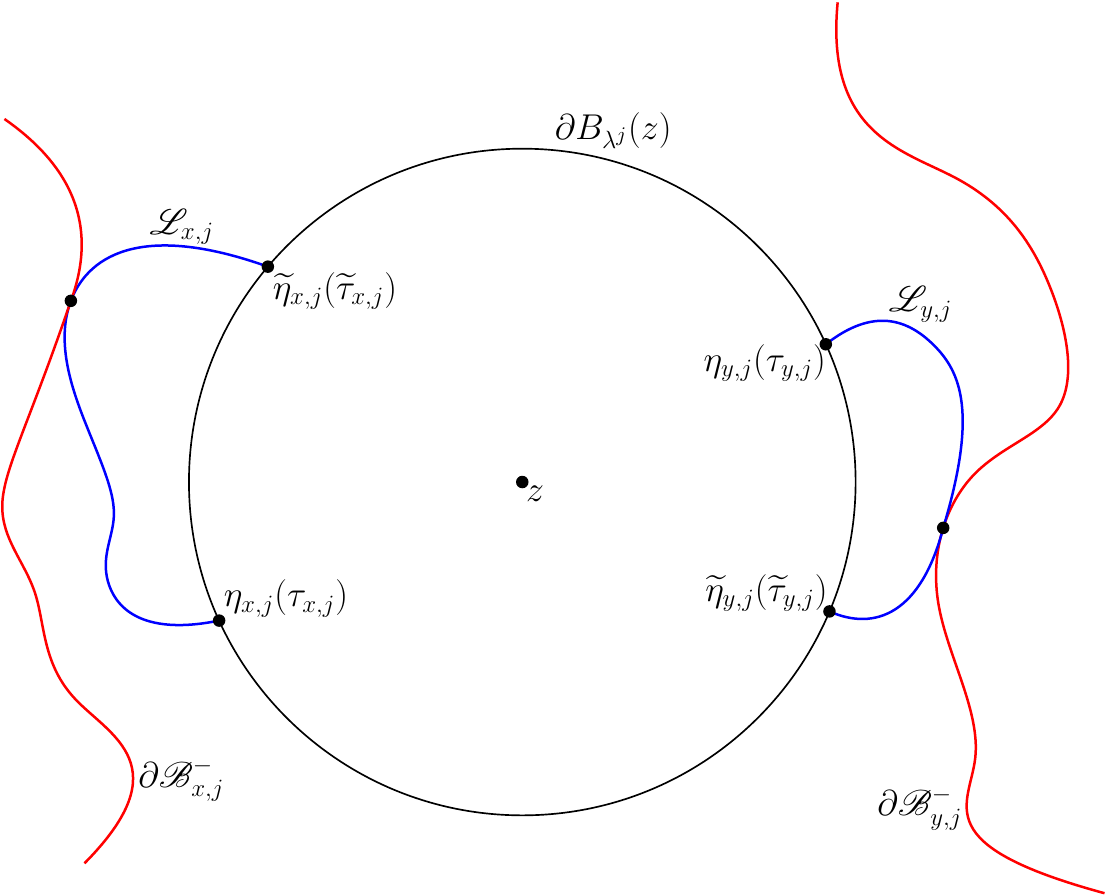}
\includegraphics[width=.49\linewidth]{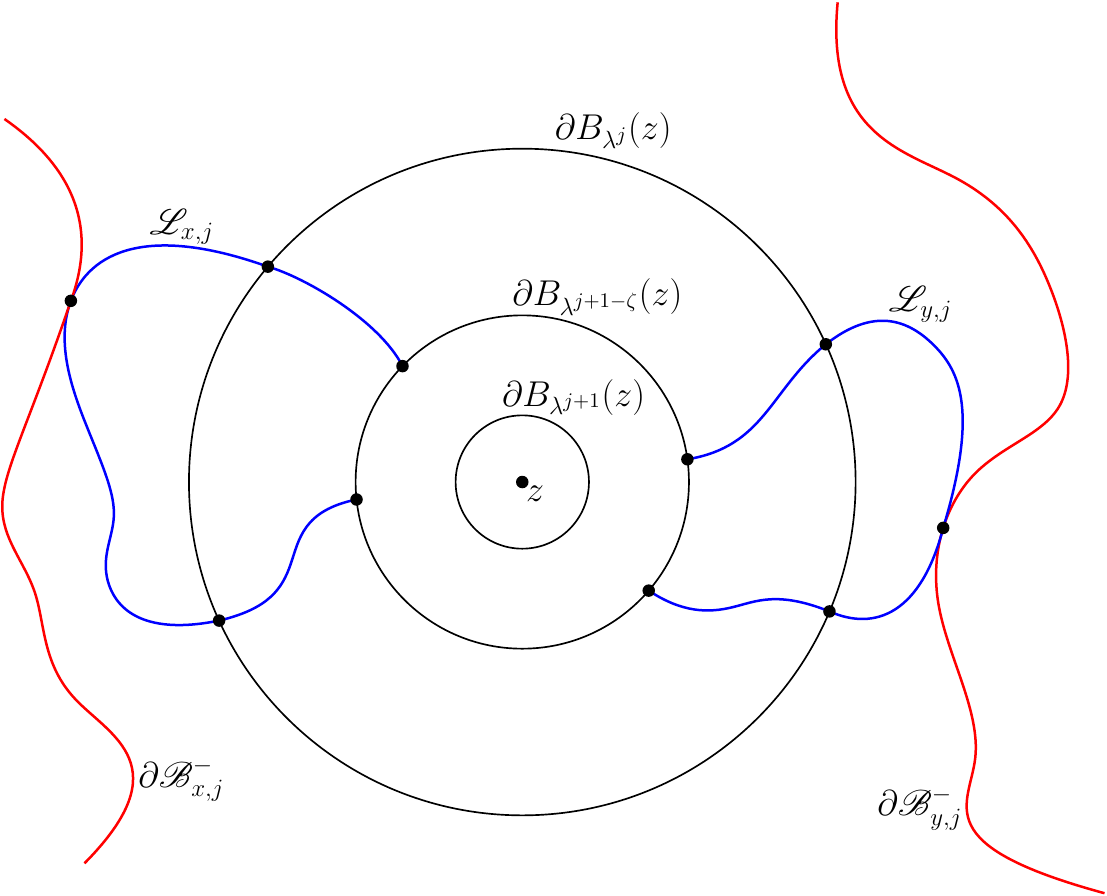}
\includegraphics[width=.49\linewidth]{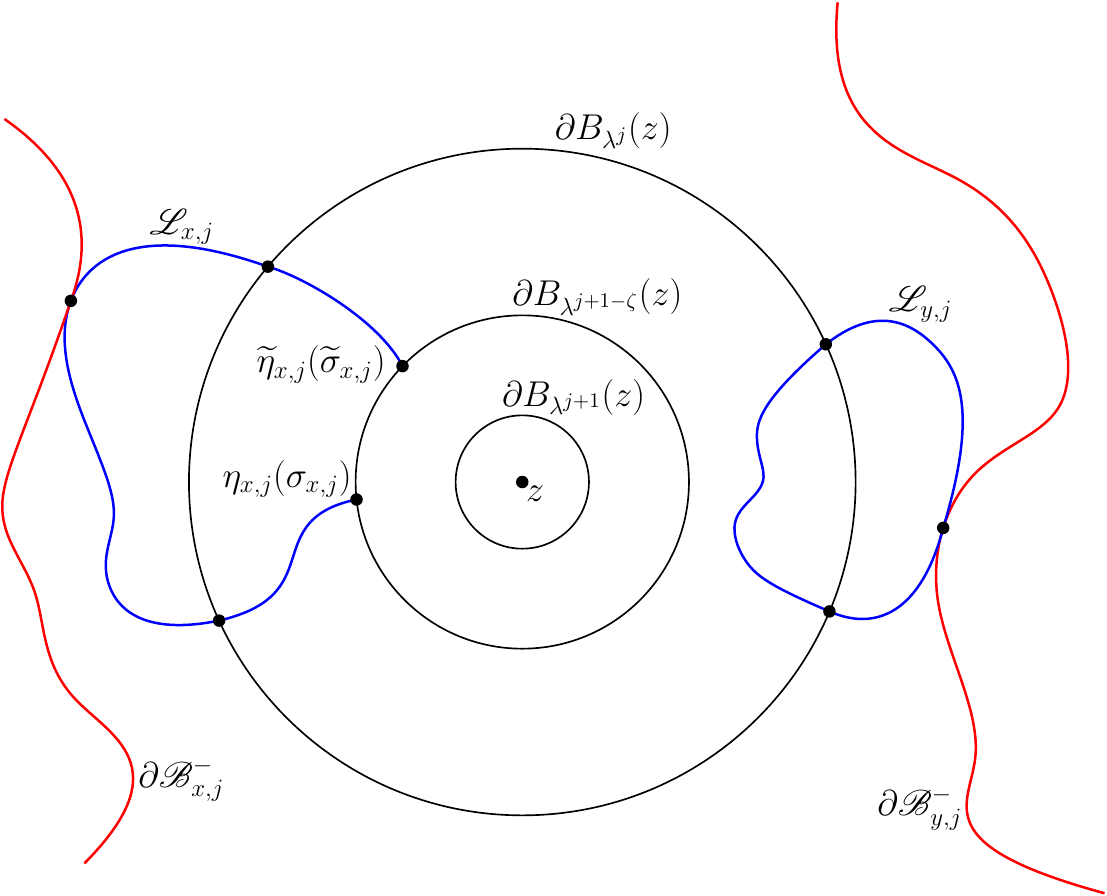}
\includegraphics[width=.49\linewidth]{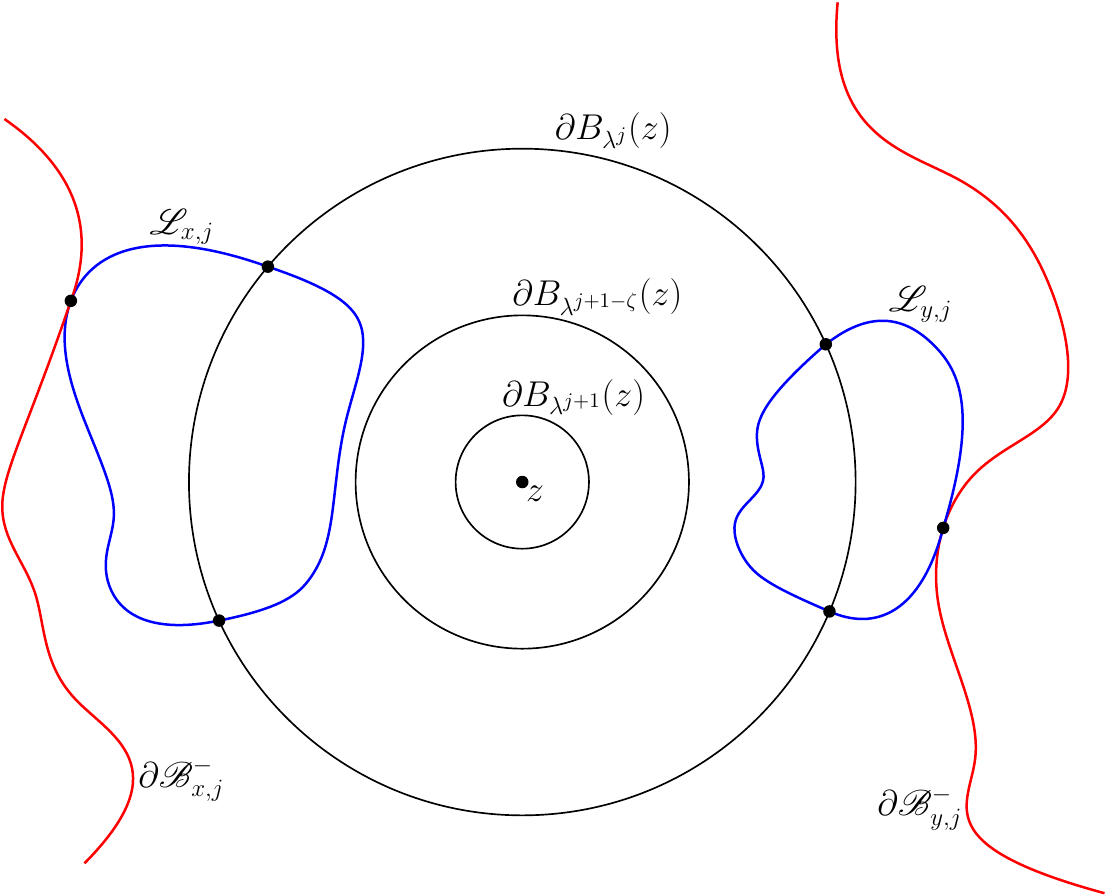}
\caption{Illustration of the proof of \Cref{prop:hitting-two-metric-balls}. {\bfseries Top left:} Illustration of the metric ball explorations around $\SCL(x)$ and $\SCL(y)$ stopped upon hitting $\partial B_{\lambda^j}(z)$. It is a.s.\ the case that the explorations stopped while exploring some loops $\SCL_{x,j}$ and $\SCL_{y,j}$, respectively, and $\eta_{x,j}(\tau_{x,j})$, $\widetilde\eta_{x,j}(\widetilde\tau_{x,j})$, $\eta_{y,j}(\tau_{y,j})$, and $\widetilde\eta_{y,j}(\widetilde\tau_{y,j})$ denote the four hitting points. {\bfseries Top right/bottom left/bottom right:} Illustration of the events $F_j^1$, $F_j^2$, $F_j^3$, respectively. We will control the conditional probability $\BP\lbrack E_{j + 1} \mid E_j\rbrack$ by estimating $\BP\lbrack F_j^1 \mid E_j\rbrack$, $\BP\lbrack E_{j + 1} \mid F_j^2 \cap E_j\rbrack$, and $\BP\lbrack E_{j + 1} \mid F_j^3 \cap E_j\rbrack$. See \Cref{fig:two-metric-balls-rectangle} for an illustration of the argument applied to estimate $\BP\lbrack E_{j + 1} \mid F_j^2 \cap E_j\rbrack$.}
\label{fig:two-metric-balls}
\end{figure}

\begin{figure}
\centering
\includegraphics[width=\linewidth]{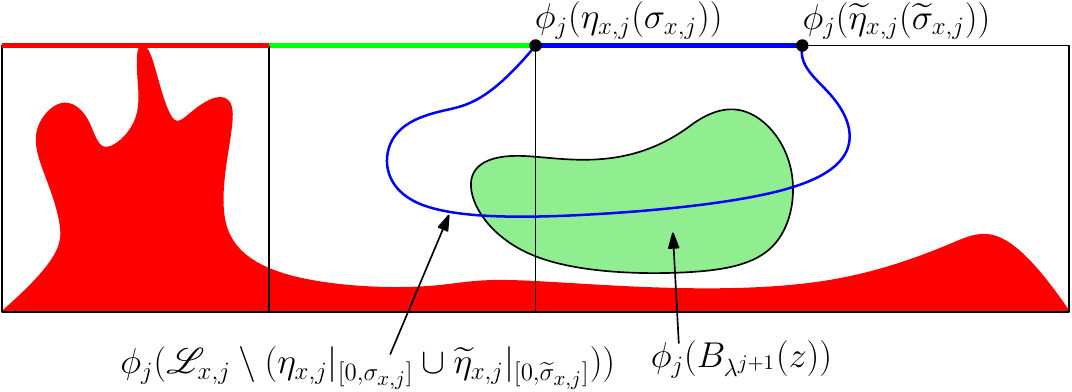}
\caption{Illustration of the argument applied to estimate $\BP\lbrack E_{j + 1} \mid F_j^2 \cap E_j\rbrack$, where the largest rectangle depicts $\phi_j(U_j) = V_{r_j}$. The arc $[\eta_{x,j}(\sigma_{x,j}), \widetilde\eta_{x,j}(\widetilde\sigma_{x,j})]_{\partial U_j}^\circlearrowright$ is mapped to the blue segment on the top of the rectangle. The image of $B_{\lambda^{j + 1}}(z)$ is bounded away from the top and the bottom and has bounded width. Both the red and green segments on the top have length at least of order $j$. The red segment being long implies that if we start the metric ball exploration with respect to a non-nested CLE$_4$ inside the rectangle from the bottom, it will hit the red segments before hitting $\phi_j(B_{\lambda^{j + 1}}(z))$ with probability $1 - O(1/j)$. The green segment being long implies that, with extremely high probability, adding the SLE$_4$ curve $\phi_j(\SCL_{x,j} \setminus (\eta_{x,j}|_{[0, \tau_{x,j}]} \cup \widetilde\eta_{x,j}|_{[0, \widetilde\tau_{x,j}]}))$ will not affect the situation where the metric ball exploration from the bottom hits the red segments. Also, it will not make the metric ball exploration from the bottom hit $\phi_j(B_{\lambda^{j + 1}}(z))$ more easily (because by the definition of the event $E_{j + 1}$, we only need to consider the situation where the metric ball exploration from the bottom hits $\phi_j(B_{\lambda^{j + 1}}(z))$ without using $\phi_j(\SCL_{x,j} \setminus (\eta_{x,j}|_{[0, \tau_{x,j}]} \cup \widetilde\eta_{x,j}|_{[0, \widetilde\tau_{x,j}]}))$).}
\label{fig:two-metric-balls-rectangle}
\end{figure}

\begin{proof}[Proof of \Cref{prop:hitting-two-metric-balls}]
\step{step:hit2-outline}{Outline and setup}

\substepn{step:hit2o-reduce}{Reduction and choice of parameters} For simplicity, we only consider the case of $\SCL(x)$ and $\SCL(y)$; the case of deterministic and disjoint connected arcs of $\partial\BD$ is entirely similar. The requirement that $\lvert x - z\rvert \wedge \lvert y - z\rvert \ge \delta$ is to make sure that $\SCB^{B_\varepsilon(z)}(\SCL(x); D_{\Gamma_{\BD}}^{\BD})$ and $\SCB^{B_\varepsilon(z)}(\SCL(y); D_{\Gamma_{\BD}}^{\BD})$ have diameters of order $\delta$ so that we may apply \Cref{lem:Beurling}. In the case of deterministic and disjoint connected arcs of $\partial\BD$, the metric balls automatically have diameters at least the diameters of the arcs.

See \Cref{fig:two-metric-balls} for an illustration. 
Fix $\zeta \in (0,1/2)$ and $\lambda \in (0,1)$.

\substepn{step:hit2o-notation}{Notation and the events $E_j$} To lighten notation, for each $j \in \BN$, write 
	\begin{gather*}
		\SCB_{x,j} \defeq \SCB^{B_{\lambda^j}(z)}(\SCL(x); D_{\Gamma_{\BD}}^{\BD}); \quad \SCB_{y,j} \defeq \SCB^{B_{\lambda^j}(z)}(\SCL(y); D_{\Gamma_{\BD}}^{\BD}); \quad E_j \defeq \{\SCB_{x,j} \cap \SCB_{y,j} = \emptyset\}; \\
		\SCB_{x,j}^- \defeq \SCB_{D_{\Gamma_{\BD}}^{\BD}(\SCL(x), B_{\lambda^j}(z))}^-(\SCL(x); D_{\Gamma_{\BD}}^{\BD}); \quad \SCB_{y,j}^- \defeq \SCB_{D_{\Gamma_{\BD}}^{\BD}(\SCL(y), B_{\lambda^j}(z))}^-(\SCL(y); D_{\Gamma_{\BD}}^{\BD}). 
	\end{gather*}
	Since $B_{\lambda^{j+1}}(z) \subset B_{\lambda^j}(z)$, we have $\SCB_{x,j}^- \subseteq \SCB_{x,j+1}^-$ and likewise for $y$, so that $E_{j+1} \subseteq E_j$ and
	\begin{equation}\label{eq:hitting-two-metric-balls-proof-2}
		\BP\lbrack E_n\rbrack = \BP\lbrack E_1\rbrack \prod_{j = 1}^{n - 1} \BP\lbrack E_{j + 1} \mid E_j\rbrack.
	\end{equation}
	By \Cref{lemma metric ball hitting a loop}, a.s.\ there exists $\SCL_{x,j} \in \Gamma_{\BD}$ with $D_{\Gamma_{\BD}}^{\BD}(\SCL(x), B_{\lambda^j}(z)) = D_{\Gamma_{\BD}}^{\BD}(\SCL(x), \SCL_{x,j})$, $\SCL_{x,j} \cap B_{\lambda^j}(z) \neq \emptyset$, and $\SCB_{x,j}^- \cap B_{\lambda^j}(z) = \emptyset$. Let $\eta_{x,j} \colon [0, 1] \to \SCL_{x,j}$ be a counterclockwise parameterization of $\SCL_{x,j}$ such that $\eta_{x,j}(0) = \eta_{x,j}(1) = (\text{the unique point of }\SCB_{x,j}^- \cap \SCL_{x,j})$. Write $\widetilde\eta_{x,j}$ for the time-reversal of $\eta_{x,j}$. Write $\tau_{x,j}$ (resp.\ $\widetilde\tau_{x,j}$) for the first time at which $\eta_{x,j}$ (resp.\ $\widetilde\eta_{x,j}$) hits $\partial B_{\lambda^j}(z)$. Let $\SCL_{y,j}$, $\eta_{y,j}$, $\widetilde\eta_{y,j}$, $\tau_{y,j}$, and $\widetilde\tau_{y,j}$ be defined in the same manner as $\SCL_{x,j}$, $\eta_{x,j}$, $\widetilde\eta_{x,j}$, $\tau_{x,j}$, and $\widetilde\tau_{x,j}$, respectively, but with $\SCL(y)$ in place of $\SCL(x)$. Note that $\SCL_{x,j} \neq \SCL_{y,j}$ on $E_j$, since a common loop would be contained in both $\SCB_{x,j}$ and $\SCB_{y,j}$. 
    
    \substepn{step:hit2o-decomp}{Decomposition according to $F_j^1$, $F_j^2$ and $F_j^3$} Write
	\begin{itemize}
		\item $F_j^1$ for the event that both $\SCL_{x,j} \setminus (\eta_{x,j}|_{[0, \tau_{x,j}]} \cup \widetilde\eta_{x,j}|_{[0, \widetilde\tau_{x,j}]})$ and $\SCL_{y,j} \setminus (\eta_{y,j}|_{[0, \tau_{y,j}]} \cup \widetilde\eta_{y,j}|_{[0, \widetilde\tau_{y,j}]})$ intersect $B_{\lambda^{j + 1 - \zeta}}(z)$; 
		\item $F_j^2$ for the event that exactly one of $\SCL_{x,j} \setminus (\eta_{x,j}|_{[0, \tau_{x,j}]} \cup \widetilde\eta_{x,j}|_{[0, \widetilde\tau_{x,j}]})$ and $\SCL_{y,j} \setminus (\eta_{y,j}|_{[0, \tau_{y,j}]} \cup \widetilde\eta_{y,j}|_{[0, \widetilde\tau_{y,j}]})$ intersects $B_{\lambda^{j + 1 - \zeta}}(z)$; 
		\item $F_j^3$ for the event that neither $\SCL_{x,j} \setminus (\eta_{x,j}|_{[0, \tau_{x,j}]} \cup \widetilde\eta_{x,j}|_{[0, \widetilde\tau_{x,j}]})$ nor $\SCL_{y,j} \setminus (\eta_{y,j}|_{[0, \tau_{y,j}]} \cup \widetilde\eta_{y,j}|_{[0, \widetilde\tau_{y,j}]})$ intersects $B_{\lambda^{j + 1 - \zeta}}(z)$.
	\end{itemize}
	Then 
	\begin{align}\label{eq:hitting-two-metric-balls-proof-3}
		\BP\lbrack E_{j + 1} \mid E_j\rbrack &= \BP\lbrack E_{j + 1} \cap F_j^1 \mid E_j\rbrack + \BP\lbrack E_{j + 1} \cap F_j^2 \mid E_j\rbrack + \BP\lbrack E_{j + 1} \cap F_j^3 \mid E_j\rbrack \\
		&\le \BP\lbrack F_j^1 \mid E_j\rbrack + \BP\lbrack E_{j + 1} \mid F_j^2 \cap E_j\rbrack + \BP\lbrack E_{j + 1} \mid F_j^3 \cap E_j\rbrack. \notag
	\end{align}

Our goal for the rest of the proof will be to bound separately from above the terms appearing on the right hand side of~\eqref{eq:hitting-two-metric-balls-proof-3} and then conclude the proof of the lemma.  For the rest of the proof,  we will assume that $B_{\lambda^j}(z) \subset \BD$.  A similar argument works in the case that $B_{\lambda^j}(z) \cap \partial \BD \neq \emptyset$.

\step{step:hit2-Fj1}{Bounding $\BP\lbrack F_j^1 \mid E_j\rbrack$}

\substepn{step:hit2F-claim}{The claim} Consider
\begin{equation}\label{eq:hitting-two-metric-balls-proof-0}
	\SCB_{x,j}^- \cup \eta_{x,j}|_{[0, \tau_{x,j}]} \cup \widetilde\eta_{x,j}|_{[0, \widetilde\tau_{x,j}]} \cup \SCB_{y,j}^- \cup \eta_{y,j}|_{[0, \tau_{y,j}]} \cup \widetilde\eta_{y,j}|_{[0, \widetilde\tau_{y,j}]}
\end{equation}
We claim that 
\begin{equation}\label{eq:hitting-two-metric-balls-proof-4}
		\BP\lbrack F_j^1 \mid~\eqref{eq:hitting-two-metric-balls-proof-0}, \ E_j\rbrack \le \lambda^{(1 - \zeta)(2 - \zeta)} \quad \text{a.s.} 
	\end{equation}
if $\lambda \in (0,1)$ is sufficiently small  and $j \in \BN$ is sufficiently large (how small and how large depend only on $\delta$ and $\zeta$).

    \substepn{step:hit2F-bothbdry}{The case where both pre-balls meet $\partial\BD$} We first consider the case where $\SCB_{x,j}^- \cap \partial\BD \neq \emptyset$ and $\SCB_{y,j}^- \cap \partial\BD \neq \emptyset$ (i.e., the case where each connected component of $\BD \setminus (\SCB_{x,j}^- \cup \SCB_{y,j}^-)$ is simply connected). By the locality property, given $\SCB_{x,j}$ and $\SCB_{y,j}^-$, $\SCL_{y,j}$ is a SLE$_4$ loop rooted at $\eta_{y,j}(0)$ conditioned so that $\SCL_{y,j} \cap B_{\lambda^j}(z) \neq \emptyset$, and the same holds with the roles of $\SCL_{x,j}$ and $\SCL_{y,j}$ interchanged. This implies that, given $\SCB_{x,j}$, $\SCB_{y,j}^-$, $\eta_{y,j}|_{[0, \tau_{y,j}]}$, and $\widetilde\eta_{y,j}|_{[0, \widetilde\tau_{y,j}]}$, $\SCL_{y,j} \setminus (\eta_{y,j}|_{[0, \tau_{y,j}]} \cup \widetilde\eta_{y,j}|_{[0, \widetilde\tau_{y,j}]})$ is conditionally a chordal SLE$_4$ from $\eta_{y,j}(\tau_{y,j})$ to $\widetilde\eta_{y,j}(\widetilde\tau_{y,j})$, and the same holds with the roles of $\SCL_{x,j}$ and $\SCL_{y,j}$ interchanged. Thus, by definition, given~\eqref{eq:hitting-two-metric-balls-proof-0} and the event $E_j$, $\{\SCL_{x,j} \setminus (\eta_{x,j}|_{[0, \tau_{x,j}]} \cup \widetilde\eta_{x,j}|_{[0, \widetilde\tau_{x,j}]}), \SCL_{y,j} \setminus (\eta_{y,j}|_{[0, \tau_{y,j}]} \cup \widetilde\eta_{y,j}|_{[0, \widetilde\tau_{y,j}]})\}$ is conditionally a bichordal SLE$_4$ in $(\BD \setminus~\eqref{eq:hitting-two-metric-balls-proof-0}; \eta_{x,j}(\tau_{x,j}), \widetilde\eta_{x,j}(\widetilde\tau_{x,j}), \eta_{y,j}(\tau_{y,j}), \widetilde\eta_{y,j}(\widetilde\tau_{y,j}))$ with link pattern 
    \begin{align*}	
    \{\{\eta_{x,j}(\tau_{x,j}), \widetilde\eta_{x,j}(\widetilde\tau_{x,j})\}, \{\eta_{y,j}(\tau_{y,j}), \widetilde\eta_{y,j}(\widetilde\tau_{y,j})\}\}.
    \end{align*}
    (Recall that $\{\SCL_{x,j} \setminus (\eta_{x,j}|_{[0, \tau_{x,j}]} \cup \widetilde\eta_{x,j}|_{[0, \widetilde\tau_{x,j}]}), \SCL_{y,j} \setminus (\eta_{y,j}|_{[0, \tau_{y,j}]} \cup \widetilde\eta_{y,j}|_{[0, \widetilde\tau_{y,j}]})\}$ is called a \emph{bichordal SLE$_4$} in $(\BD \setminus~\eqref{eq:hitting-two-metric-balls-proof-0}; \eta_{x,j}(\tau_{x,j}), \widetilde\eta_{x,j}(\widetilde\tau_{x,j}), \eta_{y,j}(\tau_{y,j}), \widetilde\eta_{y,j}(\widetilde\tau_{y,j}))$ if given~\eqref{eq:hitting-two-metric-balls-proof-0} and $\SCL_{x,j} \setminus (\eta_{x,j}|_{[0, \tau_{x,j}]} \cup \widetilde\eta_{x,j}|_{[0, \widetilde\tau_{x,j}]})$, $\SCL_{y,j} \setminus (\eta_{y,j}|_{[0, \tau_{y,j}]} \cup \widetilde\eta_{y,j}|_{[0, \widetilde\tau_{y,j}]})$ is conditionally a chordal SLE$_4$ in the connected component of $\BD \setminus (\eqref{eq:hitting-two-metric-balls-proof-0} \cup \SCL_{x,j} \setminus (\eta_{x,j}|_{[0, \tau_{x,j}]} \cup \widetilde\eta_{x,j}|_{[0, \widetilde\tau_{x,j}]}))$ that contains $\SCL_{y,j} \setminus (\eta_{y,j}|_{[0, \tau_{y,j}]} \cup \widetilde\eta_{y,j}|_{[0, \widetilde\tau_{y,j}]})$ from $\eta_{y,j}(\tau_{y,j})$ to $\widetilde\eta_{y,j}(\widetilde\tau_{y,j})$, and the same holds with the roles of $\SCL_{x,j} \setminus (\eta_{x,j}|_{[0, \tau_{x,j}]} \cup \widetilde\eta_{x,j}|_{[0, \widetilde\tau_{x,j}]})$ and $\SCL_{y,j} \setminus (\eta_{y,j}|_{[0, \tau_{y,j}]} \cup \widetilde\eta_{y,j}|_{[0, \widetilde\tau_{y,j}]})$ interchanged.)

Let $K_1$ (resp.\ $K_2$) denote the closure of the union of $\SCB_{x,j}^- \cup \eta_{x,j}|_{[0,\tau_{x,j}]} \cup \widetilde{\eta}_{x,j}|_{[0,\widetilde{\tau}_{x,j}]}$ (resp.\ 	$\SCB_{y,j}^- \cup \eta_{y,j}|_{[0,\tau_{y,j}]} \cup \widetilde{\eta}_{y,j}|_{[0,\widetilde{\tau}_{y,j}]}$) together with the points that it disconnects from $z$. Since both $\SCB_{x,j}^- \cup \eta_{x,j}|_{[0,\tau_{x,j}]} \cup \widetilde{\eta}_{x,j}|_{[0,\widetilde{\tau}_{x,j}]}$ and $\SCB_{y,j}^- \cup \eta_{y,j}|_{[0,\tau_{y,j}]} \cup \widetilde{\eta}_{y,j}|_{[0,\widetilde{\tau}_{y,j}]}$ are connected sets, we have that both $K_1$ and $K_2$ are connected, simply connected, and compact sets. 
Let $W_j$ denote the connected component of $\BD \setminus (K_1 \cup K_2)$ that contains $z$ and note that $W_j$ is simply connected and $B_{\lambda^j}(z) \subset W_j$. Let also $\psi_j$ denote the conformal mapping from $W_j$ onto $\BD$ such that $\psi_j(z) = 0$ and $\psi_j'(z) > 0$.

Note that $\dist(w,\partial W_j) > \lambda^j/2
	$ for all $w \in B_{\lambda^{1+j-\zeta}}(z)$ if $\lambda \in (0,1)$ is sufficiently small (depending only on $\zeta$) and hence \cite[Corollary~3.25]{lawler2008conformally} (applied with $r=2\lambda^{1-\zeta}$ with $\lambda$ small enough that $2\lambda^{1-\zeta}<1/2$) implies that
\begin{align*}
\psi_j(B_{\lambda^{1+j-\zeta}}(z)) \subset B_{32\lambda^{1-\zeta} /3}(0).
\end{align*}
Therefore, \Cref{lem:bichordal-bulk-4A} implies that $\BP\lbrack F_j^1 \mid~\eqref{eq:hitting-two-metric-balls-proof-0}\rbrack \le \lambda^{(1 - \zeta)(2 - \zeta)}$ a.s.\ on the event that $E_j$ occurs and $\SCB_{x,j}^- \cap \partial\BD \neq \emptyset$ and $\SCB_{y,j}^- \cap \partial\BD \neq \emptyset$.

\substepn{step:hit2F-general}{The general case: the event $G_j$} We next consider the general case. Fix deterministic,  disjoint line segments $I$ and $J$ from $x$ and $y$, respectively, to $\partial\BD$, in such a way that their distances from $z$ are also at least $\delta$. Let $G_j$ be the event that there is no loop of $\Gamma_\BD$ contained in $\BD \setminus \eqref{eq:hitting-two-metric-balls-proof-0}$ that intersects both $B_{\lambda^{1+j-\zeta}}(z)$ and $I$, or both $B_{\lambda^{1+j-\zeta}}(z)$ and $J$.  We will show that
\begin{equation}\label{eq proba Gj goes to one}
	\BP[G_j \vert  \, \eqref{eq:hitting-two-metric-balls-proof-0}, \, E_j] \mathop{\longrightarrow}\limits_{j\to \infty}1
\end{equation}

at a rate depending only on $\delta,\lambda$,  and $\zeta$.

\substepn{step:hit2F-localset}{The local set $A$ and the domain $U_j$} Following the notation of \Cref{lem:metric_ball_local},  we let $I^{\star}$ (resp.\ $J^{\star}$) denote the closure of the union of the loops in $\Gamma_{\BD}$ that intersect $I$ (resp.\ $J$),  each intersected with the closed Euclidean $\delta/2$-neighborhood of $I$ (resp.\ $J$).  We also consider the set
\begin{align*}
A = \SCB_{x,j}^- \cup \eta_{x,j}([0,\tau_{x,j}]) \cup \widetilde{\eta}_{x,j}([0,\widetilde{\tau}_{x,j}]) \cup \SCB_{y,j}^- \cup \eta_{y,j}([0,\tau_{y,j}]) \cup \widetilde{\eta}_{y,j}([0,\widetilde{\tau}_{y,j}]) \cup I^{\star} \cup J^{\star}.
\end{align*}
Then,  \Cref{lem:metric_ball_local} implies that there exists a coupling between $\Gamma_{\BD}$ and a zero-boundary GFF $\Psi$ on $\BD$ such that $A$ is local for $\Psi$.  For the rest of the proof,  we will assume that we are working with the above coupling.

Let $U_j$ denote the connected component of $\BD \setminus A$ containing $z$ and note that $\partial U_j \cap \partial \BD$ consists of two disjoint subarcs of $\partial \BD$.  Let also $\psi_j : U_j \to V_{r_j}$ be the conformal transformation mapping $U_j$ onto $V_{r_j}$ for some $r_j>0$ such that $\psi_j$ maps $\SCB_{x,j}^- \cup \eta_{x,j}([0,\tau_{x,j}]) \cup \widetilde{\eta}_{x,j}([0,\widetilde{\tau}_{x,j}])$ (resp.\ $\SCB_{y,j}^- \cup \eta_{y,j}([0,\tau_{y,j}]) \cup \widetilde{\eta}_{y,j}([0,\widetilde{\tau}_{y,j}])$) onto $(0,r_j) \times \{1\}$  (resp.\ $(0,r_j) \times \{0\}$) and $\Re(\psi_j(\eta_{x,j}(\tau_{x,j}))) < \Re(\psi_j(\widetilde{\eta}_{x,j}(\widetilde{\tau}_{x,j})))$.  Let $I^{\star,\mathrm{L}}$ (resp.\ $I^{\star,\mathrm{R}}$) denote the left (resp.\ right) side of $I^{\star}$ when seen as prime ends on $\partial U_j$.  We define $J^{\star,\mathrm{L}}$ and $J^{\star,\mathrm{R}}$ similarly.  Then,  the Beurling estimate implies that for all $q \in \{\mathrm{L},\mathrm{R}\}$ and all $w \in \psi_j(B_{\lambda^{1+j}}(z))$,  the probability that a planar Brownian motion starting from $w$ exits $U_j$ for the first time on $I^{\star,\mathrm{L}} \cup I^{\star,\mathrm{R}} \cup J^{\star,\mathrm{L}} \cup J^{\star,\mathrm{R}}$ is at most $\lesssim \lambda^{j/2}$,  where the implicit constant depends only on $\lambda,\zeta$,  and $\delta$.  

Moreover,  there exists a constant $p \in (0,1)$ depending only on $\zeta$ and $\lambda$,  such that with probability at least $p$,  a planar Brownian motion starting from $z$ makes a loop around $B_{\lambda^{1+j-\zeta}}(z)$ before exiting $U_j$ for the first time.  This implies that there exists a constant $d \in (0,1)$ depending only on $\zeta$ and $\lambda$ such that $\dist(\psi_j(B_{\lambda^{1+j-\zeta}}(z)) ,  \partial V_{r_j}) \geq d$.  Thus,  combining with \Cref{lem:extremal_length},  we obtain that there exists a constant $c>0$ depending only on $\zeta$ and $\lambda$ such that
\begin{align*}
\dist(\psi_j(B_{\lambda^{1+j-\zeta}}(z)) ,  \psi_j(I^{\star,\mathrm{L}} \cup I^{\star,\mathrm{R}} \cup J^{\star,\mathrm{L}} \cup J^{\star,\mathrm{R}})) \geq c j.
\end{align*}

Set $x_j = \inf\{\Re(w) : w \in \psi_j(B_{\lambda^{1+j-\zeta}}(z))\}$ and $ y_j = \sup\{\Re(w) : w \in \psi_j(B_{\lambda^{1+j-\zeta}}(z))\}$,  and note that
\begin{align*}
\partial V_{x_j - cj ,  x_j} \cap (I^{\star,\mathrm{L}} \cup I^{\star,\mathrm{R}} \cup J^{\star,\mathrm{L}} \cup J^{\star,\mathrm{R}}) = \partial V_{y_j ,  y_j+cj} \cap (I^{\star,\mathrm{L}} \cup I^{\star,\mathrm{R}} \cup J^{\star,\mathrm{L}} \cup J^{\star,\mathrm{R}}) = \emptyset.
\end{align*}
Suppose that $\Re(\psi_j(\widetilde{\eta}_{y,j}(\widetilde{\tau}_{y,j})))$ and $\Re(\psi_j(\eta_{y,j}(\tau_{y,j}))) $ both lie in $[x_j,y_j]$.  The other cases for the locations of the points $\psi_j(\widetilde{\eta}_{y,j}(\widetilde{\tau}_{y,j})) $ and $\psi_j(\eta_{y,j}(\tau_{y,j}))$ are treated by a similar argument and so we will only give a proof in the case that the above holds.

Since $A$ is a local set,  we have that the random field $\Psi \circ \psi_j^{-1}$ can be decomposed given $A$ into the sum of a zero-boundary GFF on $V_{r_j}$ plus an independent harmonic function on $V_{r_j}$,  which is a.s.\ determined by $A$ and such that its boundary conditions are piecewise constant and change only a countable number of times.  Moreover,  the boundary conditions of $\Psi \circ \psi_j^{-1}$ on $[x_j-cj ,   x_j] \cup [y_j ,  y_j + cj]$ take values in $[-\pi,\pi]$.  By arguing as in \Cref{step:avoid-wide} of the proof of \Cref{lem:avoiding_two_metric_balls},  we obtain that for fixed $u \in (0,\pi/2)$,  there exists deterministic constants $C_1,\alpha>0$ depending only on $c,u$ such that given $A$,  with conditional probability at least $1-C_1 e^{-\alpha j}$,  we have that there exist $m,k \in [1,cj - 1]_{\BZ}$ such that the level line of $\Psi \circ \psi_j^{-1}$ of height $u$ starting from $x_j -cj + (m+1/2)$ (resp.\ $y_j + (k+1/2)$) hits the top boundary of $V_{x_j-cj+m,x_j-cj+m+1}$ (resp.\ $V_{y_j+k,y_j+k+1}$) before hitting either its left or right boundaries for the first time.  But then,  if the latter holds,  we have by the level line interaction rules (\Cref{thm:level_line_interaction}, applied to the height-$u$ line and the continuations of the loops, which are level lines of heights $\pm\pi/2$

) that none of the continuations of the loops in $\Gamma_{\BD}$ that intersect $I \cup J$ can hit $B_{\lambda^{1+j-\zeta}}(z)$.  This proves~\eqref{eq proba Gj goes to one}.

\substepn{step:hit2F-concl}{Conclusion in the general case}  By an argument similar to the preceding ones,
a.s.\ $\BP\lbrack F_j^1 \mid~\eqref{eq:hitting-two-metric-balls-proof-0}, \ E_j, \ G_j\rbrack \le \lambda^{(1 - \zeta)(2 - \zeta)} / 2$ for all sufficiently small $\lambda \in (0, 1)$. Moreover, by~\eqref{eq proba Gj goes to one}, $\BP[G_j^c \mid~\eqref{eq:hitting-two-metric-balls-proof-0}, \ E_j] \to 0$ as $j \to \infty$ at a rate depending only on $\delta, \lambda, \zeta$, so there are $\lambda_0 = \lambda_0(\delta,\zeta) \in (0,1)$ and $j_0 = j_0(\lambda,\delta,\zeta) \in \BN$ with this conditional probability at most $\lambda^{(1-\zeta)(2-\zeta)}/2$ for $\lambda \le \lambda_0$ and $j \ge j_0$. Since $\BP[F \mid H] \le \BP[F \mid G \cap H] + \BP[G^c \mid H]$,
\begin{equation*}
    \BP\lbrack F_j^1 \mid~\eqref{eq:hitting-two-metric-balls-proof-0}, \ E_j\rbrack \le \BP\lbrack F_j^1 \mid~\eqref{eq:hitting-two-metric-balls-proof-0}, \ E_j, \ G_j\rbrack + \BP\lbrack G_j^c \mid~\eqref{eq:hitting-two-metric-balls-proof-0}, \ E_j\rbrack \le \lambda^{(1 - \zeta)(2 - \zeta)}
\end{equation*}
a.s.\ for all $\lambda \le \lambda_0$ and all $j \ge j_0$. This proves~\eqref{eq:hitting-two-metric-balls-proof-4}.

\step{step:hit2-Ej1}{Bounding $\BP\lbrack E_{j + 1} \mid F_j^2 \cap E_j\rbrack$}

\substepn{step:hit2Ej-claim}{The claim} As in \Cref{step:hit2-Fj1},  we assume first that both $\SCB_{x,j}^-$ and $\SCB_{y,j}^-$ intersect $\partial \BD$. On the event $F_j^2 \cap E_j$, we may assume without loss of generality that $\SCL_{x,j} \setminus (\eta_{x,j}|_{[0,\tau_{x,j}]} \cup \widetilde{\eta}_{x,j}|_{[0,\widetilde{\tau}_{x,j}]})$ intersects $B_{\lambda^{1+j-\zeta}}(z)$ but $\SCL_{y,j} \setminus (\eta_{y,j}|_{[0,\tau_{y,j}]} \cup \widetilde{\eta}_{y,j}|_{[0,\widetilde{\tau}_{y,j}]})$ does not; write $\sigma_{x,j}$ (resp.\ $\widetilde{\sigma}_{x,j}$) for the first time at which $\eta_{x,j}$ (resp.\ $\widetilde{\eta}_{x,j}$) hits $\partial B_{\lambda^{1+j-\zeta}}(z)$. We claim that
\begin{equation}\label{eq:hitting-two-metric-balls-proof-5}
		\BP\!\left\lbrack E_{j + 1} \ \middle\vert \ \SCB_{x,j}^-, \ \eta_{x,j}|_{[0, \sigma_{x,j}]}, \ \widetilde\eta_{x,j}|_{[0, \widetilde\sigma_{x,j}]}, \ \SCB_{y,j}, \ F_j^2 \cap E_j\right\rbrack = O(1/j) \quad \text{as } j \to \infty, 
	\end{equation}
at a rate depending only on $\zeta$ and $\lambda$. Indeed, let $\widetilde{K}_1$ (resp.\ $\widetilde{K}_2$) denote the closure of the union of $\SCB_{x,j}^- \cup \eta_{x,j}|_{[0,\sigma_{x,j}]} \cup \widetilde{\eta}_{x,j}|_{[0,\widetilde{\sigma}_{x,j}]}$ (resp.\ $\SCB_{y,j}$) together with the points that it disconnects from $\infty$. Since both $\SCB_{x,j}^- \cup \eta_{x,j}|_{[0,\sigma_{x,j}]} \cup \widetilde{\eta}_{x,j}|_{[0,\widetilde{\sigma}_{x,j}]}$ and $\SCB_{y,j}$ are connected sets, we have that both $\widetilde{K}_1$ and $\widetilde{K}_2$ are connected, simply connected, and compact sets.

\substepn{step:hit2Ej-domain}{The domain $U_j$ and the Beurling estimate} Let $U_j$ denote the connected component of $\BD \setminus (\widetilde{K}_1 \cup \widetilde{K}_2)$ containing $z$.  Then,  $U_j$ is simply connected and $\partial U_j \cap \partial \BD$ consists of exactly two connected components that we denote by $\gamma$ and $\widetilde{\gamma}$ respectively.  Also,  we have that $B_{\lambda^{1+j-\zeta}}(z) \subset U_j$ and by arguing as in the proof of \Cref{lem:Beurling},  it holds that 
\begin{align*}
\max\{\dist(z,\gamma) ,  \dist(z,\widetilde{\gamma})\} \geq (\text{diam}(\widetilde{K}_1) \wedge \text{diam}(\widetilde{K}_2)) / 2.
\end{align*}
Without loss of generality,  we can assume that
\begin{align*}
\dist(z,\gamma) \geq (\text{diam}(\widetilde{K}_1) \wedge \text{diam}(\widetilde{K}_2) )/ 2.
\end{align*}
Moreover, using the exact same argument as in the proof of \Cref{lem:Beurling}, we obtain that there exists a constant $c>0$ depending only on $\delta$ and $\lambda$ such that the extremal distance in $U_j$ between $\widetilde{K}_1$ and $\widetilde{K}_2$ is at most $c / j$. Also, there exists $r_j>0$ and a conformal transformation $\phi_j : U_j \to V_{r_j}$ that maps $\partial U_j \cap \partial \BD$ to the left and right sides of $V_{r_j}$ such that $\phi_j(\eta_{x,j}(\sigma_{x,j})), \phi_j(\widetilde{\eta}_{x,j}(\widetilde{\sigma}_{x,j})) \in T_{r_j}$ and $\Re(\phi_j(\eta_{x,j}(\sigma_{x,j}))) < \Re(\phi_j(\widetilde{\eta}_{x,j}(\widetilde{\sigma}_{x,j})))$. Here, we let $\{(V_r,T_r,B_r)\}_{r >0}$ and $\{V_{x,y}\}_{x<y}$ be as in the proof of \Cref{prop:distance_across_rectangle}. Note that there exists a constant $\widetilde{c}>0$ depending only on $\delta$ such that $r_j \geq \widetilde{c} j$ by the above estimate on extremal distances.

\substepn{step:hit2Ej-macro}{A macroscopic bound on the real parts} We claim that there exists a constant $c_1>0$ depending only on $\delta$ and $\lambda$ such that at least one of $\Re(\phi_j(\eta_{x,j}(\sigma_{x,j})))$ and $r_j - \Re(\phi_j(\widetilde{\eta}_{x,j}(\widetilde{\sigma}_{x,j})))$ is at least $c_1 j$. Indeed, since $r_j \geq \widetilde{c} j$, we can assume that $-\Re(\phi_j(\eta_{x,j}(\sigma_{x,j}))) + \Re(\phi_j(\widetilde{\eta}_{x,j}(\widetilde{\sigma}_{x,j}))) \geq r_j /3$, since otherwise the claim is obviously true. Suppose that $\gamma$ is mapped to the right side of $V_{r_j}$ under $\phi_j$. Let $I_j$ denote the connected component of $U_j \cap \partial B_{\lambda^j}(z)$ such that $\overline{I_j}$ intersects both the left side of $\widetilde{\eta}_{x,j}|_{[0,\widetilde{\sigma}_{x,j}]}$ and the right side of $\eta_{y,j}|_{[0,\tau_{y,j}]}$. Let also $w$ denote the point on $I_j$ such that $\Im(\phi_j(w)) = 1/2$. Note that if $\lambda \in (0,1)$ is sufficiently small (depending only on $\delta$), it follows from the Beurling estimate that the planar Brownian motion starting from $w$ intersects $\gamma$ before intersecting $\widetilde{K}_1 \cup \widetilde{K}_2$ for the first time with probability at most
\begin{align*}
C_1 \left(\frac{\lambda^j}{\text{diam}(\widetilde{K}_1) \wedge \text{diam}(\widetilde{K}_2)}\right)^{1/2}
\end{align*}
for some universal constant $C_1>0$. Moreover, in order for a planar Brownian motion starting from $w$ to exit $U_j$ on $\phi_j^{-1}([0,\Re(\phi_j(\widetilde{\eta}_{x,j}(\widetilde{\sigma}_{x,j})))] \times \{1\})$, it must cross the annulus $A_{\lambda^{j+1-\zeta},\lambda^j}(z)$ between its inner and outer boundaries without intersecting $\widetilde{\eta}_{x,j}|_{[0,\widetilde{\sigma}_{x,j}]}$. Hence, by applying the Beurling estimate again and possibly taking $C_1$ to be large (in a universal way), we obtain that the probability that a planar Brownian motion starting from $w$ exits $U_j$ for the first time on $[0,\Re(\phi_j(\widetilde{\eta}_{x,j}(\widetilde{\sigma}_{x,j})))] \times \{1\}$ is at most $C_1 \lambda^{(1-\zeta)/2}$. Therefore, by taking $\lambda \in (0,1)$ sufficiently small (in a way that depends only on $\delta$ and $\zeta$), we obtain that there exists a constant $\hat{c}>0$ depending only on $\delta$ such that
\begin{align*}
r_j - \Re(\phi_j(\widetilde{\eta}_{x,j}(\widetilde{\sigma}_{x,j}))) \geq \hat{c},\,\, \phi_j(w) \in (\Re(\phi_j(\widetilde{\eta}_{x,j}(\widetilde{\sigma}_{x,j}))),r_j) \times (0,1).
\end{align*}

Set $\widetilde{r}_j\defeq r_j - \Re(\phi_j(\widetilde{\eta}_{x,j}(\widetilde{\sigma}_{x,j}))) > 0$. Since both the left and right sides of $V_{r_j}$ correspond to circular arcs of $\partial \BD$, we can apply the Schwarz reflection principle across both boundaries. The starting point $\phi_j(w)$ lies in $(\Re(\phi_j(\widetilde{\eta}_{x,j}(\widetilde{\sigma}_{x,j}))),r_j) \times (0,1)$, so its distance to either the left or right boundary of the extended rectangle
\begin{align*}
\left(2\Re(\phi_j(\widetilde{\eta}_{x,j}(\widetilde{\sigma}_{x,j}))) - r_j, 2r_j - \Re(\phi_j(\widetilde{\eta}_{x,j}(\widetilde{\sigma}_{x,j})))\right) \times (0,1)
\end{align*}
is at most $2\widetilde{r}_j$. Thus, applying \Cref{lem:extremal_length} to the extended rectangle (scaled by $\pi$) implies that there exist universal constants $a_1,a_2>0$ such that the probability that a planar Brownian motion starting from $\phi_j(w)$ exits the extended rectangle on its left or right side is at least $a_1 e^{-a_2 \widetilde{r}_j}$. By the conformal invariance of Brownian motion, together with the reflection symmetry just described (a Brownian motion exiting the extended rectangle on its left or right side exits $U_j$ on $\gamma$ or on its reflection, and the two have the same probability), this implies that the probability that a planar Brownian motion starting from $w$ exits $U_j$ for the first time on $\gamma$ is at least $a_1 e^{-a_2 \widetilde{r}_j}$. But we have already shown that the latter probability is at most
\begin{align*}
C_1 \left(\frac{\lambda^j}{\text{diam}(\widetilde{K}_1) \wedge \text{diam}(\widetilde{K}_2)}\right)^{1/2} \leq C_1 \left(\frac{\lambda^j}{\delta}\right)^{1/2}.
\end{align*}

Combining everything, we obtain that there exists a constant $c_1>0$ depending only on $\lambda$ and $\delta$ such that $\widetilde{r}_j \geq c_1 j$. Similarly, if $\gamma$ is mapped to the left side of $V_{r_j}$ under $\phi_j$, we obtain that $\Re(\phi_j(\eta_{x,j}(\sigma_{x,j}))) \geq c_1 j$ possibly by taking $c_1>0$ to be smaller (in a way that depends only on $\lambda$ and $\delta$). This proves that at least one of $\Re(\phi_j(\eta_{x,j}(\sigma_{x,j})))$ and $r_j - \Re(\phi_j(\widetilde{\eta}_{x,j}(\widetilde{\sigma}_{x,j})))$ is at least $c_1 j$.

\substepn{step:hit2Ej-transfer}{Transferring the rectangle estimate} We may assume without loss of generality that $\gamma$ is mapped to the left side of $V_{r_j}$ since the exact same argument works in the case that $\gamma$ is mapped to the right side of $V_{r_j}$.  Then,  we have that
$\Re(\phi_j(\eta_{x,j}(\sigma_{x,j})))$ is at least $c_1j$.  Moreover, since $A_{\lambda^{j + 1},\lambda^{j + 1 - \zeta}}(z) \subset U_j$ disconnects $B_{\lambda^{j + 1}}(z)$ and $\partial U_j$ (i.e., the planar Brownian motion starting from a suitable point disconnects $B_{\lambda^{j + 1}}(z)$ and $\partial U_j$ before exiting $U_j$ for the first time with probability at least $c_2$ for some $c_2 = c_2(\zeta, \lambda) \in (0, 1)$), it follows that 
	\begin{gather}\label{eq:hitting-two-metric-balls-proof-7}
		\dist(\phi_j(B_{\lambda^{j+1}}(z)),\partial V_{r_j}) \geq c_3; \\
		\sup\left\{\Re(z) : z \in \phi_j(B_{\lambda^{j + 1}}(z))\right\} - \inf\left\{\Re(z) : z \in \phi_j(B_{\lambda^{j + 1}}(z))\right\} \le c_3^{-1} \notag
	\end{gather}
	for some $c_3 = c_3(\zeta, \lambda) \in (0, 1)$.   Moreover,  the Beurling estimate implies that the probability that a planar Brownian motion starting from $\phi_j(z)$ exits $V_{r_j}$ for the first time on the left boundary of $V_{r_j}$ is at most $\lesssim \lambda^{j/2}$,  where the implicit constant depends only on $\delta$.  It follows from combining \Cref{lem:extremal_length} and~\eqref{eq:hitting-two-metric-balls-proof-7} that there exists a constant $c_4 \in (0,c_1/2)$ depending only on $\zeta,\lambda$,  and $\delta$ such that
	\begin{equation}\label{eq:nice_ineq}
	\Re(\phi_j(w)) \geq 2c_4 j \quad \text{for all} \quad w \in B_{\lambda^{j+1}}(z).
	\end{equation}
	
	Let $\Gamma_{V_{r_j}}$ be a non-nested CLE$_4$ in $V_{r_j}$. Recall that by the locality property, given 
\begin{align*}	
\SCB_{x,j}^-, \eta_{x,j}|_{[0,\sigma_{x,j}]}, \widetilde{\eta}_{x,j}|_{[0,\widetilde{\sigma}_{x,j}]}, \SCB_{y,j}, 
\end{align*}
and the event $F_j^2 \cap E_j$, the restriction of $\Gamma_{\BD}$ to $U_j^{\star}$ is conditionally a non-nested CLE$_4$ on $U_j^{\star}$ with one wired boundary arc on $[\eta_{x,j}(\sigma_{x,j}),\widetilde{\eta}_{x,j}(\widetilde{\sigma}_{x,j})]_{\partial U_j}^\circlearrowright$, i.e., a chordal SLE$_4$ curve in $U_j^{\star}$ from $\eta_{x,j}(\sigma_{x,j})$ to $\widetilde{\eta}_{x,j}(\widetilde{\sigma}_{x,j})$, together with a non-nested CLE$_4$ in the complementary connected components. Let $\Gamma_{V_{r_j}}$ and $\phi_j(\Gamma_{\BD}|_{U_j})$ be coupled as in \Cref{lem:from-CLE-2-SLE}, i.e., there is a chordal SLE$_{8/3}$ curve $\gamma$ in $V_{r_j}$ from $\phi_j(\eta_{x,j}(\sigma_{x,j}))$ to $\phi_j(\widetilde\eta_{x,j}(\widetilde\sigma_{x,j}))$ that is independent of $\Gamma_{V_{r_j}}$, such that $\phi_j(\SCL_{x,j} \setminus (\eta_{x,j}|_{[0, \sigma_{x,j}]} \cup \widetilde\eta_{x,j}|_{[0, \widetilde\sigma_{x,j}]}))$ is given by the rightmost boundary of the set obtained by attaching to $\gamma$ all the loops of $\Gamma_{V_{r_j}}$ that it intersects, and the collection of loops of $\phi_j(\Gamma_{\BD}|_{U_j})$ is given by the restriction of $\Gamma_{V_{r_j}}$ to the complementary connected component of $\phi_j(\SCL_{x,j} \setminus (\eta_{x,j}|_{[0, \sigma_{x,j}]} \cup \widetilde\eta_{x,j}|_{[0, \widetilde\sigma_{x,j}]}))$. Write $W_j$ for the connected component of $V_{r_j}$ that is to the right of $\gamma$. Note that $W_j^\star$ is a complementary connected component of $\phi_j(\SCL_{x,j} \setminus (\eta_{x,j}|_{[0, \sigma_{x,j}]} \cup \widetilde\eta_{x,j}|_{[0, \widetilde\sigma_{x,j}]}))$. Furthermore, let $\Gamma_{V_{r_j}}$, $\phi_j(\Gamma_{\BD}|_{U_j})$, $D_{\Gamma_{V_{r_j}}}^{V_{r_j}}$, and $D_{\phi_j(\Gamma_{\BD}|_{U_j})}^{W_j^\star}$ be coupled as in \Cref{def:weak_axioms}, Axiom~\eqref{it:weak_axiom_locality} (locality).  Then,  combining~\eqref{eq:hitting-two-metric-balls-proof-7},  ~\eqref{eq:nice_ineq} with \Cref{lem:avoiding_two_metric_balls} and the fact that $r_j \geq c_1 j$,  we obtain that there exists a deterministic constant $c_5 = c_5(\zeta,\lambda,\delta)>0$ such that with conditional probability at least $1-c_5 / j$,  we have that
	\begin{align*}
	D_{\Gamma_{V_{r_j}}|_{V_{c_4 / 2 j ,  c_4 j}^{\star}}}^{V_{c_4 / 2 j,  c_4 j}^{\star}}(T_{r_j} ,  B_{r_j}) < D_{\Gamma_{V_{r_j}}}^{V_{r_j}}(\phi_j(B_{\lambda^{j+1}}(z)),B_{r_j}).
	\end{align*}
	
	By the monotonicity property in Axiom~\eqref{it:weak_axiom_locality}, the $D_{\Gamma_{\BD}}^{\BD}$-distance from $\SCB_{y,j}$ to $B_{\lambda^{j + 1}}(z)$ without using $\SCB_{x,j}$ is a.s.\ at least $D_{\Gamma_{V_{r_j}}}^{V_{r_j}}(\phi_j(B_{\lambda^{j + 1}}(z)), B_{r_j})$. Moreover, it follows from a similar argument to the argument applied in \Cref{step:avoid-wide} of  the proof of \Cref{lem:avoiding_two_metric_balls} that it holds with exponentially high probability as $j \to \infty$, at a rate depending only on $\zeta,\delta$ and $\lambda$, that $\phi_j(\SCL_{x,j} \setminus (\eta_{x,j}|_{[0, \sigma_{x,j}]} \cup \widetilde\eta_{x,j}|_{[0, \widetilde\sigma_{x,j}]})) \cap V_{ c_4j / 2,c_4j} =\emptyset$,  in which case 
\begin{align*}
D_{\Gamma_{V_{r_j}}|_{V_{c_4 j /2 ,  c_4 j}^{\star}}}^{V_{c_4 j/2 ,  c_4 j}^{\star}}(T_{r_j} ,  B_{r_j}) = D_{\Gamma_{\BD}|_{\phi_j^{-1}(V_{c_4 j/2,c_4 j}^{\star})}}^{\phi_j^{-1}(V_{c_4 j / 2,c_4 j}^{\star})}(\phi_j^{-1}(T_{r_j}),\phi_j^{-1}(B_{r_j})).	
\end{align*}	

 \substepn{step:hit2Ej-concl}{Conclusion} Thus, we conclude that there exists $c_6 = c_6(\zeta, \lambda) > 0$ such that it holds with conditional probability at least $1 - c_6/j$ given $\SCB_{x,j}^-,  \eta_{x,j}|_{[0,\sigma_{x,j}]},\widetilde{\eta}_{x,j}|_{[0,\widetilde{\sigma}_{x,j}]},  \SCB_{y,j},F_j^2 \cap E_j$,  that the $D_{\Gamma_{\BD}}^{\BD}$-distance between $\SCB_{y,j}$ and $\SCB_{x,j}$ is strictly less than the $D_{\Gamma_{\BD}}^{\BD}$-distance from $\SCB_{y,j}$ to $B_{\lambda^{j+1}}(z)$ without using $\SCB_{x,j}$ More precisely $D^\BD_{\Gamma_\BD}(\SCB_{x,j}, \SCB_{y,j}) <\inf\{t\ge 0: \SCB_t(\SCB_{y,j}; D^\BD_{\Gamma_\BD})  \cap B_{\lambda^{j+1}}(z) \neq \emptyset \text{ and }  \SCB_t(\SCB_{y,j}; D^\BD_{\Gamma_\BD}) \cap \SCB_{x,j} = \emptyset\}$.  Clearly,  if the latter occurs,  the event $E_{j+1}$ does not occur.  This completes the proof of~\eqref{eq:hitting-two-metric-balls-proof-5} in the case that both $\widetilde{K}_1$ and $\widetilde{K}_2$ intersect $\partial \BD$.  
 
 By arguing as in \Cref{step:hit2-Fj1} and using the same argument as in the case that both $\widetilde{K}_1$ and $\widetilde{K}_2$ intersect $\partial \BD$,  we obtain that ~\eqref{eq:hitting-two-metric-balls-proof-5} still holds in the case that either $\widetilde{K}_1 \cap \partial \BD = \emptyset$ or $\widetilde{K}_2 \cap \partial \BD = \emptyset$.

\step{step:hit2-concl}{Bounding $\BP\lbrack E_{j + 1} \mid F_j^3 \cap E_j\rbrack$ and concluding the proof} It follows from a similar (but easier) argument to the argument applied in the proof of~\eqref{eq:hitting-two-metric-balls-proof-5} that
	\begin{equation}\label{eq:hitting-two-metric-balls-proof-6}
		\BP\lbrack E_{j + 1} \mid F_j^3 \cap E_j\rbrack = O(1/j) \quad \text{as } j \to \infty, 
	\end{equation}
	at a rate depending only on $\zeta$ and $\lambda$. (The proof of~\eqref{eq:hitting-two-metric-balls-proof-6} is exactly the same as the proof of~\eqref{eq:hitting-two-metric-balls-proof-5}, except that we do not need to concern ourselves with the chordal SLE$_4$ from $\eta_{x,j}(\sigma_{x,j})$ to $\widetilde\eta_{x,j}(\widetilde\sigma_{x,j})$.) Since $\zeta$ was arbitrary, by combining~\eqref{eq:hitting-two-metric-balls-proof-2},~\eqref{eq:hitting-two-metric-balls-proof-3},~\eqref{eq:hitting-two-metric-balls-proof-4},~\eqref{eq:hitting-two-metric-balls-proof-5}, and~\eqref{eq:hitting-two-metric-balls-proof-6}, we complete the proof of \Cref{prop:hitting-two-metric-balls}.
	\end{proof}

\section{Vertical versus horizontal distance across a rectangle}
\label{sec:vertical_vs_horizontal}
	
Throughout this section, $D$ denotes a weak geodesic CLE$_4$ metric coupling in the sense of \Cref{def:weak_axioms}. Next, we state and prove the following lemma, which will play a crucial role in subsequent work \cite{kkmt2026cle4_part3}. It states that if we have the rectangle $V_a$ (for any fixed $a > 0$) and a non-nested CLE$_4$ on $V_a$, then with positive probability, the distance between the left and right boundaries of $V_a$ with respect to the CLE$_4$ metric is strictly less than the distance between the top and bottom boundaries of $V_a$.

\begin{lemma}\label{lem:vertical_vs_horizontal}
Fix $a>0$ and let $\Gamma_{V_a}$ be a non-nested CLE$_4$ in $V_a$. Then, there exists $r_0>0$ such that with positive probability, we have that 
\begin{align*}
D_{\Gamma_{V_a}}^{V_a}(\{0\} \times [0,1], \{a\} \times [0,1]) <r_0 \quad \text{and} \quad 2r_0 \le D_{\Gamma_{V_a}}^{V_a}([0,a] \times \{0\}, [0,a] \times \{1\}).
\end{align*}
Moreover, the following holds with positive probability. Let $L_2 = \{0\} \times [1/5,4/5]$ and let $L = \{0\} \times [0, 1]$ be the left boundary of $V_a$. Let $\partial_\mathrm{R} \SCB_{r_0}(L; D^{V_a}_{\Gamma_{V_a}})$, the \emph{right boundary} of the metric ball (it faces the right side of $V_a$ but may also meet $L$), be the intersection of $\SCB_{r_0}(L; D^{V_a}_{\Gamma_{V_a}})$ with the closure of the union of the connected components $C$ of $V_a \setminus \SCB_{r_0}(L; D^{V_a}_{\Gamma_{V_a}})$ such that $\overline{C} \cap (\{a\} \times (0,1)) \neq \emptyset$. Then:
	\begin{enumerate}[label=(\alph*),ref=\alph*]
		\item\label{it: lemma crossing a} We have $D_{\Gamma_{V_a}}^{V_a}(L_2, \{a\} \times [1/5,4/5]) < r_0$;
		\item\label{it: lemma crossing b} The right boundary $\partial_\mathrm{R} \SCB_{r_0}(L; D^{V_a}_{\Gamma_{V_a}})$ intersects $L_2$;
		\item\label{it: lemma crossing c} We have $D_{\Gamma_{V_a}}^{V_a}([0,a] \times \{0\}, [0,a] \times \{1/5\}) \ge 2r_0$ and $D_{\Gamma_{V_a}}^{V_a}([0,a] \times \{4/5\}, [0,a] \times \{1\}) \ge 2r_0$.
	\end{enumerate} 

\end{lemma}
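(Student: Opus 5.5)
The plan is to manufacture a positive-probability configuration in which the horizontal crossing of $V_a$ is short while the vertical crossings of the bottom and top strips are long. I would build it from three roughly independent events: a cheap horizontal corridor in the middle band $(0,a)\times(1/5,4/5)$, and two expensive bottom and top strips $(0,a)\times(0,1/5)$ and $(0,a)\times(4/5,1)$. The first assertion follows from the second, since (c) gives a vertical distance of at least $2r_0$ and (a) gives a horizontal distance below $r_0$.

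\emph{Step 1: fix $r_0$ from the strips.} The vertical crossing distance of the strip $V_{0,a}\cap\{\Im z<1/5\}$ is, after scaling, a top-bottom crossing of an $(5a)\times 1$ rectangle. It is a.s.\ positive, by right-continuity of the ball process at $0$ as in the proof of \Cref{lem:distance_across_rectangle-proof}. Hence there is a $\rho>0$ such that it exceeds $\rho$ with probability at least $1-\epsilon$.

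The difficulty is that the internal strip distance only dominates the global distance from above, whereas here we need a lower bound for the global distance. I would handle this with the CLE$_4$/GFF separation trick used in \Cref{step:avoid-wide} of \Cref{lem:avoiding_two_metric_balls}. With positive probability, a level line of height $u$ separates the bottom strip (slightly enlarged, to height $1/4$) from the rest. Then no loop crosses from height $1/5$ down to $0$ outside $V^\star$, and every path from the bottom side to height $1/5$ stays inside a region where locality (Axiom~\eqref{it:weak_axiom_locality}, bullet three, with balls agreeing up to the exit distance) identifies global balls with internal ones. This gives $D([0,a]\times\{0\},[0,a]\times\{1/5\})\ge$ the internal distance, which exceeds $\rho$. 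Set $r_0\defeq\rho/2$.

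\emph{Step 2: make the middle cheap.} Conditionally on the loops meeting the strips, the middle region is, by locality, a CLE$_4$ in some random domain containing a fixed sub-rectangle $(0,a)\times(1/4,3/4)$ with positive probability. For (a), I need the left-right distance inside this sub-rectangle to be below $r_0$ with positive probability. Here I would use level lines as in \Cref{step:dar-levellines} of \Cref{prop:distance_across_rectangle}: a level line of $\Psi+t-\pi/2$ with small $t$ lies in $\SCB_t(\partial)$. By \Cref{lem:guiding_sle4}, with probability at least $ct>0$ a boundary-started level line of this type follows a prescribed horizontal tube from the left side to the right side inside the middle band. Every point on it is within distance $t$ of the boundary. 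Combined with the lower bounds on distance to the top and bottom (from Step 1, with $t\ll r_0$) and lower semicontinuity (\Cref{lem:lower-semi-continuity}), this yields a chain from $L_2$ to the right side of cost at most $2t<r_0$, as in \Cref{step:dar-concl} of that proof.

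\emph{Step 3: (b) and independence.} Item (b) should follow from (a): by (c) and connectedness (Axiom~\eqref{it:weak_axiom_geodesic}\eqref{it:weak_axiom_geodesic_0}), the ball $\SCB_{r_0}(L)$ reaches the right side. It cannot reach the top or bottom sides within distance $r_0<2r_0$, so some component $C$ touching $\{a\}\times(0,1)$ is bounded on the left by a part of the ball emanating from $L$ through heights in $[1/5,4/5]$. The points of $L$ outside $L_2$ have distance to the horizontal sides below $r_0$ only if (c) fails, so the part of $L$ adjacent to such $C$ lies in $L_2$.

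The three events are made jointly positive-probability by conditioning successively on the separating level lines and applying \cite[Lemma~2.8]{miller2017intersections}-type absolute continuity, exactly as in \Cref{step:darc-prob}.

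\emph{Main obstacle.} I expect the main difficulty to be Step 1's transfer of internal lower bounds to global lower bounds, since Axiom~\eqref{it:weak_axiom_locality} only provides the inequality $D\le D^{V_j}$ together with agreement of small balls. The separating level lines must genuinely cut all loops, so that the local ball identity applies up to the relevant distance.
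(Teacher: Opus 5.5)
Your Step~2 does not yield a positive-probability event. \Cref{lem:guiding_sle4} gives the tube event only with probability $\ge ct$, and that order cannot be improved: as $t\downarrow 0$ these SLE$_4(-2t/\pi;2t/\pi-2)$ curves collapse onto the boundary. In \Cref{prop:distance_across_rectangle} this did no harm, because order $r$ nearly independent boxes boosted the success probability to a constant $p$, which beats the failure probability $p/2$ of the side bounds. With $a$ fixed and $t\ll r_0$ there is no such amplification. You must therefore intersect an $O(t)$ event with the event that every point of $\eta$ is $D$-far from the top and bottom, and nothing you say makes that intersection nonempty.

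Concretely, there are two readings of Step~2, and both fail.
\begin{itemize}
\item Suppose $\eta$ is a level line of the field built from the global labels $D^{V_a}_{\Gamma_{V_a}}(\SCL,\partial V_a)$, which is what you need for $\eta\subset\SCB_t(\partial V_a)$. Then your Step~1 bounds are the relevant ones, although they do not control distances from the pieces $\{0\}\times[0,1/5)$, $\{a\}\times[0,1/5)$, \dots\ of $\partial V_a$, which you need in order to land in $L_2$. The problem is independence. Your separating lines are level lines of a different field (the one with random signs on the nested CLE$_4$), and nothing gives conditional independence or absolute continuity between that field's strip events and $\eta$. All you get is $ct-\BP[\text{Step 1 fails}]$, which is negative for small $t$.
\item Suppose instead $\eta$ is a level line for the internal metric of the middle region $W$. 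Then you can hope for conditional independence. But now you need lower bounds on $W$-internal distances from the top and bottom of $W$ to the tube. Step~1 says nothing about these, and they fail with probability of the same order $t$ (a macroscopic bubble from the top of $W$ discovered before time $t$). You are then comparing two $O(t)$ terms with uncontrolled constants.
\end{itemize}

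The missing idea is a mechanism that needs no lower bounds inside the middle region and succeeds with conditional probability bounded below once $r_0$ is fixed. The paper conditions on the loops meeting heights $1/5$ and $2/5$, so that the CLE$_4$ in $U$ is fresh. It then uses \Cref{lem:loop_surrounds_ball} to find, with conditional probability $\ge p$, a single loop $\widetilde\SCL$ whose image under $\phi$ surrounds $B_r(0)$, with $r$ close to $1$. The regions between $\widetilde\SCL$ and the two vertical arcs of $U$ are then conformal rectangles of modulus $<b_0$. By \Cref{cor:moment_across_rectangle}, each is crossed at cost $\le r_0/100$ with probability $\ge q$. Since $\widetilde\SCL$ is a single point of the metric and internal distances dominate $D^{V_a}_{\Gamma_{V_a}}$, the horizontal distance is at most $r_0/50$.

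Item~(b) also does not follow from (a) and (c). Neither condition prevents $\SCB_{r_0}(L)$ from containing a connected wall of discovered loops running alongside $L_2$, separating $L_2$ from every complementary component that reaches $\{a\}\times(0,1)$. Also, the ball does touch the top and bottom sides, at the corners of $L$, so your claim that it cannot reach them is false as stated. The paper gets (b) from smallness of $r_0$. It applies \Cref{prop BCLE rho to zero} in the region $V^\dagger$ above the loops of $\Gamma_2$, so the main unexplored component still touches $\{0\}\times(3/5,4/5)$. It then uses that crossing from height $2/5$ to $3/5$ costs at least $2r_0$, so the ball from $L$ agrees there with the internal ball of $V^\dagger$.

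Finally, the obstacle you flagged in Step~1 is not real. Global distances such as $D^{V_a}_{\Gamma_{V_a}}([0,a]\times\{0\},[0,a]\times\{1/5\})$ are themselves a.s.\ positive, by right-continuity at time $0$ of the ball grown from a boundary arc. So $r_0$ can be fixed directly, and the real difficulty is producing a horizontal crossing of cost $<r_0$ jointly with these bounds.
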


\begin{figure}
	\centering
	\includegraphics[width=\linewidth]{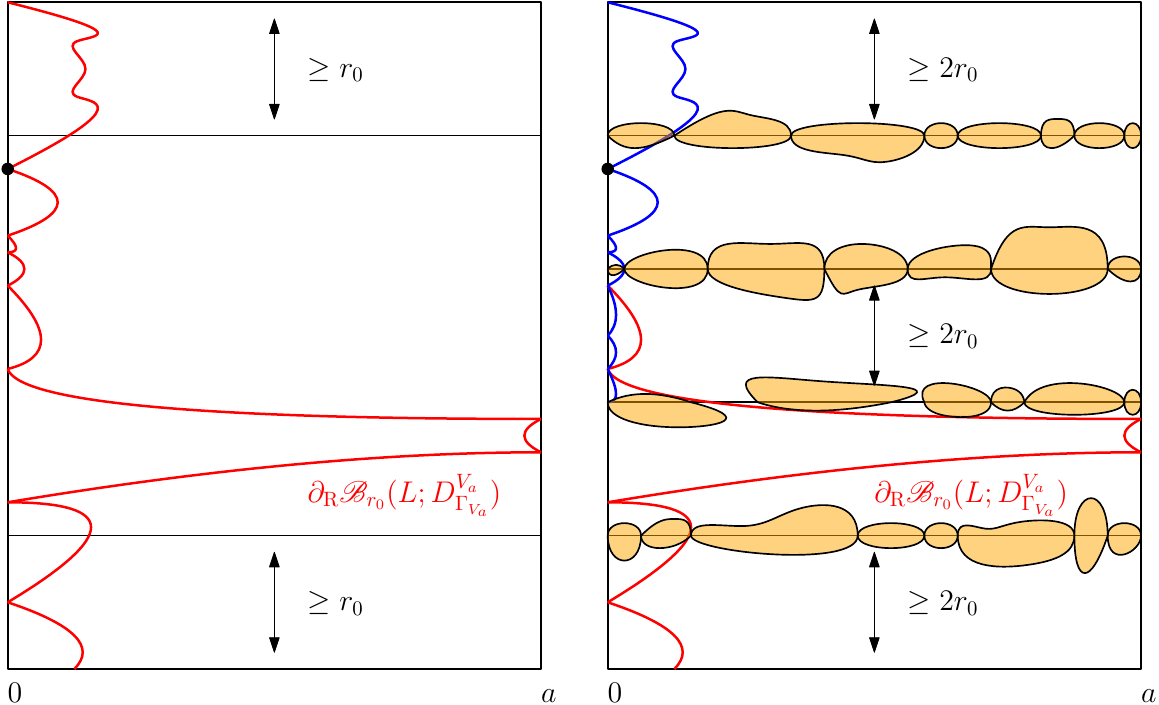}
	\caption{\textbf{Left:} Illustration of the statement of \Cref{lem:vertical_vs_horizontal}. The $D^{V_a}_{\Gamma_{V_a}}$-distance from top to bottom across the top and bottom rectangles is at least $r_0$, and the metric ball $\SCB_{r_0}(L; D^{V_a}_{\Gamma_{V_a}})$ hits the right-hand side of $V_a$ and its right boundary intersects $L_2$ at the black point. \textbf{Right:} Illustration of the proof of \Cref{lem:vertical_vs_horizontal}. The rectangle $V_a$ is divided into five rectangles from bottom to top, $G_1, \ldots, G_5$. The distance from top to bottom across each of $G_1, G_3$, and $G_5$ is at least $2r_0$. The distance from left to right across $G_2$ is smaller than $r_0$. Moreover, the right boundary of the metric ball starting from $\{0\} \times [2/5, 1]$ (in blue) intersects the left boundary of $G_4$. The right boundary of the metric ball starting from $L$ (in red) agrees with the blue boundary in $G_4$ and $G_5$ since the distance from top to bottom across $G_3$ is at least $2r_0$.}
	\label{fig:crossing-rectangle}
\end{figure}

See the left-hand side of \Cref{fig:crossing-rectangle} for an illustration of the above lemma. \Cref{lem:vertical_vs_horizontal} will follow from \Cref{lem:loop_surrounds_ball,lem:no_loop_crossing}, which are stated and proved below. We start by stating and proving \Cref{lem:loop_surrounds_ball}. It states that for any fixed $r \in (0,1)$, with positive probability, some loop of a CLE$_4$ on $\BD$ surrounds $B_r(0)$.

\begin{lemma}\label{lem:loop_surrounds_ball}
Fix $r \in (0,1)$ and suppose that $\Gamma_{\BD}$ is a CLE$_4$ on $\BD$. Then, there exists $p \in (0,1)$, depending only on $r$, such that with probability at least $p$, $B_r(0)$ is contained in the interior of some loop in $\Gamma_{\BD}$.
\end{lemma}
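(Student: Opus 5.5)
The plan is to use the uniform exploration targeted at the origin, recalled in \Cref{subsec:labeled_cle_4}, together with the fact that $C_{\tau_0}(0)$ is the interior of the loop $\SCL(0)$. It suffices to show that, with positive probability, $\overline{B_r(0)} \subset C_{\tau_0}(0)$. Equivalently, no bubble discovered before or at time $\tau_0$ comes within distance $r$ of $0$ in the original disk. Since $\SCL(0)$ is a.s.\ a simple loop encircling $0$, the set $\mathop{\mathrm{int}}(\SCL(0))$ is open and contains $0$. Hence for some random $\rho>0$ we have $B_\rho(0) \subset \mathop{\mathrm{int}}(\SCL(0))$. This only gives a positive probability for a small radius; the task is to upgrade it to an arbitrary fixed $r<1$.

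For the upgrade I would use the Markov property of the exploration at a deterministic time $t$, as in \Cref{prop BCLE rho to zero}. Fix $\varepsilon>0$ small with $r<1-2\varepsilon$. By \Cref{lem:uniform_exploration_conformal_radius}, for $t$ small the event $A_t \defeq \{\SCB_t(\partial\BD) \cap B_{1-\varepsilon}(0)=\emptyset\}$ has probability at least $1-Ct \ge 1/2$. The event $\{\tau_0>t\}$ has probability $e^{-t}$, close to one. Conditionally on the exploration up to time $t$, and on $\{\tau_0>t\}$, the remaining process in $C_t(0)$ is, after applying the conformal map $\phi_t\colon C_t(0)\to\BD$ with $\phi_t(0)=0$, an independent uniform exploration in $\BD$. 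On $A_t$, $C_t(0) \supset B_{1-\varepsilon}(0)$, so by the Koebe distortion/Schwarz lemma $\phi_t(B_r(0)) \subset B_{r'}(0)$ for some $r'<1$ depending only on $r,\varepsilon$. This reduction by itself does not shrink the radius, so a different mechanism is needed.

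The better mechanism is monotonicity plus the Brownian loop soup. Using \Cref{subsec:brownian_loop_soup}, $B_r(0)$ is surrounded by a CLE$_4$ loop as soon as some outermost cluster of the intensity-$1$ soup in $\BD$ contains a single Brownian loop that winds around $\overline{B_r(0)}$ while staying in $\BD$. Indeed, the outer boundary of that loop's cluster then surrounds $B_r(0)$. If that cluster is not outermost, a larger cluster surrounds it, and the CLE$_4$ loop which is the outer boundary of the corresponding outermost cluster still surrounds $B_r(0)$. The set of Brownian loops in $\BD$ that disconnect $\overline{B_r(0)}$ from $\partial\BD$ has positive and finite $\mu^{\mathrm{loop}}_\BD$-mass: it is positive because a Brownian loop near a circle of radius $(1+r)/2$ does this with positive probability, and finite because such loops have diameter bounded below. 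Therefore the Poisson process contains at least one such loop with probability $p=1-\exp(-\mu^{\mathrm{loop}}_\BD(\cdot))>0$, which depends only on $r$.

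The only delicate point is the justification that the outer boundary of a cluster containing a surrounding Brownian loop indeed surrounds $B_r(0)$, and that the non-nested CLE$_4$ loop is the boundary of the outermost cluster enclosing it. This is immediate from the definitions in \Cref{subsec:brownian_loop_soup}. I expect this to be the main (but mild) obstacle; the rest is the positivity of the loop measure of the surrounding event, which follows from the explicit construction of $\mu^{\mathrm{loop}}$.
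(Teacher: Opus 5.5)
Your final loop-soup argument is correct and is exactly the paper's proof: with positive probability the intensity-one soup contains a Brownian loop in $A_{r,1}(0)$ disconnecting $\overline{B_r(0)}$ from $\partial\BD$, and the CLE$_4$ loop bounding the outermost cluster that contains or surrounds that Brownian loop then has $B_r(0)$ in its interior. The uniform-exploration detour in your first two paragraphs plays no role and can simply be deleted.
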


\begin{proof}
    This follows from the construction of \Cref{subsec:brownian_loop_soup}: the Brownian loop measure of the collection of Brownian loops contained in $A_{r,1}(0)$ and disconnecting its inner and outer boundaries is positive, so with positive probability some soup loop surrounds $B_r(0)$; the loop of $\Gamma_\BD$ bounding the outermost cluster containing or surrounding it then has $B_r(0)$ in its interior.
\end{proof}

\begin{lemma}\label{lem:no_loop_crossing}
Fix $a>0$ and $0 < r_1 < r_2 < 1$, and let $\Gamma_{V_a}$ be a non-nested CLE$_4$ in $V_a$. Then, with positive probability, no loop of $\Gamma_{V_a}$ intersects both the top and the bottom boundary of the rectangle $(0,a) \times (r_1, r_2)$.
\end{lemma}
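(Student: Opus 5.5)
Our plan is to exhibit an explicit positive-probability event on which every loop of $\Gamma_{V_a}$ that comes near the strip $S \defeq (0,a)\times(r_1,r_2)$ is small. Fix $\rho>0$ much smaller than $r_2-r_1$. Our target event is that every loop of $\Gamma_{V_a}$ meeting $\overline S$ has diameter less than $\rho$. On this event, a loop touching both $\{\Im = r_1\}$ and $\{\Im = r_2\}$ inside $[0,a]\times\mathbb{R}$ would need diameter at least $r_2-r_1>\rho$, so no such loop exists.

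To show the target event has positive probability, we use the Brownian loop soup construction of \Cref{subsec:brownian_loop_soup}, or equivalently the level-line construction used in \Cref{step:avoid-wide} of the proof of \Cref{lem:avoiding_two_metric_balls}. The soup is the cleaner choice. Cover $\overline S$ by finitely many small closed squares $Q_1,\dots,Q_N$ of side $\rho/10$ whose closures lie in $V_a$ or meet $\partial V_a$ only along the left and right sides. We seek a positive-probability event on which the clusters meeting $\overline S$ all have small diameter.

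A convenient way to obtain this is monotonicity in the intensity, realised through a thinning coupling. The soup of intensity $1$ restricted to loops contained in a neighbourhood of $\overline S$ contains, with positive probability, no loop of diameter at least $\rho/20$. The soup measure of such loops in a bounded region is finite, since loops of diameter at least $\rho/20$ contained in a bounded set have finite $\mu^{\mathrm{loop}}$ mass. The remaining small loops can still chain together into a macroscopic cluster, and ruling this out is the genuine difficulty. Here we will use the fact that intensity $1$ is critical rather than supercritical: by \cite{CLE}, outermost clusters at intensity $1$ are bounded away from $\partial V_a$ and locally finite. Hence, a.s., only finitely many clusters have diameter at least $\rho$, and these form the finite collection of macroscopic loops of $\Gamma_{V_a}$.

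The remaining task is to show that, with positive probability, none of these finitely many large loops meets $\overline S$. For this we will use level lines. Couple $\Gamma_{V_a}$ with a GFF as in \Cref{subsec:nested_cle_gff}. Draw the level line of height $u\in(0,\pi/2)$ from $0$ to $a$, and a second one of height $-u$, and ask that both stay inside $S$ and separate $\{\Im\le r_1\}$ from $\{\Im \ge r_2\}$; by \cite[Lemma~2.5]{miller2017intersections} this happens with positive probability. This alone does not prevent a loop from crossing $S$. We therefore replace it with the following idea: with positive probability there is a horizontal chain of level lines at height $0$ that close up loops. More directly, the event from \Cref{step:avoid-wide} of the proof of \Cref{lem:avoiding_two_metric_balls}, rotated by $90^\circ$, yields a curve $\eta$ from the left side to the right side of $V_a$ that lies inside $S$ and that no loop crosses, by the level line interaction rules (\Cref{thm:level_line_interaction}). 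With positive probability, such a horizontal level line of height $u$ crosses from $\{0\}\times(r_1,r_2)$ to $\{a\}\times(r_1,r_2)$ while staying inside $S$. This again follows from \cite[Lemma~2.5]{miller2017intersections} together with absolute continuity (\cite[Lemma~2.8]{miller2017intersections}), since the path stays at positive distance from the top and bottom of $V_a$.

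Since no CLE$_4$ loop crosses a level line of height $u\in(0,\pi/2)$, every loop meeting both boundaries of $S$ would have to cross $\eta$, which is a contradiction. We expect the main obstacle to be the boundary behaviour near the corners $\{0\}\times[r_1,r_2]$ and $\{a\}\times[r_1,r_2]$, where the level line must start and end on the vertical sides. We will handle this by starting $\eta$ at a deterministic boundary point of the left side inside $(r_1,r_2)$ and targeting a point of the right side inside $(r_1,r_2)$. The curve then has the law of an SLE$_4(\rho_{\mathrm L};\rho_{\mathrm R})$ with piecewise constant boundary data, and confining it to a tube is handled by the cited lemma.
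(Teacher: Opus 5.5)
Your final argument is correct and is essentially the paper's proof. Couple $\Gamma_{V_a}$ with the GFF. With positive probability, a level line started on $\{0\}\times(r_1,r_2)$ and targeted on $\{a\}\times(r_1,r_2)$ reaches the right side while staying in the strip, by confinement plus absolute continuity. The CLE$_4$ loops, being level loops of heights $\pm\lambda$, cannot cross it. The only difference is that the paper uses height $0$ rather than $u\in(0,\pi/2)$, which changes nothing.

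The loop-soup discussion and the ``all loops meeting $\overline S$ are small'' target event are never completed and are not needed, so you should drop them. Your aside that a single separating level line ``does not prevent a loop from crossing $S$'' is also wrong, as your own conclusion shows.
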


\begin{proof}
    Consider the coupling of CLE$_4$ with a GFF as its level lines as in \Cref{subsec:nested_cle_gff}. Then, it holds with positive probability that the level line of the GFF with height $0$, started from $(r_1 + r_2)\ri/2$ and targeted at $a + (r_1+r_2)\ri/2$, hits $\{a\} \times (r_1, r_2)$ before hitting $(0,a) \times \{r_1\}$ or $(0,a) \times \{r_2\}$ (by \cite[Lemma~2.8]{miller2017intersections} and \cite[Lemma~A.1]{CoInCLERiemSph}, as in \Cref{sec:distances_across_rectangles}). On this event, there cannot exist a loop of CLE$_4$ intersecting both $(0,a) \times \{r_1\}$ and $(0,a) \times \{r_2\}$: the stopped level line joins the left and right sides of $(0,a) \times (r_1, r_2)$ within its closure, so such a loop would cross it, which is impossible since the loops of $\Gamma_{V_a}$ are level loops of heights $\pm \lambda$, 
    and always stay on the same side of a height-$0$ level line (cf.\ \Cref{thm:level_line_interaction}).
\end{proof}

Before proving \Cref{lem:vertical_vs_horizontal}, we will give a brief description of the main ideas behind its proof and describe the setup. See also the right-hand side of \Cref{fig:crossing-rectangle} for some of the ideas of the proof.

For each $j \in \{1,2,3,4\}$, let $\Gamma_j$ denote the collection of loops in $\Gamma_{V_a}$ which intersect $(0,a) \times \{j/5\}$. Then, \Cref{lem:no_loop_crossing}, together with the FKG inequality for PPPs (see \cite[Theorem 20.4]{LP18LecturePoissonProcess}) applied to the Brownian loop soup of \Cref{subsec:brownian_loop_soup} (the three events $\{\Gamma_j \cap \Gamma_{j+1} = \emptyset\}$ are decreasing in the soup, as adding a loop enlarges clusters and hence the loops of $\Gamma_{V_a}$), implies that there exists $p \in (0,1)$ such that with probability at least $p$, we have that
\begin{align}\label{eqn:loops_not_intersecting}
\Gamma_1 \cap \Gamma_2 = \Gamma_2 \cap \Gamma_3 = \Gamma_3 \cap \Gamma_4 = \emptyset
\end{align}
and that $\bigcup_{\SCL \in \Gamma_1 \cup \Gamma_2} \overline{\mathop{\mathrm{int}}(\SCL)}$ does not separate $\{0\} \times (1/5, 2/5)$ from $\{a\} \times (1/5, 2/5)$ within $(0,a) \times (1/5, 2/5)$. (This fourth event is also decreasing in the soup, since the filled loops only grow, and it has positive probability: the stopped level line in the proof of \Cref{lem:no_loop_crossing}, run for the strip $(0,a) \times (1/5, 2/5)$, avoids every interior of loop.)

Let $U$ denote the connected component of $V_a \setminus \overline{\bigcup_{\SCL \in \Gamma_1 \cup \Gamma_2} \SCL}$ whose boundary intersects both $\{0\} \times (1/5, 2/5)$ and $\{a\} \times (1/5, 2/5)$. On the above event, $U$ exists and is unique: each loop of $\Gamma_1 \cup \Gamma_2$ is at positive distance from the two segments, and an accumulation of infinitely many loops of $\Gamma_1 \cup \Gamma_2$ lies on one of the horizontal lines, so each segment lies on the boundary of a single component, and a path realizing the non-separation joins the two. In particular, the two segments are free boundary arcs of $U$, and $\ri/5$, $2\ri/5$, $a + \ri/5$, $a + 2\ri/5$ denote the prime ends of $U$ obtained as their endpoint limits.
Suppose that we are working on the event that~\eqref{eqn:loops_not_intersecting} occurs. Let $\phi$ be the conformal mapping from $U$ onto $\BD$ such that $\phi(\ri/5) = -\ri$, $\phi(2\ri/5) = \ri$, and $\phi(a + 2\ri/5) = 1$. On the event~\eqref{eqn:loops_not_intersecting}, the four quantities appearing below are a.s.\ positive; hence there exists a deterministic constant $r_0 \in (0,1/2)$ such that, after possibly decreasing $p \in (0,1)$, with probability at least $p$, we have in addition to~\eqref{eqn:loops_not_intersecting} that $\dist(\phi(a + \ri/5), \{-\ri, 1\}) \geq r_0$ and the $D_{\Gamma_{V_a}}^{V_a}$-distance between $[0,a] \times \{1/5\}$ and $[0,a] \times \{0\}$ is at least $2r_0$, and that the same is true for the $D_{\Gamma_{V_a}}^{V_a}$-distance between $[0,a] \times \{4/5\}$ (resp.\ $[0,a] \times \{2/5\}$) and $[0,a] \times \{1\}$ (resp.\ $[0,a] \times \{3/5\}$). These two inequalities for the $D_{\Gamma_{V_a}}^{V_a}$-distance can be obtained using the fact that, on the event that~\eqref{eqn:loops_not_intersecting} occurs, the $D_{\Gamma_{V_a}}^{V_a}$-distances which are considered are a.s.\ nonzero: a zero distance would force the closure of the union of the filled loops meeting one segment to meet the other, by right-continuity and closedness of the ball process; each such loop has diameter at least $1/5$, so only finitely many approach the other segment, and each is disjoint from it (loops avoid $\partial V_a$, and~\eqref{eqn:loops_not_intersecting} handles the interior pair). In particular, we already have condition~\eqref{it: lemma crossing c} of \Cref{lem:vertical_vs_horizontal}. 

Moreover, let $V^\dagger$ be the conformal rectangle defined as the connected component of $((0,a)\times (2/5, 1)) \setminus \overline{\bigcup_{\SCL \in \Gamma_2} \SCL }$, whose top boundary is $[0,a] \times \{1\}$, whose bottom boundary is included in the closure of the union of loops in $\Gamma_2$ (a.s., this closure covers $(0,a) \times \{2/5\}$: the interiors of the loops of $\Gamma_2$ cover a dense open subset of this segment, and the endpoints of each maximal covered subinterval lie on loops of $\Gamma_2$), and whose left and right boundaries are $\{0\}\times [2/5, 1]$ and $\{a\}\times [2/5, 1]$, respectively (loops of $\Gamma_2$ accumulate on the vertical sides only at the corners $2\ri/5$ and $a + 2\ri/5$, since such a loop within distance $\delta$ of a side point at height at least $2/5 + 2\delta$ has diameter at least $\delta$); we mark the corners $2\ri/5$, $\ri$, $a + \ri$, and $a + 2\ri/5$. By possibly taking $p$ and $r_0$ to be smaller, on the same event as above, there exists a connected component $C$ of $V^\dagger \setminus \SCB_{r_0}(\{0\} \times [2/5, 1]; D^{V^\dagger}_{\Gamma_{V_a}\vert_{V^\dagger}})$ such that $\overline{C}$ contains the right-hand side of $V^\dagger$. Actually, by possibly taking $p$ and $r_0$ to be even smaller, thanks to \Cref{prop BCLE rho to zero}, and using that the metric ball starting from the left-hand side of $V^\dagger$ is included in the metric ball starting from $\partial V^\dagger$, on the same event as above, we have that
\begin{equation}\label{eq right boundary intersects left boundary}
	\overline{C} \cap \SCB_{r_0}(\{0\} \times [2/5, 1]; D^{V^\dagger}_{\Gamma_{V_a}\vert_{V^\dagger}}) \cap (\{0\} \times (3/5, 4/5)) \neq \emptyset.
\end{equation}
Since crossing the strip $(0,a) \times (2/5, 3/5)$ costs at least $2r_0$, the ball $\SCB_{r_0}(\{0\} \times [0,1]; D^{V_a}_{\Gamma_{V_a}})$ can enter $V^\dagger$ only through its left side, where it agrees with $\SCB_{r_0}(\{0\} \times [2/5, 1]; D^{V^\dagger}_{\Gamma_{V_a}\vert_{V^\dagger}})$ (the arc analog of the ball consistency in Axiom~\eqref{it:weak_axiom_locality}), and $C$ lies in a component of $V_a \setminus \SCB_{r_0}(\{0\} \times [0,1]; D^{V_a}_{\Gamma_{V_a}})$ whose closure meets $\{a\} \times (0,1)$. Hence~\eqref{eq right boundary intersects left boundary} implies that $\partial_\mathrm{R}\SCB_{r_0}(\{0\} \times [0,1]; D^{V_a}_{\Gamma_{V_a}})$ intersects $\{0\}\times (3/5, 4/5)$ (in the notation of \Cref{lem:vertical_vs_horizontal}). See the right-hand side of \Cref{fig:crossing-rectangle}. In particular,~\eqref{eq right boundary intersects left boundary} implies condition~\eqref{it: lemma crossing b} of \Cref{lem:vertical_vs_horizontal}.

The main idea of the proof of \Cref{lem:vertical_vs_horizontal} is the following. First, we note that \Cref{lem:loop_surrounds_ball} implies that it is a positive probability event that we can find a loop $\SCL$ in $\Gamma_{V_a}|_U$ whose $D_{\Gamma_{V_a}|_U}^U$-distance to all of the marked boundary arcs of the conformal rectangle $U$ is small but fixed. Then, conditionally on the above event, it is very likely that the $D_{\Gamma_{V_a}|_U}^U$-distances between $\SCL$ and each of the two marked boundary arcs of $U$ contained in $(\{0\} \times (0,1)) \cup (\{a\} \times (0,1))$ are much smaller than the $D_{\Gamma_{V_a}}^{V_a}$-distance between $(0,a) \times \{0\}$ and $(0,a) \times \{1\}$. Thus, combining this with the locality property of $D_{\Gamma_{V_a}}^{V_a}$ and the fact that $\SCL$ is a loop in $\Gamma_{V_a}$, we will complete the proof of \Cref{lem:vertical_vs_horizontal}.

Fix $r \in (0,1)$ sufficiently close to $1$ (to be chosen), and let $\mathcal{R}_1$ (resp.\ $\mathcal{R}_2$) be the conformal rectangle bounded by the clockwise arc of $\partial \BD$ from $-\ri $ to $\ri$, the segment $[\ri,\ri r]$, the counterclockwise arc of $\partial B_r(0)$ from $\ri r$ to $-\ri r$, and the segment $[-\ri r,-\ri ]$ (resp.\ the counterclockwise arc of $\partial \BD$ from $\phi(a + \ri/5)$ to $1$, the segment $[1,r]$, the clockwise arc of $\partial B_r(0)$ from $r$ to $r e^{\ri \, \text{arg}(\phi(a + \ri/5))}$ and the segment $[r e^{\ri \, \text{arg}(\phi(a + \ri/5))},\phi(a + \ri/5)]$). Let $\phi_1$ be the unique conformal mapping from $\mathcal{R}_1$ onto the rectangle $(0,w_1) \times (0,1)$ such that $\phi_1(-\ri) = 0$, $\phi_1(-\ri r) = w_1$, $\phi_1(\ri r) = w_1 + \ri$, and $\phi_1(\ri) = \ri$ for some $w_1 \in (0,\infty)$. Similarly we let $\phi_2$ be the unique conformal mapping from $\mathcal{R}_2$ onto the rectangle $(0,w_2) \times (0,1)$ such that $\phi_2(r e^{\ri \, \text{arg}(\phi(a + \ri/5))}) = 0$, $\phi_2(\phi(a + \ri/5)) = w_2$, $\phi_2(1) = w_2 + \ri$, and $\phi_2(r) = \ri$ for some $w_2 \in (0,\infty)$. (Note that $\mathcal{R}_1$ is deterministic while $\mathcal{R}_2$ is random.)

\begin{figure}
	\centering
	\includegraphics[width=\linewidth]{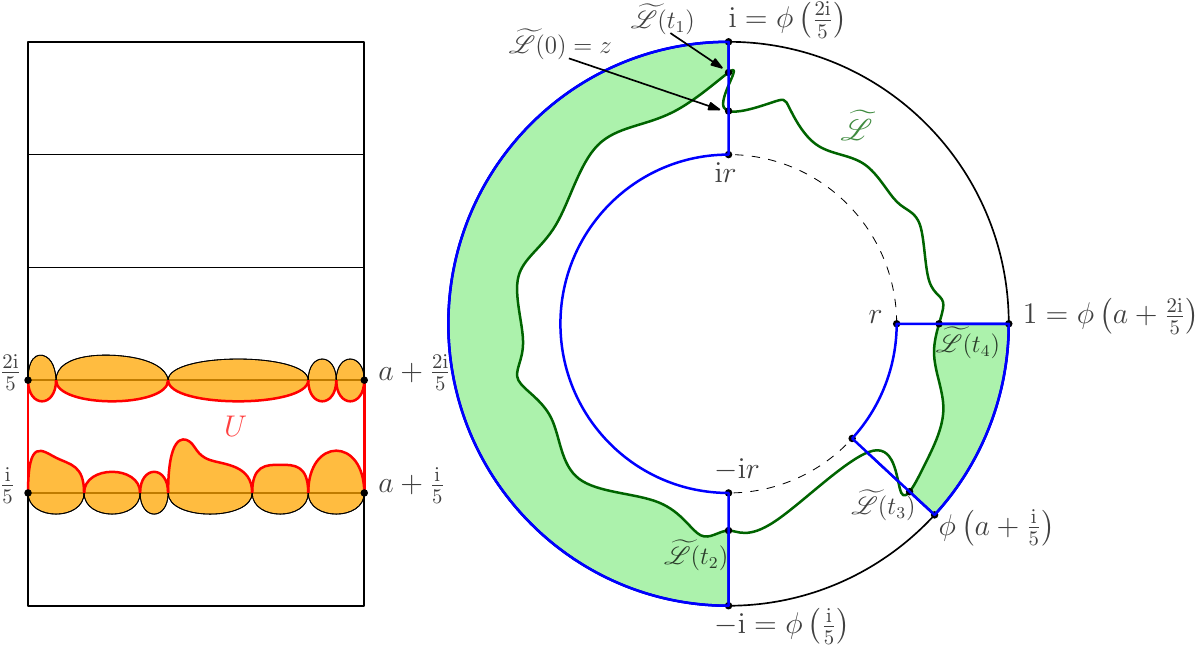}
	\caption{\textbf{Left:} Illustration of the topological rectangle $U$ in the proof of \Cref{lem:vertical_vs_horizontal}. \textbf{Right:} Illustration of the image of $U$ by $\phi$. The topological rectangles $\mathcal{R}_1$ and $\mathcal{R}_2$ have their sides colored in blue, while the topological rectangles $\widetilde{\mathcal{R}}_1$ and $\widetilde{\mathcal{R}}_2$ are colored in green. The loop $\widetilde{\SCL}$ is in dark green.}
	\label{fig:UR1R2}
\end{figure}

\begin{proof}[Proof of \Cref{lem:vertical_vs_horizontal}]
\step{step:vh-outline}{Outline and setup} 
We begin by describing the main steps of the proof. Suppose that we have the setup described in the paragraphs following the statement of the lemma and recall that we have fixed $p \in (0,1), r_0 \in (0,1/2)$ such that with probability at least $p$, we have that \begin{itemize}
	\item The event~\eqref{eqn:loops_not_intersecting} occurs and $\dist(\phi(a + \ri/5), \{-\ri, 1\}) \geq r_0$;
	\item The $D_{\Gamma_{V_a}}^{V_a}$-distance between $[0,a] \times \{1/5\}$ and $[0,a] \times \{0\}$ is at least $2r_0$, and that the same is true for the $D_{\Gamma_{V_a}}^{V_a}$-distance between $[0,a] \times \{4/5\}$ (resp.\ $[0,a] \times \{2/5\}$) and $[0,a] \times \{1
		\}$ (resp.\ $[0,a] \times \{3/5\}$);
	\item We have~\eqref{eq right boundary intersects left boundary}.
\end{itemize}
From now on, we assume that we are working on the event that the above events occur. 

First, we will show in \Cref{step:vh-rects} that the rectangles $\mathcal{R}_1$ and $\mathcal{R}_2$ are not too wide in the sense that both $w_1$ and $w_2$ are bounded by some deterministic constant. Next, in \Cref{step:vh-thin}, we will construct thin rectangles $\widetilde{\mathcal{R}}_1$ and $\widetilde{\mathcal{R}}_2$ satisfying the following property. The arc $\phi([\ri/5,2\ri/5])$ (resp.\ $\phi([a + \ri/5,a + 2\ri/5])$) is one of the marked boundary arcs of $\widetilde{\mathcal{R}}_1$ (resp.\ $\widetilde{\mathcal{R}}_2$), and there is a marked boundary arc $\widetilde{I}_1$ (resp.\ $\widetilde{I}_2$) of $\widetilde{\mathcal{R}}_1$ (resp.\ $\widetilde{\mathcal{R}}_2$) such that $\widetilde{I}_1 \cup \widetilde{I}_2 \subset \widetilde{\SCL}$ for some loop $\widetilde{\SCL}$ in $\phi(\Gamma_{V_a}|_U)$ such that the $D_{\phi(\Gamma_{V_a}|_U)|_{\widetilde{\mathcal{R}}_1}}^{\widetilde{\mathcal{R}}_1}$ (resp.\ $D_{\phi(\Gamma_{V_a}|_U)|_{\widetilde{\mathcal{R}}_2}}^{\widetilde{\mathcal{R}}_2}$)-distance between $\phi([\ri/5,2\ri/5])$ and $\widetilde{I}_1$ (resp.\ $\phi([a + \ri/5,a + 2\ri/5])$ and $\widetilde{I}_2$) is very small. In particular, this will imply that the $D_{\phi(\Gamma_{V_a}|_U)}^{\phi(U)}$-distance between $\phi([\ri/5,2\ri/5])$ and $\phi([a + \ri/5,a + 2\ri/5])$ is very small. Finally, in \Cref{step:vh-concl}, we will complete the proof of the lemma using the conformal invariance property of the CLE$_4$ metric.

\step{step:vh-rects}{The conformal rectangles ${\mathcal{R}}_1$ and ${\mathcal{R}}_2$ are not too wide}

\substepn{step:vhr-R1}{The rectangle $\mathcal{R}_1$} Fix $q \in (\max\{p, 1/2\},1)$, so that $1 - 2(1-q) > 0$. Note that there exists $b_0 \in (0,1)$ such that for all $b \in (0,b_0)$, the following holds with probability at least $q$. Let $\Gamma_{V_b}$ be a CLE$_4$ on $V_b$. Then the $D_{\Gamma_{V_b}}^{V_b}$-distance between $\{0\} \times (0,1)$ and $\{b\} \times (0,1)$ is at most $r_0 / 100$. (This follows from \Cref{cor:moment_across_rectangle}, applied after a rotation and conformal invariance to identify this distance with the top-bottom distance across a $(1/b) \times 1$ rectangle.) We claim that we can choose $r \in (0,1)$ sufficiently close to $1$ (depending only on $r_0$ and $b_0$) such that $\max\{w_1,w_2\} < b_0$. Indeed, fix $z \in \mathcal{R}_1$. Then if $r \in (0,1)$ is sufficiently close to $1$ in a universal manner, we have that either $\dist(z,[\ri r,\ri]) \geq 1/4$ or $\dist(z,[-\ri, -\ri r]) \geq 1/4$. Suppose that the former case holds. Then the Beurling estimate implies that there exists a universal constant $C \in (0,\infty)$ such that the probability that a complex Brownian motion starting from $z$ exits $\mathcal{R}_1$ on the segment $[\ri r,\ri]$ is at most $C(4(1-r))^{1/2}$. Similarly, if $\dist(z,[-\ri, -\ri r]) \geq 1/4$, then the probability that a complex Brownian motion starting from $z$ exits $\mathcal{R}_1$ on $[-\ri, -\ri r]$ is at most $C (4(1-r))^{1/2}$. In either case, the minimum of these two probabilities is at most $C(4(1-r))^{1/2}$; taking the supremum over $z \in \mathcal{R}_1$ and using the conformal invariance of complex Brownian motion, we obtain
\begin{align*}
	\Theta\left(\pi / w_1\right) \le 2 C \left(4(1-r)\right)^{1/2},
\end{align*}
where $\Theta$ is as in \Cref{lem:extremal_length}, applied to $\mathcal{R}_1$ with the two pairs of marked sides interchanged (so the relevant modulus is $1/w_1$). Thus, choosing $r$ close enough to $1$ (depending only on $b_0$) that $2C(4(1-r))^{1/2} < \Theta(\pi / b_0)$, we obtain that $w_1 < b_0$ since $\Theta$ is continuous and strictly decreasing (see \Cref{lem:extremal_length}).

\substepn{step:vhr-R2}{The rectangle $\mathcal{R}_2$} Similarly, if $z \in \mathcal{R}_2$, then we have that either $\dist(z,[r,1]) \geq r_0 / 4$ or $\dist(z,[r e^{\ri \arg(\phi(a + \ri/5))}, \phi(a + \ri/5)]) \geq r_0 / 4$ since $\min\{|\phi(a + \ri/5)-1|, |\phi(a + \ri/5)+\ri|\} \geq r_0$.  Therefore,  by arguing as above,  we obtain that $\Theta(\pi / w_2) \le 2C(4(1-r)/r_0)^{1/2} < \Theta(\pi / b_0)$ and so $w_2 < b_0$.  This proves the claim.

\step{step:vh-thin}{Constructing thin rectangles $\widetilde{\mathcal{R}}_1$ and $\widetilde{\mathcal{R}}_2$}

\substepn{step:vht-loop}{Conditioning and the loop $\widetilde{\SCL}$} We construct conformal rectangles $\widetilde{\mathcal{R}}_1$ and $\widetilde{\mathcal{R}}_2$ as described in \Cref{step:vh-outline}. First, by the domain Markov property and conformal invariance of the $\CLE_4$, conditionally on the loops of $\Gamma_{V_a}$ which intersect $([0,a] \times \{1/5\}) \cup ([0,a] \times \{2/5\})$, 
$\phi(\Gamma_{V_a}\vert_U)$ is a CLE$_4$ on $\BD$. Therefore, \Cref{lem:loop_surrounds_ball} implies that by taking $p \in (0,1)$ to be smaller, we have that the conditional probability given $U$ that there exists a loop $\widetilde{\SCL} \in \phi(\Gamma_{V_a}|_U)$ such that $\widetilde{\SCL}$ surrounds $B_r(0)$ is at least $p$. On this event, we let $\widetilde\SCL$ be the outermost such loop.

\substepn{step:vht-build}{Construction of $\widetilde{\mathcal{R}}_1$ and $\widetilde{\mathcal{R}}_2$} Since $\widetilde\SCL$ separates $\partial B_r(0)$ from $\partial\BD$, it crosses $\mathcal{R}_1$ between $[\ri r, \ri]$ and $[-\ri, -\ri r]$: there are $t_1 < t_2$ with $\widetilde\SCL(t_1) \in [\ri r, \ri]$, $\widetilde\SCL(t_2) \in [-\ri, -\ri r]$, and $\widetilde\SCL|_{(t_1,t_2)} \subset \mathcal{R}_1$. Let $\widetilde{\mathcal{R}}_1$ be the conformal rectangle bounded by the clockwise arc of $\partial \BD$ from $-\ri$ to $\ri$ and the segments $[\ri,\widetilde{\SCL}(t_1)]$, $\widetilde{\SCL}([t_1,t_2])$ and $[\widetilde{\SCL}(t_2),-\ri]$. Similarly, $\widetilde\SCL$ contains a crossing of $\mathcal{R}_2$: there are times $t_3 < t_4$ such that $\widetilde\SCL(t_3) \in [r e^{\ri \arg(\phi(a + \ri/5))}, \phi(a + \ri/5)]$, $\widetilde\SCL(t_4) \in [r,1]$, and $\widetilde\SCL|_{(t_3,t_4)} \subset \mathcal{R}_2$. Let $\widetilde{\mathcal{R}}_2$ be the conformal rectangle bounded by the counterclockwise arc of $\partial \BD$ from $\phi(a + \ri/5)$ to $1$ and the segments $[1,\widetilde{\SCL}(t_4)]$, $\widetilde{\SCL}([t_3,t_4])$ and $[\widetilde{\SCL}(t_3),\phi(a + \ri/5)]$. Let $\widetilde{\phi}_1$ be the conformal mapping from $\widetilde{\mathcal{R}}_1$ onto a rectangle $(0,\widetilde{W}_1) \times (0,1)$ such that $\widetilde{\phi}_1(-\ri) = 0$, $\widetilde{\phi}_1(\widetilde{\SCL}(t_2)) = \widetilde{W}_1$, $\widetilde{\phi}_1(\widetilde{\SCL}(t_1)) = \widetilde{W}_1+\ri$, and $\widetilde{\phi}_1(\ri) = \ri $ for some $\widetilde{W}_1 \in (0,\infty)$. Since $\widetilde{\mathcal{R}}_1 \subset \mathcal{R}_1$ and since for all $z \in \widetilde{\mathcal{R}}_1$, the probability that a planar Brownian motion starting from $z$ stopped when it hits $\partial \widetilde{\mathcal{R}}_1$ hits the left or right boundaries of $\widetilde{\mathcal{R}}_1$ is at least the probability that a planar Brownian motion starting from $z$ stopped when it hits $\partial{\mathcal{R}}_1$ hits the left or right boundaries of ${\mathcal{R}}_1$ (this comes from the strong Markov property of the Brownian motion), we see that 
$\widetilde{W}_1 \leq w_1 < b_0$. Similarly we let $\widetilde{\phi}_2$ be the conformal mapping from $\widetilde{\mathcal{R}}_2$ onto a rectangle $(0,\widetilde{W}_2) \times (0,1)$ such that $\widetilde{\phi}_2(\widetilde{\SCL}(t_3)) = 0$, $\widetilde{\phi}_2(\phi(a + \ri/5)) = \widetilde{W}_2$, $\widetilde{\phi}_2(1) = \widetilde{W}_2 + \ri$, and $\widetilde{\phi}_2(\widetilde{\SCL}(t_4)) = \ri $ for some $\widetilde{W}_2 \in (0,\infty)$. The same reasoning gives $\widetilde{W}_2 \leq w_2 < b_0$.

\substepn{step:vht-estimate}{The distance estimate} The choice of $q$ implies that conditionally on $U$ and on the event that there exists $\widetilde{\SCL} \in \phi(\Gamma_{V_a}|_U)$ such that $\widetilde{\SCL}$ surrounds $B_r(0)$, we have a.s.\ that, with conditional probability at least $1-2(1-q)>0$, the distance between the clockwise arc of $\partial \BD$ from $-\ri$ to $\ri$ and $\widetilde{\SCL}([t_1,t_2])$ with respect to $D_{\phi(\Gamma_{V_a}|_U)|_{\widetilde{\mathcal{R}}_1}}^{\widetilde{\mathcal{R}}_1}$, and the distance between the counterclockwise arc of $\partial \BD$ from $\phi(a + \ri/5)$ to $1$ and $\widetilde{\SCL}([t_3,t_4])$ with respect to $D_{\phi(\Gamma_{V_a}|_U)|_{\widetilde{\mathcal{R}}_2}}^{\widetilde{\mathcal{R}}_2}$, are both at most $r_0 / 100$. Hence, we obtain that if the above event occurs, we have that the $D_{\phi(\Gamma_{V_a}|_U)}^{\phi(U)}$-distance between the clockwise arc of $\partial \BD$ from $-\ri$ to $\ri$ and the counterclockwise arc of $\partial \BD$ from $\phi(a + \ri/5)$ to $1$ is at most $r_0 / 50$: both crossings end on the same loop $\widetilde\SCL$, a single point of the metric, so their costs add, and each $\widetilde{\mathcal{R}}_j$-distance dominates the corresponding $D_{\phi(\Gamma_{V_a}|_U)}^{\phi(U)}$-distance by the monotonicity in Axiom~\eqref{it:weak_axiom_locality}. 

\step{step:vh-concl}{Conclusion of the proof} Combining \Cref{step:vh-outline,step:vh-rects,step:vh-thin}, we obtain that with probability at least $p^2 (1-2(1-q))>0$ (the event of \Cref{step:vh-outline}, times the conditional probability at least $p$ that some loop surrounds $B_r(0)$, times the conditional probability at least $1-2(1-q)$ of the two crossing estimates), the following holds. The $D_{\Gamma_{V_a}}^{V_a}$-distance between $\{0\} \times [1/5,2/5]$ and $\{a\} \times [1/5,2/5]$ is at most $r_0 / 50$ and the $D_{\Gamma_{V_a}}^{V_a}$-distance between $[0,a] \times \{1/5\}$ (resp.\ $[0, a] \times \{4/5\}$, $[0, a] \times \{2/5\}$) and $[0,a] \times \{0\}$ (resp.\ $[0,a] \times \{1\}$, $[0, a] \times \{3/5\}$) is at least $2r_0$, as well as the fact that~\eqref{eq right boundary intersects left boundary} holds. Note that this gives conditions~\eqref{it: lemma crossing a}, \eqref{it: lemma crossing b}, and~\eqref{it: lemma crossing c} of the lemma. Suppose that the above event occurs. To conclude, it is enough to show that
\begin{align*}
D_{\Gamma_{V_a}}^{V_a}(\{0\} \times [0,1], \{a\} \times [0,1]) \leq \frac{r_0}{50} < 2r_0 \leq D_{\Gamma_{V_a}}^{V_a}([0,a] \times \{0\}, [0,a] \times \{1\}).
\end{align*}
Recall that 
\begin{align*}
D_{\Gamma_{V_a}}^{V_a}([0,a] \times \{0\}, [0,a] \times \{1\}) = \inf\{t \geq 0 : \SCB_t([0,a] \times \{1\}; D_{\Gamma_{V_a}}^{V_a}) \cap ([0,a] \times \{0\}) \neq \emptyset\}.
\end{align*}
Furthermore, since $\SCB_t([0,a] \times \{1\}; D_{\Gamma_{V_a}}^{V_a})$ is connected (Axiom~\eqref{it:weak_axiom_geodesic}\eqref{it:weak_axiom_geodesic_0}) and the segment $[0,a] \times \{4/5\}$ is closed, so that it cannot be evaded through $\partial V_a$, the ball has to intersect $[0,a] \times \{4/5\}$ before intersecting $[0,a] \times \{0\}$. Thus, 
\[
D_{\Gamma_{V_a}}^{V_a}([0,a] \times \{0\}, [0,a] \times \{1\}) \ge D_{\Gamma_{V_a}}^{V_a}([0,a] \times \{4/5\}, [0,a] \times \{1\}) \ge 2r_0.
\]
Moreover, we have
\[
D_{\Gamma_{V_a}}^{V_a}(\{0\} \times [0,1], \{a\} \times [0,1]) \le D_{\Gamma_{V_a}}^{V_a}(\{0\} \times [1/5, 2/5], \{a\} \times [1/5, 2/5]) \le \frac{r_0}{50}.
\]
This completes the proof of the lemma.
\end{proof}

\section{Existence of geodesics}
\label{section:existence-of-geodesics}

The present section is devoted to proving the existence of geodesics (cf.~\Cref{def:axioms}, Axiom~\eqref{it:axiom_geodesic}). The proof strategy is as follows. By the conformal invariance property, it suffices to consider the case of $\BD$. We fix two loops $\SCL_1, \SCL_2 \in \Gamma_{\BD}$. We consider the loops $\SCL$ with $D_{\Gamma_\BD}^\BD(\SCL_1, \SCL) + D_{\Gamma_\BD}^\BD(\SCL, \SCL_2) = D_{\Gamma_\BD}^\BD(\SCL_1, \SCL_2)$, and construct finite families of such loops, totally ordered by the associated distances, becoming finer and finer, with successive loops joined by chains of dyadic squares of vanishing mesh. Using the estimates from \Cref{sec:distances_across_rectangles} and \Cref{sec:hitting_two_metric_balls}, we show that such families exist at every scale and converge to a geodesic connecting $\SCL_1$ and $\SCL_2$.

\begin{proposition}\label{prop:existence_geodesics}
Let $D$ be a weak geodesic CLE$_4$ metric coupling. Then $D$ satisfies Axiom~\eqref{it:axiom_geodesic} of \Cref{def:axioms}. Moreover, almost surely, for all $\SCL_1, \SCL_2 \in \Gamma_U$ and for all $D^\BD_{\Gamma_\BD}$-geodesic $P$ from $\SCL_1$ to $\SCL_2$, the set $\{D_{\Gamma_{\BD}}^{\BD}(\SCL_1,\SCL): \SCL \in \Gamma, \ \SCL\cap P([0,1]) \neq \emptyset\}$ forms a dense subset of the interval $[0, D_{\Gamma_{\BD}}^{\BD}(\SCL_1, \SCL_2)]$.
\end{proposition}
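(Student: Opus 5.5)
By conformal invariance we work in $\BD$ with two fixed loops $\SCL_1,\SCL_2$ and set $d\defeq D^\BD_{\Gamma_\BD}(\SCL_1,\SCL_2)$. The construction is a multi-scale midpoint refinement inside the set
\[
K\defeq K_{\SCL_1,\SCL_2}=\overline{\{z : D(\SCL_1,z)+D(\SCL_2,z)=d\}},
\]
which is closed. By Axiom~\eqref{it:weak_axiom_geodesic}\eqref{it:weak_axiom_geodesic_4} it has a connected component $K^0$ containing $\SCL_1\cup\SCL_2$.

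\textbf{Step 1: a distance label on $K^0$.} For $z\in K^0$ set $f(z)\defeq D(\SCL_1,z)$. By \Cref{lem:lower-semi-continuity}, both $f$ and $d-f=D(\SCL_2,\cdot)$ are lower semicontinuous on $K^0$. Hence $f$ is continuous on $K^0$, at least on the closure of the defining set.

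\textbf{Step 2: $K^0$ has no distance gaps and loop points are dense.} Since $K^0$ is connected and $f$ is continuous, $f(K^0)=[0,d]$. We also want loop points to be dense in $K^0$. Take $z\in K^0$ not on a loop and a small Euclidean ball $B_\varepsilon(w)$ around it. Then the balls $\SCB_{f(z)}(\SCL_1)$ and $\SCB_{d-f(z)}(\SCL_2)$, taken just before they reach $B_\varepsilon(w)$, must be disjoint. If they met, Axiom~\eqref{it:weak_axiom_geodesic}\eqref{it:weak_axiom_geodesic_2} would give a strict shortcut.

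Covering the disk by $\varepsilon^{-2}$ balls $B_\varepsilon(w)$ and applying \Cref{prop:hitting-two-metric-balls}, which bounds each such event by $\varepsilon^{2+o(1)}$ (with the rational points of $\BD$ in place of $x,y$, plus \Cref{prop:boundary-hitting-two-metric-balls} near $\partial\BD$), we get the following. The number of $\varepsilon$-balls that $K^0$ meets while the two left-limit balls stay disjoint is $\varepsilon^{o(1)}$ in expectation. Since these balls cover the non-loop part of $K^0$, this is exactly the Hausdorff-dimension-zero statement of \Cref{thm:behavior-of-geodesics}. The boundary version gives exponent $4$, so an $\varepsilon^{-1}$ cover of $\partial\BD$ is summable, and this is the route to part~(3).

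The difficulty is that $\SCL_1,\SCL_2$ are random. I plan to handle this by a union over rational points $x\in\mathrm{int}(\SCL_1)$, $y\in\mathrm{int}(\SCL_2)$.

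\textbf{Step 3: build the path.} For each dyadic level $n$, choose points $z^n_k\in K^0$ with $f(z^n_k)=kd2^{-n}$, and choose them compatibly across levels. They should lie on loops whenever possible, which Step~2 allows. The set $K^0\cap f^{-1}([kd2^{-n},(k+1)d2^{-n}])$ may be large, so we need equicontinuity to pass to a limit.

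Here Step~2 is used again, together with \Cref{cor:moment_across_rectangle}. We want: with high probability, every pair of points of $K^0$ whose labels differ by at most $2^{-n}$ and that are separated by some Euclidean distance $\rho$ forces an annulus crossing that costs more than $2^{-n}$. The rectangle estimate says that crossing an annulus of modulus $m$ costs of order $1/m$ with exponential tails. I expect an appropriate reformulation of this to give a modulus of continuity for the inverse label.

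Arzelà--Ascoli then produces a continuous $P$ with $f(P(s))=sd$. Since $f$ is monotone along $P$, every chain in the definition of $\len$ telescopes, so $\len(P;D)=d$ by the triangle inequality. Admissibility comes from the density of loop points in Step~2.

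The density assertion of the proposition follows for every geodesic by the same argument. Along a geodesic, $f$ is forced to be continuous and nondecreasing and to cover $[0,d]$, and the no-gap property transfers to the loops hit by the path.

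\textbf{Remaining axioms and main obstacle.} Axioms~\eqref{it:axiom_geodesic}\eqref{it:axiom_geodesic_cadlag} and~\eqref{it:axiom_geodesic_twoball} follow from weak Axiom~\eqref{it:weak_axiom_geodesic}\eqref{it:weak_axiom_geodesic_1} and from geodesics respectively. The main obstacle is Step~3: getting uniform continuity of the label inverse, i.e.\ ruling out that $K^0$ has a long Euclidean excursion at nearly constant distance. I would attack it first via \Cref{lem:avoiding_two_metric_balls} applied in annuli at all dyadic scales, with a Borel--Cantelli bound over the $O(\varepsilon^{-2})$ annulus centres.
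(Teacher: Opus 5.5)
Your Steps 1 and 2 are sound. On $K$ both $f=D(\SCL_1,\cdot)$ and $d-f=D(\SCL_2,\cdot)$ are lower semicontinuous, so $f$ is continuous there. Your square count via \Cref{prop:hitting-two-metric-balls} is essentially \Cref{lem:building-geodesic}\eqref{it:building-geodesic-2}. The continuity of $f$ on $K$, combined with admissibility, even gives the density statement for every geodesic quite directly.

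The gap is Step 3, and it is not a technicality: what you want there is false. The fibres of $f$ in $K^0$ are not small, since $f$ is constant on every loop of $\Gamma_{\SCL_1,\SCL_2}$ lying in $K^0$, starting with $\SCL_1$ itself. So no modulus of continuity for the inverse label exists, and no estimate can rule out long Euclidean excursions at constant distance. In fact, your own Step 2 shows that no continuous admissible $P$ with $f(P(s))=sd$ can exist. If $f\circ P$ is strictly increasing, $P$ meets each loop at a single time. Then $P([0,1])$ minus countably many points lies in the non-loop part of $K$, which has dimension zero. But $P([0,1])$ is a continuum of positive diameter, so it has positive $\mathcal H^1$-measure. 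A geodesic must therefore run along arcs of loops of positive diameter, with $f\circ P$ constant on those time intervals. The construction has to be organized around loops rather than label values.

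The tools you name also point the wrong way. \Cref{cor:moment_across_rectangle} is an upper bound (long thin rectangles are cheap to cross), so it cannot force a crossing to cost more than $2^{-n}$. \Cref{lem:avoiding_two_metric_balls} fails with probability of order $1/r$ in the modulus $r$, which is only logarithmic in the scale ratio and cannot be summed over $O(\varepsilon^{-2})$ centres.

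The missing idea is the paper's coarse-to-fine selection of loops. On $\widetilde\Gamma_{\SCL_1,\SCL_2}$ one uses the partial order $\SCL\preceq\SCL'$ iff $D(\SCL_1,\SCL)+D(\SCL,\SCL')+D(\SCL',\SCL_2)=d$. Using connectedness of $\widetilde K_{\SCL_1,\SCL_2}$ and the inclusion $\widetilde K_{\SCL,\SCL'}\subset\widetilde K_{\SCL_1,\SCL_2}$, one selects finite totally ordered families $\Gamma^n_{\SCL_1,\SCL_2}$ of loops of diameter at least $2^{-n}$. Consecutive loops in each family are joined by chains of squares of $\CS^n_{\SCL_1,\SCL_2}$.

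The path $P^n$ runs along the selected loops and through these chains, and $P^{n+1}$ is frozen on the loop arcs already used by $P^n$. Euclidean control then comes from the count $\#\CS^n_{\SCL_1,\SCL_2}\le 2^{n\cdot o(1)}$: each chain has diameter at most $2^{-n(1-o(1))}$. This gives uniform convergence with no lower bound on metric crossing costs. The total order makes the $D$-sums telescope to $d$. The rectangle estimate enters only in the opposite direction, via \Cref{lem:building-geodesic}\eqref{it:building-geodesic-6}, to show that loops joined by a short chain are metrically close.
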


For $\SCL_1, \SCL_2 \in \Gamma_{\BD}$, we shall write $\Gamma_{\SCL_1,\SCL_2}$ for the collection of loops $\SCL \in \Gamma_{\BD}$ with $D_{\Gamma_{\BD}}^{\BD}(\SCL_1, \SCL) + D_{\Gamma_{\BD}}^{\BD}(\SCL, \SCL_2) = D_{\Gamma_{\BD}}^{\BD}(\SCL_1, \SCL_2)$; we recall from \Cref{def:weak_axioms}, Axiom~\eqref{it:weak_axiom_geodesic}\eqref{it:weak_axiom_geodesic_4}, the set $K_{\SCL_1,\SCL_2}$, the closure of the collection of points $x \in \overline\BD$ such that $D_{\Gamma_{\BD}}^{\BD}(\SCL_1, x) + D_{\Gamma_{\BD}}^{\BD}(x, \SCL_2) = D_{\Gamma_{\BD}}^{\BD}(\SCL_1, \SCL_2)$ (here $D_{\Gamma_\BD}^\BD(\SCL, x)$ is the distance to the point $x$ defined in \Cref{subsec:setup}), and the connected component $\widetilde K_{\SCL_1,\SCL_2}$ of $K_{\SCL_1,\SCL_2}$ containing $\SCL_1$ and $\SCL_2$, provided by that axiom. We shall write $\widetilde\Gamma_{\SCL_1,\SCL_2} \defeq \{\SCL \in \Gamma_{\SCL_1,\SCL_2} : \SCL \subset \widetilde K_{\SCL_1,\SCL_2}\}$; note that $D_{\Gamma_\BD}^\BD(\SCL_1, \cdot)$ is constant on each loop, so each loop of $\Gamma_{\SCL_1,\SCL_2}$ (resp.\ $\widetilde{\Gamma}_{\SCL_1,\SCL_2}$) is contained in $K_{\SCL_1,\SCL_2}$ (resp.\ $\widetilde{K}_{\SCL_1,\SCL_2}$). For each $n \in \BN$, we shall write $\CS_{\SCL_1,\SCL_2}^n$ for the collection of dyadic squares of side length $2^{-n}$ that intersect $\overline{K_{\SCL_1,\SCL_2} \setminus \bigcup_{\SCL \in \Gamma_{\SCL_1,\SCL_2}} \overline{\mathop{\mathrm{int}}(\SCL)}}$ (the full $K_{\SCL_1,\SCL_2}$ is used here, which only enlarges $\CS_{\SCL_1,\SCL_2}^n$). All of the definitions and results of this section are also made and proved with one or both of $\SCL_1, \SCL_2$ replaced by a deterministic connected arc of $\partial\BD$ (two disjoint arcs in the latter case), using the boundary-arc clauses of the axioms of \Cref{def:weak_axioms} in place of the loop clauses; we will not repeat the arguments.

We first record some basic facts regarding these constructions.

\begin{lemma}\label{lem:building-geodesic}
\begin{enumerate}
	\item\label{it:building-geodesic-5} Almost surely, for each $\SCL_1, \SCL_2 \in \Gamma_{\BD}$, we have $D_{\Gamma_{\BD}}^{\BD}(\SCL_1, x) + D_{\Gamma_{\BD}}^{\BD}(x, \SCL_2) = D_{\Gamma_{\BD}}^{\BD}(\SCL_1, \SCL_2)$ for all $x \in K_{\SCL_1,\SCL_2}$.
\item\label{it:building-geodesic-0} Almost surely, $\Gamma_{\SCL_1,\SCL} \subset \Gamma_{\SCL_1,\SCL_2}$, $\widetilde\Gamma_{\SCL_1,\SCL} \subset \widetilde\Gamma_{\SCL_1,\SCL_2}$, $K_{\SCL_1,\SCL} \subset K_{\SCL_1,\SCL_2}$, and $\widetilde K_{\SCL_1,\SCL} \subset \widetilde K_{\SCL_1,\SCL_2}$ for all $\SCL_1, \SCL_2 \in \Gamma_{\BD}$ and $\SCL \in \Gamma_{\SCL_1,\SCL_2}$. 
\item\label{it:building-geodesic-2} Almost surely, $\#\CS_{\SCL_1,\SCL_2}^n \le 2^{n \cdot o(1)}$ as $n \to \infty$ for all $\SCL_1, \SCL_2 \in \Gamma_{\BD}$. 
\item\label{it:building-geodesic-6} Almost surely, for each $\SCL_1, \SCL_2 \in \Gamma_{\BD}$,
\begin{equation*}
	\sup\nolimits_{S \in \CS_{\SCL_1,\SCL_2}^n} D_{\Gamma_{\BD}}^{\BD}\left(\SCB^S(\SCL_1; D_{\Gamma_{\BD}}^{\BD}), \SCB^S(\SCL_2; D_{\Gamma_{\BD}}^{\BD})\right) \to 0 \quad \text{as } n \to \infty,
\end{equation*}
where, as in the statement of \Cref{prop:hitting-two-metric-balls}, we set $\SCB^{S}(\SCL; D_{\Gamma_{\BD}}^{\BD}) \defeq \SCB_{D_{\Gamma_{\BD}}^{\BD}(\SCL, S)}(\SCL; D_{\Gamma_{\BD}}^{\BD})$.

\end{enumerate}
\end{lemma}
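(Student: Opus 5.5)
We treat the four items in turn. Items \eqref{it:building-geodesic-5} and \eqref{it:building-geodesic-0} are deterministic consequences of the axioms together with \Cref{lem:lower-semi-continuity}. Items \eqref{it:building-geodesic-2} and \eqref{it:building-geodesic-6} are probabilistic and rest on \Cref{prop:hitting-two-metric-balls}.

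For \eqref{it:building-geodesic-5}, we first observe that the triangle inequality gives $D(\SCL_1,x)+D(x,\SCL_2)\ge D(\SCL_1,\SCL_2)$ for every $x$. Indeed, $x\in\SCB_s(\SCL_1)\cap\SCB_t(\SCL_2)$ means that loops discovered by the two balls accumulate at $x$, and Axiom~\eqref{it:weak_axiom_geodesic}\eqref{it:weak_axiom_geodesic_2} together with the set-to-set definition then bounds $D(\SCL_1,\SCL_2)$ by $s+t$. Conversely, the function $x\mapsto D(\SCL_1,x)+D(x,\SCL_2)$ is lower semicontinuous by \Cref{lem:lower-semi-continuity}. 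Its sublevel set at height $D(\SCL_1,\SCL_2)$ is therefore closed, so it contains the closure $K_{\SCL_1,\SCL_2}$ of the equality set, and equality holds on all of $K_{\SCL_1,\SCL_2}$.

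For \eqref{it:building-geodesic-0}, take $\SCL\in\Gamma_{\SCL_1,\SCL_2}$ and $x$ with $D(\SCL_1,x)+D(x,\SCL)=D(\SCL_1,\SCL)$. Then
\[
D(\SCL_1,x)+D(x,\SCL_2)\le D(\SCL_1,\SCL)+D(\SCL,\SCL_2)=D(\SCL_1,\SCL_2),
\]
so the triangle inequality forces equality. Taking closures gives $K_{\SCL_1,\SCL}\subset K_{\SCL_1,\SCL_2}$, and the same computation applied to loops gives $\Gamma_{\SCL_1,\SCL}\subset\Gamma_{\SCL_1,\SCL_2}$. For the tilde versions we need a bit more. $\widetilde K_{\SCL_1,\SCL}$ is connected and contains $\SCL_1$, and it lies in $K_{\SCL_1,\SCL_2}$. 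Since $\SCL_1\subset\widetilde K_{\SCL_1,\SCL_2}$, it must lie in the component of $K_{\SCL_1,\SCL_2}$ containing $\SCL_1$, which is $\widetilde K_{\SCL_1,\SCL_2}$. This also gives the inclusion of the $\widetilde\Gamma$'s.

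For \eqref{it:building-geodesic-2} and \eqref{it:building-geodesic-6}, we reduce to deterministic rational points by countability, i.e.\ to $\SCL_1=\SCL(x)$ and $\SCL_2=\SCL(y)$. The key observation is as follows. Suppose a dyadic square $S$ of side $2^{-n}$ contains a point $w\in K_{\SCL_1,\SCL_2}$ that is not in the closed interior of any loop of $\Gamma_{\SCL_1,\SCL_2}$. Then, using \Cref{lemma metric ball hitting a loop} and \eqref{it:building-geodesic-5}, the balls from $\SCL_1$ and $\SCL_2$ stopped upon hitting $B_{2^{-n+1}}(\text{center of }S)$ satisfy the following: either they are disjoint before the hitting time, or they share a loop. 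In the latter case that shared loop would lie on a geodesic-like configuration, and its interior would be forced to contain $w$ up to a small error. So, up to the left-limit issue noted before \Cref{prop:hitting-two-metric-balls}, the square $S$ is counted only on the event of \eqref{eq:hitting-two-metric-balls}. Then \Cref{prop:hitting-two-metric-balls} gives $\BE[\#\CS^n]\le 2^{2n}\cdot 2^{-n(2+o(1))}=2^{n\,o(1)}$, at least for squares at distance $\ge\delta$ from $x$ and $y$. The squares near $\SCL_1$ and $\SCL_2$ themselves need separate handling: $K$ near $\SCL_i$ is essentially $\SCL_i$, plus the boundary version \Cref{prop:boundary-hitting-two-metric-balls}. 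Markov's inequality and Borel--Cantelli over $n$ then give the almost sure bound $2^{n\epsilon}$ for every $\epsilon>0$.

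For \eqref{it:building-geodesic-6}, argue by contradiction. Suppose squares $S_n\in\CS^n$ have stopped-ball distance $\ge\epsilon$, and extract a limit point $w\in K_{\SCL_1,\SCL_2}$. The hitting times $D(\SCL_i,S_n)$ converge to $D(\SCL_i,w)$ by lower semicontinuity together with \eqref{it:building-geodesic-5}. The balls $\SCB^{S_n}(\SCL_i)$ both approach $w$, and Axiom~\eqref{it:weak_axiom_geodesic}\eqref{it:weak_axiom_geodesic_1} (Hausdorff continuity of the balls) then shows that their distance tends to $0$.

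The main obstacle is the step in \eqref{it:building-geodesic-2} that turns ``$S\in\CS^n$'' into ``the two stopped balls are disjoint''. Two difficulties arise there: distinguishing $\SCB$ from $\SCB^-$, and ruling out that a single loop meeting $S$ belongs to both balls while $S$ still contains non-loop points of $K$. I would attack it using the uniqueness in \Cref{lemma intersection with boundary} and the fact that such a loop lies in $\Gamma_{\SCL_1,\SCL_2}$ and hence is excluded from $\CS^n$.
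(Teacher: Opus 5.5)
Items \eqref{it:building-geodesic-5} and \eqref{it:building-geodesic-0} are proved exactly as in the paper. For \eqref{it:building-geodesic-2} you also follow the paper's route: reduce to $\SCL(x),\SCL(y)$, bound $\BE[\#\CS^n]$ via \Cref{prop:hitting-two-metric-balls}, then apply Markov and Borel--Cantelli. Two corrections are needed there.

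First, the step you flag as the main obstacle closes cleanly, with neither a left-limit issue nor a ``small error''. (The paper uses this step without comment.) Write $D$ for $D_{\Gamma_\BD}^\BD$ and let $B\supset S$ be an open ball as you propose. Since $B$ is an open neighborhood of $S$, you can pick $w\in B\cap K_{\SCL_1,\SCL_2}$ outside $\bigcup_{\SCL\in\Gamma_{\SCL_1,\SCL_2}}\overline{\mathop{\mathrm{int}}(\SCL)}$. Put $t_i\defeq D(\SCL_i,B)\le D(\SCL_i,w)$. Suppose the full stopped balls $\SCB_{t_1}(\SCL_1)$ and $\SCB_{t_2}(\SCL_2)$ met at some $u$. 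The triangle inequality through $u$ together with \eqref{it:building-geodesic-5} would give $D(\SCL_1,\SCL_2)\le t_1+t_2\le D(\SCL_1,w)+D(w,\SCL_2)=D(\SCL_1,\SCL_2)$. This forces $t_i=D(\SCL_i,w)$, hence $w\in\SCB_{t_i}(\SCL_i)$ by right-continuity. Since $B$ is open, the left limits $\SCB^-_{t_i}(\SCL_i)$ miss $B$. So by \Cref{lemma metric ball hitting a loop}, $w$ lies in the filled closure of the single loop discovered by each ball at time $t_i$. Filled closures of distinct loops are disjoint, so the two loops coincide. The common loop lies in $\Gamma_{\SCL_1,\SCL_2}$, because its two distances add up to $t_1+t_2=D(\SCL_1,\SCL_2)$; this contradicts the choice of $w$. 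Hence $S\in\CS^n$ forces exactly the event bounded in \Cref{prop:hitting-two-metric-balls}.

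Second, the squares needing separate treatment are those near the points $x,y$, where the constant of \Cref{prop:hitting-two-metric-balls} degenerates, not those near the loops. \Cref{prop:boundary-hitting-two-metric-balls} concerns $z\in\partial\BH$ and plays no role here. Instead, on $E_\delta\defeq\{B_{2\delta}(x)\subset\mathop{\mathrm{int}}(\SCL(x)),\ B_{2\delta}(y)\subset\mathop{\mathrm{int}}(\SCL(y))\}$, such squares lie inside $\mathop{\mathrm{int}}(\SCL(x))\cup\mathop{\mathrm{int}}(\SCL(y))$, hence outside $\CS^n$, and $\BP[E_\delta]\to1$.

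For \eqref{it:building-geodesic-6} your route genuinely differs from the paper's. The paper argues quantitatively. When the two stopped balls are disjoint, \Cref{lem:Beurling} makes their extremal distance $O(1/n)$. By locality and the geometric tail of \Cref{cor:moment_across_rectangle}, their $D$-distance then exceeds $\varepsilon$ with conditional probability $O(e^{-\alpha n})$. Combined with the $2^{-n(2+o(1))}$ bound and a union over $O(2^{2n})$ squares, Borel--Cantelli concludes; this even covers every small square away from $x,y$, not only those in $\CS^n$.

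Your compactness argument is deterministic and far shorter, but its last step fails as justified. Euclidean proximity of both balls to $w$, plus Hausdorff continuity, says nothing about their $D$-distance. Turning Euclidean closeness into small $D$-distance is exactly what the paper's extremal-length and rectangle estimates are for.

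The right ingredient is Axiom~\eqref{it:weak_axiom_geodesic}\eqref{it:weak_axiom_geodesic_2}. For every loop $\SCL$ with $D(\SCL_1,\SCL)>t_1$ (hence disjoint from $\SCB_{t_1}(\SCL_1)$), it gives $D(\SCB_{t_1}(\SCL_1),\SCL)=D(\SCL_1,\SCL)-t_1$. Consequently $\SCB_{t_1+s}(\SCL_1)\subset\SCB_s(\SCB_{t_1}(\SCL_1))$ for $s\ge0$. Applying the axiom once more from $\SCL_2$, together with the symmetry of set distances, yields
\[
D\bigl(\SCB_{t_1}(\SCL_1),\SCB_{t_2}(\SCL_2)\bigr)\le\bigl(D(\SCL_1,\SCL_2)-t_1-t_2\bigr)^+ .
\]
Along your subsequence, $t_i^n\defeq D(\SCL_i,S_n)\ge D(\SCL_i,u_i^n)$ for some $u_i^n\in S_n$. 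Lower semicontinuity and \eqref{it:building-geodesic-5} then give $\liminf_n(t_1^n+t_2^n)\ge D(\SCL_1,w)+D(w,\SCL_2)=D(\SCL_1,\SCL_2)$, so the right-hand side tends to $0$. Only this liminf half of your convergence claim is needed. With this repair, your proof of \eqref{it:building-geodesic-6} is valid and uses neither \Cref{prop:hitting-two-metric-balls} nor \Cref{cor:moment_across_rectangle}.
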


\begin{proof}
\stepn{step:bg-1}{Proof of assertion~\eqref{it:building-geodesic-5}} Let $x \in K_{\SCL_1,\SCL_2}$. By definition, there exists a sequence $\{x_n\}_{n \in \BN} \subset \overline\BD$ such that $\lvert x_n - x\rvert \to 0$ as $n \to \infty$ and $D_{\Gamma_{\BD}}^{\BD}(\SCL_1, x_n) + D_{\Gamma_{\BD}}^{\BD}(x_n, \SCL_2) = D_{\Gamma_{\BD}}^{\BD}(\SCL_1, \SCL_2)$ for all $n \in \BN$. By lower semi-continuity (cf.~\Cref{lem:lower-semi-continuity}), 
\begin{equation*}
	D_{\Gamma_{\BD}}^{\BD}(\SCL_1, x) \le \liminf_{n \to \infty} D_{\Gamma_{\BD}}^{\BD}(\SCL_1, x_n) \quad \text{and} \quad D_{\Gamma_{\BD}}^{\BD}(x, \SCL_2) \le \liminf_{n \to \infty} D_{\Gamma_{\BD}}^{\BD}(x_n, \SCL_2). 
\end{equation*}
Thus, 
\begin{equation*}
	D_{\Gamma_{\BD}}^{\BD}(\SCL_1, x) + D_{\Gamma_{\BD}}^{\BD}(x, \SCL_2) \le \liminf_{n \to \infty} \left( D_{\Gamma_{\BD}}^{\BD}(\SCL_1, x_n) + D_{\Gamma_{\BD}}^{\BD}(x_n, \SCL_2) \right) = D_{\Gamma_{\BD}}^{\BD}(\SCL_1, \SCL_2). 
\end{equation*}
Therefore, by the triangle inequality, we must have $D_{\Gamma_{\BD}}^{\BD}(\SCL_1, x) + D_{\Gamma_{\BD}}^{\BD}(x, \SCL_2) = D_{\Gamma_{\BD}}^{\BD}(\SCL_1, \SCL_2)$. This completes the proof of assertion~\eqref{it:building-geodesic-5}.

\stepn{step:bg-2}{Proof of assertion~\eqref{it:building-geodesic-0}} It suffices to show that $\Gamma_{\SCL_1,\SCL} \subset \Gamma_{\SCL_1,\SCL_2}$ and $K_{\SCL_1,\SCL} \subset K_{\SCL_1,\SCL_2}$. (Indeed, $\widetilde K_{\SCL_1,\SCL}$ is then a connected subset of $K_{\SCL_1,\SCL_2}$ containing $\SCL_1$, so it lies in the connected component $\widetilde K_{\SCL_1,\SCL_2}$ of $K_{\SCL_1,\SCL_2}$ containing $\SCL_1$.) For simplicity, we only consider the former; the proof of the latter is entirely similar. Let $\SCL^\prime \in \Gamma_{\SCL_1,\SCL}$. Then
\begin{align*}
D_{\Gamma_{\BD}}^{\BD}(\SCL_1, \SCL_2) &= D_{\Gamma_{\BD}}^{\BD}(\SCL_1, \SCL) + D_{\Gamma_{\BD}}^{\BD}(\SCL, \SCL_2) \\
&= D_{\Gamma_{\BD}}^{\BD}(\SCL_1, \SCL^\prime) + D_{\Gamma_{\BD}}^{\BD}(\SCL^\prime, \SCL) + D_{\Gamma_{\BD}}^{\BD}(\SCL, \SCL_2) \\
&\ge D_{\Gamma_{\BD}}^{\BD}(\SCL_1, \SCL^\prime) + D_{\Gamma_{\BD}}^{\BD}(\SCL^\prime, \SCL_2) \\
&\ge D_{\Gamma_{\BD}}^{\BD}(\SCL_1, \SCL_2). 
\end{align*}
Thus, we conclude that $D_{\Gamma_{\BD}}^{\BD}(\SCL_1, \SCL^\prime) + D_{\Gamma_{\BD}}^{\BD}(\SCL^\prime, \SCL_2) = D_{\Gamma_{\BD}}^{\BD}(\SCL_1, \SCL_2)$, i.e., $\SCL^\prime \in \Gamma_{\SCL_1,\SCL_2}$. This completes the proof of assertion~\eqref{it:building-geodesic-0}.

\stepn{step:bg-3}{Proof of assertion~\eqref{it:building-geodesic-2}} Fix deterministic $x, y \in \BD$. Fix $\delta > 0$. Write $E_\delta$ for the event that $B_{2\delta}(x) \subset \mathop{\mathrm{int}}(\SCL(x))$ and $B_{2\delta}(y) \subset \mathop{\mathrm{int}}(\SCL(y))$. Since $\BP[E_{\delta}] \to 1$ as $\delta \to 0$ (a deterministic point is a.s.\ surrounded by a loop of $\Gamma_\BD$, cf.\ \Cref{subsec:setup}), it suffices to show that, a.s.\ on the event $E_\delta$, $\#\CS_{\SCL(x),\SCL(y)}^n \le 2^{n \cdot o(1)}$ as $n \to \infty$. (This gives assertions~\eqref{it:building-geodesic-2} and~\eqref{it:building-geodesic-6} for all pairs of loops: intersect the almost sure events over $x, y \in \BD \cap \BQ^2$ and $\delta \in \BQ_{>0}$; every $\SCL_1, \SCL_2 \in \Gamma_\BD$ equal $\SCL(x), \SCL(y)$ for such $x, y$, with $E_\delta$ occurring for some such $\delta$, as the loops have nonempty interiors.) On $E_\delta$, every $S \in \CS_{\SCL(x),\SCL(y)}^n$ with $2^{-n}\sqrt{2} < \delta$ satisfies $\dist(S,x) \wedge \dist(S,y) \ge \delta$: otherwise $S \subset B_{2\delta}(x) \subset \mathop{\mathrm{int}}(\SCL(x))$ (or the same with $y$ in place of $x$), an open set which is disjoint from $\overline{K_{\SCL(x),\SCL(y)} \setminus \bigcup_{\SCL \in \Gamma_{\SCL(x),\SCL(y)}} \overline{\mathop{\mathrm{int}}(\SCL)}}$. We conclude from \Cref{prop:hitting-two-metric-balls} that
\begin{multline*}
	\BE\!\left[ \#\CS_{\SCL(x),\SCL(y)}^n \one_{E_\delta}\right] \\
	\le \BE\!\left[ \#\!\left\{S : S \cap \BD \neq \emptyset, \ \dist(S, x) \wedge \dist(S, y) \ge \delta, \ \SCB^S(\SCL(x); D_{\Gamma_{\BD}}^{\BD}) \cap \SCB^S(\SCL(y); D_{\Gamma_{\BD}}^{\BD}) = \emptyset\right\}\right] \\
	\le 2^{n \cdot o(1)} \quad \text{as } n \to \infty.
\end{multline*}
(For the last bound we used that $S \subset B_{2^{-n}}(z_S)$ for $z_S$ the center of $S$, hence $\SCB^{B_{2^{-n}}(z_S)}(\SCL; D_{\Gamma_\BD}^\BD) \subset \SCB^S(\SCL; D_{\Gamma_\BD}^\BD)$, and applied \Cref{prop:hitting-two-metric-balls} with $\varepsilon = 2^{-n}$ to each of the $O(2^{2n})$ squares in question.) Now, one deduces assertion~\eqref{it:building-geodesic-2} using Markov's inequality and the Borel--Cantelli lemma.

\stepn{step:bg-4}{Proof of assertion~\eqref{it:building-geodesic-6}}

\substepn{step:bg4-setup}{Setup} Fix deterministic $x, y \in \BD$. Fix $\delta > 0$. Let the event $E_\delta$ be as above. Fix $\varepsilon > 0$. It suffices to show that, a.s.\ on the event $E_\delta$,
\begin{equation*}
	\sup\nolimits_{S \in \CS_{\SCL(x),\SCL(y)}^n} D_{\Gamma_{\BD}}^{\BD}\left(\SCB^S(\SCL(x); D_{\Gamma_{\BD}}^{\BD}), \SCB^S(\SCL(y); D_{\Gamma_{\BD}}^{\BD})\right) < \varepsilon
\end{equation*}
for all sufficiently large $n \in \BN$. Write $F(S)$ for the event that $\SCB^S(\SCL(x); D_{\Gamma_{\BD}}^{\BD}) \cap \SCB^S(\SCL(y); D_{\Gamma_{\BD}}^{\BD}) = \emptyset$. 

\substepn{step:bg4-bothbdry}{The case where both balls intersect $\partial\BD$} We first consider the case where both $\SCB^S(\SCL(x); D_{\Gamma_{\BD}}^{\BD})$ and $\SCB^S(\SCL(y); D_{\Gamma_{\BD}}^{\BD})$ intersect $\partial \BD$ (i.e., the case where each connected component of $\BD \setminus (\SCB^S(\SCL(x); D_{\Gamma_{\BD}}^{\BD}) \cup \SCB^S(\SCL(y); D_{\Gamma_{\BD}}^{\BD}))$ is simply connected). Note that there is a deterministic constant $C = C(\delta) > 0$ such that, on the event $E_\delta \cap F(S)$, the extremal distance between $\SCB^S(\SCL(x); D_{\Gamma_{\BD}}^{\BD})$ and $\SCB^S(\SCL(y); D_{\Gamma_{\BD}}^{\BD})$ is at most $C/n$ (cf.~\Cref{lem:Beurling}). Combining this with \Cref{cor:moment_across_rectangle} (transferred by uniformizing the complementary component, with the two balls as top and bottom sides and the two arcs of $\partial\BD$ as free sides) and with Axiom~\eqref{it:weak_axiom_locality}, which gives the conditional law of the loops and the metric there, it follows that, given $\SCB^S(\SCL(x); D_{\Gamma_{\BD}}^{\BD})$, $\SCB^S(\SCL(y); D_{\Gamma_{\BD}}^{\BD})$, the event $E_\delta \cap F(S)$, and the event that both $\SCB^S(\SCL(x); D_{\Gamma_{\BD}}^{\BD})$ and $\SCB^S(\SCL(y); D_{\Gamma_{\BD}}^{\BD})$ intersect $\partial \BD$, the conditional law of $D_{\Gamma_{\BD}}^{\BD}\left(\SCB^S(\SCL(x); D_{\Gamma_{\BD}}^{\BD}), \SCB^S(\SCL(y); D_{\Gamma_{\BD}}^{\BD})\right)$ is a.s.\ stochastically dominated by $C^\prime/n$ times a geometric random variable of success probability $\ge 1/C^\prime$, where $C^\prime = C^\prime(\delta) \ge 1$ is a deterministic constant. (Recall from the proof of \Cref{cor:moment_across_rectangle} that the distance between the top and bottom sides of an $r \times 1$ rectangle is stochastically dominated by $1/r$ times a geometric random variable with universal success probability.)
Thus, conditional on $\SCB^S(\SCL(x); D_{\Gamma_{\BD}}^{\BD})$, $\SCB^S(\SCL(y); D_{\Gamma_{\BD}}^{\BD})$, the event $E_\delta \cap F(S)$, and the event that both $\SCB^S(\SCL(x); D_{\Gamma_{\BD}}^{\BD})$ and $\SCB^S(\SCL(y); D_{\Gamma_{\BD}}^{\BD})$ intersect $\partial \BD$, the probability that $D_{\Gamma_{\BD}}^{\BD}\left(\SCB^S(\SCL(x); D_{\Gamma_{\BD}}^{\BD}), \SCB^S(\SCL(y); D_{\Gamma_{\BD}}^{\BD})\right) \ge \varepsilon$ is at most $O(\re^{-\alpha n})$ for some deterministic constant $\alpha = \alpha(\delta, \varepsilon) > 0$.

\substepn{step:bg4-general}{The general case} In general,  we fix deterministic line segments $I$ and $J$ from $x$ and $y$,  respectively,  to $\partial \BD$,  in such a way that $\dist(I,J) \ge \delta^\prime$ and $\dist(I \cup J, S) \ge \delta^\prime$, where $\delta^\prime \defeq \delta/4$ (such a pair exists for every $S$ as above once $2^{-n}\sqrt{2} < \delta^\prime$, and we fix one for each $S$).  Let $I^{\star}$ (resp.\@ $J^{\star}$) denote the closure of the union of the loops in $\Gamma_{\BD}$ that intersect $I$ (resp.\@ $J$),  each intersected with the closed Euclidean $\delta^\prime/2$-neighborhood of $I$ (resp.\ $J$).  Then,  arguing as in \Cref{step:avoid-wide} of the proof of \Cref{lem:avoiding_two_metric_balls} (where a level line of height $u \in (0,\pi/2)$ crossing a unit box separates, by \Cref{thm:level_line_interaction}, the loops meeting $I \cup J$ from the target region),  we obtain that there exist deterministic constants $C_1,\alpha_1>0$ depending only on $\delta$,  such that the following holds a.s.  Conditionally on $\SCB^S(\SCL(x) ;  D_{\Gamma_{\BD}}^{\BD}),  \SCB^S(\SCL(y) ; D_{\Gamma_{\BD}}^{\BD})$,  and on the event $E_{\delta} \cap F(S)$,  we have with conditional probability at least $1 - C_1 e^{-\alpha_1 n}$ that there are no loops in $\Gamma_{\BD}$ intersecting both $I \cup J$ and $B_{2^{-n/2}}(S)$.  Write $G(S)$ for the latter event. On $G(S)$ we have $(I^\star \cup J^\star) \cap B_{2^{-n/2}}(S) = \emptyset$, so that $S \setminus (\SCB^S(\SCL(x) ; D_{\Gamma_{\BD}}^{\BD}) \cup \SCB^S(\SCL(y) ; D_{\Gamma_{\BD}}^{\BD}))$ is contained in $\BD \setminus (\SCB^S(\SCL(x) ; D_{\Gamma_{\BD}}^{\BD}) \cup \SCB^S(\SCL(y) ; D_{\Gamma_{\BD}}^{\BD}) \cup I^{\star} \cup J^{\star})$; we let $W$ be a connected component of the latter set whose closure meets both of the stopped balls.  Furthermore,  arguing as in the case that both $\SCB^S(\SCL(x) ; D_{\Gamma_{\BD}}^{\BD})$ and $\SCB^S(\SCL(y) ; D_{\Gamma_{\BD}}^{\BD})$ intersect $\partial \BD$,  we obtain that there exist constants $C^{\prime},  \alpha^{\prime}>0$ depending only on $\delta$ and $\varepsilon$,  such that conditionally on $\SCB^S(\SCL(x) ; D_{\Gamma_{\BD}}^{\BD}),  \SCB^S(\SCL(y) ; D_{\Gamma_{\BD}}^{\BD})$,  and on the event $E_{\delta} \cap F(S) \cap G(S)$,  with conditional probability at most $C^{\prime} e^{-\alpha^{\prime} n}$,  we have that
\begin{align*}
D_{\Gamma_{\BD}}^{\BD}(\SCB^S(\SCL(x) ; D_{\Gamma_{\BD}}^{\BD}),  \SCB^S(\SCL(y) ; D_{\Gamma_{\BD}}^{\BD}); W) \geq \varepsilon.
\end{align*}
Since
\begin{align*}
D_{\Gamma_{\BD}}^{\BD}(\SCB^S(\SCL(x) ; D_{\Gamma_{\BD}}^{\BD}),\SCB^S(\SCL(y) ; D_{\Gamma_{\BD}}^{\BD})) \leq D_{\Gamma_{\BD}}^{\BD}(\SCB^S(\SCL(x) ; D_{\Gamma_{\BD}}^{\BD}) ,  \SCB^S(\SCL(y) ; D_{\Gamma_{\BD}}^{\BD}) ; W),
\end{align*}
it follows that,  a.s.,  
\begin{multline*}
	\BP\!\left\lbrack D_{\Gamma_{\BD}}^{\BD}\left(\SCB^S(\SCL(x); D_{\Gamma_{\BD}}^{\BD}), \SCB^S(\SCL(y); D_{\Gamma_{\BD}}^{\BD})\right) \ge \varepsilon \, \middle\vert \, \SCB^S(\SCL(x); D_{\Gamma_{\BD}}^{\BD}),  \SCB^S(\SCL(y); D_{\Gamma_{\BD}}^{\BD}), E_\delta \cap F(S) \cap G(S)\right\rbrack \\ \le C^\pprime \re^{-\alpha^\pprime n},
\end{multline*}
where $C^\pprime = C^\pprime(\delta, \varepsilon) > 0$ and $\alpha^\pprime = \alpha^\pprime(\delta, \varepsilon) > 0$ are deterministic constants. On the other hand, it follows from \Cref{prop:hitting-two-metric-balls} that there is a deterministic constant $C^{\pprime\prime} = C^{\pprime\prime}(\delta) > 0$ such that $\BP\lbrack E_\delta \cap F(S)\rbrack \le C^{\pprime\prime} 2^{-n(2 + o(1))}$. Thus, we conclude that there exists a constant $\alpha = \alpha(\delta,\varepsilon)>0$ such that
\begin{align*}
\BP\!\left[\left\{D_{\Gamma_{\BD}}^{\BD}(\SCB^S(\SCL(x) ; D_{\Gamma_{\BD}}^{\BD}) ,  \SCB^S(\SCL(y) ; D_{\Gamma_{\BD}}^{\BD})) \geq \varepsilon \right \} \cap E_{\delta} \right] \leq 2^{-n(2+\alpha + o(1))}
\end{align*}
as $n \to \infty$, for every dyadic square $S$ with side length $2^{-n}$ such that $\dist(S,x) \wedge \dist(S,y) \geq \delta$,  where the rate of convergence depends only on $\varepsilon$ and $\delta$.

\substepn{step:bg4-concl}{Conclusion via the Borel--Cantelli lemma} Now, one deduces assertion~\eqref{it:building-geodesic-6} using the Borel--Cantelli lemma, after summing over the $O(2^{2n})$ dyadic squares $S$ of side length $2^{-n}$ with $\dist(S,x) \wedge \dist(S,y) \ge \delta$. The remaining squares do not contribute: on $E_\delta$ they lie in $\mathop{\mathrm{int}}(\SCL(x)) \cup \mathop{\mathrm{int}}(\SCL(y))$, where the two stopped balls intersect, so the distance vanishes; this is also why only the squares of $F(S)$ matter above.
\end{proof}

\begin{proof}[Proof of \Cref{prop:existence_geodesics}]
\stepn{step:eg-order}{The partial order on the loops of a geodesic} Fix $\SCL_1, \SCL_2 \in \Gamma_{\BD}$. For $\SCL, \SCL^\prime \in \widetilde\Gamma_{\SCL_1,\SCL_2}$, we shall write $\SCL \preceq \SCL^\prime$ if the following equivalent conditions are satisfied:
\begin{itemize}
\item $\SCL \in \Gamma_{\SCL_1,\SCL^\prime}$, i.e., $D_{\Gamma_{\BD}}^{\BD}(\SCL_1, \SCL) + D_{\Gamma_{\BD}}^{\BD}(\SCL, \SCL^\prime) = D_{\Gamma_{\BD}}^{\BD}(\SCL_1, \SCL^\prime)$.
\item $\SCL^\prime \in \Gamma_{\SCL,\SCL_2}$, i.e., $D_{\Gamma_{\BD}}^{\BD}(\SCL, \SCL^\prime) + D_{\Gamma_{\BD}}^{\BD}(\SCL^\prime, \SCL_2) = D_{\Gamma_{\BD}}^{\BD}(\SCL, \SCL_2)$.
\item $D_{\Gamma_{\BD}}^{\BD}(\SCL_1, \SCL) + D_{\Gamma_{\BD}}^{\BD}(\SCL, \SCL^\prime) + D_{\Gamma_{\BD}}^{\BD}(\SCL^\prime, \SCL_2) = D_{\Gamma_{\BD}}^{\BD}(\SCL_1, \SCL_2)$.
\end{itemize}
(The three identities are equivalent for $\SCL, \SCL^\prime \in \widetilde\Gamma_{\SCL_1,\SCL_2}$: adding $D(\SCL^\prime, \SCL_2)$ to the first and using that $\SCL^\prime$ is intermediate gives the third, which with the triangle inequality forces equality in the first two.) Note that $(\widetilde\Gamma_{\SCL_1,\SCL_2}, \preceq)$ is a partially ordered set (antisymmetry holds since distinct loops are at positive $D_{\Gamma_\BD}^\BD$-distance, $D_{\Gamma_\BD}^\BD$ being a.s.\ a metric on $\Gamma_\BD$ by Axiom~\eqref{it:weak_axiom_geodesic}) with least element $\SCL_1$ and greatest element $\SCL_2$. We shall write $\SCL \xleftrightarrow n \SCL^\prime$ if there exists a sequence $S_0, S_1, \ldots, S_m \in \CS_{\SCL_1,\SCL_2}^n$ such that $S_0 \cap \SCL \neq \emptyset$, $S_m \cap \SCL^\prime \neq \emptyset$, and $\partial S_{j - 1} \cap \partial S_j \neq \emptyset$ for all $j \in [1, m]_{\BZ}$. 

\stepn{step:eg-families}{Construction of the finite families}

\substepn{step:egf-claim}{The claim} Next, we \emph{claim} that there exists a sequence of finite collections 
\begin{equation*}
\{\SCL_1, \SCL_2\} \subset \Gamma_{\SCL_1,\SCL_2}^1 \subset \Gamma_{\SCL_1,\SCL_2}^2 \subset \cdots \subset \widetilde\Gamma_{\SCL_1,\SCL_2}
\end{equation*}
satisfying the following conditions:
\begin{enumerate}
\item For each $n \in \BN$, the set $(\Gamma_{\SCL_1,\SCL_2}^n, \preceq)$ is totally ordered. 
\item For each $n \in \BN$ and $\SCL \in \Gamma_{\SCL_1,\SCL_2}^n \setminus \{\SCL_1, \SCL_2\}$, we have $\diam(\SCL) \ge 2^{-n}$. 
\item For each $n \in \BN$, if we write $\Gamma_{\SCL_1,\SCL_2}^n = \{\SCL_1 = \SCL_0^\prime \preceq \SCL_1^\prime \preceq \cdots \preceq \SCL_N^\prime = \SCL_2\}$, then $\SCL_{j - 1}^\prime \xleftrightarrow n \SCL_j^\prime$ for all $j \in [1, N]_{\BZ}$. 
\end{enumerate}

\substepn{step:egf-build}{Construction of $\Gamma_{\SCL_1,\SCL_2}^1$} Let us construct $\Gamma_{\SCL_1,\SCL_2}^1$. Since $\widetilde K_{\SCL_1,\SCL_2}$ is connected, 
the graph $(\widetilde\Gamma_{\SCL_1,\SCL_2}, \bullet \xleftrightarrow {1} \bullet)$ is connected, and hence so is the graph
\begin{equation*}
\left(\left\{\SCL \in \widetilde\Gamma_{\SCL_1,\SCL_2} : \diam(\SCL) \ge 2^{-1}\right\} \cup \{\SCL_1, \SCL_2\}, \bullet \xleftrightarrow {1} \bullet\right), 
\end{equation*}
(since loops of diameter smaller than $2^{-1}$ cannot intersect two dyadic squares of side length $2^{-1}$ that are not adjacent). If $\SCL_1 \xleftrightarrow 1 \SCL_2$, then we are done; otherwise, we may choose $\SCA_0 \in \widetilde\Gamma_{\SCL_1,\SCL_2} \setminus \{\SCL_1,\SCL_2\}$ with $\diam(\SCA_0) \ge 2^{-1}$ (such a loop exists since $\SCL_1$ and $\SCL_2$ are non-adjacent vertices of the above connected graph, so any path joining them in the graph passes through a third vertex). Now, in a similar vein, the graphs
\begin{multline*}
\left(\left\{\SCL \in \widetilde\Gamma_{\SCL_1,\SCA_0} : \diam(\SCL) \ge 2^{-1}\right\} \cup \{\SCL_1\}, \bullet \xleftrightarrow {1} \bullet\right) \\
\text{and} \quad \left(\left\{\SCL \in \widetilde\Gamma_{\SCA_0,\SCL_2} : \diam(\SCL) \ge 2^{-1}\right\} \cup \{\SCL_2\}, \bullet \xleftrightarrow {1} \bullet\right)
\end{multline*}
are connected. If $\SCL_1 \xleftrightarrow 1 \SCA_0$ and $\SCA_0 \xleftrightarrow 1 \SCL_2$, then we are done; otherwise, we may choose $\SCA_1 \in \widetilde\Gamma_{\SCL_1,\SCA_0} \setminus \{\SCL_1,\SCA_0\}$ (resp.\ $\SCA_2 \in \widetilde\Gamma_{\SCA_0,\SCL_2} \setminus \{\SCA_0,\SCL_2\}$) with $\diam(\SCA_1) \ge 2^{-1}$ (resp.\ $\diam(\SCA_2) \ge 2^{-1}$). Since the number of loops with diameter $\ge 2^{-1}$ is finite, this procedure must terminate after finitely many steps. This completes the construction of $\Gamma_{\SCL_1,\SCL_2}^1$.

\substepn{step:egf-induct}{The inductive step} The construction of $\Gamma_{\SCL_1,\SCL_2}^{n + 1}$ given $\Gamma_{\SCL_1,\SCL_2}^n$ follows from a similar argument, applied between consecutive loops of $\Gamma_{\SCL_1,\SCL_2}^n$: by \Cref{lem:building-geodesic}\eqref{it:building-geodesic-0}, $\widetilde K_{\SCL_{j-1}^\prime,\SCL_j^\prime} \subset \widetilde K_{\SCL_1,\SCL_2}$, so the same argument runs there.

\stepn{step:eg-paths}{Construction of the approximating paths} Next, for each $n \in \BN$, choose a continuous path 
\begin{equation*}
P^n \colon [0, 1] \to \bigcup_{\SCL \in \Gamma_{\SCL_1,\SCL_2}^n} \SCL \cup \bigcup_{S \in \CS_{\SCL_1,\SCL_2}^n} S
\end{equation*}
from $\SCL_1$ to $\SCL_2$ satisfying the following conditions:
\begin{enumerate}[label=(\Alph*)]
\item\label{it:condition A path P n} If we write $\Gamma_{\SCL_1,\SCL_2}^n = \{\SCL_1 = \SCL_0^\prime \preceq \SCL_1^\prime \preceq \cdots \preceq \SCL_N^\prime = \SCL_2\}$, then there exist times $0 = s_0 < t_0 < s_1 < t_1 < \cdots < s_N < t_N = 1$ such that $P^n([s_j, t_j]) \subset \SCL_j^\prime$ for all $j \in [0, N]_{\BZ}$, and for each $j \in [1, N]_{\BZ}$, there exists a sequence $S_0, S_1, \ldots, S_m \in \CS_{\SCL_1,\SCL_2}^n$ such that $P^n([t_{j - 1}, s_j]) \subset \bigcup_{k \in [0, m]_{\BZ}} S_k$, $S_0 \cap \SCL_{j - 1}^\prime \neq \emptyset$, $S_m \cap \SCL_j^\prime \neq \emptyset$, and $\partial S_{k - 1} \cap \partial S_k \neq \emptyset$ for all $k \in [1, m]_{\BZ}$. 
\item\label{it:condition B path P n} Let $0 = s_0 < t_0 < s_1 < t_1 < \cdots < s_N < t_N = 1$ be as above. Then (by abuse of notation) $P^{n^\prime}|_{[s_j, t_j]} = P^n|_{[s_j, t_j]}$ for all $j \in [0, N]_{\BZ}$ and $n^\prime \ge n$. (More precisely, if $0 = s_0^n < t^n_0 < s^n_1 < \cdots$ denotes the times corresponding to $P^n$ and if $P^n([s^n_j, t^n_j]) \subset \SCL$ and $P^{n^\prime}([s^{n^\prime}_{j^\prime}, t^{n^\prime}_{j^\prime}])\subset \SCL$ with $\SCL \in \Gamma_{\SCL_1,\SCL_2}^n$, then $s^n_j= s^{n^\prime}_{j^\prime}$, $t^n_j= t^{n^\prime}_{j^\prime}$ and $P^{n^\prime}\vert_{[s^{n^\prime}_{j^\prime}, t^{n^\prime}_{j^\prime}]}= P^n\vert_{[s^n_j, t^n_j ]}$.)
\end{enumerate}
Such a path exists: each loop is a continuous curve, hence path-connected, and consecutive squares of a chain share a boundary point, through which the path can pass.

\stepn{step:eg-concl}{Conclusion of the proof} Next, let us show the uniform convergence of $P^n$. The maximum diameter of a chain of squares in $\CS^n_{\SCL_1, \SCL_2}$ in Condition \ref{it:condition A path P n} that $P^n$ satisfies goes to zero as $n\to \infty$. Indeed, by \Cref{lem:building-geodesic}\eqref{it:building-geodesic-2}, there are $2^{n\cdot o(1)}$ squares in $\CS^n_{\SCL_1, \SCL_2}$ and each of them has side-length $2^{-n}$. Moreover, Condition \ref{it:condition B path P n} shows that the path $P^{n+1}$ agrees with $P^n$ on the loops that are used by $P^n$. Therefore, the sequence $\{P^n\}_{n \in \BN}$ converges uniformly on $[0,1]$. Let $P$ be its limit. It remains to verify that $P$ is a $D_{\Gamma_{\BD}}^{\BD}$-geodesic from $\SCL_1$ to $\SCL_2$. For each $n \in \BN$, since $\Gamma_{\SCL_1,\SCL_2}^n = \{\SCL_1 = \SCL_0^\prime \preceq \SCL_1^\prime \preceq \cdots \preceq \SCL_N^\prime = \SCL_2\}$ is a totally ordered set, it follows that 
\begin{equation*}
	D_{\Gamma_{\BD}}^{\BD}(\SCL_1, \SCL_2) = \sum_{j = 1}^{N} D_{\Gamma_{\BD}}^{\BD}(\SCL_{j - 1}^\prime, \SCL_j^\prime). 
\end{equation*}
Moreover, the path $P$ is admissible. Assume by contradiction that there exists a time $t\in [0,1]$ such that $P(t)\not\in P([0,1]) \cap \overline{\bigcup_{\SCL\in \Gamma} \SCL}$. Note that necessarily $t \not\in\{0,1\}$. Then, there exists $\varepsilon>0$ such that $B_\varepsilon(P(t)) \cap \overline{\bigcup_{\SCL\in \Gamma} \SCL} = \emptyset$. This is impossible by the observation that we made at the beginning of \Cref{step:eg-concl}. Thus, $P$ is a $D_{\Gamma_{\BD}}^{\BD}$-geodesic from $\SCL_1$ to $\SCL_2$.

Finally, it remains to verify that $\bigcup_{n \in \BN} \{D_{\Gamma_{\BD}}^{\BD}(\SCL_1, \SCL) : \SCL \in \Gamma_{\SCL_1,\SCL_2}^n\}$ forms a dense subset of the interval $[0, D_{\Gamma_{\BD}}^{\BD}(\SCL_1, \SCL_2)]$. Suppose by way of contradiction that this is false. Then there exists $0 \le s < t \le D_{\Gamma_{\BD}}^{\BD}(\SCL_1, \SCL_2)$ such that for each $n \in \BN$, there exist $\SCL_1^n, \SCL_2^n \in \Gamma_{\SCL_1,\SCL_2}^n$ consecutive in $(\Gamma_{\SCL_1,\SCL_2}^n, \preceq)$ (which is why the chain below is provided by condition~(3)) and $S_0^n, S_1^n, \ldots, S_m^n \in \CS_{\SCL_1,\SCL_2}^n$ such that $D_{\Gamma_{\BD}}^{\BD}(\SCL_1, \SCL_1^n) \le s$, $D_{\Gamma_{\BD}}^{\BD}(\SCL_1, \SCL_2^n) \ge t$, $S_0^n \cap \SCL_1^n \neq \emptyset$, $S_m^n \cap \SCL_2^n \neq \emptyset$, and $\partial S_{j - 1}^n \cap \partial S_j^n \neq \emptyset$ for all $j \in [1, m]_{\BZ}$. Let $S^n$ be the smallest dyadic square that contains $S_0^n, S_1^n, \ldots, S_m^n$. By definition, the side length $\scl(S^n)$ of $S^n$ satisfies $S^n \in \CS_{\SCL_1,\SCL_2}^{\log_{2}(1/\scl(S^n))}$ and $\scl(S^n) \le 100 \cdot \#\CS_{\SCL_1,\SCL_2}^n \cdot 2^{-n} \to 0$ as $n \to \infty$ (cf.~\Cref{lem:building-geodesic}\eqref{it:building-geodesic-2}). Thus, it follows from \Cref{lem:building-geodesic}\eqref{it:building-geodesic-6} that 
\begin{equation*}
	D_{\Gamma_{\BD}}^{\BD}(\SCL_1^n, \SCL_2^n) \le D_{\Gamma_{\BD}}^{\BD}\left(\SCB^{S^n}(\SCL_1; D_{\Gamma_{\BD}}^{\BD}), \SCB^{S^n}(\SCL_2; D_{\Gamma_{\BD}}^{\BD})\right) \to 0 \quad \text{as } n \to \infty.
\end{equation*}
This contradicts the fact that $D_{\Gamma_{\BD}}^{\BD}(\SCL^n_1, \SCL^n_2) = D_{\Gamma_{\BD}}^{\BD}(\SCL_1, \SCL^n_2) - D_{\Gamma_{\BD}}^{\BD}(\SCL_1, \SCL^n_1) \ge t - s$. Thus, $\bigcup_{n \in \BN} \{D_{\Gamma_{\BD}}^{\BD}(\SCL_1, \SCL) : \SCL \in \Gamma_{\SCL_1,\SCL_2}^n\}$ is dense in $[0, D_{\Gamma_{\BD}}^{\BD}(\SCL_1, \SCL_2)]$.

This completes the proof of \Cref{prop:existence_geodesics}. 
\end{proof}
\begin{remark}
	Note that the above proof produces, for any $\SCL_1, \SCL_2 \in \Gamma_U$, a $D^U_{\Gamma_U}$-geodesic $P$ from $\SCL_1$ to $\SCL_2$ along which $t\mapsto D^U_{\Gamma_U}(\SCL_1, P(t))$ is continuous, and the same holds if we replace one of the two loops or both of them by a segment of $\partial U$. (By conformal invariance it suffices to treat $U = \BD$, and the boundary-arc versions require the corresponding versions of \Cref{lem:building-geodesic}.)
\end{remark}

The following are two immediate corollaries regarding the behavior of these geodesics.

\begin{corollary}\label{lem:Hausdorff_dim_geodesic}
Almost surely, for each $\SCL_1, \SCL_2 \in \Gamma_{\BD}$, any $D_{\Gamma_{\BD}}^{\BD}$-geodesic $P$ connecting $\SCL_1$ and $\SCL_2$ has the property that the subset $\{P(t) : P(t) \notin \SCL \text{ for any } \SCL \in \Gamma_{\BD}\}$ is of Hausdorff dimension zero. A similar statement holds with two deterministic and disjoint connected arcs of $\partial\BD$ in place of $\SCL_1$ and $\SCL_2$.
\end{corollary}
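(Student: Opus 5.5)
The plan is to show that the non-loop points of any geodesic lie in the sets $\CS^n_{\SCL_1,\SCL_2}$, and then cover those sets using the cardinality bound of \Cref{lem:building-geodesic}\eqref{it:building-geodesic-2}. Fix $\SCL_1,\SCL_2$ and a geodesic $P$ from $\SCL_1$ to $\SCL_2$.

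\emph{Step 1: every point of $P$ lies in $K_{\SCL_1,\SCL_2}$.} Let $u\in[0,1]$. Since $P$ is admissible, we can pick times $u_k\to u$ with $P(u_k)\in\SCL^{(k)}\in\Gamma_\BD$. Suppose $u_k\le u$; the other case is symmetric. The chain in the definition of $\len$ can be split at $u_k$, so $\len(P)\ge D(\SCL_1,\SCL^{(k)})+D(\SCL^{(k)},\SCL_2)$. Because $\len(P)=D(\SCL_1,\SCL_2)$, the triangle inequality forces equality. Hence $\SCL^{(k)}\in\Gamma_{\SCL_1,\SCL_2}$, and in particular $P(u_k)\in K_{\SCL_1,\SCL_2}$. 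Since $K_{\SCL_1,\SCL_2}$ is closed, $P(u)\in K_{\SCL_1,\SCL_2}$.

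\emph{Step 2: non-loop points of $P$ avoid filled loop closures of $\Gamma_{\SCL_1,\SCL_2}$.} Suppose $P(u)$ lies on no loop. Since $P$ takes values in the carpet $\Upsilon_\BD$, the point $P(u)$ is not in $\mathop{\mathrm{int}}(\SCL)$ for any $\SCL$. Closures of filled loops add only the loop itself, so $P(u)\notin\bigcup_{\SCL\in\Gamma_{\SCL_1,\SCL_2}}\overline{\mathop{\mathrm{int}}(\SCL)}$. Combined with Step~1, the set $X\defeq\{P(t):P(t)\notin\bigcup\SCL\}$ is therefore contained in $\overline{K_{\SCL_1,\SCL_2}\setminus\bigcup_{\SCL\in\Gamma_{\SCL_1,\SCL_2}}\overline{\mathop{\mathrm{int}}(\SCL)}}$. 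Consequently, for every $n$, the set $X$ is covered by the squares of $\CS^n_{\SCL_1,\SCL_2}$.

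\emph{Step 3: dimension count.} By \Cref{lem:building-geodesic}\eqref{it:building-geodesic-2}, almost surely and simultaneously for all pairs of loops, $\#\CS^n_{\SCL_1,\SCL_2}\le 2^{n\cdot o(1)}$. Fix $s>0$ and cover $X$ by these squares, each of diameter $\sqrt2\,2^{-n}$. This gives $\mathcal H^s_{\sqrt2 2^{-n}}(X)\le 2^{n\cdot o(1)}(\sqrt2\,2^{-n})^s\to0$. Hence $\mathcal H^s(X)=0$ for every $s>0$, so $\dim_H X=0$. This argument in fact bounds the upper box dimension.

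The almost-sure event is the single event from \Cref{lem:building-geodesic}, so the conclusion holds for all pairs of loops and all geodesics at once. The boundary-arc version is identical, using the arc versions of that lemma.

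The only delicate point is Step~1: we must check that splitting chains at intermediate loops is legitimate, and that points with $P(u)\in\partial\BD$ or non-loop limit points are still captured. Both are handled by the closure and admissibility arguments above.
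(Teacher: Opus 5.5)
Your proof is correct and is essentially the paper's argument. Geodesic points lie in $K_{\SCL_1,\SCL_2}$ by additivity along $P$, and points of $P$ on no loop avoid every $\overline{\mathop{\mathrm{int}}(\SCL)}$, so they are covered by $\CS^n_{\SCL_1,\SCL_2}$ and \Cref{lem:building-geodesic}\eqref{it:building-geodesic-2} gives dimension zero. Your Step~2 correctly uses that $P$ is $\Upsilon_\BD$-valued, and this is the right justification: $K_{\SCL_1,\SCL_2}$ itself contains $\overline{\mathop{\mathrm{int}}(\SCL)}$ for every $\SCL\in\Gamma_{\SCL_1,\SCL_2}$, since $D_{\Gamma_\BD}^\BD(\SCL_1,\cdot)$ is constant on filled loops.

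One small fix in Step~1. Admissibility gives density of loop points in the image $P([0,1])$, not loop times converging to every $u$; the latter fails if $P$ is constant near $u$. This does not matter, because your splitting argument applies at any time $v$ with $P(v)$ on a loop. So closedness of $K_{\SCL_1,\SCL_2}$ together with density in the image already gives $P([0,1])\subset K_{\SCL_1,\SCL_2}$.
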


\begin{proof}
Every point of a geodesic $P$ from $\SCL_1$ to $\SCL_2$ lies in $K_{\SCL_1,\SCL_2}$ by the additivity of the distances along $P$, and $K_{\SCL_1,\SCL_2}$ misses the interior of every loop of $\Gamma_{\SCL_1,\SCL_2}$, as shown in the proof of \Cref{prop:existence_geodesics}. Hence the subset in question lies in $\overline{K_{\SCL_1,\SCL_2} \setminus \bigcup_{\SCL \in \Gamma_{\SCL_1,\SCL_2}} \overline{\mathop{\mathrm{int}}(\SCL)}}$, and the claim follows from \Cref{lem:building-geodesic}\eqref{it:building-geodesic-2}.
\end{proof}

\begin{corollary}\label{lem:geodesic_avoid_boundary}
Almost surely, for each $\SCL_1, \SCL_2 \in \Gamma_{\BD}$, any $D_{\Gamma_{\BD}}^{\BD}$-geodesic connecting $\SCL_1$ and $\SCL_2$ does not touch $\partial\BD$. 
A similar statement holds with two deterministic and disjoint connected arcs of $\partial\BD$ in place of $\SCL_1$ and $\SCL_2$ (in which case the $D_{\Gamma_{\BD}}^{\BD}$-geodesic touches $\partial\BD$ only at its endpoints).
\end{corollary}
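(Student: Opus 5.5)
The plan is to mirror the proof of \Cref{lem:Hausdorff_dim_geodesic}, replacing the bulk estimate \Cref{prop:hitting-two-metric-balls} by its boundary version \Cref{prop:boundary-hitting-two-metric-balls}. Every point of a geodesic $P$ from $\SCL_1$ to $\SCL_2$ lies in $K_{\SCL_1,\SCL_2}$, by additivity of distances along $P$. If $P$ touches $\partial\BD$ at some point $w$, then $w \in K_{\SCL_1,\SCL_2} \cap \partial\BD$. Moreover, $w$ is not in the filled interior of any loop, since loops do not touch $\partial\BD$. So for every $n$, some dyadic square $S$ of side $2^{-n}$ containing $w$ belongs to $\CS^n_{\SCL_1,\SCL_2}$. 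For this square, the metric balls from $\SCL_1$ and $\SCL_2$, stopped upon hitting $S$, must be disjoint (event $F(S)$); otherwise $w$ could not be an intermediate point at positive distance from both. It therefore suffices to show that a.s., for all large $n$, no square meeting $\partial\BD$ satisfies $F(S)$. As before, we reduce to $\SCL_1=\SCL(x)$ and $\SCL_2=\SCL(y)$ with $x,y$ rational and work on $E_\delta$.

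\textbf{Step 1 (first moment).} There are $O(2^n)$ dyadic squares of side $2^{-n}$ meeting $\partial\BD$. For each such square, pick $z_S\in\partial\BD\cap S$, so that $S\subset B_{2\cdot 2^{-n}}(z_S)$. Map $\BD$ to $\BH$ by a Möbius transformation. Near $\partial\BD$, for $x,y$ at distance $\ge\delta$ from $\partial\BD$, this map distorts distances only by bounded factors depending on $\delta$. Conformal invariance (Axiom~\eqref{it:weak_axiom_conformal_invariance}) then lets us apply \Cref{prop:boundary-hitting-two-metric-balls}, which gives $\BP[F(S)\cap E_\delta]\le 2^{-n(4+o(1))}$, uniformly over $z_S$; uniformity is needed here and should follow from rotation invariance of $\BD$. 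Summing, the expected number of boundary squares with $F(S)$ is $2^{-n(3+o(1))}$. This is summable, so by Borel--Cantelli a.s.\ no such square exists for $n$ large. This yields the claim for all pairs of loops simultaneously after intersecting over rational $x,y,\delta$.

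\textbf{Step 2 (the arc case).} For two disjoint deterministic arcs $I_1,I_2$, the same argument applies to squares meeting $\partial\BD\setminus(I_1\cup I_2)$, using the arc clause of \Cref{prop:boundary-hitting-two-metric-balls}. Its rate is uniform over $z\in\partial\BH\setminus(I_1\cup I_2)$, but near the endpoints of the arcs some care is needed. I would handle this by exhausting with the arcs $\{z:\dist(z,I_1\cup I_2)\ge\eta\}$ and letting $\eta\downarrow0$ along rationals. A geodesic touching $\partial\BD$ off $I_1\cup I_2$ touches one of these sets.

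Points of $I_1$ or $I_2$ other than the endpoints of $P$ must then be excluded. If $P(t)\in I_1$ for some $t>0$, the truncation $P|_{[t,1]}$ is again a geodesic with strictly smaller length, unless the length of $P|_{[0,t]}$ is zero. Admissibility and the density of loop distances along $P$ (\Cref{prop:existence_geodesics}) should then force $P|_{[0,t]}$ to stay in $I_1$. This case is where the statement reads ``only at its endpoints'', understood up to reparametrizing such degenerate initial boundary segments.

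The main obstacle is the uniformity of the boundary estimate over the $2^n$ boundary locations, together with the endpoint issue in the arc case. The summation itself only needs an exponent $>1$, so the $4$ leaves ample room.
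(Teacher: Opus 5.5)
Your proposal is correct and follows essentially the same route as the paper: a union bound over $O(\varepsilon^{-1})$ boundary locations, each controlled by the $\varepsilon^{4+o(1)}$ boundary estimate of \Cref{prop:boundary-hitting-two-metric-balls}, for a total of $\varepsilon^{3+o(1)}$. The differences are cosmetic: the paper works in $\BH$ with the grid $(\varepsilon/2)\BZ$, using two conformal maps to cover all of $\partial\BD$, and concludes directly because the event does not depend on $\varepsilon$, so your Borel--Cantelli step and the detour through $\CS^n_{\SCL_1,\SCL_2}$ are unnecessary though harmless (and the disjointness of the stopped balls is cleanest for an open neighbourhood of the touching point, which your $B_{2\cdot 2^{-n}}(z_S)$ provides).
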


\begin{proof}
For simplicity, we only consider the case of loops of $\Gamma_{\BD}$; the case of two deterministic and disjoint connected arcs of $\partial\BD$ is entirely similar.
By the conformal invariance property, it suffices to consider the case of $\BH$, provided we apply the argument below for two conformal maps from $\BD$ onto $\BH$ whose inverses send $\infty$ to two distinct points of $\partial\BD$, so that every boundary point is covered. Fix $x, y \in \BH$ and $R > 0$; as in the proof of \Cref{lem:building-geodesic}, it suffices to treat $\SCL_1 = \SCL(x)$ and $\SCL_2 = \SCL(y)$ for $x, y$ in a countable dense set. It suffices to show that, a.s., no $D_{\Gamma_{\BH}}^{\BH}$-geodesic $P$ connecting $\SCL(x)$ and $\SCL(y)$ touches $B_R(0) \cap \partial\BH$. Moreover, if one touches it at a point $w$, then by the additivity of the distances along $P$ the balls $\SCB^{B_\varepsilon(z)}$ below are disjoint for the $z \in (\varepsilon/2)\BZ$ nearest to $w$, so that, by a union bound,
\begin{align*}
	&\BP\!\left[ \text{some geodesic } P \text{ touches } B_R(0) \cap \partial\BH\right] \\
	&\qquad \le \sum_{z \in B_R(0) \cap (\varepsilon/2)\BZ}
	\BP\!\left[\SCB^{B_\varepsilon(z)}(\SCL(x); D_{\Gamma_{\BH}}^{\BH}) \cap \SCB^{B_\varepsilon(z)}(\SCL(y); D_{\Gamma_{\BH}}^{\BH}) = \emptyset\right], 
\end{align*}
By \Cref{prop:boundary-hitting-two-metric-balls}, each of the $O(R/\varepsilon)$ terms of this sum is $\varepsilon^{4 + o(1)}$, so the sum is $O(\varepsilon^{3 + o(1)})$ as $\varepsilon \to 0$. Since the left-hand side does not depend on $\varepsilon$, it vanishes.
\end{proof}

\begin{corollary}\label{lem:strong-locality}
Let $D$ be a weak geodesic CLE$_4$ metric coupling, and let $U \subsetneq \BC$ be a deterministic simply connected domain. Let $V \subset U$ be a deterministic simply connected subdomain and let $\{V_j\}_j$ denote the connected components of $V^\star$. Suppose that the random variables $(\Gamma_U, D_{\Gamma_U}^U, \{D_{\Gamma_U |_{V_j}}^{V_j}\}_j)$ are coupled as in Axiom~\eqref{it:weak_axiom_locality} in \Cref{def:weak_axioms}. Then, we have that $D_{\Gamma_U |_{V_j}}^{V_j} = D_{\Gamma_U}^U(\bullet, \bullet; V_j)$ for all $j$ a.s. In particular, for all $j$, $(\Gamma_U|_{V_j}, D_{\Gamma_U}^U(\bullet, \bullet; V_j))$ and $(\Gamma_{V_j}, D_{\Gamma_{V_j}}^{V_j})$ have the same law. In other words, $D$ satisfies Axiom~\eqref{it:axiom_locality} of \Cref{def:axioms} (the conditional independence over $j$ required comes from Axiom~\eqref{it:weak_axiom_locality} of \Cref{def:weak_axioms}).
\end{corollary}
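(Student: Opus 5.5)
The statement asks for an identity between two metrics on $\Gamma_U|_{V_j}$: the internal metric $D_{\Gamma_U}^U(\bullet,\bullet;V_j)$, defined as an infimum of $D_{\Gamma_U}^U$-lengths of admissible paths in $V_j$, and the coupled metric $D_{\Gamma_U|_{V_j}}^{V_j}$. I would prove the two inequalities separately. By conformal invariance, I would work with $U=\BD$ throughout.

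\emph{Upper bound: $D_{\Gamma_U}^U(\bullet,\bullet;V_j)\le D_{\Gamma_U|_{V_j}}^{V_j}$.}
\begin{enumerate}
\item By Axiom~\eqref{it:weak_axiom_locality}, the pair $(\Gamma_U|_{V_j}, D^{V_j}_{\Gamma_U|_{V_j}})$ has the law of $(\Gamma_{V_j},D^{V_j}_{\Gamma_{V_j}})$. This law transfers to $V_j$ by conformal invariance, even though $V_j$ is random, because it holds conditionally on the $\sigma$-algebra~\eqref{eq:weak_axiom_locality}.
\item \Cref{prop:existence_geodesics}, applied in $V_j$, gives for each pair of loops $\SCL_1,\SCL_2\in\Gamma_U|_{V_j}$ an admissible path $P\subset V_j$ with $\len(P;D^{V_j}_{\Gamma_U|_{V_j}})=D^{V_j}_{\Gamma_U|_{V_j}}(\SCL_1,\SCL_2)$.
\item By \Cref{lem:geodesic_avoid_boundary}, $P$ does not touch $\partial V_j$, so $P$ is a legitimate competitor for the internal distance.
\item The second bullet of Axiom~\eqref{it:weak_axiom_locality} gives $D_{\Gamma_U}^U\le D^{V_j}_{\Gamma_U|_{V_j}}$ on every chain of loops. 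Summing over chains yields $\len(P;D^U_{\Gamma_U})\le \len(P;D^{V_j}_{\Gamma_U|_{V_j}})$.
\item Hence $D^U_{\Gamma_U}(\SCL_1,\SCL_2;V_j)\le D^{V_j}_{\Gamma_U|_{V_j}}(\SCL_1,\SCL_2)$.
\end{enumerate}

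\emph{Lower bound: for every admissible path $P\subset V_j\cap\Upsilon_U$ from $\SCL_1$ to $\SCL_2$, $\len(P;D^U_{\Gamma_U})\ge D^{V_j}_{\Gamma_U|_{V_j}}(\SCL_1,\SCL_2)$.}
\begin{enumerate}
\item Since $P([0,1])$ is compact in $V_j$, it lies at positive Euclidean distance from $\partial V_j$.
\item Only finitely many loops of $\Gamma_U|_{V_j}$ have diameter at least any fixed size. Using this and admissibility, I would choose loops $\SCL_1=\SCL'_0,\dots,\SCL'_N=\SCL_2$ met by $P$ in order, with consecutive $D_{\Gamma_U}^U$-distances smaller than a small $\eta$. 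The aim is $\eta<\min_k D^U_{\Gamma_U}(\SCL'_k,\partial V_j)$.
\item For such consecutive loops, the ball $\SCB_t(\SCL'_{k-1};D^U_{\Gamma_U})$ with $t=D^U_{\Gamma_U}(\SCL'_{k-1},\SCL'_k)$ does not reach $\partial V_j$. The third bullet of Axiom~\eqref{it:weak_axiom_locality} says the balls agree up to that time, so $D^{V_j}(\SCL'_{k-1},\SCL'_k)=D^U(\SCL'_{k-1},\SCL'_k)$.
\item Summing over $k$ and using the triangle inequality for $D^{V_j}$ gives $D^{V_j}(\SCL_1,\SCL_2)\le\sum_k D^U(\SCL'_{k-1},\SCL'_k)\le\len(P;D^U)$.
\end{enumerate}

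\emph{Main obstacle.} Step~2 of the lower bound is the hardest part: ensuring that $\min_k D^U(\SCL'_k,\partial V_j)$ is bounded below uniformly while the mesh $\eta\to0$. Points of $P$ may lie off the loops, so nearby loops can be tiny, and Euclidean closeness does not obviously give small $D$-distance.

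I would first bound $D^U(\cdot,\partial V_j)$ below on a compact set via the lower semi-continuity of \Cref{lem:lower-semi-continuity}, using that $D^U(x,\partial V_j)>0$ for each $x\in P$. Then I would cover $P$ by finitely many short subpaths, which is possible because the $D$-length of $P$ is finite. The obstacle is getting positivity uniformly, not just pointwise.

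If that fails, I would instead apply the argument to a geodesic $P$ of $D^U(\bullet,\bullet;V_j)$, which should be constructed by the same procedure as \Cref{prop:existence_geodesics}. Along such a geodesic, $t\mapsto D(\SCL_1,P(t))$ is continuous, which makes the choice of chain straightforward.

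Finally, the equality of laws and Axiom~\eqref{it:axiom_locality} of \Cref{def:axioms} follow immediately from the a.s. identity together with Axiom~\eqref{it:weak_axiom_locality}.
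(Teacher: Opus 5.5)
Your upper bound $D_{\Gamma_U}^U(\bullet,\bullet;V_j)\le D^{V_j}_{\Gamma_U|_{V_j}}$ is correct, and it is organized more directly than the paper's argument. You take a $D^{V_j}_{\Gamma_U|_{V_j}}$-geodesic, which stays in $V_j$ by \Cref{lem:geodesic_avoid_boundary}, as a competitor for the internal distance. You then compare the two lengths chain by chain using the second bullet of Axiom~\eqref{it:weak_axiom_locality}. The paper instead first writes $D^{V_j}_{\Gamma_U|_{V_j}}(\SCL_1,\SCL_2)$ as an infimum of chain sums whose steps are shorter than the distance to $\partial V_j$, deduced from the existence of geodesics that avoid $\partial V_j$. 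It then replaces each step by its $D_{\Gamma_U}^U$-value via the third bullet, and finally identifies the resulting chain formula with the internal metric.

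The lower bound, however, is not established. The gap is the step you flag, and neither of your fallbacks closes it.

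\emph{Positivity is not the obstruction.} Uniform positivity of $D_{\Gamma_U}^U(\cdot,\partial V_j)$ along $P$ can be obtained. For $\SCL\subset V_j$, the third bullet gives $D_{\Gamma_U}^U(\SCL,\partial V_j)=D^{V_j}_{\Gamma_U|_{V_j}}(\SCL,\partial V_j)$. This is a uniform-exploration label in $V_j$, hence bounded below on loops meeting a compact subset of $V_j$.

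\emph{The mesh is the obstruction.} $P$ is only Euclidean-continuous. Admissibility only makes loop points dense in $P([0,1])$, not loop times dense in $[0,1]$. Finiteness of $\len(P;D_{\Gamma_U}^U)$ only bounds chain sums. Nothing you have excludes a jump: a time $t_0$ with $P(t_0)$ off the loops such that the loops met just before and just after $t_0$ stay at $D_{\Gamma_U}^U$-distance bounded below.
\begin{itemize}
\item At such a $t_0$, no chain along $P$ has small steps, so ``covering $P$ by short subpaths'' fails.
\item That step is charged at the ambient price $D_{\Gamma_U}^U(\SCL^-,\SCL^+)$, which may be realized by a route leaving $V_j$.
\item For such a step the axiom only gives $D_{\Gamma_U}^U\le D^{V_j}_{\Gamma_U|_{V_j}}$, which is the wrong direction.
\end{itemize}
Along geodesics this is excluded by the density statement in \Cref{prop:existence_geodesics}, but no such statement is available for arbitrary admissible paths.

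\emph{The second fallback is circular.} The construction in \Cref{prop:existence_geodesics} relies on the weak axioms for the metric being used: connectedness of $K_{\SCL_1,\SCL_2}$, \Cref{prop:hitting-two-metric-balls}, and \Cref{lem:building-geodesic}. These are not available for $D_{\Gamma_U}^U(\bullet,\bullet;V_j)$ until you know it coincides with $D^{V_j}_{\Gamma_U|_{V_j}}$.

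So you still need an argument controlling $D_{\Gamma_U}^U$-distances between loops met by $P$ at nearby times. The paper does not supply this either. Its last sentence asserts that ``by the definition of the internal metric'' the short-step chain formula also holds for $D_{\Gamma_U}^U(\bullet,\bullet;V_j)$. That is exactly the discretization of an arbitrary admissible path that your step~2 could not carry out.
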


\begin{proof}

Recall that by Axiom~\eqref{it:weak_axiom_locality} in \Cref{def:weak_axioms}, we have that conditionally on the $\sigma$-algebra given in~\eqref{eq:weak_axiom_locality}, the pairs $(\Gamma_U |_{V_j}, D_{\Gamma_U |_{V_j}}^{V_j})$ are independent and their conditional laws are that of $(\Gamma_{V_j}, D_{\Gamma_{V_j}}^{V_j})$ respectively. Therefore, combining with \Cref{prop:existence_geodesics,lem:geodesic_avoid_boundary}, we obtain that it is a.s.\ the case that the metrics $D_{\Gamma_U |_{V_j}}^{V_j}$ are geodesic metrics and that any geodesic between two loops in $\Gamma_U |_{V_j}$ does not intersect $\partial V_j$. It follows that for all $\SCL_1,\SCL_2 \in \Gamma_U |_{V_j}$ and all $j$, we have that
\begin{equation}\label{eq:strong-locality-proof}
	D_{\Gamma_U|_{V_j}}^{V_j}(\SCL_1, \SCL_2) = \inf_{\SCL_0^\prime, \SCL_1^\prime, \ldots, \SCL_n^\prime} \sum_{k = 1}^n D_{\Gamma_U|_{V_j}}^{V_j}(\SCL_{k - 1}^\prime, \SCL_k^\prime), 
\end{equation}
where $\SCL_0^\prime, \SCL_1^\prime, \ldots, \SCL_n^\prime$ range over all sequences of loops of $\Gamma_U|_{V_j}$ such that $\SCL_1 = \SCL_0^\prime$, $\SCL_2 = \SCL_n^\prime$, and $D_{\Gamma_U|_{V_j}}^{V_j}(\SCL_{k - 1}^\prime, \SCL_k^\prime) < D_{\Gamma_U|_{V_j}}^{V_j}(\SCL_k^\prime, \partial V_j)$ for all $k \in [1, n]_{\BZ}$. 

Moreover, by Axiom~\eqref{it:weak_axiom_locality} in \Cref{def:weak_axioms}, we have that $\SCB_t(\SCL; D_{\Gamma_U}^U) = \SCB_t(\SCL; D_{\Gamma_U|_{V_j}}^{V_j})$ for all $\SCL \in \Gamma_U|_{V_j}, \ t \in [0, D_{\Gamma_U}^U(\SCL, \partial V_j)]$. This implies that
\begin{align*}
D_{\Gamma_U|_{V_j}}^{V_j}(\SCL_{k - 1}^\prime, \SCL_k^\prime) = D_{\Gamma_U}^U(\SCL_{k - 1}^\prime, \SCL_k^\prime),
\end{align*}
where $\SCL_0^\prime, \SCL_1^\prime, \ldots, \SCL_n^\prime$ are as in~\eqref{eq:strong-locality-proof}. In particular, we have that 
\begin{align*}
D_{\Gamma_U|_{V_j}}^{V_j}(\SCL_1, \SCL_2) = \inf_{\SCL_0^\prime, \SCL_1^\prime, \ldots, \SCL_n^\prime} \sum_{k = 1}^n D_{\Gamma_U}^U(\SCL_{k - 1}^\prime, \SCL_k^\prime), 
\end{align*}
where $\SCL_0^\prime, \SCL_1^\prime, \ldots, \SCL_n^\prime$ range over all sequences of loops of $\Gamma_U|_{V_j}$ such that $\SCL_1 = \SCL_0^\prime$, $\SCL_2 = \SCL_n^\prime$, and $D_{\Gamma_U}^U(\SCL_{k - 1}^\prime, \SCL_k^\prime) < D_{\Gamma_U}^U(\SCL_k^\prime, \partial V_j)$ for all $k \in [1, n]_{\BZ}$. By the definition of the internal metric, the same is true with $D_{\Gamma_U}^U(\bullet, \bullet; V_j)$ in place of $D_{\Gamma_U |_{V_j}}^{V_j}$. This completes the proof.
\end{proof}

\begin{remark}\label{rem:almost_strong_metric}
Note that we have not yet proven that $D_{\Gamma_U}^U$ is a.s.\ determined by $\Gamma_U$. This will be achieved in the next paper. 
\end{remark}

\begin{proof}[Proof of \Cref{thm:existence_geodesics}.]
\stepn{step:teg-forward}{Every weak geodesic coupling is a geodesic coupling} By \Cref{prop:existence_geodesics} and \Cref{lem:strong-locality}, we know that every weak geodesic CLE$_4$ metric coupling satisfies Axioms~\eqref{it:axiom_geodesic} and~\eqref{it:axiom_locality} of \Cref{def:axioms}; since the remaining axioms of \Cref{def:axioms} (conformal invariance and uniform exploration) are common to the two definitions, it is a geodesic CLE$_4$ metric coupling. Let us show the converse, which is easier to prove.
	
\stepn{step:teg-easy}{The converse: the axioms that transfer directly} Let $D$ be a geodesic CLE$_4$ metric coupling. It is straightforward to see that Axiom~\eqref{it:axiom_locality} from \Cref{def:axioms} implies Axiom~\eqref{it:weak_axiom_locality} from \Cref{def:weak_axioms}. Similarly, Axioms~\eqref{it:weak_axiom_geodesic}\eqref{it:weak_axiom_geodesic_0} and~\eqref{it:weak_axiom_geodesic}\eqref{it:weak_axiom_geodesic_4} are a direct consequence of the existence of geodesics. Moreover, Axiom~\eqref{it:weak_axiom_geodesic}\eqref{it:weak_axiom_geodesic_1} is contained in Axiom~\eqref{it:axiom_geodesic} from \Cref{def:axioms}.
	 
\stepn{step:teg-hard}{The converse: Axiom~\eqref{it:weak_axiom_geodesic}\eqref{it:weak_axiom_geodesic_2}} Let $x, y \in U_\BQ$ and $t \in [0, D^U_{\Gamma_U}(\SCL(x), \SCL(y))]$. Let $P \colon [0,1] \to U$ be a $D^U_{\Gamma_U}$-geodesic from $\SCL(x)$ to $\SCL(y)$. Let $r>0$. Assume that 
	 \[
	 \SCB_t(\SCL(x); D^U_{\Gamma_U}) \cap \SCB_r(\SCL(y); D^U_{\Gamma_U}) = \emptyset.
	 \]
	 Then, by continuity of $P$ and since $ \SCB_t(\SCL(x); D^U_{\Gamma_U}) $ and $ \SCB_r(\SCL(y); D^U_{\Gamma_U})$ are closed subsets of $\overline{U}$, there is an interval $[s_1, s_2]$ with $s_1< s_2$ such that $P([s_1, s_2])$ positive diameter and does not intersect $ \SCB_t(\SCL(x); D^U_{\Gamma_U}) \cup \SCB_r(\SCL(y); D^U_{\Gamma_U}) $. Since $\bigcup_{\SCL \in \Gamma_U}\SCL \cap P$ is dense in $P$ (because $P$ is an admissible path), we deduce that there is a loop $\SCL$ of $\Gamma_U$ meeting $P([s_1,s_2])$; since a loop which intersects a metric ball is contained in it (the ball being the closure of the union of the domains surrounded by the loops it contains), we have that
	 \[
	\SCL \cap \left( \SCB_t(\SCL(x); D^U_{\Gamma_U}) \cup \SCB_r(\SCL(y); D^U_{\Gamma_U}) \right) = \emptyset .
	 \]
	 But then, this means that
	 \[
	 D^U_{\Gamma_U}(\SCL(x), \SCL(y))=D^U_{\Gamma_U}(\SCL(x), \SCL)+ D^U_{\Gamma_U}(\SCL, \SCL(y)) > t+r.
	 \]
	 Since this holds for every $r$ with $\SCB_t(\SCL(x)) \cap \SCB_r(\SCL(y)) = \emptyset$, and since the balls intersect once $r$ exceeds $D^U_{\Gamma_U}(\SCB_t(\SCL(x); D^U_{\Gamma_U}), \SCL(y))$, letting $r$ increase to the latter gives, a.s.,
	 \[
	 D^U_{\Gamma_U}(\SCL(x), \SCL(y)) \ge D^U_{\Gamma_U}(\SCB_t(\SCL(x); D^U_{\Gamma_U}), \SCL(y)) + t.\]
	 Let us show the converse inequality. Assume that 
	 \[
	 \SCB_t(\SCL(x); D^U_{\Gamma_U}) \cap \SCB_r(\SCL(y); D^U_{\Gamma_U}) \neq \emptyset.
	 \]
	 By Axiom~\eqref{it:axiom_geodesic}\eqref{it:axiom_geodesic_twoball}, we have $D^U_{\Gamma_U}(\SCL(x), \SCL(y)) \le t+r$. Letting $r$ decrease to $D^U_{\Gamma_U}(\SCB_t(\SCL(x); D^U_{\Gamma_U}), \SCL(y))=\inf\{r\ge 0: \SCB_t(\SCL(x); D^U_{\Gamma_U}) \cap \SCB_r(\SCL(y); D^U_{\Gamma_U})  \neq \emptyset\}$ gives a.s.
	 \[
	 D^U_{\Gamma_U}(\SCL(x), \SCL(y)) \le D^U_{\Gamma_U}(\SCB_t(\SCL(x); D^U_{\Gamma_U}), \SCL(y)) + t.
	 \]
	  This proves Axiom~\eqref{it:weak_axiom_geodesic}\eqref{it:weak_axiom_geodesic_2} from \Cref{def:weak_axioms}.
\end{proof}

\begin{proof}[Proof of \Cref{thm:behavior-of-geodesics}.]
\Cref{thm:behavior-of-geodesics} follows from the second part of \Cref{prop:existence_geodesics}, from \Cref{lem:Hausdorff_dim_geodesic,lem:geodesic_avoid_boundary}, together with conformal invariance (Axiom~\eqref{it:weak_axiom_conformal_invariance} of \Cref{def:weak_axioms}), since the corollaries are stated for $\BD$.
\end{proof}

\begin{appendix}
\crefalias{section}{appendix}

\section{\texorpdfstring{Bichordal SLE$_4$ four-arm exponents}{Bichordal SLE4 four-arm exponents}}
\label{sec:cle_4_four_arm_exponents}

In this appendix, we prove \Cref{lem:bichordal-boundary-4A}. In order to prove \Cref{lem:bichordal-boundary-4A}, we will review some results from \cite{zhan2019two}.

Note that the hypergeometric function ${}_2F_1(1,0; 2;\bullet)$ appearing in \cite[Theorems~6.2 and~6.4]{zhan2019two} is actually identically equal to one in our case. For fixed points $v_-, w_-, w_+, v_+ \in \BR$ such that $v_- < w_- < w_+ < v_+$, we set 
\begin{equation}\label{eqn:form_of_G_1}
    G_1(w_-, w_+; v_-, v_+) \defeq \lvert w_+ - v_+\rvert \cdot \lvert w_- - v_-\rvert \cdot \lvert w_+ - v_-\rvert \cdot \lvert w_- - v_+\rvert. 
\end{equation}
Moreover, we set 
\begin{equation}\label{eqn:form_of_G_2}
    G_2(w_-, w_+; v_-, v_+) \defeq \lvert w_+ - w_-\rvert \cdot \lvert v_+ - v_-\rvert \cdot \lvert w_+ - v_-\rvert \cdot \lvert w_- - v_+\rvert. 
\end{equation}

We will use the following result from \cite{zhan2019two}.

\begin{proposition}[{\cite[Theorem~6.2]{zhan2019two}}]
\label{prop:arm_exponent_interior}
Let $v_-, w_-, w_+, v_+ \in \BR$ with $v_- < w_- < w_+ < v_+$ be such that $0 \in [v_-, v_+]$. Let $(\eta_+,\eta_-)$ be a bichordal SLE$_4$ in $\BH$ with link pattern $\{\{w_+, v_+\}, \{w_-, v_-\}\}$. Then, there exists a universal constant $C \in (0,\infty)$ such that
\begin{equation*}
    \BP[\text{both } \eta_+ \text{ and } \eta_- \text{ exit } B_L(0)] \leq C L^{-4} G_1(w_-, w_+; v_-, v_+) \left(1 + C((v_+ - v_-) / L)^{5/6}\right)
\end{equation*}
for all $L > 0$.
\end{proposition}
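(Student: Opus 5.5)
This bound is not new: it is the case $\kappa = 4$ of the Green's function estimate for two-curve (bichordal) SLE$_\kappa$ in \cite[Theorem~6.2]{zhan2019two}. The plan is therefore to specialize that theorem and check that every $\kappa$-dependent quantity takes the form stated.

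First, we put ourselves in Zhan's setting. A bichordal SLE$_4$ in $\BH$ with link pattern $\{\{w_+, v_+\}, \{w_-, v_-\}\}$ is a $2$-SLE$_\kappa$ with $\kappa = 4$: each curve, given the other, is a chordal SLE$_4$ in the complementary component. Its law is unique by \cite{IG2,beffara2021uniqueness}, so it coincides with Zhan's commuting pair. The event that both $\eta_+$ and $\eta_-$ exit $B_L(0)$ is the event that both curves reach the far neighborhood of $\infty$. After the inversion $z \mapsto -1/z$, this is Zhan's two-sided boundary event at a marked point, so his theorem applies directly once $0 \in [v_-, v_+]$. This hypothesis ensures that the four marked points lie within distance $v_+ - v_-$ of the center, which is what the error term is measured against.

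Next, we specialize the constants. Zhan's estimate has the form
\[
C_0\, L^{-\alpha}\, G_\kappa(w_\pm; v_\pm)\,\bigl(1 + O(((v_+ - v_-)/L)^{\beta})\bigr),
\]
where:
\begin{itemize}
\item the exponent is $\alpha = 2(12 - \kappa)/\kappa$;
\item $G_\kappa$ is a product of powers of the pairwise distances among $w_\pm, v_\pm$, multiplied by a hypergeometric factor evaluated at the cross-ratio;
\item $\beta$ is an explicit positive exponent.
\end{itemize}
At $\kappa = 4$ we get $\alpha = 4$. As noted above, the hypergeometric function that appears is ${}_2F_1(1,0;2;\cdot) \equiv 1$, since one of its upper parameters vanishes. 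The remaining exponents in the product of pairwise distances all reduce to $1$ on the factors $|w_+ - v_+|\,|w_- - v_-|\,|w_+ - v_-|\,|w_- - v_+|$, and the other factors get exponent $0$. This yields exactly $G_1$ from \eqref{eqn:form_of_G_1}. Finally, we read off the error exponent at $\kappa = 4$ as $5/6$, or take any smaller exponent Zhan allows, which only weakens the bound. This gives the stated inequality with a universal $C$.

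The main obstacle is bookkeeping rather than mathematics: we must carefully match Zhan's normalizations (the orientation of the link pattern, and whether he measures exit from $B_L(0)$ or proximity to $\infty$ after inversion) with ours, and verify that the power exponents at $\kappa = 4$ indeed collapse to $G_1$ rather than to a mixture involving $G_2$. I would first write Zhan's formula in his variables and then substitute $\kappa = 4$ term by term.

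Two remaining points are minor. First, uniformity for all $L > 0$: Zhan states an asymptotic as $L \to \infty$ with error $O(((v_+ - v_-)/L)^{5/6})$, so for $L \lesssim v_+ - v_-$ we need a separate argument. There the right-hand side is at least of order $L^{-4}G_1 \cdot ((v_+ - v_-)/L)^{5/6}$, and I expect it to exceed $1$ after enlarging $C$. This requires a lower bound on $G_1$ in terms of $(v_+ - v_-)^4$, and that lower bound can fail when the points are clustered. So I would instead rely on Zhan's statement being a genuine inequality valid for all $L$, as his Theorem~6.2 is phrased with an explicit error bound rather than only a limit. Second, $C$ must not depend on the configuration, which follows from scaling covariance: both sides scale consistently under $z \mapsto sz$, since $G_1$ has degree $4$.
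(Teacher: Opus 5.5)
Your route matches the paper's. The paper gives no argument of its own: it cites \cite[Theorem~6.2]{zhan2019two} and notes that the hypergeometric factor ${}_2F_1(1,0;2;\cdot)$ is identically $1$ at $\kappa=4$.

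The genuine gap is your handling of small $L$. You correctly observe that for $L\lesssim v_+-v_-$ you would need $G_1\gtrsim (v_+-v_-)^4$, and that this fails for clustered points. You then fall back on Zhan's theorem being ``a genuine inequality valid for all $L$''. No such inequality exists, because the displayed bound is false in that regime. Take $v_-=-1$, $w_-=-\tfrac12$, $w_+=1-\epsilon$, $v_+=1$ and $L=1-\epsilon/2$, with $\epsilon\in(0,1)$. Each curve joins a point of $B_L(0)$ to a point outside $\overline{B_L(0)}$, so the probability is $1$. On the other hand, $G_1\le\tfrac32\epsilon$, so the right-hand side is at most $24C\epsilon(1+4^{5/6}C)$, which tends to $0$ as $\epsilon\to0$. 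Hence no universal $C$ works for all $L>0$, and no citation can supply this step.

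What you can legitimately extract from Zhan's estimate, using that its error constant depends only on $\kappa$, is the bound for $L\ge K(v_+-v_-)$ with an absolute constant $K$. This is all the paper actually uses. In the proof of \Cref{lem:bichordal-boundary-4A}, one has $[v_-,v_+]\subset[-M,M]$ with $M=M(\delta)$ and $L=\delta/(2\varepsilon)$. The restricted bound therefore covers $\varepsilon\le\delta/(4KM)$. The remaining range of $\varepsilon$ is absorbed into $C(\delta)$, exactly as is done there for $\varepsilon\ge\delta/2$. So the correct repair is to restrict the range of $L$ in the statement, not to look for a proof of it as written.

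Relatedly, your scaling argument does not give uniformity over configurations. Scaling only normalizes $v_+-v_-=1$ and leaves a non-compact family of configurations (e.g.\ $w_+\to v_+$). That uniformity has to come from the $\kappa$-only dependence of the constants in Zhan's theorem.
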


\Cref{prop:arm_exponent_interior} will be used to prove \Cref{lem:bichordal-boundary-4A} in the case that neither of the curves $\eta_{12}$ and $\eta_{34}$ appearing in the statement of \Cref{lem:bichordal-boundary-4A} separates $z_0$ from the other curve.

Next, we state the other main result from \cite{zhan2019two} that we are going to use.

\begin{proposition}[{\cite[Theorem~6.4]{zhan2019two}}]
\label{prop:arm_exponent_boundary}
Let $v_-, w_-, w_+, v_+ \in \BR$ with $v_- < w_- < w_+ < v_+$ be such that $0 \in [v_-, v_+]$. Let $(\eta_w,\eta_v)$ be a bichordal SLE$_4$ in $\BH$ with link pattern $\{\{w_+, w_-\}, \{v_+, v_-\}\}$. Then, there exists a universal constant $C \in (0,\infty)$ such that
\begin{align*}
\BP[\eta_w \text{ exits } B_L(0)] \leq C L^{-4} G_2(w_-, w_+; v_-, v_+) \left(1 + C((v_+ - v_-) / L)^{5/6}\right)
\end{align*}
for all $L > 0$.
\end{proposition}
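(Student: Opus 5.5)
The plan is to obtain this estimate by specializing \cite[Theorem~6.4]{zhan2019two} to $\kappa = 4$, rather than reproving it. Zhan's theorem treats a commuting pair of SLE$_\kappa$ curves in $\BH$. One curve connects the two inner points $w_\pm$, and the other connects the two outer points $v_\pm$; here these are $\eta_w$ and $\eta_v$. The theorem states that the probability that the inner curve reaches distance $L$ has the form
\begin{equation*}
C_\kappa L^{-\alpha}\, G(w_-,w_+;v_-,v_+)\bigl(1+O(((v_+-v_-)/L)^{\beta})\bigr),
\end{equation*}
where $G$ is a product of powers of the pairwise distances multiplied by a hypergeometric factor evaluated at the cross-ratio of the four points. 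I will check three things for $\kappa=4$.

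First, Zhan's bichordal (two-curve) law coincides with the bichordal SLE$_4$ of the statement. By the uniqueness in \cite{IG2,beffara2021uniqueness}, it suffices to verify that each curve, conditionally on the other, is a chordal SLE$_4$ in the complementary component. Second, the exponent $\alpha$ equals $4$. This is the two-sided boundary exponent for $\kappa=4$: substituting $\kappa=4$ into Zhan's general formula gives $\alpha = 2(12-\kappa)/\kappa = 4$, and I will confirm this against the paper's normalization. Third, the Green's function reduces exactly to $G_2$ in \eqref{eqn:form_of_G_2}. The exponents of $\lvert w_+-w_-\rvert$, $\lvert v_+-v_-\rvert$, $\lvert w_+-v_-\rvert$, $\lvert w_--v_+\rvert$ all become $1$. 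The hypergeometric factor becomes ${}_2F_1(1,0;2;\cdot)$, which is identically $1$ because its second parameter vanishes, as remarked just above.

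The error term $(1+C((v_+-v_-)/L)^{5/6})$ requires a small adjustment. Zhan's theorem is stated with $O(\cdot)$ as $L\to\infty$, with the exponent $5/6$ arising from the $\kappa=4$ instance of his regularity estimate. I would convert this to an inequality valid for all $L>0$ in two regimes. For $L \ge C_0 (v_+-v_-)$ it is the asymptotic bound with explicit constants. For $L < C_0(v_+-v_-)$, the right-hand side is bounded below by a positive constant times $L^{-4}G_2 (v_+-v_-)^{5/6}L^{-5/6}$. Here I need to check that this quantity is at least $1$ after enlarging $C$, which uses that $G_2 \gtrsim$ something comparable to $(v_+-v_-)^{4}$ only when the four points are well spread. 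This step is the main obstacle: if the points are clustered, $G_2$ can be small while $L$ is small. In that case I would instead use scaling and translation invariance, which let me assume $v_+-v_-=1$, together with the uniformity of Zhan's constants over configurations with $0\in[v_-,v_+]$, to obtain the bound directly from his nonasymptotic estimate.
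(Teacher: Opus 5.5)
Your reduction is the same as the paper's. The paper gives no argument beyond citing \cite[Theorem~6.4]{zhan2019two} at $\kappa=4$: the boundary exponent $2(12-\kappa)/\kappa$ equals $4$, and Zhan's Green's function collapses to $G_2$ because ${}_2F_1(1,0;2;\cdot)\equiv 1$.

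The gap is in your final step. For $L<C_0(v_+-v_-)$ with clustered points, the obstacle you flag cannot be removed by scaling or by some nonasymptotic form of Zhan's estimate, because the inequality is false there for every fixed $C$. Take $v_-=-L$, $w_\pm=L(1\pm\epsilon/2)$ and $v_+=3L$, with $\epsilon\in(0,1)$. Then $0\in[v_-,v_+]$. The curve $\eta_w$ is continuous from $w_-$, where $|w_-|<L$, to $w_+$, where $|w_+|>L$, so it exits $B_L(0)$ with probability one. On the other hand,
\[
G_2=(\epsilon L)(4L)\bigl(L(2+\epsilon/2)\bigr)^2<25\epsilon L^4
\]
and $(v_+-v_-)/L=4$. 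So the right-hand side is below $25C\epsilon(1+C4^{5/6})$, which tends to $0$ as $\epsilon\to 0$. Separately, the translation invariance you invoke is not available. Both the ball $B_L(0)$ and the hypothesis $0\in[v_-,v_+]$ are anchored at the origin, so only scaling can be used.

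So the proposition cannot be proved for all $L>0$ as literally stated. What Zhan's theorem delivers, and what your first regime correctly extracts, is the bound for $L\ge C_0(v_+-v_-)$ with $C_0$ universal; you should stop there. This restricted form is all that the proof of \Cref{lem:bichordal-boundary-4A} uses. There $L=\delta/(2\varepsilon)$ and $[v_-,v_+]\subset[-M,M]$ with $M=M(\delta)$, so the bound applies whenever $\varepsilon\le\delta/(4C_0M)$. The remaining range of $\varepsilon$ is bounded below by a constant depending only on $\delta$, so it is absorbed by enlarging $C(\delta)$, exactly as the paper already does for $\varepsilon\ge\delta/2$. The same caveat applies to \Cref{prop:arm_exponent_interior}.
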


\Cref{prop:arm_exponent_boundary} will be used to prove \Cref{lem:bichordal-boundary-4A} in the case that one of $\eta_{12}$ and $\eta_{34}$ separates $z_0$ from the other curve.

Now, we are ready to prove \Cref{lem:bichordal-boundary-4A}.

\begin{proof}[Proof of \Cref{lem:bichordal-boundary-4A}.]
\stepn{step:b4a-setup}{Setup} Let $(\eta_{12},\eta_{34})$ denote a bichordal SLE$_4$ on $\BD$ such that $\eta_{12}$ connects $z_1$ to $z_2$ and $\eta_{34}$ connects $z_3$ to $z_4$. Write $I_{x,y}$ for the counterclockwise arc of $\partial\BD$ from $x$ to $y$. In each case below, fix $w \in \partial\BD$ in a different connected component from $z_0$ (of $\partial\BD \setminus (I_{z_1,z_2} \cup I_{z_3,z_4})$ in \Cref{step:b4a-nosep}, of $I_{z_1,z_2} \cup I_{z_3,z_4}$ in \Cref{step:b4a-sep}), and let $\psi(z) = c(z-w)/(z-z_0)$, where $\vert c\vert = 1$ is chosen so that $\psi$ maps $\BD$ onto $\BH$ with $\psi(I_{w,z_0}) \subset \BR_+$; thus $\psi(w) = 0$ and $\psi(z_0) = \infty$. Since $w$ and $z_0$ are separated by the arcs, both boundary arcs from $z_0$ to $w$ contain some $z_i$, so $\vert z_0 - w\vert \ge \delta$, the chord length being monotone in the arc length. Then, we have the following two cases.

\stepn{step:b4a-nosep}{Neither $\eta_{12}$ nor $\eta_{34}$ separates $z_0$ from the other curve} In that case, we have that $z_0 \notin I_{z_1,z_2} \cup I_{z_3,z_4}$, and $w$ is chosen in a different connected component of $\partial\BD \setminus (I_{z_1,z_2} \cup I_{z_3,z_4})$ from $z_0$.

If $w \in I_{z_4, z_1}$ and $z_0 \in I_{z_2,z_3}$, we set $v_- = \psi(z_3), w_- = \psi(z_4), w_+ = \psi(z_1)$, and $v_+ = \psi(z_2)$. Then, we have that $(\psi(\eta_{34}), \psi(\eta_{12}))$ has the law of a bichordal SLE$_4$ on $\BH$ with link pattern $\{\{v_-, w_-\}, \{w_+, v_+\}\}$, and $0 \in [v_-, v_+]$. If $w \in I_{z_2,z_3}$ and $z_0 \in I_{z_4,z_1}$, we set $v_- = \psi(z_1), w_- = \psi(z_2), w_+ = \psi(z_3)$, and $v_+ = \psi(z_4)$. Then, we have that $(\psi(\eta_{12}), \psi(\eta_{34}))$ has the law of a bichordal SLE$_4$ on $\BH$ with link pattern $\{\{v_-, w_-\}, \{v_+, w_+\}\}$, and $0 \in [v_-, v_+]$.

Since the points $z_1, z_2, z_3, z_4$ are at distance at least $\delta$ from $z_0$, their images under $\psi$ are bounded. Hence, it is easy to see that there exists $M \in (0,\infty)$ depending only on $\delta$ such that $[v_-, v_+] \subset [-M,M]$. Moreover, note that $[w_-, w_+] \subset [v_-, v_+]$. Therefore,
  we obtain by the explicit form of $G_1(w_-, w_+; v_-, v_+)$ in~\eqref{eqn:form_of_G_1} that there exists $\widetilde{M} \in (0,\infty)$ depending only on $\delta$ such that $G_1(w_-, w_+; v_-, v_+) \leq \widetilde{M}$. Moreover, we have that
\begin{align*}
\psi(\BD \cap B_\varepsilon(z_0)) \subset \BH \setminus B_{\delta/(2\varepsilon)}(0), \quad \forall\varepsilon \in (0, \delta / 2).
\end{align*}
Indeed, $\vert \psi(z)\vert = \vert z - w\vert / \vert z - z_0\vert \ge (\delta - \varepsilon)/\varepsilon \ge \delta/(2\varepsilon)$ for $z \in \BD \cap B_\varepsilon(z_0)$ and $\varepsilon \le \delta/2$, by the bound $\vert z_0 - w \vert \ge \delta$ above.
Thus, the claim of the lemma follows by combining the above with \Cref{prop:arm_exponent_interior}.

\stepn{step:b4a-sep}{One of $\eta_{12}$ and $\eta_{34}$ separates $z_0$ from the other curve} In that case, we have that $z_0 \in I_{z_1,z_2} \cup I_{z_3,z_4}$, and here $w$ is chosen in a different connected component of $I_{z_1,z_2} \cup I_{z_3,z_4}$ from $z_0$ (note that this union, rather than its complement, is the relevant set in this case).

\substepn{step:b4as-first}{The case $z_0 \in I_{z_1,z_2}$} If $z_0 \in I_{z_1,z_2}$ and $w \in I_{z_3,z_4}$, we set $v_- = \psi(z_2), w_- = \psi(z_3), w_+ = \psi(z_4)$, and $v_+ = \psi(z_1)$. Then, we have that $(\psi(\eta_{34}), \psi(\eta_{12}))$ has the law of a bichordal SLE$_4$ on $\BH$ with link pattern given by $\{\{w_-,w_+\}, \{v_-, v_+\}\}$. Again, as in \Cref{step:b4a-nosep}, we have that there exists $M \in (0,\infty)$ depending only on $\delta$ such that $[v_-, v_+] \subset [-M,M]$. Thus, combining this with the explicit form of $G_2(w_-, w_+; v_-, v_+)$ in~\eqref{eqn:form_of_G_2}, we obtain that there exists a constant $\widetilde{M} \in (0,\infty)$ depending only on $\delta$ such that $G_2(w_-, w_+; v_-, v_+) \leq \widetilde{M}$. Moreover, we have again that
\begin{align*}
\psi(\BD \cap B_\varepsilon(z_0)) \subset \BH \setminus B_{\delta/(2\varepsilon)}(0), \quad \forall\varepsilon \in (0, \delta / 2).
\end{align*}
Indeed, $\vert \psi(z)\vert = \vert z - w\vert / \vert z - z_0\vert \ge (\delta - \varepsilon)/\varepsilon \ge \delta/(2\varepsilon)$ for $z \in \BD \cap B_\varepsilon(z_0)$ and $\varepsilon \le \delta/2$, by the bound $\vert z_0 - w \vert \ge \delta$ above.
This implies that if both $\eta_{12}$ and $\eta_{34}$ intersect $\BD \cap B_\varepsilon(z_0)$, we have that $\psi(\eta_{34})$ intersects $\BH \setminus B_{\delta/(2\varepsilon)}(0)$. Therefore, the claim of the lemma follows by combining the above with \Cref{prop:arm_exponent_boundary}.

\substepn{step:b4as-second}{The case $z_0 \in I_{z_3,z_4}$} If $w \in I_{z_1,z_2}$ and $z_0 \in I_{z_3,z_4}$, then we set $v_- = \psi(z_4), w_- = \psi(z_1), w_+ = \psi(z_2)$, and $v_+ = \psi(z_3)$. Then, we have that $(\psi(\eta_{12}), \psi(\eta_{34}))$ has the law of a bichordal SLE$_4$ on $\BH$ with link pattern given by $\{\{w_-, w_+\}, \{v_-, v_+\}\}$.

Therefore, we conclude the proof of the lemma by using \Cref{prop:arm_exponent_boundary} and arguing as in the previous paragraph. Finally, we note that by possibly increasing the constant $C(\delta)$, the bound $C \varepsilon^4$ trivially holds for all $\varepsilon \ge \delta/2$ since the probability is at most 1.
\end{proof}

\begin{remark}
We note that \Cref{lem:bichordal-bulk-4A,lem:bichordal-boundary-4A} can also be derived without referring to the results in \cite{zhan2019two} by combining the SLE$_4$ four-arm estimates (see \cite[Theorem~1.3]{U2C4ASimCLE}) with arguments similar to those used in \cite{kkmt2026cle4_part3}.
More precisely, we would obtain the following version of \Cref{lem:bichordal-boundary-4A} (resp.\ \Cref{lem:bichordal-bulk-4A}). Suppose that we have the same setup as in the statement of \Cref{lem:bichordal-boundary-4A} (resp.\ \Cref{lem:bichordal-bulk-4A}). Then, for all $\nu \in (0,1)$, there exists $\varepsilon_0 \in (0,1)$ depending only on $\delta,\nu$ such that the following holds for all $\varepsilon \in (0,\varepsilon_0)$. The probability that both $\eta_{12}$ and $\eta_{34}$ intersect $B_{\varepsilon}(z_0)$ (resp.\ $B_{\varepsilon}(0)$) is at most $\varepsilon^{4(1-\nu)}$ (resp.\ $\varepsilon^{2(1-\nu)}$). We chose not to include the aforementioned proof since the current argument keeps the proofs of \Cref{lem:bichordal-bulk-4A,lem:bichordal-boundary-4A} shorter.
\end{remark}

\end{appendix}

\bibliographystyle{alpha}
\bibliography{references}

\end{document}